\documentclass[letterpaper]{article}
\usepackage{arxiv}
\usepackage[utf8]{inputenc}
\usepackage[T1]{fontenc}
\usepackage[hyphens]{url}
\usepackage{graphicx}
\usepackage[authoryear,round]{natbib}
\setcitestyle{semicolon,aysep={},yysep={;}}
\usepackage{caption}
\usepackage{hyperref}
\usepackage{microtype}
\usepackage{amsmath}
\usepackage{amssymb}
\usepackage{amsfonts}
\usepackage{mathtools}
\usepackage{amsthm}
\usepackage{booktabs}
\usepackage{nicefrac}
\usepackage{multirow}
\usepackage{algorithm}
\usepackage{algpseudocode}
\usepackage{xr}
\usepackage{placeins}

\hypersetup{
  pdftitle={Sliding Methods for Holder-Smooth Convex--Concave Minimax Optimization with Bilinear Coupling},
  pdfauthor={Nhat Trung Nguyen},
  hidelinks
}

\newtheorem{theorem}{Theorem}[section]
\newtheorem{corollary}[theorem]{Corollary}
\newtheorem{lemma}[theorem]{Lemma}
\newtheorem{assumption}[theorem]{Assumption}
\theoremstyle{definition}
\newtheorem{definition}{Definition}
\newtheorem{remark}{Remark}

\DeclareMathOperator*{\argmin}{arg\,min}
\DeclareMathOperator*{\argmax}{arg\,max}

\DeclareMathOperator{\col}{col}

\DeclareMathOperator{\diag}{diag}
\DeclareMathOperator{\range}{range}

\newcommand{\R}{\mathbb{R}}

\newcommand{\bA}{{\mathbf{A}}}
\newcommand{\bB}{{\mathbf{B}}}

\newcommand{\bC}{{\mathbf{C}}}

\newcommand{\bD}{{\mathbf{D}}}

\newcommand{\bI}{{\mathbf{I}}}

\newcommand{\bL}{{\mathbf{L}}}
\newcommand{\bM}{{\mathbf{M}}}

\newcommand{\bO}{{\mathbf{O}}}
\newcommand{\bP}{{\mathbf{P}}}

\newcommand{\bS}{{\mathbf{S}}}

\newcommand{\bU}{{\mathbf{U}}}
\newcommand{\bV}{{\mathbf{V}}}
\newcommand{\bW}{{\mathbf{W}}}

\newcommand{\bX}{{\mathbf{X}}}

\newcommand{\cA}{{\mathcal{A}}}

\newcommand{\cF}{{\mathcal{F}}}
\newcommand{\cG}{{\mathcal{G}}}
\newcommand{\cH}{{\mathcal{H}}}

\newcommand{\cN}{{\mathcal{N}}}

\newcommand{\cS}{{\mathcal{S}}}

\newcommand{\cU}{{\mathcal{U}}}

\newcommand{\cX}{{\mathcal{X}}}
\newcommand{\cY}{{\mathcal{Y}}}
\newcommand{\cZ}{{\mathcal{Z}}}

\newcommand{\bt}{{\mathbf{t}}}

\newcommand{\bx}{{\mathbf{x}}}

\newcommand{\angles}[1]{\left\langle#1\right\rangle}
\newcommand{\norm}[1]{\left\| #1 \right\|}
\newcommand{\normsq}[1]{\norm{#1}^2}
\newcommand{\lmax}{\lambda_{\max}}
\newcommand{\lmin}{\lambda_{\min}}
\newcommand{\lminp}{\lambda_{\min}^+}

\DeclareMathOperator{\D}{D}

\newcommand{\bbS}{{\mathbb{S}}}
\newcommand{\xout}{{x_{\mathrm{out}}}}
\newcommand{\yout}{{y_{\mathrm{out}}}}
\newcommand{\zout}{{z_{\mathrm{out}}}}
\newcommand{\xin}{{x_{\mathrm{in}}}}
\newcommand{\yin}{{y_{\mathrm{in}}}}
\newcommand{\zin}{{z_{\mathrm{in}}}}
\newcommand{\pmat}[1]{\begin{pmatrix}#1\end{pmatrix}}

\title{Sliding Methods for H\"older-Smooth Convex--Concave Minimax Optimization with Bilinear Coupling}
\author{
  Nhat Trung Nguyen \\
  MIRIAI\\
  \texttt{nguyen.n@miriai.org} \\
  \And
 Alexander Gasnikov \\
  Innopolis University, MIRIAI, IITP \\
  \texttt{gasnikov@yandex.ru} \\
}

\date{}
\renewcommand{\shorttitle}{Sliding Methods for H\"older-Smooth Minimax Optimization}

\begin{document}

\raggedbottom
\maketitle

\begin{abstract}
We study convex-concave minimax optimization problems with bilinear coupling of the form
$\min_{x\in \mathcal X}\max_{y\in \mathcal Y}
    \; f(x)+\langle y,\mathbf{B}x\rangle-g(y),$
where the functions $f$ and $g$ have H\"older continuous (sub)gradients.
This setting covers a broad range of regimes, from nonsmooth problems with
bounded subgradient variation to smooth problems with Lipschitz continuous
gradients; for a smooth component used in the coupling-induced regularizer,
its Lipschitz-gradient constant is assumed to hold in the ambient space. We
propose a sliding method that exploits the composite structure of the problem
by querying the oracles associated with $f$, $g$, and the bilinear coupling
operator at prescribed frequencies determined by their individual properties.
The method is based on a recursive sliding scheme for monotone variational
inequalities. We establish convergence guarantees under H\"older continuity
and show how the resulting complexity bounds depend explicitly on the H\"older
exponents, H\"older constants, strong convexity parameters, and spectral
properties of the coupling matrix. Our analysis covers nonstrongly convex and
partially strongly convex regimes. For stochastic problems, we prove a uniform
expected-gap bound in the degenerate regime and, under ambient smoothness and
positive effective curvature, convergence up to an explicit noise floor.
Numerical experiments reproduce the predicted H\"older exponents and confirm
that the number of gradient evaluations required for each function separates
according to its own smoothness level rather than the worse of the two. A
tomographic benchmark shows runtime gains when gradient evaluations are more
expensive than the additional matrix-vector products.
\end{abstract}

\section{Introduction} \label{sec:intro}
Convex-concave minimax optimization plays a central role in many areas of
optimization, machine learning, game theory, and signal processing. In this
work, we focus on the important class of saddle-point problems in which the
interaction between the primal and dual variables is bilinear. Such problems
arise naturally in constrained optimization, decentralized optimization,
regularized optimal transport, inverse problems, and matrix games. We discuss several applications of this problem class in Appendix~\ref{sec:applications}. Specifically, we study minimax problems of the form
\begin{equation} \label{prob:bilinear_SPP_main}
  \min_{x\in \mathcal{X}} \max_{y \in \mathcal{Y}}
  \; F(x, y) = f(x) + \langle y, \bB x \rangle - g(y),
\end{equation}
where \(\mathcal{X} \subseteq \R^{d_x}\) and
\(\mathcal{Y} \subseteq \R^{d_y}\) are nonempty closed convex sets, and
\(\bB \in \R^{d_y \times d_x}\) is the coupling matrix.

We consider the setting where \(f \colon \mathcal{X} \to \R\) and
\(g \colon \mathcal{Y} \to \R\) are convex functions whose
(sub)gradients satisfy H\"older continuity conditions. This
assumption provides a unified framework that interpolates between nonsmooth
problems with bounded subgradient variation and smooth problems with
Lipschitz continuous gradients. The formal assumptions on \(f\), \(g\), and
the spectral structure of \(\bB\) are stated in
Section~\ref{sec:bilinear_SPP}.

In the smooth case, Problem~\eqref{prob:bilinear_SPP_main} was studied by~\citet{borodich2025linear}, who obtained separated accelerated
complexities for evaluating \(\nabla f\), evaluating \(\nabla g\), and applying
\(\bB\) and \(\bB^\top\). This paper investigates how these guarantees extend
to the H\"older-continuous setting.

\subsection{Main Contributions}
The main contributions of this paper are summarized as follows.

\paragraph{Componentwise sliding for variational inequalities.}
We develop a recursive sliding algorithm for finite-sum monotone variational
inequalities with H\"older-continuous function components. The method extends
the sliding framework of~\citet{borodich2025linear} beyond the
smooth setting by querying different components at different frequencies, which
yields componentwise oracle complexities. In the composite optimization case,
this removes the cross-component terms present in existing gradient sliding
bounds~\citep{lan2016gradient,lan2022accelerated}.

\paragraph{H\"older-smooth bilinear minimax optimization.}
We apply the recursive sliding framework to the H\"older-smooth
convex-concave minimax problem~\eqref{prob:bilinear_SPP_main}. By formulating
the problem as a structured variational inequality with components
corresponding to \(f\), \(g\), and the bilinear coupling, the method separates
the oracle calls associated with the different parts of the problem. We prove
convergence guarantees with component-wise complexity bounds for
(sub)gradient evaluations of \(f\),
(sub)gradient evaluations of \(g\), and matrix-vector products
involving \(\bB\) and \(\bB^\top\). Each count depends on the H\"older exponent
and constant of the corresponding function, on the strong convexity parameters,
and on the spectrum of \(\bB\).
We also extend the results to nonstrongly convex, partially strongly convex, and stochastic
settings.

\paragraph{Numerical experiments.}
Synthetic tests recover the predicted componentwise exponents and separated
gradient counts. A three-regime Huber--TV tomography benchmark against four
primal--dual baselines shows runtime gains when gradient calls dominate the
additional matrix-vector cost, including a regime with a genuinely H\"older
dual gradient.

\subsection{Related Work}\label{sec:related_work}

\textbf{Convex-concave saddle-point problems.}
The study of convex-concave minimax problems has a long history in optimization. Classical first-order methods include the extragradient method of~\citet{korpelevich1976extragradient}, the mirror-prox algorithm of~\citet{nemirovski2004prox} achieving an $O(1/t)$ convergence rate for Lipschitz monotone operators, and the primal-dual algorithm of~\citet{chambolle2011first}. These general-purpose methods do not exploit any particular structure in the coupling between the primal and dual variables, and hence yield suboptimal rates for problems with bilinear coupling.

\textbf{Bilinear saddle-point problems.}
For bilinear coupling \(\langle y,\bB x\rangle\), spectral structure yields
sharper rates. Prior work gives lower bounds~\citep{zhang2022lower},
bilinear-induced linear convergence~\citep{du2019linear,ibrahim2020linear},
and optimal smooth algorithms~\citep{borodich2025linear}. The latter, closest
to this paper, uses condition numbers combining the smoothness of \(f,g\) with
the spectrum of \(\bB\). \citet{kovalev2022accelerated} give an optimal
Accelerated Primal-Dual Gradient method for the strongly convex--strongly
concave regime, but charge \(\nabla f\), \(\nabla g\), and \(\bB\) jointly
rather than separately.

Universal primal--dual methods already treat H\"older-smooth
affine-constrained optimization with bilinear Lagrangians:
\citet{yurtsever2015universal} adapt to unknown dual H\"older regularity, and
\citet{luo2024universal} give an accelerated universal method for Lipschitz and
H\"older gradients. Their guarantees count joint iterations. In this work, the
two function oracles and the coupling oracle are charged separately.

\textbf{Complexity separation.}
Per-component budgets are established in the smooth regime:
\citet{tominin2021accelerated} and \citet{lan2021mirrorprox} obtain separated
rates for strongly-convex--strongly-concave and two-component composite
problems, respectively. Closest to us, \citet{borodich2025linear} give an
\(n\)-level recursive sliding method
for composite monotone variational inequalities with separated budgets, plus
matching lower bounds. This paper adds the layer above: exact gradients are
replaced by inexact oracles, which lets each function carry its own level of
smoothness, from Lipschitz gradients to bounded subgradient variation, within a
single analysis. For composite minimization, the parameter-free universal
gradient-sliding method of~\citet{wu2026parameterfree} also separates the
(sub)gradient counts of a H\"older component and a smooth component without
knowing their constants. It does not contain a bilinear coupling oracle.
Appendix~\ref{app:separation} gives the detailed comparison.

\textbf{Variational inequality framework.}
The reduction of saddle-point problems to monotone variational inequalities (VIs) is a classical technique~\citep{facchinei2003finite}. \citet{nesterov2007dual} developed the dual extrapolation method for solving VIs, and~\citet{juditsky2016solving} studied VIs on domains given by linear minimization oracles. In this work, we formulate the bilinear saddle-point problem as a structured VI with a finite-sum decomposition into $n = 3$ components, which enables the application of multi-level sliding techniques.

\textbf{Sliding and multi-level methods.}
The gradient sliding technique, introduced by~\citet{lan2016gradient} for composite optimization, allows different components of the objective to be updated at different frequencies, thereby reducing the total number of expensive gradient evaluations. This idea was further developed by~\citet{lan2018optimal} for incremental gradient methods. Sliding methods have also been applied in distributed optimization~\citep{dvinskikh2021decentralized, kovalev2020optimal}. Closest to Section~\ref{sec:general_VI} is the Mirror-Prox Sliding method of~\citet{lan2021mirrorprox}, which treats a smooth convex function together with a monotone Lipschitz operator and charges each of the two its own number of evaluations, with a stochastic variant in which the operator is sampled; our two-component case with smooth components reproduces those rates.

\textbf{H\"older continuous gradients and inexact oracles.}
The framework of H\"older continuous gradients, parameterized by $\nu \in [0,1]$, provides a unified treatment that interpolates between non-smooth ($\nu = 0$) and smooth ($\nu = 1$) optimization. \citet{nesterov2015universal} introduced universal gradient methods that automatically adapt to the unknown smoothness level, and~\citet{nesterov2005smooth} developed smoothing techniques for non-smooth problems. For monotone VIs, the generalized Mirror-Prox method of~\citet{stonyakin2022generalized} is universal for H\"older-continuous operators. Separately, the inexact oracle model of~\citet{devolder2014first} provides a principled way to analyze first-order methods when gradient and function value computations are approximate.

\paragraph{Paper organization.}
 Section~\ref{sec:definitions} collects definitions and notation. Section~\ref{sec:general_VI} develops a recursive sliding algorithm for general monotone variational inequalities with \((\nu, H)\)-H\"older operators. Section~\ref{sec:bilinear_SPP} specializes it to the bilinear saddle-point problem~\eqref{prob:bilinear_SPP_main} and states the main convergence theorem. Section~\ref{sec:experiments} presents numerical experiments. All proofs are deferred to the appendix.

\section{Definitions and Notations}\label{sec:definitions}

We denote by \(\|\cdot\|\) the standard Euclidean norm on \(\R^d\) and by \(\angles{\cdot, \cdot}\) the standard inner product. Throughout this section, \(\cU\subseteq\R^d\) denotes a nonempty closed convex set, typically a feasible set. For a positive definite matrix \(\bP \in \bbS_{++}^d\), the weighted norm is \(\|z\|_{\bP} = \angles{\bP z, z}^{1/2}\). For a symmetric matrix \(\bM\), we write \(\lmax(\bM)\), \(\lmin(\bM)\), and \(\lminp(\bM)\) for its largest eigenvalue, smallest eigenvalue, and smallest positive eigenvalue, respectively. We write \(\range(\bM)\) for the column space of \(\bM\).

\begin{definition}\label{def:subgradient}
  Let $h \colon \R^d \to \R$ be convex. A vector $\xi \in \R^d$ is called a \emph{subgradient} of $h$ at the point $u \in \R^d$ if for any $v \in \R^d$ we have
  \begin{equation} \label{eq:def_subgradient}
    h(v) \geq h(u) + \langle \xi, v - u \rangle.
  \end{equation}
  The set of all subgradients of $h$ at $u$, denoted by $\partial h(u)$, is called the \emph{subdifferential} of $h$ at $u$.

\end{definition}


\noindent\textbf{Notation and convention.}
Throughout the paper, when \(h\) is subdifferentiable at \(u\), we write
\(h'(u)\in\partial h(u)\) to denote an arbitrary subgradient of \(h\) at
\(u\). Any condition involving \(h'(u)\) must hold for every choice from
\(\partial h(u)\). When such a condition is required for every \(u\in\cU\),
we also assume that a subgradient exists at every point of \(\cU\), that is,
\[
  \partial h(u)\neq\varnothing,
  \qquad \forall u\in\cU.
\]
If \(h\) is differentiable at \(u\), then
\(\partial h(u)=\{\nabla h(u)\}\), and hence \(h'(u)=\nabla h(u)\).

\begin{definition}\label{def:strongly_convex}
  A function \(h \colon \mathcal{U} \to \R \) is called \emph{\( \mu \)-strongly convex} on $\mathcal{U} \subseteq \R^d$, for some $\mu \geq 0$, if for any $u, v \in \mathcal{U}$ and $h^\prime(u) \in \partial h(u)$ we have
  \begin{equation} \label{eq:def_strongly_convex}
    h(v) \geq h(u) + \angles{h^\prime(u), v - u} + \frac\mu2\normsq{v - u}.
  \end{equation}
  When \(\mu = 0\), the function \( h \) is said to be (non-strongly) convex.
\end{definition}

\begin{definition}\label{def:inexact_oracle}
  We say that \(h \colon \cU \to \R\) is equipped with a first-order \((\delta, L)\)-oracle with respect to the norm \(\|\cdot\|_{\bP}\) if for any \(v \in \cU\) we can compute a pair \((\tilde{h}(v), \tilde{\nabla}h(v))\) such that for all \(u \in \cU\),
  \begin{equation}
    0 \leq h(u) - \tilde{h}(v) - \langle \tilde{\nabla}h(v), u - v \rangle \leq \frac{L}{2} \|u - v\|_{\bP}^2 + \delta.
  \end{equation}
\end{definition}

\begin{definition}\label{def:Holder_condition}
  Let \(0 \leq \nu \leq 1\) and \(H > 0\). We say that a function \(h \colon \cU \to \R\) satisfies the \((\nu, H)\)-H\"older condition on \(\cU \subseteq \R^d\) with respect to \(\|\cdot\|_{\bP}\) if for any \(u, v \in \cU\) and $h^\prime(u) \in \partial h(u)$, $h^\prime(v) \in \partial h(v)$,
  \begin{equation} \label{eq:def_Holder_condition}
    \|h'(u) - h'(v)\|_{\bP^{-1}} \leq H \|u - v\|_{\bP}^{\nu}.
  \end{equation}
\end{definition}

For \(\nu > 0\) the condition implies H\"older continuity of the gradient, the limit case \(\nu = 1\) corresponds to Lipschitz-continuous gradients, while \(\nu = 0\) requires only that subgradients have bounded variation.

\begin{definition}\label{def:strongly_monotone_operator}
  An operator $G \colon \mathcal{U} \to \R^{d}$ is called \emph{$\mu$-strongly monotone} with respect to the norm $\| \cdot \|_{\bP}$ for $\mu \geq 0$ if the following inequality holds for all $u, v \in \mathcal{U}$:
  \begin{equation} \label{eq:def_strongly_monotone_operator}
    \langle G(u) - G(v), u - v \rangle \geq \mu \| u - v\|_\bP^2.
  \end{equation}
  An operator $G \colon \mathcal{U} \to \R^{d_z}$ is called \emph{ monotone} if the same inequality holds with $\mu = 0$.
\end{definition}

\begin{definition}\label{def:Lipschitz_operator}
  An operator $G \colon \mathcal{U} \to \R^{d}$ is called \emph{$M$-Lipschitz} with respect to the norm $\| \cdot \|_{\bP}$ for $M_i > 0$ if the following inequality holds for all $u, v \in \mathcal{U}$:
  \begin{equation} \label{eq:def_strongly_monotone}
    \|G(u)-G(v)\|_{\bP^{-1}}
  \leq
  M\|u-v\|_{\bP}.
  \end{equation}
\end{definition}

\section{General Variational Inequalities}\label{sec:general_VI}

In this section, we consider a general monotone variational inequality problem
of finding \(z^* \in \mathcal{Z}\) such that
\begin{equation} \label{prob:general_VI}
  p(z^*) - p(z) + \langle Q(z), z^* - z \rangle \leq 0,
  \qquad \forall z \in \mathcal{Z},
\end{equation}
where \(\mathcal{Z} \subseteq \R^{d_z}\) is nonempty, closed, and convex,
\(p \colon \mathcal{Z} \to \R\), and
\(Q \colon \mathcal{Z} \to \R^{d_z}\). We assume that both \(p\) and \(Q\)
have the finite-sum structure:
\begin{equation} \label{eq:finite_sum_structure}
  p(z) = \sum_{i=1}^n p_i(z),
  \qquad
  Q(z) = \sum_{i=1}^n Q_i(z).
\end{equation}
For each \(1\leq i\leq n\), the function
\(p_i:\mathcal Z\to\mathbb R\) is convex, and the operator
\(Q_i:\mathcal Z\to\mathbb R^{d_z}\) is continuous and monotone.

A point \(z^* \in \mathcal{Z}\) satisfying~\eqref{prob:general_VI} is called
a \emph{weak} solution of the monotone variational inequality. Since \(p\) is convex and \(Q\) is continuous and monotone, the weak and strong formulations are equivalent. In particular,
every weak solution also satisfies
\begin{equation}
  p(z^*) - p(z) + \langle Q(z^*), z^* - z \rangle \leq 0,
  \qquad \forall z \in \mathcal{Z}.
\end{equation}

\subsection{Algorithm for Inexact Oracle}\label{sec:VI_inexact}
To exploit the finite-sum structure in~\eqref{eq:finite_sum_structure}, we
allow each component to have its own operator Lipschitz constant and inexact
oracle parameters. Throughout this section, Lipschitz continuity of operators
and first-order inexact oracles are understood in the sense of
Definitions~\ref{def:Lipschitz_operator}
and~\ref{def:inexact_oracle}.

\begin{assumption}\label{ass:Monotone_and_Lipschitz_Operators_VI}
For every \(1\leq i\leq n\), the operator \(Q_i\) is monotone and
\(M_i\)-Lipschitz on \(\mathcal Z\) with respect to the norm
\(\|\cdot\|_{\bP}\).
\end{assumption}

\begin{assumption}\label{ass:inexact_oracle_functions_VI}
For every \(1\leq i\leq n\), the function \(p_i\) admits a first-order
\((\delta_i,L_i)\)-oracle on \(\mathcal Z\) with respect to the norm
\(\|\cdot\|_{\bP}\).
\end{assumption}

We first describe two special cases to illustrate the construction. When
\(n=1\), there is no inner loop, and the method reduces to
Algorithm~\ref{alg:one_level}. This algorithm recovers several classical methods. If
\(Q_1\equiv 0\), it reduces to Nesterov's accelerated gradient method
\citep{nesterov2013introductory}. If \(p_1\equiv 0\), it reduces to the extragradient method of~\citet{korpelevich1976extragradient}. When
both \(p_1\) and \(Q_1\) are present, it coincides with the Accelerated
Mirror-Prox method of~\citet{chen2017accelerated}, specialized to the
\(\|\cdot\|_{\bP}\)-norm.

For \(n=2\), the outer and inner loops handle the first and second components,
respectively, allowing query frequencies tailored to \(L_i\) and \(M_i\).
The \(n\)-level method applies this construction recursively. Appendix~
\ref{app:explicit_sliding_algorithms} gives both algorithms, and Appendix~
\ref{app:algo_implementability} discusses implementability.

The following theorem gives the main guarantee for the recursive sliding
scheme. It bounds the weak variational inequality gap at the output
\(z_{\mathrm{out}}\) uniformly over all comparison points \(z\in\mathcal Z\).
The proof is given in Appendix~\ref{app:VI_inexact_proof}. The theorem is used in
Section~\ref{sec:VI_Holder} to handle H\"older-continuous components.

\begin{theorem}\label{thm:VI_sliding_inexact}
Let Assumptions~\ref{ass:Monotone_and_Lipschitz_Operators_VI}
and~\ref{ass:inexact_oracle_functions_VI} hold, where
\(M_i,L_i\geq0\) and \(M_i+L_i>0\) for all \(i\). Let
\(T_1,\dots,T_n\) be positive integers. Suppose that
Algorithm~\ref{alg:sliding_recursive} uses the sequence
\begin{equation}\label{eq:alpha_sequence}
  \alpha_0=1,
  \qquad
  \alpha_{t+1}
  =
  \frac{2}{1+\sqrt{1+4/\alpha_t^2}},
  \qquad t\geq0,
\end{equation}
and the parameters
\begin{equation}\label{eq:eta_inexact}
  \eta_{\bt}^{k}
  =
  L_k\prod_{\ell=1}^k \alpha_{t_\ell}
  +
  M_k\prod_{\ell=1}^k
  \frac{\alpha_{t_\ell}}{\alpha_{T_\ell-1}},
  \qquad
  \bt=(t_1,\dots,t_k),
\end{equation}
for every \(1\leq k\leq n\), where
\(0\leq t_\ell\leq T_\ell-1\) for \(\ell=1,\dots,k\). Then, for every
\(z\in\mathcal Z\), the output \(z_{\mathrm{out}}\) of
Algorithm~\ref{alg:sliding_recursive} satisfies
\begin{equation}\label{eq:gap_bound_inexact}
  p(z_{\mathrm{out}})-p(z)
  +\langle Q(z),z_{\mathrm{out}}-z\rangle
  \leq
  \sum_{i=1}^n
  \Bigg[
    \frac{2^{2i-1}L_i}{\prod_{j=1}^i T_j^2}
    \|z_{\mathrm{in}}-z\|_{\bP}^2
    +
    \frac{2^{i-1}M_i}{\prod_{j=1}^i T_j}
    \|z_{\mathrm{in}}-z\|_{\bP}^2
    +\delta_i\prod_{j=1}^i T_j
  \Bigg].
\end{equation}
\end{theorem}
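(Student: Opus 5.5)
The proof goes by induction on the number of levels \(n\). The engine is a one-level estimate for the accelerated mirror-prox step (Algorithm~\ref{alg:one_level}) run with an inexact oracle for \(p_1\), an exactly evaluated monotone operator, and a remainder problem solved only approximately. I would first isolate a single outer iteration \(t\) of Algorithm~\ref{alg:sliding_recursive} at level \(1\), written in the usual Nesterov/Chen--Lan--Ouyang three-sequence form (\(\underline z_t\), \(z_t\), \(\bar z_t\) with weights \(\alpha_t\)). In that iteration, \(p_1\) is linearized at \(\underline z_t\) through the \((\delta_1,L_1)\)-oracle, \(Q_1\) is evaluated at the extrapolated point, and the remaining sum \(\sum_{i\ge2}(p_i,Q_i)\) plus the prox term \(\tfrac{\eta^1_t}{2}\|\cdot - z_t\|_\bP^2\) is passed to the inner \((n-1)\)-level solver. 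The inner solver runs for \(T_2,\dots,T_n\) steps and returns an approximate solution of this regularized subproblem.

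The per-step inequality I want has the form
\[
\tfrac{1}{\alpha_t^2}\big[\Phi(\bar z_{t+1};z)\big]-\tfrac{1-\alpha_t}{\alpha_t^2}\big[\Phi(\bar z_t;z)\big]\le \tfrac{\eta^1_t}{2\alpha_t}\big(\|z_t-z\|_\bP^2-\|z_{t+1}-z\|_\bP^2\big)+\tfrac{\delta_1}{\alpha_t}+\tfrac{1}{\alpha_t}\varepsilon_t(z),
\]
where \(\Phi(w;z)=p(w)-p(z)+\langle Q(z),w-z\rangle\) and \(\varepsilon_t(z)\) is the inner-gap error.

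The key ingredients for this inequality are as follows.
\begin{itemize}
\item The upper part of Definition~\ref{def:inexact_oracle} is applied at \(\bar z_{t+1}=(1-\alpha_t)\bar z_t+\alpha_t z_{t+1}\). The resulting term \(\tfrac{L_1\alpha_t^2}{2}\|z_{t+1}-z_t\|^2\) is absorbed by the prox term because \(\eta^1_t\ge L_1\alpha_t\).
\item The lower part of the oracle, together with convexity of \(p\), handles the comparison point.
\item Monotonicity and \(M_1\)-Lipschitzness of \(Q_1\), in the extragradient manner, absorb the \(Q_1\) error using \(\eta^1_t\ge M_1\alpha_t/\alpha_{T_1-1}\).
\item The identity \((1-\alpha_{t+1})/\alpha_{t+1}^2=1/\alpha_t^2\) for the sequence \eqref{eq:alpha_sequence} makes the left side telescope.
\end{itemize}
Summing over \(t=0,\dots,T_1-1\) and using \(\alpha_{T_1-1}\ge 1/T_1\) (more precisely \(1/\alpha_{T-1}^2\ge T^2/4\)) gives the level-1 terms \(2L_1T_1^{-2}\|z_{\mathrm{in}}-z\|^2+M_1T_1^{-1}\|z_{\mathrm{in}}-z\|^2+\delta_1T_1\). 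The \(\delta_1\) term accumulates as \(\sum_t 1/\alpha_t\cdot \alpha_{T_1-1}^2\lesssim T_1\), which is the source of the error-accumulation factor.

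Next, the inner errors are handled by the induction hypothesis. The subproblem solved at outer step \(t\) is an \((n-1)\)-component VI with components \(p_i\) for \(i\ge2\), \(Q_i\) for \(i\ge2\), and the strongly convex prox function. After the natural rescaling by \(\alpha_t\), its \(\eta\)-parameters are exactly \eqref{eq:eta_inexact} with the first factor \(\alpha_{t_1}\) (resp. \(\alpha_{t_1}/\alpha_{T_1-1}\)) pulled out. This is why the product form of \(\eta_\bt^k\) is chosen. Applying the theorem for \(n-1\) with initial point \(z_t\) bounds \(\varepsilon_t(z)\) by \(\sum_{i\ge2}[2^{2i-3}L_i(\cdots)/\prod_{j=2}^iT_j^2+2^{i-2}M_i(\cdots)/\prod_{j=2}^iT_j+\delta_i\prod_{j=2}^iT_j]\). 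These bounds are weighted by \(\alpha_t\)-factors and measured in \(\|z_t-z\|^2\).

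Summing over \(t\) with the same telescoping weights produces the extra factors \(4/T_1^2\) and \(2/T_1\) and the \(T_1\) multiplier on \(\delta_i\). This reproduces the constants \(2^{2i-1}\) and \(2^{i-1}\) in \eqref{eq:gap_bound_inexact}.

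\textbf{Main obstacle.} The delicate step is that the inner error bound involves \(\|z_t-z\|^2\) rather than \(\|z_{\mathrm{in}}-z\|^2\). I would try to prove, as part of the inductive statement, a strengthened bound that keeps the negative term \(-\tfrac{\eta}{2}\|z_{\mathrm{out}}-z\|^2\)-type distance from the prox regularizer. With that term, the distance terms telescope across outer steps instead of accumulating, so the induction hypothesis must be formulated for the regularized subproblem with this extra strong-convexity slack. If that fails, I would fall back on bounding \(\|z_t-z\|\le\|z_{\mathrm{in}}-z\|\) plus error terms via the Fejér-type monotonicity that the one-step inequality also yields.
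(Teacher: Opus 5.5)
Your level-one step (linearize $p_1$ at $\alpha_t z_t+(1-\alpha_t)\bar z_t$, extragradient for $Q_1$, the identity for $\alpha_t$) is fine, but there is a genuine gap in how you aggregate the inner errors, and your diagnosis of it is off. The problem is not that the inner bound is measured in $\|z_t-z\|_{\bP}^2$ rather than $\|\zin-z\|_{\bP}^2$. The problem is that these terms do not telescope.

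After the level-one inequality is summed and multiplied by $\alpha_{T_1-1}^2$, the weight on a level-$i$ inner term at outer step $t$ is $\alpha_{T_1-1}^2L_i\prod_{j=2}^i\alpha_{T_j-1}^2+\alpha_{T_1-1}M_i\prod_{j=2}^i\alpha_{T_j-1}$. This weight is the same for every $t$, which is exactly what the product form of $\eta_{\bt}^k$ buys. Summing $T_1$ copies of this weight times $\|z_t-z\|_{\bP}^2$ therefore produces terms of order $L_i/(T_1\prod_{j\ge2}T_j^2)$ and $M_i/\prod_{j\ge2}T_j$. Both are a factor $T_1$ worse than \eqref{eq:gap_bound_inexact}. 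Your claim that the same weights produce the factors $4/T_1^2$ and $2/T_1$ holds only for differences of distances.

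Your two proposed remedies do not close this.
\begin{itemize}
\item The Fejér fallback fails. Even granting $\|z_t-z\|_{\bP}\le\|\zin-z\|_{\bP}$, you would still be summing $T_1$ non-telescoping copies. Moreover, that monotonicity holds only for a solution $z^*$, not for the arbitrary comparator $z\in\cZ$ the theorem requires, because $p(w)-p(z)+\langle Q(z),w-z\rangle$ can be negative.
\item Keeping the inner terminal term $-\|\cdot-z\|_{\bP}^2$ telescopes only if each inner run starts where the previous one ended. With the inner run restarted at $z_t$ this cannot happen: $z_{t+1}$ is the extragradient correction of the inner output $\tilde z_t$, which is an average of inner iterates, not the inner terminal prox center.
\end{itemize}
Separately, the $(n-1)$-level theorem cannot be invoked verbatim. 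The subproblem also contains the level-one linearization-plus-prox term, which is carried down and minimized exactly at the base level.

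The missing idea is the warm start in line~\ref{line:warm_start} of Algorithm~\ref{alg:sliding_recursive}. The inner prox centers are never reset to $z_t$; each level's sequence $z^\ell$ continues from its terminal value in the previous parent iteration. The paper exploits this through a backward induction over the levels $k=n,\dots,0$ of a single run (Lemma~\ref{lem:recursive_gap_bound}), proving $\Phi^k_{\bt,\hat z}(\tilde z^k_{\bt},\hat z)\le S^k_{\bt}+R^k_{\bt}$.
\begin{itemize}
\item $\Phi^k$ includes the accumulated prox weight $H^k_\Sigma$. This is the strong-convexity slack you propose, but it is not what makes things telescope.
\item $S^k_{\bt}$ contains, for every deeper level $i>k$, a difference $\|z^i_{\text{start}}-\hat z\|_{\bP}^2-\|z^i_{\text{end}}-\hat z\|_{\bP}^2$ with coefficient $A_i^{k,\bt}/2$.
\item Because $\frac{\alpha_{T_k-1}^2}{\alpha_{t_k}}A_i^{k,\bt}$ does not depend on $t_k$, the warm start makes these differences telescope over $t_k$. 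At the root they collapse to $\|\zin-z\|_{\bP}^2$ minus nonnegative terminal terms.
\end{itemize}

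A smaller slip: the per-step oracle error is $\delta_1/\alpha_t^2$, not $\delta_1/\alpha_t$. Since $\alpha_{T-1}^2\sum_{t<T}\alpha_t^{-1}=1$ exactly, your version would give a total of $\delta_1$. The correct accumulation is $\alpha_{T-1}^2\sum_{t<T}\alpha_t^{-2}\le 1/\alpha_{T-1}\le T$, which gives $\delta_1T_1$.
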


\subsection{The H\"older Case}\label{sec:VI_Holder}
We now specialize the inexact-oracle result to the case where the functions
\(p_i\) satisfy H\"older-type smoothness conditions. This allows us to use
exact values and exact (sub)gradients while choosing the oracle accuracy
parameters implicitly through the H\"older constants.
\begin{assumption}\label{ass:Holder_continuous_VI}
For each \(1\leq i\leq n\), the function \(p_i\) is convex and satisfies the
\((\nu_i,H_i)\)-H\"older condition on $\mathcal{Z}$ with respect to \(\|\cdot\|_{\bP}\).
\end{assumption}

For each \(1\leq i\leq n\) and \(\delta>0\), define
\begin{equation}\label{eq:Holder_L_of_delta}
  L_i(\delta)
  =
  \left[
    \frac{1-\nu_i}{2(1+\nu_i)}\cdot \frac{1}{\delta}
  \right]^{\frac{1-\nu_i}{1+\nu_i}}
  H_i^{\frac{2}{1+\nu_i}}.
\end{equation}
For \(\nu_i=1\), we use the convention \(L_i(\delta)=H_i\) for all
\(\delta\geq0\). As shown in Lemma~\ref{lem:Holder_to_inexact}, taking the
exact value \(p_i\) and an exact (sub)gradient \(p_i'\) as oracle outputs
gives a first-order \((\delta,L_i(\delta))\)-oracle for \(p_i\).

The next theorem is obtained by applying
Theorem~\ref{thm:VI_sliding_inexact} with a componentwise choice of the
oracle accuracy parameters. Its proof, together with
Lemma~\ref{lem:Holder_to_inexact}, is given in
Appendix~\ref{app:proof_VI_sliding_Holder}.

\begin{theorem}\label{thm:VI_sliding_Holder}
Let Assumptions~\ref{ass:Monotone_and_Lipschitz_Operators_VI}
and~\ref{ass:Holder_continuous_VI} hold, where \(M_i,H_i\geq0\) and
\(M_i+H_i>0\). Let \(T_1,\dots,T_n\) be positive integers and define
\[
  \Omega_{z_{\mathrm{in}}}
  :=
  \sup_{z\in\mathcal Z}
  \|z_{\mathrm{in}}-z\|_{\bP}^2
  <\infty,
  \qquad
  N_i:=\prod_{j=1}^i T_j.
\]
Run Algorithm~\ref{alg:sliding_recursive} with the sequence
\(\{\alpha_t\}\) in~\eqref{eq:alpha_sequence}, using exact values and exact
(sub)gradients in the induced oracle constructions. Choose
\begin{equation}\label{eq:Holder_eta}
  \eta_{\bt}^{k}
  =
  L_k(\delta_k^\circ)
  \prod_{\ell=1}^k \alpha_{t_\ell}
  +
  M_k
  \prod_{\ell=1}^k
  \frac{\alpha_{t_\ell}}{\alpha_{T_\ell-1}},
  \qquad
  \bt=(t_1,\dots,t_k).
\end{equation}
where, for \(0\leq\nu_i<1\),
\begin{equation}\label{eq:Holder_delta_star}
  \delta_i^\circ
  =
  \left[
    \frac{(1-\nu_i)\,2^{2i-1}K_i\,\Omega_{z_{\mathrm{in}}}}
    {(1+\nu_i)\,N_i^3}
  \right]^{(1+\nu_i)/2},
  \qquad
  K_i
  =
  \left[
    \frac{1-\nu_i}{2(1+\nu_i)}
  \right]^{\frac{1-\nu_i}{1+\nu_i}}
  H_i^{\frac{2}{1+\nu_i}}.
\end{equation}
For \(\nu_i=1\), set \(K_i=H_i\), \(\delta_i^\circ=0\), and
\(L_i(\delta_i^\circ)=H_i\). Then
\begin{equation}\label{eq:Holder_gap_bound}
  \sup_{z\in\mathcal Z}
  \left\{
    p(z_{\mathrm{out}})-p(z)
    +
    \langle Q(z),z_{\mathrm{out}}-z\rangle
  \right\}
  \leq
  \sum_{i=1}^n
  \Bigg[
    \frac{
      2^{i(1+\nu_i)}H_i
      \Omega_{z_{\mathrm{in}}}^{(1+\nu_i)/2}
    }{
      (1+\nu_i)
      N_i^{(1+3\nu_i)/2}
    }
    +
    \frac{2^{i-1}M_i
      \Omega_{z_{\mathrm{in}}}}{N_i}
  \Bigg].
\end{equation}
\end{theorem}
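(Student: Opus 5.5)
The plan is to reduce to Theorem~\ref{thm:VI_sliding_inexact}. First I will show that exact values and exact (sub)gradients give, for each component, a first-order $(\delta,L_i(\delta))$-oracle; this is Lemma~\ref{lem:Holder_to_inexact}. Then I will plug the componentwise choice $\delta_i=\delta_i^\circ$, $L_i=L_i(\delta_i^\circ)$ into \eqref{eq:gap_bound_inexact}, bound $\|z_{\mathrm{in}}-z\|_{\bP}^2\le\Omega_{z_{\mathrm{in}}}$, take the supremum over $z$, and simplify each summand. The $M_i$ terms pass through unchanged and give $2^{i-1}M_i\Omega_{z_{\mathrm{in}}}/N_i$. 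The step sizes \eqref{eq:Holder_eta} are exactly \eqref{eq:eta_inexact} with $L_k=L_k(\delta_k^\circ)$, so the theorem applies verbatim.

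\emph{Step 1 (the oracle lemma).} The lower bound $0\le p_i(u)-p_i(v)-\langle p_i'(v),u-v\rangle$ is convexity. For the upper bound, I write $r=\|u-v\|_{\bP}$.
\begin{itemize}
\item For $\nu_i>0$, I use the fundamental theorem of calculus along the segment. For $\nu_i=0$, I use instead the subgradient inequality at $u$: $p_i(u)-p_i(v)\le\langle p_i'(u),u-v\rangle$. In both cases Cauchy--Schwarz in the dual pair $\|\cdot\|_{\bP^{-1}},\|\cdot\|_{\bP}$ together with \eqref{eq:def_Holder_condition} gives
\[
p_i(u)-p_i(v)-\langle p_i'(v),u-v\rangle\le \tfrac{H_i}{1+\nu_i}\,r^{1+\nu_i}.
\]
\item Young's inequality with exponents $\tfrac{2}{1+\nu_i}$ and $\tfrac{2}{1-\nu_i}$ then yields $\tfrac{H_i}{1+\nu_i}r^{1+\nu_i}\le \tfrac{L}{2}r^2+\delta$ whenever $L\ge L_i(\delta)$. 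This is where the constant in \eqref{eq:Holder_L_of_delta} comes from.
\end{itemize}
I regard this as the main technical point. Care is needed to justify the segment argument for $\nu_i>0$, where Hölder continuity of subgradients forces differentiability on the relevant set, and to make the Young constants come out exactly as in \eqref{eq:Holder_L_of_delta}. For $\nu_i=1$, the standard descent lemma gives the $(0,H_i)$-oracle directly.

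\emph{Step 2 (balancing $\delta$).} Write $L_i(\delta)=K_i\delta^{-s}$ with $s=\tfrac{1-\nu_i}{1+\nu_i}$, and set $a=2^{2i-1}K_i\Omega_{z_{\mathrm{in}}}/N_i^2$. The $i$-th summand of \eqref{eq:gap_bound_inexact} without the $M_i$ term is then $\phi(\delta)=a\delta^{-s}+N_i\delta$. The first-order condition $sa\delta^{-s-1}=N_i$, with $s+1=\tfrac{2}{1+\nu_i}$, gives $\delta=(sa/N_i)^{(1+\nu_i)/2}$, which is precisely $\delta_i^\circ$ in \eqref{eq:Holder_delta_star}. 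At this point $a\delta^{-s}=N_i\delta/s$, so
\[
\phi(\delta_i^\circ)=\tfrac{2}{1-\nu_i}\,N_i\delta_i^\circ .
\]

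\emph{Step 3 (constants).} I will expand $N_i\delta_i^\circ$. The power of $N_i$ is $1-\tfrac{3(1+\nu_i)}{2}=-\tfrac{1+3\nu_i}{2}$. I will also use
\[
K_i^{(1+\nu_i)/2}=\Big[\tfrac{1-\nu_i}{2(1+\nu_i)}\Big]^{(1-\nu_i)/2}H_i .
\]
Collecting the remaining factors gives
\[
\tfrac{2}{1-\nu_i}\cdot\tfrac{1-\nu_i}{1+\nu_i}\cdot 2^{-(1-\nu_i)/2}\cdot 2^{(2i-1)(1+\nu_i)/2}=\frac{2^{i(1+\nu_i)}}{1+\nu_i},
\]
since the exponents of $2$ sum to $1-\tfrac{1-\nu_i}{2}+\tfrac{(2i-1)(1+\nu_i)}{2}=i(1+\nu_i)$. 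This gives the first term of \eqref{eq:Holder_gap_bound}. For $\nu_i=1$ we have $\delta_i^\circ=0$, and the summand $2^{2i-1}H_i\Omega_{z_{\mathrm{in}}}/N_i^2$ coincides with the general formula at $\nu_i=1$. Summing over $i$ completes the argument.
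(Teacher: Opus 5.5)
Your proposal is correct and follows the paper's proof essentially step for step: the H\"older-to-inexact-oracle lemma, the direct reduction to Theorem~\ref{thm:VI_sliding_inexact}, the balancing of $\delta_i$ at $D^2=\Omega_{z_{\mathrm{in}}}$ that yields $\frac{2}{1-\nu_i}N_i\delta_i^\circ$, and the same bookkeeping of the powers of two. Your use of Young's inequality in place of explicitly maximizing $\frac{H}{1+\nu}r^{1+\nu}-\frac{L}{2}r^2$, and of the subgradient inequality at $u$ for $\nu_i=0$ in place of the segment integral, are cosmetic variants that give identical constants.
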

The following corollary states the resulting componentwise evaluation complexity. The proof is given in
Appendix~\ref{app:proof_VI_Holder_complexity}. A discussion on comparing with existing results is given in Appendix~\ref{app:discussion_sliding}

\begin{corollary}\label{cor:VI_sliding_Holder_complexity}
Let the assumptions of Theorem~\ref{thm:VI_sliding_Holder} hold, and set
\(\Omega:=\Omega_{z_{\mathrm{in}}}\). For any \(\varepsilon>0\) and
\(i=1,\dots,n\), define
\[
  R_i:=\max\!\left\{
  \left(\frac{H_i\Omega^{(1+\nu_i)/2}}{\varepsilon}\right)^{
  \frac{2}{1+3\nu_i}},\frac{M_i\Omega}{\varepsilon},1\right\}.
\]
Relabel the component pairs \((p_i,Q_i)\) so that
\(
  R_1\leq R_2\leq\cdots\leq R_n
\).
Then there exists a constant \(C_n>0\), depending only on \(n\), such that
performing no more than \(C_nR_i\) evaluations of the subgradient \(p_i'\)
and the operator \(Q_i\), for each \(1\leq i\leq n\), is sufficient to ensure
\[
  \sup_{z\in\cZ}
  \left\{
    p(z_{\mathrm{out}})-p(z)
    +
    \langle Q(z),z_{\mathrm{out}}-z\rangle
  \right\}
  \leq
  n\varepsilon .
\]
\end{corollary}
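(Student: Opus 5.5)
The plan is to apply Theorem~\ref{thm:VI_sliding_Holder} directly, with the loop lengths chosen so that $N_i=\prod_{j\le i}T_j$ is a constant multiple of $R_i$. After relabeling so that $R_1\le\cdots\le R_n$, I would set
\[
  T_1:=\lceil c_nR_1\rceil,\qquad T_i:=\left\lceil \frac{c_n R_i}{N_{i-1}}\right\rceil \quad (i\ge2),
\]
with a constant $c_n\ge1$ depending only on $n$, chosen later. The monotone ordering is what makes this work. By induction, $c_nR_i\le N_i$. In the other direction, $N_i\le c_nR_i+N_{i-1}\le 2N_{i-1}+\dots$. More carefully, since $\lceil x\rceil\le x+1$, we get $N_i\le c_nR_i+N_{i-1}$. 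Combined with $N_{i-1}\le C'c_nR_{i-1}\le C'c_nR_i$, this gives $N_i\le (1+C')c_nR_i$. Unrolling from $N_1\le c_nR_1+1\le 2c_nR_1$ (using $R_1\ge1$) yields $N_i\le (i+1)c_nR_i$. The requirement $R_i\ge1$ is exactly what lets the ceilings be absorbed.

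Next comes the oracle count. In Algorithm~\ref{alg:sliding_recursive}, component $i$ is queried at level $i$. A level-$i$ step is executed once per tuple $(t_1,\dots,t_i)$, so there are $N_i$ such steps, and each step involves a constant number of evaluations of $p_i'$ and $Q_i$. The mirror-prox/extragradient structure uses two operator calls per step; I would confirm this constant from the explicit algorithm in Appendix~\ref{app:explicit_sliding_algorithms}. The total number of evaluations of component $i$ is therefore at most $c\,N_i\le c(i+1)c_nR_i=:C_nR_i$.

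The remaining step is the accuracy check, and it needs each summand of \eqref{eq:Holder_gap_bound} to be at most $\varepsilon$. For the operator term, $N_i\ge c_nR_i\ge c_nM_i\Omega/\varepsilon$ gives $2^{i-1}M_i\Omega/N_i\le 2^{i-1}\varepsilon/c_n$. For the H\"older term, $N_i\ge c_n\bigl(H_i\Omega^{(1+\nu_i)/2}/\varepsilon\bigr)^{2/(1+3\nu_i)}$. Raising this to the power $(1+3\nu_i)/2\ge 1/2$ gives
\[
  \frac{H_i\Omega^{(1+\nu_i)/2}}{N_i^{(1+3\nu_i)/2}}\le \frac{\varepsilon}{c_n^{(1+3\nu_i)/2}}\le\frac{\varepsilon}{\sqrt{c_n}} .
\]
With the prefactor $2^{i(1+\nu_i)}/(1+\nu_i)\le 4^n$, the $i$-th H\"older term is at most $4^n\varepsilon/\sqrt{c_n}$. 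Choosing $c_n=4^{2n+1}$ makes the two terms sum to at most $\varepsilon$. Summing over $i$ then gives the bound $n\varepsilon$.

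The main obstacle I expect is bookkeeping rather than analysis. One issue is the degenerate cases $H_i=0$ or $M_i=0$, which are handled by the $\max$ with $1$ and the assumption $M_i+H_i>0$. The other is that the exponent $(1+3\nu_i)/2$ may be less than $1$ when $\nu_i<1/3$, so the factor $c_n$ is only partially gained. This is why I use the weaker power $\sqrt{c_n}$ uniformly and pick $c_n$ large enough to compensate.
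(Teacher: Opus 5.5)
Your proposal is correct and follows essentially the paper's route. You choose nested ceilings $T_i$ so that $c_nR_i\le N_i\le C\,c_nR_i$, using the ordering $R_1\le\cdots\le R_n$ and $R_i\ge 1$. You then substitute into Theorem~\ref{thm:VI_sliding_Holder} so that each component's two terms are at most $\varepsilon$, with at most two oracle calls per level-$i$ step.

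The differences are only bookkeeping. The paper inflates the thresholds to $\widetilde R_i\le 2^{2n+2}R_i$ and gets the sharper $N_i\le 2\kappa_nR_i$ by a case split. You instead use $c_n^{(1+3\nu_i)/2}\ge\sqrt{c_n}$ and the cruder $N_i\le (i+1)c_nR_i$, which is equally valid.
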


\section{Bilinear Saddle-Point Problem}\label{sec:bilinear_SPP}

We impose the following assumptions on
Problem~\eqref{prob:bilinear_SPP_main}.

\begin{assumption}\label{ass:properties_of_f}
  The function \(f \colon \R^{d_x} \to \R\) is convex and finite. Its
  restriction to \(\cX\) is \(\mu_x\)-strongly convex with \(\mu_x \geq 0\)
  and satisfies the \((\nu_x, H_x)\)-H\"older condition with respect to the
  Euclidean norm, with \(0 \leq \nu_x \leq 1\) and \(H_x > 0\).
\end{assumption}

\begin{assumption}\label{ass:properties_of_g}
  The function \(g \colon \R^{d_y} \to \R\) is convex and finite. Its
  restriction to \(\cY\) is \(\mu_y\)-strongly convex with \(\mu_y \geq 0\)
  and satisfies the \((\nu_y, H_y)\)-H\"older condition with respect to the
  Euclidean norm, with \(0 \leq \nu_y \leq 1\) and \(H_y > 0\).
\end{assumption}

\begin{assumption}\label{ass:properties_of_B}
  There exist constants \(L_{xy}>0\) and \(\mu_{xy},\mu_{yx}\geq0\), with
  \(L_{xy}>\max\{\mu_{xy},\mu_{yx}\}\). Let
  \[
    \begin{aligned}
      \mathsf R_x &:\
      \partial f(x)\subseteq\range(\bB^\top)
      \quad\forall x\in\cX,\\
      \mathsf R_y &:\
      \partial g(y)\subseteq\range(\bB)
      \quad\forall y\in\cY.
    \end{aligned}
  \]
  Then
  \[
    \begin{aligned}
      \mu_{xy}^2
      &\leq
      \begin{cases}
        \lminp(\bB^\top\bB),
        & \text{if } \mathsf R_x \text{ holds},\\
        \lmin(\bB^\top\bB),
        & \text{otherwise},
      \end{cases}
      \\[0.5ex]
      \mu_{yx}^2
      &\leq
      \begin{cases}
        \lminp(\bB\bB^\top),
        & \text{if } \mathsf R_y \text{ holds},\\
        \lmin(\bB\bB^\top),
        & \text{otherwise},
      \end{cases}
      \\[0.5ex]
      L_{xy}^2
      &\geq
      \lambda_{\max}(\bB^\top\bB)
      =
      \lambda_{\max}(\bB\bB^\top).
    \end{aligned}
  \]
  Additionally, if \(\mu_{xy}>0\) and \(\mu_{yx}>0\), then
  \(\mu_{xy}=\mu_{yx}\). Finally, if \(\mathsf R_x\) is used, we require
  \(\cX+\ker\bB=\cX\), and if \(\mathsf R_y\) is used,
  \(\cY+\ker\bB^\top=\cY\).
\end{assumption}

The last requirement is vacuous when the kernel is trivial, e.g.\ for
\(\cX=\R^{d_x}\); it keeps the iterates in the subspace on which \(\lminp\)
lower-bounds \(\bB\) (Appendix~\ref{app:subspace}).

\paragraph{Ambient-space functions and subgradients.}
In Assumptions~\ref{ass:properties_of_f}--\ref{ass:properties_of_B},
\(\partial f(x)\) and \(\partial g(y)\) denote subdifferentials in the ambient
spaces \(\R^{d_x}\) and \(\R^{d_y}\), respectively. If they were instead
defined relative to \(\cX\) and \(\cY\), their values at boundary points would
include contributions from the normal cones \(N_{\cX}(x)\) and
\(N_{\cY}(y)\). The range
conditions \(\mathsf R_x\) and \(\mathsf R_y\) would then unintentionally
restrict these normal cones. The strong-convexity and H\"older conditions are
imposed only on \(\cX\) and \(\cY\). The feasible sets enter the algorithm
through projections and the optimality conditions~\eqref{eq:opt_cond} through
their normal cones.

\paragraph{Ambient smoothness for active anchor terms.}
Whenever a result below uses the coupling-induced regularizer \(p_3\), we make
the following additional assumption for each of its active anchor terms. If
\(\nu_x=1\), then \(f\) is differentiable on \(\R^{d_x}\) and
\(\nabla f\) is \(H_x\)-Lipschitz on \(\R^{d_x}\); if \(\nu_y=1\), then
\(g\) is differentiable on \(\R^{d_y}\) and \(\nabla g\) is
\(H_y\)-Lipschitz on \(\R^{d_y}\). Thus the same constants used in the
decomposition are valid in the ambient spaces.

For input \(z_{\mathrm{in}}=(x_{\mathrm{in}},y_{\mathrm{in}})\), choose
oracle tolerances \(\delta_f,\delta_g\), with \(\delta_f>0\) if
\(\nu_x<1\) and \(\delta_g>0\) if \(\nu_y<1\). We define
\begin{align}
  L_x
  &:=
  \begin{cases}
    \left[
      \dfrac{1-\nu_x}{2(1+\nu_x)\delta_f}
    \right]^{\frac{1-\nu_x}{1+\nu_x}}
    H_x^{\frac{2}{1+\nu_x}},
    & 0\leq\nu_x<1,\\[1.2ex]
    H_x,
    & \nu_x=1,
  \end{cases}
  \\
  L_y
  &:=
  \begin{cases}
    \left[
      \dfrac{1-\nu_y}{2(1+\nu_y)\delta_g}
    \right]^{\frac{1-\nu_y}{1+\nu_y}}
    H_y^{\frac{2}{1+\nu_y}},
    & 0\leq\nu_y<1,\\[1.2ex]
    H_y,
    & \nu_y=1.
  \end{cases}
\end{align}

When \(\nu_x=1\), the ambient-smoothness convention makes \(H_x\) an
ambient-space Lipschitz constant of \(\nabla f\) whenever its anchor term is
active, and we write \(L_x := H_x\). Similarly, when \(\nu_y=1\), we write
\(L_y := H_y\).

For convex \(h\) with an ambient \(L_h\)-Lipschitz gradient, the standard
self-bounding inequality gives, for all \(u,v\),
\begin{equation}\label{eq:ambient_self_bounding}
  \|\nabla h(u)-\nabla h(v)\|^2
  \leq
  2L_h\bigl(
    h(u)-h(v)
    -\langle\nabla h(v),u-v\rangle
  \bigr).
\end{equation}
Smoothness restricted to an arbitrary closed convex set is insufficient for
\eqref{eq:ambient_self_bounding}.

We define
\begin{equation}\label{eq:SPP_beta_xy_holder}
  \beta_x:=\frac{\mathbf{1}_{\{\nu_y=1\}}}{4L_y},
  \qquad
  \beta_y:=\frac{\mathbf{1}_{\{\nu_x=1\}}}{4L_x}.
\end{equation}

We use the following effective strong convexity parameters:
\begin{equation}\label{eq:effective_strong_convexity_holder}
  \delta_x := \mu_x + 4\beta_x\mu_{xy}^2,
  \qquad
  \delta_y := \mu_y + 4\beta_y\mu_{yx}^2 .
\end{equation}
Assuming that \(\min\{\delta_x,\delta_y\}>0\), we define
\begin{align}
  \kappa_{x} &= \frac{L_{x}}{\delta_{x}}, \qquad
  \kappa_{y} = \frac{L_{y}}{\delta_{y}}, \qquad
  \kappa_{xy} = \frac{L_{xy}^2}{\delta_{x} \delta_{y}}, \label{eq:kappa_s}
\end{align}
and
\begin{align}
  \tilde{H}_{x} &= H_x \delta_{x}^{-(1+\nu_x)/2}, \qquad
  \tilde{H}_{y} = H_y \delta_{y}^{-(1+\nu_y)/2}. \label{eq:H_s}
\end{align}

In the smooth case, \citet{borodich2025linear} obtain separated complexities
\(\widetilde{\mathcal O}(\sqrt{\kappa_x})\),
\(\widetilde{\mathcal O}(\sqrt{\kappa_y})\), and
\(\widetilde{\mathcal O}(\sqrt{\kappa_{xy}})\) for \(\nabla f\),
\(\nabla g\), and products with \(\bB,\bB^\top\), respectively, where
\(\widetilde{\mathcal O}\) suppresses logarithms in \(1/\epsilon\) and the
problem parameters. We extend these guarantees to H\"older-continuous
\(f\) and \(g\).

We set \(n=3\) and
\begin{align}
  p_1(x, y) &= f(x), \quad p_2(x, y) = g(y), \nonumber\\
  p_3(x,y)&=\tfrac{\beta_x}{2}\|\bB x-g'(\yin)\|^2
  +\tfrac{\beta_y}{2}\|\bB^\top y+f'(\xin)\|^2.
  \label{eq:SPP_decomposition}
\end{align}

and the operators
\begin{equation}\label{eq:SPP_operator}
  \begin{aligned}
  Q_1(x,y)&=0,\qquad Q_2(x,y)=0,\\
  Q_3(x,y)&=
    \pmat{\bO_{d_x}&\bB^\top\\-\bB&\bO_{d_y}}
    \pmat{x\\y}.
  \end{aligned}
\end{equation}

We define the norm matrix \(\bP \in \bbS_{++}^{d_z}\) as
\begin{equation}
  \bP = \diag (\delta_{x} \bI_{d_x}, \delta_{y} \bI_{d_y}).
\end{equation}

Algorithm~\ref{alg:three_level_bilinear_spp} in
Appendix~\ref{app:explicit_sliding_algorithms} specializes
Algorithm~\ref{alg:sliding_recursive} to Problem~\eqref{prob:bilinear_SPP_main}.
Since level \(i\) is evaluated \(\prod_{j\leq i}T_j\) times and the additive
decomposition is permutation invariant, we place the cheapest component
outermost.

Let \(\cS\neq\varnothing\) be the solution set of
Problem~\eqref{prob:bilinear_SPP_main}. By
Lemma~\ref{lem:solution_set_structure} we may fix \(z^*=(x^*,y^*)\in\cS\) whose
\(x\)-block satisfies \(\xin-x^*\in\range(\bB^\top)\) whenever \(\mathsf R_x\)
is invoked, and whose \(y\)-block satisfies \(\yin-y^*\in\range(\bB)\) whenever
\(\mathsf R_y\) is invoked; on a block whose range condition is not invoked no
restriction is placed on \(z^*\), and
Assumption~\ref{ass:properties_of_B} supplies the unrestricted bound
\(\lmin\) there. By first-order optimality
there are \(f'(x^*)\in\partial f(x^*)\), \(g'(y^*)\in\partial g(y^*)\) and
normal vectors \(\xi\in N_{\cX}(x^*)\), \(\zeta\in N_{\cY}(y^*)\) with
\begin{equation}\label{eq:opt_cond}
  f'(x^*)+\bB^\top y^*+\xi=0,
  \qquad
  g'(y^*)-\bB x^*+\zeta=0;
\end{equation}
we fix such a choice. Define the Bregman divergences
\begin{equation}\label{eq:def_D_f_g}
  \begin{aligned}
  \D_f(x,x^*)
  &=
  f(x)-f(x^*)
  -\langle f'(x^*),x-x^*\rangle,\\
  \D_g(y,y^*)
  &=
  g(y)-g(y^*)
  -\langle g'(y^*),y-y^*\rangle.
  \end{aligned}
\end{equation}
We denote by \(N_f\), \(N_g\), and \(N_B\) the numbers of evaluations of
\(f'\), of \(g'\), and of matrix-vector products with \(\bB\) or \(\bB^\top\).
The next theorem gives a one-step contraction for a single call of
Algorithm~\ref{alg:sliding_recursive} initialized at \(z_{\mathrm{in}}\); the
proof is in Appendix~\ref{app:proof_bilinear_SPP_Holder}.

\begin{theorem}\label{thm:bilinear_SPP_Holder}
  Let Assumptions~\ref{ass:properties_of_f}--\ref{ass:properties_of_B}
  hold, together with the ambient-smoothness condition above for every active
  smooth anchor term. Thus~\eqref{eq:ambient_self_bounding} is available
  whenever it is used below. Assume \(\min\{\delta_x,\delta_y\}>0\) and
  $
    \beta_x\delta_y \leq \frac14,
    \beta_y\delta_x \leq \frac14
  $. Assume further that the normal vectors in the optimality
  conditions~\eqref{eq:opt_cond} satisfy \(\zeta=0\) if \(\beta_x>0\) and
  \(\xi=0\) if \(\beta_y>0\). This condition is automatic at interior saddle
  points and vacuous when \(\nu_x,\nu_y<1\)
  (Remark~\ref{rem:exact_optimality}). Let \(\Omega_{\mathrm{in}}\) be any
  upper bound satisfying
  $
    \Psi(\zin)\leq \Omega_{\mathrm{in}}$,  where
  \begin{equation}\label{eq:Psi_s}
    \Psi(z)
    :=
    \|z-z^*\|_{\bP}^2
    +
    12\D_f(x,x^*)
    +
    12\D_g(y,y^*).
  \end{equation}
  Associate with \(p_1,p_2,p_3\) the constants and componentwise budgets
  \begin{equation}\label{eq:holder_spp_oracle_params}
    \begin{aligned}
    &L_1=\kappa_x,\quad L_2=\kappa_y,\quad L_3=\kappa_{xy},
    \qquad M_1=M_2=0,\quad M_3=\sqrt{\kappa_{xy}},\\
    &R_1=\max\{\widehat R_1,1\},\quad
    R_2=\max\{\widehat R_2,1\},\quad
    R_3=\max\{\sqrt{\kappa_{xy}},1\}.
    \end{aligned}
  \end{equation}
  where
  \(
    \widehat R_1
    :=
    \tilde H_x^{\frac{2}{1+3\nu_x}}
    \Omega_{\mathrm{in}}^{\frac{\nu_x-1}{1+3\nu_x}}
  \)
  and
  \(
    \widehat R_2
    :=
    \tilde H_y^{\frac{2}{1+3\nu_y}}
    \Omega_{\mathrm{in}}^{\frac{\nu_y-1}{1+3\nu_y}}
  \).
  Choose \(\delta_f,\delta_g\) as in the proof and apply
  Algorithm~\ref{alg:sliding_recursive} to
  \eqref{eq:SPP_decomposition}--\eqref{eq:SPP_operator} with the components
  sorted along the levels, i.e.\ with \(\big(p_{\pi(i)},Q_{\pi(i)}\big)\) at
  level \(i\) for a permutation \(\pi\) of \(\{1,2,3\}\) satisfying
  \(R_{\pi(1)}\leq R_{\pi(2)}\leq R_{\pi(3)}\).
  Then the output \(z_{\mathrm{out}}\) satisfies
\(
  \Psi(\zout)
  \leq
  \frac34\Omega_{\mathrm{in}}
\),
with
\(
  N_f
  =
  \mathcal O\left(R_1\right)
\),
\(
  N_g
  =
  \mathcal O\left(R_2\right)
\),
and
\(
  N_B
  =
  \mathcal O\left(R_3\right)
\).

\end{theorem}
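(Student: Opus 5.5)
We reduce the statement to Theorem~\ref{thm:VI_sliding_inexact}, applied to the decomposition \eqref{eq:SPP_decomposition}--\eqref{eq:SPP_operator} in the norm \(\|\cdot\|_{\bP}\), and then convert the resulting weak gap at the single point \(z=z^*\) into a contraction of the potential \(\Psi\).

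\textbf{Oracle constants.} First we verify the constants of Assumptions~\ref{ass:Monotone_and_Lipschitz_Operators_VI}--\ref{ass:inexact_oracle_functions_VI} in the \(\bP\)-norm. By Lemma~\ref{lem:Holder_to_inexact} in the Euclidean norm, \(f\) has a \((\delta_f,L_x)\)-oracle. Since \(\|x\|^2=\delta_x^{-1}\|x\|_{\bP}^2\) on the \(x\)-block, this becomes a \((\delta_f,\kappa_x)\)-oracle in \(\|\cdot\|_{\bP}\); similarly \(g\) gives \((\delta_g,\kappa_y)\).

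The quadratic \(p_3\) is exactly smooth with \(\delta_3=0\). Its Hessian is \(\diag(\beta_x\bB^\top\bB,\beta_y\bB\bB^\top)\), so its \(\bP\)-smoothness constant is \(\max\{\beta_x L_{xy}^2/\delta_x,\ \beta_y L_{xy}^2/\delta_y\}\). Using \(\beta_x\delta_y\le\tfrac14\) and \(\beta_y\delta_x\le\tfrac14\), this is at most \(\kappa_{xy}\).

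The skew operator \(Q_3\) is monotone. Its \(\bP\)-Lipschitz constant is \(\|\bP^{-1/2}\bB\bP^{-1/2}\|\)-type, namely \(L_{xy}/\sqrt{\delta_x\delta_y}=\sqrt{\kappa_{xy}}\).

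\textbf{Gap and budgets.} Take \(T_j\) so that \(N_i=\prod_{j\le i}T_j\asymp R_{\pi(i)}\), i.e.\ \(T_1=R_{\pi(1)}\) and \(T_i\approx R_{\pi(i)}/R_{\pi(i-1)}\), with constants large enough. The sorting of the \(R\)'s guarantees \(T_i\ge1\), and Corollary~\ref{cor:VI_sliding_Holder_complexity}-type bookkeeping gives \(N_f,N_g,N_B=\mathcal O(R_\cdot)\).

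For the tolerances, choose \(\delta_f\) balancing \(L_x\) against \(\delta_f\) as in \eqref{eq:Holder_delta_star}, with \(\Omega_{\mathrm{in}}\) in place of \(\Omega_{z_{\mathrm{in}}}\). Then the \(f\)-terms in \eqref{eq:gap_bound_inexact}, evaluated at \(z=z^*\), are bounded by \(c\,\tilde H_x\Omega_{\mathrm{in}}^{(1+\nu_x)/2}N^{-(1+3\nu_x)/2}\). With \(N\ge C\widehat R_1\) this is \(\le c'\Omega_{\mathrm{in}}\), and the constant \(c'\) is made small by the choice of \(C\). Note that only \(\|z_{\mathrm{in}}-z^*\|_{\bP}^2\le\Psi(z_{\mathrm{in}})\le\Omega_{\mathrm{in}}\) is needed, not a diameter, since we use the gap only at \(z=z^*\). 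The same holds for \(g\), and for \(p_3,Q_3\) through \(\kappa_{xy}/N^2+\sqrt{\kappa_{xy}}/N\).

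Thus \(p(\zout)-p(z^*)+\langle Q(z^*),\zout-z^*\rangle\le\epsilon_0\Omega_{\mathrm{in}}\) for a small absolute constant \(\epsilon_0\).

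\textbf{Lower bound on the gap (main obstacle).} The key step is to show that the left-hand side is at least \(c\,\Psi(\zout)-c''\,\Psi(\zin)\), with \(c''\) small.

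Using the optimality conditions \eqref{eq:opt_cond}, the terms \(f(\xout)-f(x^*)+\langle \bB^\top y^*,\xout-x^*\rangle\) become \(\D_f(\xout,x^*)-\langle\xi,\xout-x^*\rangle\). The normal-cone term is \(\ge0\) since \(\xout\in\cX\); the \(g\) terms are handled the same way. Strong convexity then gives \(\D_f\ge\frac{\mu_x}{2}\|\xout-x^*\|^2\).

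The curvature \(\mu_{xy}\) must come from \(p_3(\zout)-p_3(z^*)+\langle\nabla p_3(z^*),\ldots\rangle\). By the condition \(\zeta=0\), we have \(g'(y^*)=\bB x^*\), so
\[
\bB x-g'(\yin)=\bB(x-x^*)-(g'(\yin)-g'(y^*)).
\]
Then \(\tfrac{\beta_x}{2}\|\bB(\xout-x^*)\|^2\) minus error terms controlled by Young's inequality yields \(\beta_x\mu_{xy}^2\|\xout-x^*\|^2\). The range condition and the subspace argument of Appendix~\ref{app:subspace} ensure that \(\xout-x^*\in\range(\bB^\top)\), so that \(\lminp\) applies. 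The resulting errors are \(\beta_x\|g'(\yin)-g'(y^*)\|^2\), and these are bounded via \eqref{eq:ambient_self_bounding} by \(2L_y\beta_x\D_g(\yin,y^*)=\tfrac12\D_g(\yin,y^*)\).

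This is why \(\Psi\) carries the weights \(12\D_f,12\D_g\). The Bregman terms at \(\zin\) are absorbed by \(\Psi(\zin)\le\Omega_{\mathrm{in}}\), while those at \(\zout\) contribute to the left side.

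Carefully tracking these constants so that \(\Psi(\zout)\le\frac34\Omega_{\mathrm{in}}\) is the delicate part. Here I would first try splitting half of \(\D_f(\xout)\) to produce the strong-convexity term and using the other half to dominate \(12\D_f\) after rescaling, with \(\epsilon_0\) chosen last.
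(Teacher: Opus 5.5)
Up to the last step your argument is the paper's proof. The two share the same $\bP$-norm constants $(\kappa_x,\kappa_y,\kappa_{xy},\sqrt{\kappa_{xy}})$ and apply Theorem~\ref{thm:VI_sliding_inexact} only at the comparator $z^*$, using $\|\zin-z^*\|_{\bP}^2\le\Omega_{\mathrm{in}}$. They use the same normal-cone argument. They also use the same estimate $\frac12\|a+b\|^2-\frac12\|b\|^2\ge\frac14\|a\|^2-\|b\|^2$, combined with \eqref{eq:ambient_self_bounding} to turn the anchor errors into $\frac12\bigl(\D_f(\xin,x^*)+\D_g(\yin,y^*)\bigr)$. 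Choosing $\delta_f$ via \eqref{eq:Holder_delta_star} with $\tilde H_x$ and $\Omega_{\mathrm{in}}$, instead of the paper's $\delta_f=\rho\Omega_{\mathrm{in}}/\bar N_f$, is an inessential variant. The gap is the step you leave open, which is exactly the conclusion $\Psi(\zout)\le\frac34\Omega_{\mathrm{in}}$. The split you propose for it does not work.

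Write $D_o:=\D_f(\xout,x^*)+\D_g(\yout,y^*)$, $D_i:=\D_f(\xin,x^*)+\D_g(\yin,y^*)$, and $\mathcal E$ for the weak gap at $z^*$. With your Young split, the coupling curvature is $\frac{\beta_x\mu_{xy}^2}{4}\|\xout-x^*\|^2$. That is only $\frac1{16}$ of the $4\beta_x\mu_{xy}^2$ part of $\delta_x$, so the uniform coefficient of $\|\zout-z^*\|_{\bP}^2$ is $\frac1{16}$. Spending half of $D_o$ on strong convexity gives $\mathcal E\ge\frac12D_o+\frac1{16}\|\zout-z^*\|_{\bP}^2-\frac12D_i$, i.e. $\|\zout-z^*\|_{\bP}^2+8D_o\le16\mathcal E+8D_i$. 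Here the Bregman part has the same weight on both sides, so this only yields $\Psi(\zout)\le\Psi(\zin)+24\mathcal E$: there is no contraction, and the weight $12$ of $\Psi$ is not even produced on the left. The fraction of $D_o$ you keep must strictly exceed the coefficient $\frac12$ of $D_i$, and with the $\frac1{16}$ normalization it must be at least $\frac{12}{16}$.

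The fix is to spend only a quarter: $\frac14\D_f(\xout,x^*)\ge\frac{\mu_x}{8}\|\xout-x^*\|^2$, and likewise for $g$. The bound $\frac{\mu_x}{8}+\frac{\beta_x\mu_{xy}^2}{4}\ge\frac{\delta_x}{16}$ still holds, so
\[
  \mathcal E\ge\tfrac34D_o+\tfrac1{16}\|\zout-z^*\|_{\bP}^2-\tfrac12D_i .
\]
Multiply by $16$ and use $8D_i\le\frac23\Psi(\zin)\le\frac23\Omega_{\mathrm{in}}$ to get $\Psi(\zout)\le16\mathcal E+\frac23\Omega_{\mathrm{in}}$. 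Your upper bound $\mathcal E\le\epsilon_0\Omega_{\mathrm{in}}$ with $\epsilon_0\le\frac1{192}$ then gives $\Psi(\zout)\le\frac34\Omega_{\mathrm{in}}$. This $\frac34/\frac14$ allocation is where the weight $12=16\cdot\frac34$ in $\Psi$ and the factor $8/12=\frac23$ (before error terms) come from.
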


We apply a standard restarting scheme. This yields the following
global complexity bound. The proof is given in
Appendix~\ref{app:proof_bilinear_SPP_restart}.

\begin{corollary}\label{cor:bilinear_SPP_restart}
Let the assumptions of Theorem~\ref{thm:bilinear_SPP_Holder} hold. Given
\(z^0\in\cZ\) and \(\varepsilon\in(0,\Psi(z^0)]\), run the restarted
Sliding Algorithm with
\(
  \Omega_s:=\left(3/4\right)^s\Psi(z^0)
\)
for
\(
  S:=
  \left\lceil
    \frac{\log(\Psi(z^0)/\varepsilon)}{\log(4/3)}
  \right\rceil
\)
restarts. Then the output \(z^S\) satisfies \(\Psi(z^S)\leq\varepsilon\),
and the total evaluation complexity is given in
Table~\ref{tab:bilinear_SPP_restart_complexity}.
\end{corollary}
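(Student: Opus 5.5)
The plan is induction over the restarts, with Theorem~\ref{thm:bilinear_SPP_Holder} applied once per stage and the per-stage oracle counts summed as geometric series.

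\textbf{Step 1 (error contraction).} Define the stages recursively: stage \(s\) calls Algorithm~\ref{alg:sliding_recursive} with \(\zin=z^{s}\) and upper bound \(\Omega_{\mathrm{in}}=\Omega_s=(3/4)^s\Psi(z^0)\), and returns \(z^{s+1}=\zout\). The induction hypothesis is \(\Psi(z^s)\le\Omega_s\); it holds trivially at \(s=0\).

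Two things must be checked before invoking Theorem~\ref{thm:bilinear_SPP_Holder} at stage \(s\).
\begin{itemize}
\item The standing assumptions do not depend on \(\zin\), except for the choice of \(z^*\) in Lemma~\ref{lem:solution_set_structure}. This choice requires \(\xin-x^*\in\range(\bB^\top)\) and \(\yin-y^*\in\range(\bB)\) when the range conditions are invoked.
\item To keep a single reference point across restarts, I will argue that the iterates stay in the affine set \(z^0+\range(\bB^\top)\times\range(\bB)\) on the relevant blocks. This uses the invariance \(\cX+\ker\bB=\cX\) (Appendix~\ref{app:subspace}) together with the fact that all updates move along \(\range(\bB^\top)\), which contains \(\partial f\). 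Consequently \(z^*\) can be fixed once, from \(z^0\).
\end{itemize}
I expect this reference-point consistency to be the only delicate point. The fallback is to note that \(\Psi\) is defined with whatever admissible \(z^*\) the lemma assigns, and that projecting \(x^*\) along \(\ker\bB\) leaves \(\Psi\)'s control intact.

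With these checks in place, the theorem gives \(\Psi(z^{s+1})\le\tfrac34\Omega_s=\Omega_{s+1}\). After \(S\) stages, \(\Psi(z^S)\le(3/4)^S\Psi(z^0)\le\varepsilon\) by the choice of \(S\).

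\textbf{Step 2 (per-stage cost).} Stage \(s\) uses \(N_f^{(s)}=\mathcal O(\max\{\widehat R_1(\Omega_s),1\})\) evaluations of \(f'\), and similarly for \(g'\). It uses \(N_B^{(s)}=\mathcal O(\max\{\sqrt{\kappa_{xy}},1\})\) matrix-vector products, which does not depend on \(s\). Summing over stages:
\begin{itemize}
\item \(N_B=\mathcal O(S\max\{\sqrt{\kappa_{xy}},1\})=\mathcal O\big(\max\{\sqrt{\kappa_{xy}},1\}\log(\Psi(z^0)/\varepsilon)\big)\).
\item For \(f\), write \(\widehat R_1(\Omega_s)=\tilde H_x^{2/(1+3\nu_x)}\Omega_s^{-\gamma_x}\) with \(\gamma_x=(1-\nu_x)/(1+3\nu_x)\ge0\).
\end{itemize}
The \(f\)-count splits into two cases.
\begin{itemize}
\item If \(\nu_x=1\), then \(\gamma_x=0\), and the sum is \(\mathcal O(\max\{\sqrt{\kappa_x}\text{-type term},1\}\cdot S)\), a logarithmic factor. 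Here \(\tilde H_x^{1/2}=\sqrt{H_x/\delta_x}=\sqrt{\kappa_x}\).
\item If \(\nu_x<1\), then \(\Omega_s^{-\gamma_x}=\Psi(z^0)^{-\gamma_x}(4/3)^{s\gamma_x}\) grows geometrically. The sum is then dominated by its last term, up to the constant \(1/(1-(3/4)^{\gamma_x})\), which depends only on \(\nu_x\). This gives \(\mathcal O\big(\tilde H_x^{2/(1+3\nu_x)}\varepsilon^{-\gamma_x}+S\big)\), since \(\Omega_{S}\asymp\varepsilon\) by the choice of \(S\) (\(\Omega_{S-1}\ge\varepsilon\), \(\Omega_S\ge \tfrac34\varepsilon\)).
\end{itemize}
The same computation applies to \(g\) with \(\nu_y,\tilde H_y\).

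\textbf{Step 3.} Collect these three bounds into the entries of Table~\ref{tab:bilinear_SPP_restart_complexity}. The additive \(S\) accounts for the \(\max\{\cdot,1\}\) floor in each stage. No step beyond the geometric-series bookkeeping is expected to be hard, apart from the reference-point issue in Step 1.
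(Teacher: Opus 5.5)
Your proposal follows the paper's proof. It uses the same induction \(\Psi(z^s)\le\Omega_s\) via Theorem~\ref{thm:bilinear_SPP_Holder}, keeps one aligned \(z^*\) fixed across restarts through subspace invariance (Lemma~\ref{lem:subspace_invariance}, Remark~\ref{rem:restart_slice}), and sums the per-stage budgets geometrically using \(\Omega_{S-1}>\varepsilon\). The one step to add is how the paper gets the log-free table entry for \(\nu_x<1\) (resp.\ \(\nu_y<1\)): it absorbs your additive \(S=\mathcal O(1+\log(\Psi(z^0)/\varepsilon))\) into \(\varepsilon^{-\gamma_x}\), because \(1+\log(\Psi(z^0)/\varepsilon)=\mathcal O(\varepsilon^{-\gamma_x})\) as \(\varepsilon\downarrow0\) for fixed problem parameters; your version that keeps \(+S\) is the more explicit statement.
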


\begin{table}[H]
  \centering
  \footnotesize
  \setlength{\tabcolsep}{3pt}
  \renewcommand{\arraystretch}{1.35}
  \begin{tabular}{@{}ll@{}}
    \toprule
    Oracle & Complexity \\
    \midrule
    \(f'\) evaluations
    &
    \(\displaystyle
      \begin{cases}
        \mathcal O\!\left(
          \tilde H_x^{\frac{2}{1+3\nu_x}}
          \varepsilon^{\frac{\nu_x-1}{1+3\nu_x}}
        \right), & 0\leq\nu_x<1,\\
        \widetilde{\mathcal O}\!\left(\sqrt{\kappa_x}\right),
          & \nu_x=1
      \end{cases}
    \)
    \\[0.8ex]
    \(g'\) evaluations
    &
    \(\displaystyle
      \begin{cases}
        \mathcal O\!\left(
          \tilde H_y^{\frac{2}{1+3\nu_y}}
          \varepsilon^{\frac{\nu_y-1}{1+3\nu_y}}
        \right), & 0\leq\nu_y<1,\\
        \widetilde{\mathcal O}\!\left(\sqrt{\kappa_y}\right),
          & \nu_y=1
      \end{cases}
    \)
    \\[0.8ex]
    \(\bB,\bB^\top\) products
    &
    \(\displaystyle
      \widetilde{\mathcal O}\!\left(\sqrt{\kappa_{xy}}\right)
    \)
    \\
    \bottomrule
  \end{tabular}
  \caption{Evaluation complexity for the bilinear saddle-point
  problem~\eqref{prob:bilinear_SPP_main}. The notation
  \(\widetilde{\mathcal O}\) hides logarithmic factors in
  \(\Psi_0/\varepsilon\).}
  \label{tab:bilinear_SPP_restart_complexity}
\end{table}

Table~\ref{tab:bilinear_SPP_restart_complexity} interpolates componentwise
between H\"older and smooth strongly convex rates. For \(0\leq\nu_x<1\),
\(N_f=\mathcal O\bigl(\tilde H_x^{\frac{2}{1+3\nu_x}}
\varepsilon^{-\frac{1-\nu_x}{1+3\nu_x}}\bigr)\), with the analogous bound
for \(N_g\). At \(\nu_x=1\) or \(\nu_y=1\), the corresponding rate becomes
\(\widetilde{\mathcal O}(\sqrt{\kappa_x})\) or
\(\widetilde{\mathcal O}(\sqrt{\kappa_y})\), with logarithmic accuracy
dependence. Coupling requires
\(\widetilde{\mathcal O}(\sqrt{\kappa_{xy}})\) products independently of
\(\nu_x,\nu_y\). Thus \((\nu_x,\nu_y)=(1,1)\) recovers the smooth bilinear
complexity.

\paragraph{The case \(\delta_x=\delta_y=0\)}
\label{sec:bilinear_SPP_degenerate}

We finally discuss the case \(\delta_x=\delta_y=0\). Assuming that \(\cZ\) is bounded, we apply
Theorem~\ref{thm:VI_sliding_Holder} directly with respect to the Euclidean
product norm. Since no strong convexity is available, we set
$ p_3\equiv 0$.
Consequently, this result requires only the H\"older conditions on
\(\cX,\cY\), not the ambient-smoothness condition for active anchor terms.

Thus only the bilinear coupling remains in \(Q_3\), which is
\(L_{xy}\)-Lipschitz. Hence, with
\(\Omega:=\sup_{z\in\cZ}\|z^0-z\|^2\),
Corollary~\ref{cor:VI_sliding_Holder_complexity} gives
\(N_f=\mathcal O\bigl((H_x\Omega^{(1+\nu_x)/2}/\varepsilon)^{
2/(1+3\nu_x)}\bigr)\). Analogously,
\(N_g=\mathcal O\bigl((H_y\Omega^{(1+\nu_y)/2}/\varepsilon)^{
2/(1+3\nu_y)}\bigr)\). The coupling complexity is
\(N_B=\mathcal O(L_{xy}\Omega/\varepsilon)\).
These bounds describe the genuinely non-strongly-convex--non-strongly-concave
regime. The \(H_x\)- and \(H_y\)-terms match the known lower-bound scaling for
first-order convex optimization with H\"older-continuous gradients
\citep{nemirovskij1983problem,nesterov2013introductory,nesterov2015universal},
interpolating from \(\mathcal O(\varepsilon^{-2})\) at \(\nu=0\) to the
accelerated smooth rate \(\mathcal O(\varepsilon^{-1/2})\) at \(\nu=1\).
The bilinear term \(\mathcal O(L_{xy}\Omega/\varepsilon)\) matches the
optimal Mirror-Prox rate for monotone Lipschitz variational inequalities and
is consistent with lower bounds for bilinear saddle-point problems
\citep{nemirovski2004prox,zhang2022lower}.

\paragraph{Mixed cases.}
\label{sec:bilinear_SPP_mixed}

We next consider the regimes \(\delta_x>0,\delta_y=0\) and
\(\delta_x=0,\delta_y>0\), where only one effective
strong-convexity parameter is positive. In contrast to the fully degenerate
case \(\delta_x=\delta_y=0\), we do not discard the coupling-induced
regularization \(p_3\) in~\eqref{eq:SPP_decomposition}. Instead, we add
regularization only in the block whose effective curvature is zero and rebuild
\(p_3\) using the regularized component's H\"older/smoothness constant and
anchor whenever that part of \(p_3\) is active. The corrected parameters and
compatibility conditions are given in Appendix~\ref{app:mixed_cases}. After
simplification, the rates are summarized in
Table~\ref{tab:bilinear_SPP_mixed_complexity}.

\begin{table}[H]
  \centering
  \footnotesize
  \setlength{\tabcolsep}{3pt}
  \renewcommand{\arraystretch}{1.25}
  \begin{tabular}{@{}ll@{}}
    \toprule
    Oracle & Complexity \\
    \midrule
    \(f'\) evaluations
    &
    \(\displaystyle
      \begin{cases}
        \mathcal O\!\left(
          \tilde H_x^{\frac{2}{1+3\nu_x}}
          \varepsilon^{\frac{\nu_x-1}{1+3\nu_x}}
        \right), & 0\leq\nu_x<1,\\
        \widetilde{\mathcal O}\!\left(\sqrt{\kappa_x}\right),
          & \nu_x=1
      \end{cases}
    \)
    \\[0.8ex]
    \(g'\) evaluations
    &
    \(\displaystyle
      \begin{cases}
        \mathcal O\!\left(
          \left(
            \frac{H_y\Omega_y^{(1+\nu_y)/2}}{\varepsilon}
          \right)^{\frac{2}{1+3\nu_y}}
        \right), & 0\leq\nu_y<1,\\
        \widetilde{\mathcal O}\!\left(
          \sqrt{\frac{H_y\Omega_y}{\varepsilon}}
        \right), & \nu_y=1
      \end{cases}
    \)
    \\[0.8ex]
    \(\bB,\bB^\top\) products
    &
    \(\displaystyle
      \widetilde{\mathcal O}\!\left(
        L_{xy}\sqrt{\frac{\Omega_y}{\delta_x\varepsilon}}
      \right)
    \)
    \\
    \bottomrule
  \end{tabular}
  \caption{Mixed-regime complexity for \(\delta_x>0,\delta_y=0\).}
  \label{tab:bilinear_SPP_mixed_complexity}
\end{table}

\subsection{Stochastic oracle setting}
\label{sec:bilinear_SPP_stochastic_main}

We consider unbiased, bounded-variance estimators of \(\nabla f\) and
\(\nabla g\), with exact bilinear products. Table~\ref{tab:bilinear_SPP_stochastic_complexity}
summarizes the bounds, proved in Appendix~\ref{app:stoch_cases}.

\begin{table}[H]
  \centering
  \scriptsize
  \setlength{\tabcolsep}{2pt}
  \renewcommand{\arraystretch}{1.1}
  \begin{tabular}{@{}lcc@{}}
    \toprule
    Oracle
    & \(\delta_x=\delta_y=0\)
    & \(\delta_x,\delta_y>0\) \\
    \midrule
    \(\nabla f\) samples
    &
    \(\displaystyle
      \mathcal O\!\left(
        \sqrt{\frac{L_x\Omega}{\varepsilon}}
        +\frac{\sigma_x^2\Omega}{\varepsilon^2}
      \right)
    \)
    &
    \(\displaystyle
      \mathcal O\!\left(
        \sqrt{\kappa_x}\log\frac{\Psi_0}{\varepsilon}
        +\frac{\sigma_x^2}{\delta_x\varepsilon}
      \right)
    \)
    \\
    \(\nabla g\) samples
    &
    \(\displaystyle
      \mathcal O\!\left(
        \sqrt{\frac{L_y\Omega}{\varepsilon}}
        +\frac{\sigma_y^2\Omega}{\varepsilon^2}
      \right)
    \)
    &
    \(\displaystyle
      \mathcal O\!\left(
        \sqrt{\kappa_y}\log\frac{\Psi_0}{\varepsilon}
        +\frac{\sigma_y^2}{\delta_y\varepsilon}
      \right)
    \)
    \\
    \(\bB,\bB^\top\) products
    &
    \(\displaystyle
      \mathcal O\!\left(\frac{L_{xy}\Omega}{\varepsilon}\right)
    \)
    &
    \(\displaystyle
      \mathcal O\!\left(
        \sqrt{\kappa_{xy}}\log\frac{\Psi_0}{\varepsilon}
      \right)
    \)
    \\
    \bottomrule
  \end{tabular}
  \caption{Evaluation complexity in stochastic settings. Here
  \(\Omega:=\sup_{z\in\cZ}\|z^0-z\|^2\) and
  \(\Psi_0:=\Psi(z^0)\).}
  \label{tab:bilinear_SPP_stochastic_complexity}
\end{table}

We consider two regimes. When \(\delta_x=\delta_y=0\), we work on the bounded
domain \(\cZ\) without restarts, set \(p_3\equiv0\), and obtain a uniform
expected-gap bound. A fixed-comparator bound would be vacuous for pure
bilinear instances. When \(\delta_x>0\) and \(\delta_y>0\), we use the
regularized decomposition with restarts. This result is limited to ambiently
smooth \(f\) and \(g\), with the same constants \(L_x,L_y\) used in the
decomposition.

Using a single anchor sample introduces the residual
\(\Delta_{\mathrm{anc}}:=\beta_y\sigma_x^2+\beta_x\sigma_y^2\).
Appendix~\ref{app:stoch_cases} proves
\begin{equation}\label{eq:anchor_residual}
  \mathbb E\big[\Psi(z^S)\big]
  \leq
  \varepsilon+\mathcal O\!\left(\Delta_{\mathrm{anc}}\right),
\end{equation}
Thus, the strongly convex block of
Table~\ref{tab:bilinear_SPP_stochastic_complexity} gives
\(\varepsilon\)-accuracy when
\(\Delta_{\mathrm{anc}}=\mathcal O(\varepsilon)\). Otherwise, it guarantees
convergence to an \(\mathcal O(\Delta_{\mathrm{anc}})\)-neighbourhood. This
residual results from the single-sample anchor. Averaging
\(b=\mathcal O(\Delta_{\mathrm{anc}}/\varepsilon)\) anchor samples at each
restart reduces it to \(\mathcal O(\varepsilon)\). The total additional cost is
\(\mathcal O\big(\Delta_{\mathrm{anc}}\varepsilon^{-1}\log(\Psi_0/\varepsilon)\big)\)
samples, matching the \(\sigma^2/\varepsilon\) dependence of the variance
terms in the table up to a logarithmic factor.

\section{Experiments}\label{sec:experiments}
We report exact gaps and oracle counts on synthetic and tomography instances.
Appendix~\ref{app:experiments} gives the setup and tuning.

\paragraph{Synthetic results.}
Across the tested \(\nu_x\), the fitted \(N_f\) exponent is within \(4\%\) of
\(2/(1+3\nu_x)\), with \(R^2\geq0.9999\) and per-seed spread below \(0.04\).
All \(90\) runs satisfy~\eqref{eq:Holder_gap_bound}, attaining
\(8\%\)--\(27\%\) of the bound (Table~\ref{tab:e1_slopes}).
On \((\nu_x,\nu_y)=(0.75,0.25)\), recursion reaches gap below \(0.1\) with
\(48\) \(f'\)-calls, versus \(707\) for the lockstep ablation and \(2{,}477\)
for Universal Mirror-Prox. Relative to the two-level ablation, it keeps
\(48\) \(f'\)-calls and trades \(1{,}584\to1{,}920\) \(g'\)-calls for
\(6{,}336\to3{,}840\) \(\bB,\bB^\top\)-products
(Figure~\ref{fig:e2_separation} in Appendix~\ref{app:experiments}).
The appendix also covers other regularities. On a smooth instance, sliding uses
\(4.7\)--\(8.5\times\) fewer gradients but \(18\)--\(40\times\) more products,
with a gradient-to-product cost crossover of \(22\)--\(44\)
(Appendix~\ref{app:exp_cost}).

\paragraph{Tomography.}
At relative gap \(5\cdot10^{-4}\), we compare against Mirror-Prox, Accelerated
Mirror-Prox, and Mirror-Prox Sliding (MPS) on five unseen instances. Standard
Huber--TV is the negative control. Figure~\ref{fig:e5_convergence_main} shows
that MPS reaches the target first with diagonal \(g'\), whereas recursion has
the lowest weighted cost with nonlocal and nonlocal H\"older \(g'\). Across the
five instances, MPS wins \(2.43\) vs.\ \(3.18\) s in the diagonal regime. A
nonlocal graph makes \(g'\) \(48\times\) costlier.
Recursion uses \(480\) rather than \(3840\) \(g'\)-calls and takes \(3.96\)
rather than \(8.39\) s with the same \(160\) data gradients, reaching
\(30.81\) dB at \(128^2\). Adding a \(\nu_y=\tfrac12\) power term makes \(g'\)
genuinely H\"older. The times are \(3.98\) s for recursion, \(8.56\) s for MPS,
and \(10.8\) s for Generalized Universal Mirror-Prox at its target-matched
\(576\)-iteration budget. Appendix~\ref{app:exp_ct} gives diagnostics and
curves. Coupling calls exceed \(g'\)-calls by three orders of magnitude, so
tomography does not reveal the \(\nu_y\)-dependent exponent.

\begin{figure}[H]
  \centering
  \includegraphics[width=\textwidth]{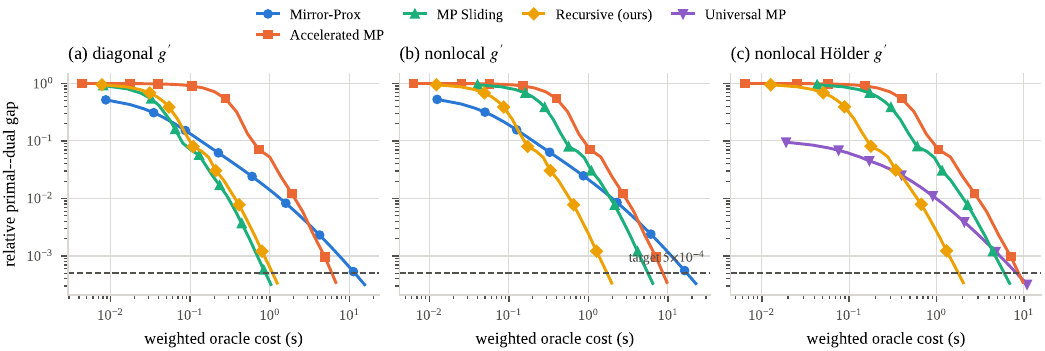}
  \caption{Tomographic convergence on the first unseen instance versus
  calibrated cost \(c_fN_f+c_gN_g+c_BN_B\), where \(N_f,N_g,N_B\) count
  \(f'\), \(g'\), and \(B/B^\top\) calls and \(c_f,c_g,c_B\) are their
  measured times per call. Panels (a)--(c) use diagonal, nonlocal, and
  nonlocal H\"older \(g'\), with
  \((\nu_x,\nu_y)=(1,1),(1,1),(1,\tfrac12)\), respectively. The dashed line
  marks the relative-gap target \(5\cdot10^{-4}\).}
  \label{fig:e5_convergence_main}
\end{figure}

\section{Conclusions}\label{sec:conclusions}

We extended recursive componentwise sliding to componentwise inexact
first-order oracles and applied it to H\"older-smooth bilinear saddle-point
problems. The resulting bounds charge \(f'\), \(g'\), and
\(\bB,\bB^\top\) separately, with the two gradient counts governed by their
respective \((H_x,\nu_x)\) and \((H_y,\nu_y)\). At
\(\nu_x=\nu_y=1\), they reduce to the known optimal smooth rates. The analysis
also covers nonstrongly convex, mixed, strongly curved, and stochastic regimes
under the stated conditions. In the stochastic case, it gives a uniform
expected-gap bound in the degenerate regime and, under ambient smoothness and
positive effective curvature, convergence up to an explicit noise floor.
Synthetic experiments recover the predicted componentwise exponents. The
tomography benchmark favors recursive sliding for expensive nonlocal \(g'\),
including the H\"older case, and MPS for cheap diagonal \(g'\).
Future work includes parameter-free schedules, stochastic H\"older guarantees,
and heterogeneous-exponent lower bounds.

\begingroup
\small
\setlength{\bibhang}{1.5em}
\setlength{\bibsep}{3pt plus 1pt minus 1pt}
\bibliographystyle{abbrvnat}
\bibliography{references}
\endgroup
\clearpage
\appendix

\section{The Sliding Algorithm and Explicit Special Cases}
\label{app:explicit_sliding_algorithms}

The general recursive scheme referred to throughout the paper is
Algorithm~\ref{alg:sliding_recursive}. Its one- and two-level cases and the
three-level bilinear specialization follow it.

\begin{algorithm}[H]
\caption{Sliding Algorithm}\label{alg:sliding_recursive}
\begin{algorithmic}[1]
\State \textbf{Input:} $z_{\mathrm{in}} \in \cZ$. \label{line:sliding_input}
\State \textbf{Parameters:} $\{\alpha_t\}$, $\{T_i\}_{i=1}^n$, $\{\eta^k_{\bt}\}$ \label{line:sliding_parameters}

\For{$k=1,\dots,n$} \label{line:init_z_loop}
  \State \(z^k_{\mathbf{0}_k}=z_{\mathrm{in}}\), where
  \(\mathbf{0}_k=(0,\dots,0)\in\mathbb{N}^k\). \label{line:init_z}
\EndFor

\For{$i=1,\dots,n$} \label{line:init_p_loop}
  \State \(p_i^{0,\varnothing}(z)\equiv p_i(z)\). \label{line:init_p}
\EndFor

\State $z_{\mathrm{out}} = \textsc{RecursiveProcedure}(0,\varnothing)$ \label{line:compute_z_out}
\State \Return $z_{\mathrm{out}}$ \label{line:return_z_out}
\State

\Function{RecursiveProcedure}{$k, \bt$} \Comment{\(z^\ell_{\bt,0,\dots,0}\) is initialized for all \(\ell>k\)} \label{line:recursive_procedure}
  \If{$k = n$} \label{line:base_case_if}
    \State \Return $\argmin_{z\in\cZ} \sum_{i=1}^n p_i^{n,\bt}(z)$ \label{line:base_case_return}
  \EndIf

  \State Set $\bar{z}_{\bt, 0}^{k+1} = z^{k+1}_{\bt, 0}$. \label{line:init_bar_z}

  \For{$j = 0, \dots, T_{k+1} - 1$} \label{line:level_loop}
    \State Set $\bt' = (\bt, j)$, $\bt'' = (\bt, j+1)$ \label{line:define_bt_prime}

    \For{$i = 1, \dots, n$} \label{line:hat_p_loop}
    \State $\hat{p}_i^{k+1,\bt'}(z) =
      \begin{cases}
        \alpha_j^{-1} \, p_i^{k,\bt}(\alpha_j z + (1-\alpha_j)\bar{z}_{\bt'}^{k+1}), & i > k, \\
        p_i^{k,\bt}(z), & i \leq k,
      \end{cases}$ \label{line:define_hat_p}
    \EndFor

    \For{$i = 1, \dots, n$} \label{line:p_loop}
    \State $p_i^{k+1,\bt'}(z) =
      \begin{cases}
        \frac{\eta_{\bt^\prime}^{k+1}}{2}\|z - z^{k+1}_{\bt'}\|_{\bP}^2
        + \langle \tilde{\nabla}\hat{p}_{k+1}^{k+1,\bt'}(z^{k+1}_{\bt'})
        + Q_{k+1}(z^{k+1}_{\bt'}), z \rangle, & i = k+1, \\
        \hat{p}_i^{k+1,\bt'}(z), & i \neq k+1,
      \end{cases}$ \label{line:define_p_next}
    \EndFor
    \State $\tilde{z}_{\bt'}^{k+1}
    = \textsc{RecursiveProcedure}(k+1, \bt')$ \label{line:recursive_call}

    \State $\bar{z}_{\bt''}^{k+1}
    = \alpha_j \tilde{z}_{\bt'}^{k+1}
    + (1 - \alpha_j) \bar{z}_{\bt'}^{k+1}$ \label{line:update_bar_z}

    \State $z_{\bt''}^{k+1}
    =
    \argmin_{z\in\cZ}
    \left\{
    \left\langle
    Q_{k+1}(\tilde z_{\bt'}^{k+1})
    -
    Q_{k+1}(z_{\bt'}^{k+1}),
    z
    \right\rangle
    +
    \frac{\eta_{\bt'}^{k+1}}{2}
    \|z-\tilde z_{\bt'}^{k+1}\|_{\bP}^2
    \right\}$ \label{line:update_z}

    \If{\(k+2\leq n\)} \label{line:warm_start_if}
      \For{\(\ell=k+2,\dots,n\)} \label{line:warm_start_loop}
        \State Set $z^\ell_{\bt'',0,\dots,0}
        = z^\ell_{\bt',T_{k+2},0,\dots,0}$. \label{line:warm_start}
      \EndFor
    \EndIf
  \EndFor

  \State \Return $\bar{z}_{\bt, T_{k+1}}^{k+1}$ \label{line:recursive_return}
\EndFunction
\end{algorithmic}
\end{algorithm}

\begin{algorithm}[H]
\caption{One-Level Sliding (\(n = 1\))}\label{alg:one_level}
\begin{algorithmic}[1]
\State Choose $z_0 \in \cZ$ and set $\bar{z}^0 = z_0$.
\For{$t = 0, \dots, T - 1$}
  \State $z^t = \alpha_t z_t + (1 - \alpha_t) \bar{z}^t$
  \State $\tilde{z}_t = \argmin_{z\in\cZ} \left\{ \langle \tilde{\nabla}p(z^t) + Q(z_t), z \rangle + \frac{\beta_t}{2} \|z - z_t\|_{\bP}^2 \right\}$
  \State $z_{t+1} = \argmin_{z\in\cZ} \left\{ \langle Q(\tilde{z}_t) - Q(z_t), z \rangle + \frac{\beta_t}{2} \|z - \tilde{z}_t\|_{\bP}^2 \right\}$
  \State $\bar{z}^{t+1} = \alpha_t \tilde{z}_t + (1 - \alpha_t) \bar{z}^t$
\EndFor
\State \Return $\bar{z}^{T}$
\end{algorithmic}
\end{algorithm}

\begin{algorithm}[H]
\caption{Two-Level Sliding (special case \(n = 2\))}\label{alg:two_level}
\begin{algorithmic}[1]
\State Choose $z_0 \in \cZ$ and set $\bar{z}_0 = z_0^0 = z_0$.
\For{$k = 0, \dots, T_1 - 1$}
  \State $\underline{z}_k = \alpha_k z_k + (1 - \alpha_k) \bar{z}_k$
  \State Set $\bar{z}_k^0 = z_k^0$
  \For{$t = 0, \dots, T_2 - 1$}
    \State $\hat{z}^t_k = \alpha_t z_k^t + (1-\alpha_t)\bar{z}^t_k$
    \State $\underline{z}^t_k = \alpha_k \hat{z}^t_k + (1-\alpha_k)\bar{z}_k$
    \State $\tilde{z}^t_k = \argmin_{z\in\cZ} \big\{ \langle \tilde{\nabla}p_1(\underline{z}_k) + \tilde{\nabla}p_2(\underline{z}^t_k) + Q_1(z_k) + Q_2(z_k^t), z \rangle + \frac{\beta_k}{2}\|z - z_k\|_{\bP}^2 + \frac{\eta_k^t}{2}\|z - z_k^t\|_{\bP}^2 \big\}$
    \State $\bar{z}^{t+1}_k = \alpha_t \tilde{z}^t_k + (1 - \alpha_t) \bar{z}^t_k$
    \State $z_k^{t+1}
    =
    \argmin_{z\in\cZ}
    \left\{
    \left\langle
    Q_2(\tilde z_k^t)-Q_2(z_k^t),z
    \right\rangle
    +
    \frac{\eta_k^t}{2}
    \|z-\tilde z_k^t\|_{\bP}^2
    \right\}$
  \EndFor
  \State Set $\tilde{z}_k = \bar{z}^{T_2}_k$
  \State $\bar{z}_{k+1} = \alpha_k \tilde{z}_k + (1 - \alpha_k) \bar{z}_k$
  \State $z_{k+1}
    =
    \argmin_{z\in\cZ}
    \left\{
    \left\langle
    Q_1(\tilde z_k)-Q_1(z_k),z
    \right\rangle
    +
    \frac{\beta_k}{2}
    \|z-\tilde z_k\|_{\bP}^2
    \right\}$
  \State Set $z_{k+1}^0 = z_k^{T_2}$
\EndFor
\State \Return $\bar{z}_{T_1}$
\end{algorithmic}
\end{algorithm}

\begin{algorithm}[H]
\caption{Three-Level Sliding for the Bilinear Saddle-Point Problem}
\label{alg:three_level_bilinear_spp}
\begin{algorithmic}[1]
\State Choose \(z_0=(x_0,y_0)\in\cZ\) and set
\[
  \bar z_0=z_0^0=z_0^{0,0}=z_0 .
\]

\For{\(k=0,\dots,T_1-1\)}
  \State \(\underline z_k=\alpha_k z_k+(1-\alpha_k)\bar z_k\).
  \State Set \(\bar z_k^0=z_k^0\).

  \For{\(t=0,\dots,T_2-1\)}
    \State \(\hat z_k^t=\alpha_t z_k^t+(1-\alpha_t)\bar z_k^t\).
    \State \(\underline z_k^t=\alpha_k\hat z_k^t+(1-\alpha_k)\bar z_k\).
    \State Set \(\bar z_k^{t,0}=z_k^{t,0}\).

    \For{\(r=0,\dots,T_3-1\)}
      \State \(\hat z_k^{t,r}
      =
      \alpha_r z_k^{t,r}
      +
      (1-\alpha_r)\bar z_k^{t,r}\).

      \State \(\underline z_k^{t,r}
      =
      \alpha_k
      \left(
        \alpha_t\hat z_k^{t,r}
        +
        (1-\alpha_t)\bar z_k^t
      \right)
      +
      (1-\alpha_k)\bar z_k\).

      \State Write
      \[
        \underline z_k=(\underline x_k,\underline y_k),
        \qquad
        \underline z_k^t=(\underline x_k^t,\underline y_k^t).
      \]

      \State Define

    \[
  G_k^{t,r}
  :=
  \begin{pmatrix}
    f'(\underline x_k)
    +
    \beta_x \bB^\top
    \bigl(
      \bB \underline x_k^{t,r}
      -
      g'(y_{\mathrm{in}})
    \bigr)
    +
    \bB^\top y_k^{t,r}
    \\[1.2ex]
    g'(\underline y_k^t)
    +
    \beta_y \bB
    \bigl(
      \bB^\top \underline y_k^{t,r}
      +
      f'(x_{\mathrm{in}})
    \bigr)
    -
    \bB x_k^{t,r}
  \end{pmatrix}.
\]
      \State
      \[
      \tilde z_k^{t,r}
      =
      \argmin_{z\in\cZ}
      \left\{
        \langle G_k^{t,r},z\rangle
        +
        \frac{\lambda_k}{2}\|z-z_k\|_{\bP}^2
        +
        \frac{\eta_k^t}{2}\|z-z_k^t\|_{\bP}^2
        +
        \frac{\gamma_k^{t,r}}{2}\|z-z_k^{t,r}\|_{\bP}^2
      \right\}.
      \]

      \State
      \[
        \bar z_k^{t,r+1}
        =
        \alpha_r\tilde z_k^{t,r}
        +
        (1-\alpha_r)\bar z_k^{t,r}.
      \]

      \State
      \[
  z_k^{t,r+1}
  =
  \argmin_{z=(x,y)\in\cZ}
  \left\{
    \left\langle
      \begin{pmatrix}
        \bB^\top(\tilde y_k^{t,r}-y_k^{t,r})\\[0.8ex]
        -\bB(\tilde x_k^{t,r}-x_k^{t,r})
      \end{pmatrix},
      \begin{pmatrix}
        x\\y
      \end{pmatrix}
    \right\rangle
    +
    \frac{\gamma_k^{t,r}}{2}
    \|z-\tilde z_k^{t,r}\|_{\bP}^2
  \right\}.
\]
    \EndFor
    \State Set \(\tilde z_k^t=\bar z_k^{t,T_3}\).
    \State
    \[
      \bar z_k^{t+1}
      =
      \alpha_t\tilde z_k^t
      +
      (1-\alpha_t)\bar z_k^t .
    \]
    \State Set \(z_k^{t+1}=\tilde z_k^t\).
    \State Initialize the next inner loop by setting
    \[
      z_k^{t+1,0}=z_k^{t,T_3}.
    \]
  \EndFor

  \State Set \(\tilde z_k=\bar z_k^{T_2}\).
  \State
  \[
    \bar z_{k+1}
    =
    \alpha_k\tilde z_k
    +
    (1-\alpha_k)\bar z_k .
  \]
  \State Set \(z_{k+1}=\tilde z_k\).
  \State Initialize the next middle and inner loops by setting
  \[
    z_{k+1}^{0}=z_k^{T_2},
    \qquad
    z_{k+1}^{0,0}=z_k^{T_2,0}.
  \]
\EndFor

\State \Return \(\bar z_{T_1}\).
\end{algorithmic}
\end{algorithm}

\section{Applications of the Bilinear Saddle-Point Problem}\label{sec:applications}

The bilinear saddle-point formulation~\eqref{prob:bilinear_SPP_main} arises naturally in a variety of settings. We describe several important examples below.

\textbf{Linearly constrained optimization.}
Consider the problem of minimizing a convex function subject to linear equality constraints:
\begin{equation}\label{eq:app_constrained}
  \min_{x \in \mathcal{X}} \; f(x) \quad \text{subject to} \quad \bA x = b,
\end{equation}
where $\bA \in \R^{m \times d_x}$ and $b \in \R^m$.  Introducing
Lagrange multipliers $y \in \R^m$ yields the bilinear saddle-point problem $\min_{x \in \mathcal{X}} \max_{y \in \R^m} \;
f(x) + \langle y, \bA x \rangle - \langle y, b \rangle$, an instance of~\eqref{prob:bilinear_SPP_main} with $\bB = \bA$,
$g(y) = \langle y, b \rangle$, and $\mathcal{Y} = \R^m$.
Adding a strongly convex regularizer $r(y)$ to the dual
variable yields the augmented Lagrangian form, which also fits the framework.

\textbf{Decentralized optimization over networks.}
In decentralized optimization~\citep{lan2020communication, scaman2018optimal}, $m$ agents connected by a communication network collaboratively solve
\begin{equation}\label{eq:app_decentralized}
  \min_{x \in \mathcal{X}} \; \frac{1}{m}\sum_{i=1}^m f_i(x),
\end{equation}
where each agent $i$ has access only to its local objective $f_i$. Introducing local copies $\bx = \col(x_1, \dots, x_m)$ and a symmetric positive semidefinite \emph{gossip matrix} $\bW \in \R^{m \times m}$ with $\ker(\bW) = \operatorname{span}\{\mathbf{1}_m\}$, the consensus constraint $x_1 = \cdots = x_m$ is equivalent to $(\bW \otimes \bI_{d_x})\bx = 0$.  Its Lagrangian dual is a bilinear saddle-point problem of the form~\eqref{prob:bilinear_SPP_main} with $f(\bx) = \frac{1}{m}\sum_{i=1}^m f_i(x_i)$, $\bB = \bW \otimes \bI_{d_x}$, and $g \equiv 0$.

\textbf{Regularized optimal transport.}
Given discrete probability vectors $a \in \R^{n_1}$ and $b \in \R^{n_2}$,
the regularized optimal transport problem~\citep{dvurechensky2018computational} is
\begin{equation}\label{eq:app_OT}
  \begin{aligned}
  \min_{\bX \in \R^{n_1 \times n_2}}\quad&
  \langle \bC, \bX \rangle + r(\bX)\\
  \text{subject to}\quad&
  \bX \mathbf{1}_{n_2} = a,\\
  &\bX^\top \mathbf{1}_{n_1} = b.
  \end{aligned}
\end{equation}
where $\bC$ is the cost matrix and $r(\cdot)$ is a strongly convex
regularizer (e.g., squared Frobenius norm).  Introducing Lagrange
multipliers $u \in \R^{n_1}$ and $v \in \R^{n_2}$ for the marginal
constraints yields a bilinear saddle-point problem of the
form~\eqref{prob:bilinear_SPP_main} with $f(\bX) = \langle \bC,
\bX \rangle + r(\bX)$, $g(u, v) = \langle u, a \rangle + \langle v,
b \rangle$, and the coupling operator
$\bB \colon \bX \mapsto \col(\bX \mathbf{1}_{n_2},\,
\bX^\top \mathbf{1}_{n_1})$.

\textbf{Inverse problems.}
Recovering a signal $x \in \R^d$ from noisy measurements
$b = \bA x + \xi$ with an $\ell_p$-penalty on a linear transform
$\bD$ takes the form
\begin{equation}\label{eq:app_ellp}
  \min_{x \in \R^d} \; \frac{1}{2}\|\bA x - b\|^2
  + \frac{\lambda}{p}\|\bD x\|_p^p,
\end{equation}
where $\bD \in \R^{m \times d}$, $p \ge 2$, and $\lambda > 0$.
Using the Fenchel conjugate of $\frac{1}{p}\|\cdot\|_p^p$, this
admits a bilinear saddle-point reformulation of the
form~\eqref{prob:bilinear_SPP_main} with
$f(x) = \frac{1}{2}\|\bA x - b\|^2$,
$\bB = \bD$, and a dual regularizer $g$ whose gradient is
$(q-1)$-H\"older continuous with $1/p + 1/q = 1$.

\textbf{Matrix games.}
A two-player zero-sum game with strongly convex strategy regularizers
$f$ and $g$ takes the form
\begin{equation}\label{eq:app_games}
  \min_{x \in \mathcal{X}} \max_{y \in \mathcal{Y}} \;
  f(x) + \langle y, \bA x \rangle - g(y),
\end{equation}
where $\bA \in \R^{d_y \times d_x}$ is the payoff matrix and
$\mathcal{X}, \mathcal{Y}$ are convex strategy sets.  This is an
instance of~\eqref{prob:bilinear_SPP_main} with $\bB = \bA$.

\section{Discussion of Algorithm~\ref{alg:sliding_recursive}}

\subsection{Algorithmic Implementability}\label{app:algo_implementability}
\begin{assumption}\label{ass:prox_setup}
The projection operator
\[
  \Pi_{\cZ}^{\bP}(v)
  :=
  \argmin_{z\in\cZ}
  \frac12\|z-v\|_{\bP}^2
\]
can be computed exactly and at negligible cost compared with evaluations of
the component oracles \(p_i'\) and \(Q_i\). Equivalently, all proximal
subproblems generated by Algorithm~\ref{alg:sliding_recursive} are assumed
to be solved exactly. If this assumption is not satisfied, then the cost of
these proximal subproblems must be added separately to the oracle complexity.
\end{assumption}

Under Assumption~\ref{ass:prox_setup}, the oracle complexities reported below
count only evaluations of the component first-order oracles \(p_i'\) and
\(Q_i\). Each minimization step in Algorithm~\ref{alg:sliding_recursive} is a
proximal step with a linear objective and a quadratic regularizer. In
particular, subproblems of the form
\[
  \begin{aligned}
  &\argmin_{z\in\cZ}
  \left\{
    \langle c,z\rangle
    +
    \sum_{m=1}^q
    \frac{\theta_m}{2}\|z-a_m\|_{\bP}^2
  \right\},\\
  &\qquad
  \theta_m\geq0,\qquad
  \sum_{m=1}^q\theta_m>0.
  \end{aligned}
\]
can be written as
\[
  \Pi_{\cZ}^{\bP}
  \left(
    \frac{\sum_{m=1}^q\theta_m a_m}{\sum_{m=1}^q\theta_m}
    -
    \frac{1}{\sum_{m=1}^q\theta_m}\bP^{-1}c
  \right).
\]
Thus the algorithm is directly implementable whenever projection onto
\(\cZ\) in the \(\bP\)-norm is cheap, for example for common product sets
such as boxes, Euclidean balls, or unconstrained domains. For the bilinear specialization, the proximal subproblems reduce to
projections onto \(\cX\) and \(\cY\).

\subsection{Special Cases and Mirror-Prox Sliding}\label{app:discussion_sliding}

The general complexity bound recovers familiar guarantees in its one- and
two-component special cases. When \(n=1\) and \(Q_1\equiv0\),
Corollary~\ref{cor:VI_sliding_Holder_complexity} gives
\[
  O\left(
    \left(
      \frac{H_1\Omega_{z_{\mathrm{in}}}^{(1+\nu_1)/2}}{\varepsilon}
    \right)^{\frac{2}{1+3\nu_1}}
  \right),
\]
which is the usual universal gradient rate, interpolating between nonsmooth
subgradient and accelerated smooth complexity~\citep{nesterov2015universal}.

When \(\nu_i=1\), the function part of the complexity becomes $O\left(\sqrt{\frac{H_i\Omega}{\varepsilon}}\right)$,
which recovers the accelerated smooth rate. When \(\nu_i=0\), the function
part becomes
$O\left(\frac{H_i^2\Omega}{\varepsilon^2}\right)$,
matching the standard nonsmooth subgradient complexity up to constants. The
operator part contributes the Mirror-Prox-type term
\(O(M_i\Omega/\varepsilon)\).

\paragraph{Relation to Mirror-Prox Sliding.}
The closest two-component comparison is Mirror-Prox Sliding
of~\citet{lan2021mirrorprox}, for VIs with operator \(\nabla G+H\), where
\(G\) is convex with \(L\)-Lipschitz gradient and \(H\) is monotone and
\(M\)-Lipschitz. In our notation, the two component pairs are
\((G,0)\) and \((0,H)\). Mirror-Prox Sliding requires
\(\mathcal O(\sqrt{L\Omega/\varepsilon})\) evaluations of \(\nabla G\) and
\(\mathcal O(\sqrt{L\Omega/\varepsilon}+M\Omega/\varepsilon)\) evaluations of
\(H\). The ascending relabeling of~\citet{borodich2025linear}, used in
Corollary~\ref{cor:VI_sliding_Holder_complexity}, instead gives
\(\mathcal O(\max\{\sqrt{L\Omega/\varepsilon},1\})\) and
\(\mathcal O(\max\{M\Omega/\varepsilon,1\})\) evaluations, respectively,
removing the cross-component square-root term from the \(H\)-oracle count.

Stochastic Mirror-Prox Sliding keeps \(\nabla G\) exact and samples \(H\),
adding \(\mathcal O(\sigma_H^2\Omega/\varepsilon^2)\) to its \(H\)-oracle
complexity. Appendix~\ref{app:stoch_cases} considers a different model:
\(\bB,\bB^\top\) are exact while \(\nabla f\) and \(\nabla g\) are stochastic.
Its variance costs are charged separately to \(N_f,N_g\), scaling as
\(\sigma_i^2\Omega/\varepsilon^2\) in the degenerate regime and
\(\sigma_i^2/(\delta_i\varepsilon)\) under strong curvature, where anchor
noise also leaves an explicit residual.

\section{Proof of Theorem~\ref{thm:VI_sliding_inexact}}\label{app:VI_inexact_proof}

\begin{lemma}
\label{lem:alpha_properties}
Let \(\{\alpha_t\}_{t\geq0}\) be the sequence defined in
\eqref{eq:alpha_sequence}. Then \(0<\alpha_t\leq1\) for all \(t\geq0\),
the sequence \(\{\alpha_t\}\) is nonincreasing, and the following properties
hold:
\begin{align}
  \frac{1-\alpha_t}{\alpha_t^2}
  &=
  \frac{1}{\alpha_{t-1}^2},
  \qquad t\geq1,
  \label{eq:alpha_identity} \\
  \frac{1}{t+1}
  \leq
  \alpha_t
  &\leq
  \frac{2}{t+2},
  \qquad t\geq0.
  \label{eq:alpha_t_bounds}
\end{align}
\end{lemma}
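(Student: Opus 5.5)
The plan is to work directly from the recursion \eqref{eq:alpha_sequence}. Observe first that \(\alpha_{t+1}\) is the positive root of the quadratic
\[
  \frac{1}{\alpha_t^2}\,\alpha^2+\alpha-1=0,
\]
i.e.\ of \(\alpha^2=\alpha_t^2(1-\alpha)\). To verify this, rationalize: \(\frac{2}{1+\sqrt{1+4/\alpha_t^2}}=\frac{\alpha_t^2}{2}\bigl(\sqrt{1+4/\alpha_t^2}-1\bigr)\), which is exactly the quadratic formula's positive root. Rearranging \(\alpha_{t+1}^2=\alpha_t^2(1-\alpha_{t+1})\) gives \eqref{eq:alpha_identity} after shifting the index.

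Positivity and \(\alpha_t\le 1\) follow by induction. We have \(\alpha_0=1\). If \(\alpha_t>0\), the square root exceeds \(1\), so \(0<\alpha_{t+1}<1\). Monotonicity then follows from the identity: \(\alpha_{t+1}^2=\alpha_t^2(1-\alpha_{t+1})\le\alpha_t^2\).

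For the bounds \eqref{eq:alpha_t_bounds}, set \(s_t:=1/\alpha_t\). The identity reads \(s_t^2-s_t=s_{t-1}^2\), so \(s_t=\tfrac12+\sqrt{\tfrac14+s_{t-1}^2}\). Two one-line estimates then control the increments:
\begin{itemize}
  \item Since \(\sqrt{1/4+s^2}\le s+\tfrac12\), we get \(s_t\le s_{t-1}+1\). Induction from \(s_0=1\) gives \(s_t\le t+1\), i.e.\ \(\alpha_t\ge 1/(t+1)\).
  \item Since \(\sqrt{1/4+s^2}\ge s\), we get \(s_t\ge s_{t-1}+\tfrac12\). Induction from \(s_0=1\) gives \(s_t\ge 1+t/2=(t+2)/2\), i.e.\ \(\alpha_t\le 2/(t+2)\).
\end{itemize}
Both inequalities hold at \(t=0\) with equality, so the inductions start correctly.

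No step is a real obstacle. The only point needing care is the algebraic identification of \(\alpha_{t+1}\) with the root of the quadratic, since everything else follows from it.
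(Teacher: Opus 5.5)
Your proposal is correct and follows essentially the same route as the paper. Both pass to reciprocals \(a_t=1/\alpha_t\), derive \(a_{t+1}^2-a_{t+1}=a_t^2\), and bound the increments using \(2a\le\sqrt{1+4a^2}\le 1+2a\). The only cosmetic difference is that you get monotonicity from \(\alpha_{t+1}^2=\alpha_t^2(1-\alpha_{t+1})\), whereas the paper gets it from \(a_{t+1}\ge a_t+\tfrac12\).
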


\begin{proof}
Set \(a_t:=1/\alpha_t\). By the recursion in
\eqref{eq:alpha_sequence},
\[
  a_{t+1}
  =
  \frac{1+\sqrt{1+4a_t^2}}{2},
\]
and hence
\[
  a_{t+1}^2-a_{t+1}=a_t^2.
\]
Replacing \(t+1\) by \(t\) gives, for \(t\geq1\),
\[
  \frac{1-\alpha_t}{\alpha_t^2}
  =
  \frac{1}{\alpha_{t-1}^2}.
\]

Moreover, since \(a_0=1\) and
\[
  a_{t+1}
  =
  \frac{1+\sqrt{1+4a_t^2}}{2}
  \geq a_t,
\]
the sequence \(\{a_t\}\) is nondecreasing. Hence \(\{\alpha_t\}\) is
nonincreasing, and therefore \(0<\alpha_t\leq\alpha_0=1\).

It remains to prove~\eqref{eq:alpha_t_bounds}. Since
\[
  \sqrt{1+4a_t^2}\geq 2a_t,
\]
we have
\[
  a_{t+1}
  =
  \frac{1+\sqrt{1+4a_t^2}}{2}
  \geq
  a_t+\frac12.
\]
By induction and \(a_0=1\), this gives
\[
  a_t\geq 1+\frac{t}{2}=\frac{t+2}{2}.
\]
Equivalently,
\[
  \alpha_t=\frac1{a_t}\leq \frac{2}{t+2}.
\]

On the other hand, since
\[
  \sqrt{1+4a_t^2}\leq 1+2a_t,
\]
we have
\[
  a_{t+1}
  =
  \frac{1+\sqrt{1+4a_t^2}}{2}
  \leq
  a_t+1.
\]
Again by induction and \(a_0=1\), this gives
\[
  a_t\leq t+1.
\]
Equivalently,
\[
  \alpha_t=\frac1{a_t}\geq \frac{1}{t+1}.
\]
Combining the two inequalities proves~\eqref{eq:alpha_t_bounds}.
\end{proof}

\begin{lemma}
\label{lem:affine_rescaled_oracle}
Suppose that \(0<\alpha_t\leq 1\) for all \(t\), and that all averaging
points generated by Algorithm~\ref{alg:sliding_recursive} belong to
\(\mathcal Z\). For \(k\geq 1\), let
\[
  \bt=(t_1,\dots,t_k),
  \qquad
  0\leq t_\ell\leq T_\ell-1,\quad \ell=1,\dots,k,
\]
and define
\[
  \Gamma_{\bt}:=\prod_{\ell=1}^k \alpha_{t_\ell}.
\]
Then, for every \(1\leq k\leq i\leq n\), the function
\(\hat p_i^{k,\bt}\) is equipped with a first-order
\((\delta_i^{k,\bt},L_i^{k,\bt})\)-oracle on \(\mathcal Z\), with respect to
\(\|\cdot\|_{\bP}\), where
\[
  \delta_i^{k,\bt}
  =
  \frac{\delta_i}{\Gamma_{\bt}},
  \qquad
  L_i^{k,\bt}
  =
  L_i\Gamma_{\bt}.
\]
\end{lemma}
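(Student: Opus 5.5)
The plan is induction on \(k\), tracking how the affine rescaling in line~\ref{line:define_hat_p} transforms an inexact oracle. The basic tool is a one-step rescaling fact. Suppose \(h\) has a \((\delta,L)\)-oracle on \(\cZ\), let \(\alpha\in(0,1]\), and let \(\bar z\in\cZ\). Define
\[
  \hat h(z)=\alpha^{-1}h(\alpha z+(1-\alpha)\bar z).
\]
Then \(\hat h\) has a \((\delta/\alpha,\,L\alpha)\)-oracle on \(\cZ\), given by
\[
  \tilde{\hat h}(v)=\alpha^{-1}\tilde h(w_v),
  \qquad
  \tilde\nabla\hat h(v)=\tilde\nabla h(w_v),
  \qquad
  w_v=\alpha v+(1-\alpha)\bar z .
\]
To check this, fix \(u,v\in\cZ\) and put \(w_u=\alpha u+(1-\alpha)\bar z\). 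Both \(w_u,w_v\) lie in \(\cZ\) by convexity; this is where the hypothesis that the averaging points stay in \(\cZ\) is used. Since \(w_u-w_v=\alpha(u-v)\), applying Definition~\ref{def:inexact_oracle} to \(h\) at \((w_u,w_v)\) gives
\[
  0\le h(w_u)-\tilde h(w_v)-\alpha\langle\tilde\nabla h(w_v),u-v\rangle\le \tfrac{L\alpha^2}{2}\|u-v\|_{\bP}^2+\delta .
\]
Dividing by \(\alpha>0\) yields exactly the required two-sided inequality for \(\hat h\) with constants \((\delta/\alpha, L\alpha)\).

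Next comes the induction over levels. For \(k\ge1\), \(\bt=(t_1,\dots,t_k)\) and \(i\ge k\), line~\ref{line:define_hat_p} with \(k-1<i\) gives
\[
  \hat p_i^{k,\bt}(z)=\alpha_{t_k}^{-1}p_i^{k-1,\bt_{-}}\bigl(\alpha_{t_k}z+(1-\alpha_{t_k})\bar z^{k}_{\bt}\bigr),
\]
where \(\bt_-=(t_1,\dots,t_{k-1})\). By line~\ref{line:define_p_next}, \(p_i^{k-1,\bt_-}=\hat p_i^{k-1,\bt_-}\) whenever \(i\neq k-1\), and in particular whenever \(i\ge k\). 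When \(k=1\), \(p_i^{0,\varnothing}=p_i\) carries the \((\delta_i,L_i)\)-oracle of Assumption~\ref{ass:inexact_oracle_functions_VI}. So the base case \(k=1\) follows from one application of the rescaling fact, which gives \((\delta_i/\alpha_{t_1},\,L_i\alpha_{t_1})\).

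For the inductive step, assume \(\hat p_i^{k-1,\bt_-}\) has a \((\delta_i/\Gamma_{\bt_-},\,L_i\Gamma_{\bt_-})\)-oracle for all \(i\ge k-1\), and in particular for all \(i\ge k\). Since \(p_i^{k-1,\bt_-}=\hat p_i^{k-1,\bt_-}\) for these \(i\), one more application of the rescaling fact with \(\alpha=\alpha_{t_k}\) multiplies the accuracy by \(\alpha_{t_k}^{-1}\) and the smoothness constant by \(\alpha_{t_k}\). Because \(\Gamma_{\bt}=\Gamma_{\bt_-}\alpha_{t_k}\), this gives exactly \(\delta_i^{k,\bt}\) and \(L_i^{k,\bt}\). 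The oracle obtained this way is the composition of the affine maps applied to the original oracle of \(p_i\), and this is what Algorithm~\ref{alg:sliding_recursive} uses as \(\tilde\nabla\hat p\).

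The only delicate point is bookkeeping rather than analysis. We must confirm that for \(i\ge k\) the index \(i\) was never the ``active'' level \(k-1\) at which \(p_i\) is replaced by its linearized proximal model. The condition \(i\ge k>k-1\) rules this out, so the chain of substitutions only ever composes affine rescalings. We must also confirm that each intermediate point \(\bar z^{k}_{\bt}\) lies in \(\cZ\). This holds because every \(\bar z\) is a convex combination of points in \(\cZ\), and it is in any case covered by the stated hypothesis.
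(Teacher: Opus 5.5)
Your proof is correct and follows the paper's argument. You induct on the level, use that $p_i^{k-1,\bt_-}=\hat p_i^{k-1,\bt_-}$ for $i\ge k$, and use the affine change of variables $w=\alpha z+(1-\alpha)\bar z$ with $w_u-w_v=\alpha(u-v)$ to turn a $(\delta,L)$-oracle into a $(\delta/\alpha,\,L\alpha)$-oracle. The only difference is presentational: you state this one-step rescaling as a standalone fact, while the paper repeats the same computation inline in both the base case and the inductive step.
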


\begin{proof}
We prove the claim by induction on \(k\). Throughout the proof, the oracle
for each transformed function is the oracle induced by the affine
transformation in line~\ref{line:define_hat_p} of
Algorithm~\ref{alg:sliding_recursive}.

\paragraph{Base case \(k=1\).}
The lemma's index condition \(i\geq k\) becomes \(i\geq 1\). By
lines~\ref{line:init_p} and~\ref{line:define_hat_p} of
Algorithm~\ref{alg:sliding_recursive}, with \(k=0\), \(\bt=\varnothing\),
and \(\bt'=(j)\), for all \(1\leq i\leq n\) and
\(0\leq j\leq T_1-1\),
\[
  \begin{aligned}
  \hat p_i^{1,j}(z)
  &=
  \alpha_j^{-1}
  p_i^{0,\varnothing}\bigl(
    \alpha_j z+(1-\alpha_j)\bar z_j^1
  \bigr)\\
  &=
  \alpha_j^{-1}
  p_i\bigl(
    \alpha_j z+(1-\alpha_j)\bar z_j^1
  \bigr).
  \end{aligned}
\]
The induced oracle is therefore
\[
  \begin{aligned}
  \tilde{\hat p}_i^{1,j}(z)
  &=
  \alpha_j^{-1}
  \tilde p_i\bigl(
    \alpha_j z+(1-\alpha_j)\bar z_j^1
  \bigr),\\
  \tilde\nabla \hat p_i^{1,j}(z)
  &=
  \tilde\nabla p_i\bigl(
    \alpha_j z+(1-\alpha_j)\bar z_j^1
  \bigr).
  \end{aligned}
\]
Fix \(u,v\in\mathcal Z\) and define
\[
  u'=\alpha_j u+(1-\alpha_j)\bar z_j^1,
  \qquad
  v'=\alpha_j v+(1-\alpha_j)\bar z_j^1.
\]
By lines~\ref{line:init_bar_z} and~\ref{line:update_bar_z} of
Algorithm~\ref{alg:sliding_recursive}, the averaging point
\(\bar z_j^1\) belongs to \(\mathcal Z\). Since \(\mathcal Z\) is convex,
we have \(u',v'\in\mathcal Z\). Hence,
\begin{align*}
&\hat p_i^{1,j}(u)
-
\left(
  \tilde{\hat p}_i^{1,j}(v)
  +
  \left\langle
    \tilde\nabla\hat p_i^{1,j}(v), u-v
  \right\rangle
\right) \\
&\quad =
\alpha_j^{-1}
\left[
  p_i(u')
  -
  \left(
    \tilde p_i(v')
    +
    \alpha_j
    \left\langle
      \tilde\nabla p_i(v'), u-v
    \right\rangle
  \right)
\right] \\
&\quad =
\alpha_j^{-1}
\left[
  p_i(u')
  -
  \left(
    \tilde p_i(v')
    +
    \left\langle
      \tilde\nabla p_i(v'), u'-v'
    \right\rangle
  \right)
\right].
\end{align*}
Using the \((\delta_i,L_i)\)-oracle bound for \(p_i\), we obtain
\begin{align*}
0
&\leq
\hat p_i^{1,j}(u)
-
\left(
  \tilde{\hat p}_i^{1,j}(v)
  +
  \left\langle
    \tilde\nabla\hat p_i^{1,j}(v), u-v
  \right\rangle
\right) \\
&\leq
\alpha_j^{-1}
\left(
  \frac{L_i}{2}\|u'-v'\|_{\bP}^2+\delta_i
\right) \\
&=
\frac{L_i\alpha_j}{2}\|u-v\|_{\bP}^2
+
\frac{\delta_i}{\alpha_j} \\
&=
\frac{L_i^{1,j}}{2}\|u-v\|_{\bP}^2
+
\delta_i^{1,j}.
\end{align*}
Thus \(\hat p_i^{1,j}\) is equipped with a
\((\delta_i^{1,j},L_i^{1,j})\)-oracle.

\paragraph{Inductive step.}
Assume the claim holds at level \(k\), namely that
\(\hat p_i^{k,\bt}\) is equipped with a
\((\delta_i^{k,\bt},L_i^{k,\bt})\)-oracle for all \(i\geq k\). We prove the
claim at level \(k+1\). Let \(0\leq j\leq T_{k+1}-1\) and set
\[
  \bt'=(\bt,j).
\]
Fix \(u,v\in\mathcal Z\) and define
\[
  u'=\alpha_j u+(1-\alpha_j)\bar z_{\bt'}^{k+1},
  \qquad
  v'=\alpha_j v+(1-\alpha_j)\bar z_{\bt'}^{k+1}.
\]
By lines~\ref{line:init_bar_z} and~\ref{line:update_bar_z} of
Algorithm~\ref{alg:sliding_recursive}, the averaging point
\(\bar z_{\bt'}^{k+1}\) belongs to \(\mathcal Z\). Since
\(\mathcal Z\) is convex, we have \(u',v'\in\mathcal Z\).

Now fix \(i\geq k+1\). By line~\ref{line:define_hat_p} of
Algorithm~\ref{alg:sliding_recursive},
\[
  \hat p_i^{k+1,\bt'}(z)
  =
  \alpha_j^{-1}
  p_i^{k,\bt}\bigl(\alpha_j z+(1-\alpha_j)\bar z_{\bt'}^{k+1}\bigr).
\]
Since \(i\geq k+1\), we have \(i\neq k\). Hence, by
line~\ref{line:define_p_next} of Algorithm~\ref{alg:sliding_recursive}
applied at level \(k\),
\[
  p_i^{k,\bt}(z)=\hat p_i^{k,\bt}(z).
\]
Therefore,
\[
  \hat p_i^{k+1,\bt'}(z)
  =
  \alpha_j^{-1}
  \hat p_i^{k,\bt}\bigl(\alpha_j z+(1-\alpha_j)\bar z_{\bt'}^{k+1}\bigr).
\]
The induced oracle is
\[
  \tilde{\hat p}_i^{k+1,\bt'}(z)
  =
  \alpha_j^{-1}
  \tilde{\hat p}_i^{k,\bt}
  \bigl(\alpha_j z+(1-\alpha_j)\bar z_{\bt'}^{k+1}\bigr),
\]
and
\[
  \tilde\nabla \hat p_i^{k+1,\bt'}(z)
  =
  \tilde\nabla \hat p_i^{k,\bt}
  \bigl(\alpha_j z+(1-\alpha_j)\bar z_{\bt'}^{k+1}\bigr).
\]
Consequently,
\begin{align*}
&\hat p_i^{k+1,\bt'}(u)
-
\tilde{\hat p}_i^{k+1,\bt'}(v)\\
&\quad-
\left\langle
  \tilde\nabla\hat p_i^{k+1,\bt'}(v), u-v
\right\rangle \\
&\quad =
\alpha_j^{-1}
\left[
  \hat p_i^{k,\bt}(u')
  -
  \tilde{\hat p}_i^{k,\bt}(v')
  \right.\\
&\hspace{7em}\left.
  -
  \alpha_j
  \left\langle
    \tilde\nabla\hat p_i^{k,\bt}(v'), u-v
  \right\rangle
\right] \\
&\quad =
\alpha_j^{-1}
\left[
  \hat p_i^{k,\bt}(u')
  -
  \tilde{\hat p}_i^{k,\bt}(v')
  \right.\\
&\hspace{7em}\left.
  -
  \left\langle
    \tilde\nabla\hat p_i^{k,\bt}(v'), u'-v'
  \right\rangle
\right].
\end{align*}
Using the induction hypothesis for \(\hat p_i^{k,\bt}\), we obtain
\begin{align*}
0
&\leq
\hat p_i^{k+1,\bt'}(u)
-
\left(
  \tilde{\hat p}_i^{k+1,\bt'}(v)
  +
  \left\langle
    \tilde\nabla\hat p_i^{k+1,\bt'}(v), u-v
  \right\rangle
\right) \\
&\leq
\alpha_j^{-1}
\left(
  \frac{L_i^{k,\bt}}{2}\|u'-v'\|_{\bP}^2
  +
  \delta_i^{k,\bt}
\right) \\
&=
\frac{L_i^{k,\bt}\alpha_j}{2}\|u-v\|_{\bP}^2
+
\frac{\delta_i^{k,\bt}}{\alpha_j} \\
&=
\frac{L_i^{k+1,\bt'}}{2}\|u-v\|_{\bP}^2
+
\delta_i^{k+1,\bt'}.
\end{align*}
Thus \(\hat p_i^{k+1,\bt'}\) is equipped with a
\((\delta_i^{k+1,\bt'},L_i^{k+1,\bt'})\)-oracle. The induction is complete.
\end{proof}

For fixed $k$, $\mathbf{t} = (t_1, \dots t_k)$ and $0\leq t\leq T_k -1 $, we
define
\begin{align}
  y^{k, \mathbf{t}}_t      & = z_{t_1,\dots,t_{k-1}, t}^k
\end{align}

Fix \(\hat z\in\mathcal Z\). For \(0\leq k\leq n\) and
\(\bt=(t_1,\dots,t_k)\), define
\[
  f_{\bt}^k(z)
  :=
  \sum_{i=1}^n p_i^{k,\bt}(z),
  \qquad
  \hat f_{\bt}^k(z)
  :=
  \sum_{i=1}^n \hat p_i^{k,\bt}(z),
\]
and
\[
  g_{\hat z}^k(z)
  :=
  \sum_{i=k+1}^n \langle Q_i(\hat z), z\rangle,
  \qquad
  h_{\bt,\hat z}^k(z)
  :=
  f_{\bt}^k(z)+g_{\hat z}^k(z).
\]
For \(u,v\in\mathcal Z\), write
\[
  \begin{aligned}
  \Delta f_{\bt}^k(u,v)
  &:=
  f_{\bt}^k(u)-f_{\bt}^k(v),\\
  \Delta \hat f_{\bt}^k(u,v)
  &:=
  \hat f_{\bt}^k(u)-\hat f_{\bt}^k(v),
  \end{aligned}
\]
and
\[
  \begin{aligned}
  \Delta g_{\hat z}^k(u,v)
  &:=
  g_{\hat z}^k(u)-g_{\hat z}^k(v),\\
  \Delta h_{\bt,\hat z}^k(u,v)
  &:=
  h_{\bt,\hat z}^k(u)-h_{\bt,\hat z}^k(v).
  \end{aligned}
\]
Finally, define
\[
  \Phi_{\bt,\hat z}^k(u,v)
  :=
  \Delta h_{\bt,\hat z}^k(u,v)
  +
  \frac{H_\Sigma^k}{2}\|u-v\|_{\bP}^2,
\]
where
\[
  H_\Sigma^k
  :=
  \sum_{\ell=1}^k \eta_{t_1,\dots,t_\ell}^{\ell}.
\]
We use the convention that empty sums are equal to zero.

The constants \(L_i^{k,\bt}\) and \(\delta_i^{k,\bt}\) are those from
Lemma~\ref{lem:affine_rescaled_oracle}. We define the transformed Lipschitz constants for the
operators by
\[
  M_i^{k,\bt}
  :=
  M_i
  \prod_{\ell=1}^k
  \frac{\alpha_{t_\ell}}{\alpha_{T_\ell-1}}.
\]

\begin{lemma}\label{lem:recursive_gap_bound}
Fix \(\hat z\in\mathcal Z\). For every \(0\leq k\leq n\) and
\(\bt=(t_1,\dots,t_k)\), where for \(k\geq 1\)
\[
  0\leq t_\ell\leq T_\ell-1,
  \qquad \ell=1,\dots,k,
\]
and where \(\bt=\varnothing\) when \(k=0\), we have
\[
  \Phi_{\bt,\hat z}^k(\tilde z_{\bt}^k,\hat z)
  \leq
  S_{\bt}^k+R_{\bt}^k.
\]
Here
\[
  A_i^{k,\bt}
  :=
  L_i^{k,\bt}
  \prod_{j=k+1}^i\alpha_{T_j-1}^2
  +
  M_i^{k,\bt}
  \prod_{j=k+1}^i\alpha_{T_j-1},
\]
and
\[
  S_{\bt}^k
  :=
  \sum_{i=k+1}^n
  \frac{A_i^{k,\bt}}{2}
  \Bigl(
    \|z_{\bt^{i,0}}^i-\hat z\|_{\bP}^2
    -
    \|z_{\bt_+^{i,0}}^i-\hat z\|_{\bP}^2
  \Bigr),
\]
and
\[
  R_{\bt}^k
  :=
  \sum_{i=k+1}^n
  \frac{
    \delta_i^{k,\bt}
  }{
    \prod_{j=k+1}^i \alpha_{T_j-1}
  }.
\]
We use the conventions
\[
  L_i^{0,\varnothing}=L_i,
  \qquad
  M_i^{0,\varnothing}=M_i,
  \qquad
  \delta_i^{0,\varnothing}=\delta_i,
\]
and empty products are equal to \(1\).

For \(i\geq k+1\), the multi-indices in \(S_{\bt}^k\) are defined by
\[
  \mathbf 0_r:=(0,\dots,0)\in\mathbb N^r,
\]
\[
  \bt^{i,0}
  :=
  (\bt,\mathbf 0_{i-k})
  \in\mathbb N^i.
\]
For \(k\geq1\), set
\[
  \bt_+:=(t_1,\dots,t_{k-1},t_k+1).
\]
Then
\[
  \bt_+^{i,0}\in\mathbb N^i,
  \qquad
  \bt_+^{i,0}
  :=
  \begin{cases}
    (T_1,\mathbf 0_{i-1}),
    & k=0, \\[2mm]
    (\bt_+,\mathbf 0_{i-k}),
    & 1\leq k\leq n-1.
  \end{cases}
\]
When \(k=n\), the sums defining \(S_{\bt}^n\) and \(R_{\bt}^n\) are empty,
so \(S_{\bt}^n=R_{\bt}^n=0\).
\end{lemma}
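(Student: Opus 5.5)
We argue by backward induction on \(k\), from \(k=n\) down to \(k=0\).

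\textbf{Base case \(k=n\).} Here \(\tilde z_{\bt}^n\) is the exact minimizer in line~\ref{line:base_case_return} of \(f_{\bt}^n=\sum_i p_i^{n,\bt}\), and \(g^n_{\hat z}\equiv0\). Every \(p_i^{n,\bt}\) is either a rescaled copy of the original \(p_i\) or a linear term plus \(\frac{\eta^\ell}{2}\|\cdot\|_{\bP}^2\) for some level \(\ell\). So \(f_{\bt}^n\) is \(H_\Sigma^n\)-strongly convex in \(\|\cdot\|_{\bP}\). By first-order optimality, \(\Delta f_{\bt}^n(\tilde z,\hat z)+\frac{H_\Sigma^n}{2}\|\tilde z-\hat z\|_{\bP}^2\le0\), which is the claim since \(S^n_{\bt}=R^n_{\bt}=0\).

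\textbf{Inductive step \(k+1\Rightarrow k\).} Fix \(\bt\) at level \(k\) and consider the loop over \(j=0,\dots,T_{k+1}-1\). Write \(\bt'=(\bt,j)\), \(w_j=z^{k+1}_{\bt'}\), \(\tilde w_j=\tilde z^{k+1}_{\bt'}\), \(\bar w_j=\bar z^{k+1}_{\bt'}\). The goal is a one-step inequality mimicking accelerated Mirror-Prox (Algorithm~\ref{alg:one_level}), built in four steps.

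\emph{Step 1.} Apply the induction hypothesis at \((k+1,\bt')\) with comparison point \(\hat z\). This bounds \(\Phi^{k+1}_{\bt',\hat z}(\tilde w_j,\hat z)\) by \(S^{k+1}_{\bt'}+R^{k+1}_{\bt'}\).

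\emph{Step 2.} Unfold \(f^{k+1}_{\bt'}\). It contains:
\begin{itemize}
\item the rescaled functions \(\hat p_i^{k+1,\bt'}\), \(i\neq k+1\), which by line~\ref{line:define_hat_p} equal \(\alpha_j^{-1}p_i^{k,\bt}(\alpha_j\cdot+(1-\alpha_j)\bar w_j)\) for \(i>k\);
\item the linearized term for \(i=k+1\), for which we use Lemma~\ref{lem:affine_rescaled_oracle} and the \((\delta,L)\)-oracle inequality to replace the linear model by \(\hat p_{k+1}^{k+1,\bt'}\). This costs \(\frac{L^{k+1,\bt'}_{k+1}}{2}\|\tilde w_j-w_j\|^2_{\bP}+\delta^{k+1,\bt'}_{k+1}\).
\end{itemize}

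\emph{Step 3.} Handle the operator term \(\langle Q_{k+1}(w_j),\cdot\rangle\) by the extragradient step in line~\ref{line:update_z}. By the standard three-point prox lemma and \(M^{k+1,\bt'}_{k+1}\)-Lipschitzness, it is exchanged for \(\langle Q_{k+1}(\tilde w_j),\tilde w_j-\hat z\rangle\) plus \(\frac{\eta}{2}(\|w_j-\hat z\|^2-\|w_{j+1}-\hat z\|^2)\). The choice \(\eta^{k+1}_{\bt'}\ge L^{k+1,\bt'}_{k+1}+M^{k+1,\bt'}_{k+1}\) from \eqref{eq:eta_inexact} absorbs the \(\|\tilde w_j-w_j\|^2\) terms. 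Monotonicity then replaces \(Q_{k+1}(\tilde w_j)\) by \(Q_{k+1}(\hat z)\), so \(Q_{k+1}\) joins \(g^k_{\hat z}\).

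\emph{Step 4.} Use convexity of each \(p^{k,\bt}_i\) along \(\bar w_{j+1}=\alpha_j\tilde w_j+(1-\alpha_j)\bar w_j\). Multiplying the resulting inequality by \(\alpha_j\) (the \(\alpha_j^{-1}\) rescaling makes this exact) gives
\[
h^k_{\bt,\hat z}(\bar w_{j+1})-h^k_{\bt,\hat z}(\hat z)\le(1-\alpha_j)\bigl(h^k_{\bt,\hat z}(\bar w_j)-h^k_{\bt,\hat z}(\hat z)\bigr)+\alpha_j[\cdots].
\]

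Next we divide by \(\alpha_j^2\) and sum over \(j\). The identity \eqref{eq:alpha_identity} telescopes the function gaps, leaving \(\alpha_{T_{k+1}-1}^{-2}\bigl(h^k(\bar w_{T})-h^k(\hat z)\bigr)\) with \(\bar w_T=\tilde z^k_{\bt}\). The distance terms telescope to the level-\(k+1\) contribution of \(S^k_{\bt}\); the warm start in line~\ref{line:warm_start} makes the inner \(S^{k+1}_{\bt'}\) terms telescope across \(j\) to the \(i>k+1\) contributions. Multiplying back by \(\alpha_{T_{k+1}-1}^2\) produces the factors \(\prod_j\alpha_{T_j-1}^2\) on \(L\) and \(\prod_j\alpha_{T_j-1}\) on \(M\) in \(A_i^{k,\bt}\), and the residual \(R_{\bt}^k\).

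\textbf{Main obstacle.} The delicate part is this weight bookkeeping. The \(\eta\)-terms come with weight \(\alpha_j/\alpha_j^2\), and the scaling in \(M^{k,\bt}\) by \(\alpha_{t}/\alpha_{T-1}\) is what makes those weights nondecreasing, so that a telescoping bound by the first term is valid. The remaining strongly-convex leftover \(\frac{H^k_\Sigma}{2}\|\tilde z^k_{\bt}-\hat z\|^2\) must be recovered from the level-\((k+1)\) quadratic. I would first verify the whole scheme carefully in the two-level case, Algorithm~\ref{alg:two_level}, and then lift it.
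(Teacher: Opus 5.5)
Your plan is the paper's proof step for step. It has the same strong-convexity base case and the same chain: the oracle lower bound for the linearized component, then the extragradient prox inequality with Young and $M^{k+1,\bt'}_{k+1}\geq M_{k+1}$, then monotonicity, then convexity along $\bar w_{j+1}=\alpha_j\tilde w_j+(1-\alpha_j)\bar w_j$. In that step the $\eta^{k+1}_{\bt'}$-part of $H^{k+1}_\Sigma$ cancels the extragradient's $-\frac{\eta}{2}\|\tilde w_j-\hat z\|_{\bP}^2$, and the $H^k_\Sigma$-part is carried to $\bar w_{j+1}$ by convexity of $\|\cdot-\hat z\|_{\bP}^2$. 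You then divide by $\alpha_j^2$ and telescope via \eqref{eq:alpha_identity} and the warm start, as the paper does.

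One correction to your ``main obstacle'' remark. Let $c_j$ be the weight on $\|w_j-\hat z\|_{\bP}^2-\|w_{j+1}-\hat z\|_{\bP}^2$. If $c_j$ were merely nondecreasing, this would leave positive terms $\sum_{j\geq1}(c_j-c_{j-1})\|w_j-\hat z\|_{\bP}^2$ and the bound would fail. In fact the weights are exactly constant in $j$, because $\eta^{k+1}_{\bt'}/\alpha_j=L^{k,\bt}_{k+1}+M^{k,\bt}_{k+1}/\alpha_{T_{k+1}-1}$ and, for $i\geq k+2$, $\alpha_{T_{k+1}-1}^2A_i^{k+1,\bt'}/\alpha_j=A_i^{k,\bt}$. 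This constancy is what makes both the level-$(k+1)$ distances and the warm-started deeper terms telescope into $S^k_{\bt}$ with equal initial and terminal coefficients. Finally, $R^k_{\bt}$ comes from the inequality $\sum_{t<T}\alpha_{T-1}^2/\alpha_t^2\leq1/\alpha_{T-1}$, not from a telescoping identity.
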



\begin{proof}
We prove the claim by backward induction on \(k\).

\paragraph{Base case \(k=n\).}
Let \(\bt=(t_1,\dots,t_n)\), where
\(0\leq t_\ell\leq T_\ell-1\) for \(\ell=1,\dots,n\).
Since the sum defining \(g_{\hat z}^n\) is empty, we have
\[
  g_{\hat z}^n(z)\equiv 0,
  \qquad
  h_{\bt,\hat z}^n(z)=f_{\bt}^n(z).
\]
Moreover, by line~\ref{line:base_case_return} of
Algorithm~\ref{alg:sliding_recursive},
\[
  \tilde z_{\bt}^n
  =
  \argmin_{z\in\mathcal Z} f_{\bt}^n(z).
\]
Since \(f_{\bt}^n\) is \(H_\Sigma^n\)-strongly convex with respect to
\(\|\cdot\|_{\bP}\), it follows that, for every \(\hat z\in\mathcal Z\),
\[
  f_{\bt}^n(\hat z)
  \geq
  f_{\bt}^n(\tilde z_{\bt}^n)
  +
  \frac{H_\Sigma^n}{2}
  \|\tilde z_{\bt}^n-\hat z\|_{\bP}^2.
\]
Therefore,
\[
  \Phi_{\bt,\hat z}^n(\tilde z_{\bt}^n,\hat z)
  =
  f_{\bt}^n(\tilde z_{\bt}^n)
  -
  f_{\bt}^n(\hat z)
  +
  \frac{H_\Sigma^n}{2}
  \|\tilde z_{\bt}^n-\hat z\|_{\bP}^2
  \leq 0.
\]
On the other hand, when \(k=n\), both sums defining \(S_{\bt}^n\) and
\(R_{\bt}^n\) are empty, so
\[
  S_{\bt}^n=0,
  \qquad
  R_{\bt}^n=0.
\]
Hence
\[
  \Phi_{\bt,\hat z}^n(\tilde z_{\bt}^n,\hat z)
  \leq
  S_{\bt}^n+R_{\bt}^n.
\]



\paragraph{Inductive step.}
Fix \(1\leq k\leq n\). Assume that the claim holds at level \(k\), i.e.,
for every \(\bt=(t_1,\dots,t_k)\), where
\(0\leq t_\ell\leq T_\ell-1\) for \(\ell=1,\dots,k\),
\[
  \Phi_{\bt,\hat z}^k(\tilde z_{\bt}^k,\hat z)
  \leq
  S_{\bt}^k+R_{\bt}^k.
\]
Let
\[
  \bt'=(t_1,\dots,t_{k-1}),
\]
with the convention that \(\bt'=\varnothing\) when \(k=1\). We prove
\[
  \Phi_{\bt',\hat z}^{k-1}
  (\tilde z_{\bt'}^{k-1},\hat z)
  \leq
  S_{\bt'}^{k-1}+R_{\bt'}^{k-1}.
\]

For any \(\bt=(\bt',t_k)\), \(0\leq t_{k}\leq T_k-1\), we have
\begin{align*}
  \Phi_{\bt,\hat z}^k(\tilde z_{\bt}^k,\hat z)
  &=
  \Delta h_{\bt,\hat z}^k(\tilde z_{\bt}^k,\hat z)
  +
  \frac{H_\Sigma^k}{2}
  \|\tilde z_{\bt}^k-\hat z\|_{\bP}^2.
\end{align*}

For any \(z\in\mathcal Z\), by line~\ref{line:define_p_next} of
Algorithm~\ref{alg:sliding_recursive}, the only component in which
\(f_{\bt}^k\) and \(\hat f_{\bt}^k\) differ is the \(k\)-th component.
Hence
\begin{align*}
  f_{\bt}^k(z)-\hat f_{\bt}^k(z)
  &=
  p_k^{k,\bt}(z)-\hat p_k^{k,\bt}(z),
\end{align*}
and consequently
\begin{align*}
  \Delta f_{\bt}^k(\tilde z_{\bt}^k,\hat z)
  &=
  \Delta \hat f_{\bt}^k(\tilde z_{\bt}^k,\hat z)
  +
  \Delta p_k^{k,\bt}(\tilde z_{\bt}^k,\hat z)
  -
  \Delta \hat p_k^{k,\bt}(\tilde z_{\bt}^k,\hat z)
  \\
  &=
  \Delta \hat f_{\bt}^k(\tilde z_{\bt}^k,\hat z)
  +
  \left[
    p_k^{k,\bt}(\tilde z_{\bt}^k)
    -
    \hat p_k^{k,\bt}(\tilde z_{\bt}^k)
  \right]
  -
  \left[
    p_k^{k,\bt}(\hat z)
    -
    \hat p_k^{k,\bt}(\hat z)
  \right].
\end{align*}


  For any function \(f\) equipped with a first-order \((\delta,L)\)-oracle,
we have, for all \(x,y,z\in\mathcal Z\),
\begin{equation}
  f(z)-f(y)
  \geq
  \left\langle \tilde\nabla f(x), z-y \right\rangle
  -
  \frac{L}{2}\|y-x\|_{\bP}^2
  -
  \delta .
  \label{eq:oracle_lower_three_point}
\end{equation}

  By Lemma~\ref{lem:affine_rescaled_oracle},
\(\hat p_k^{k,\bt}\) is equipped with a first-order
\((\delta_k^{k,\bt},L_k^{k,\bt})\)-oracle. Therefore,
using line~\ref{line:define_p_next} of
Algorithm~\ref{alg:sliding_recursive}, we obtain
\begin{align*}
  \Delta f_{\bt}^k(\tilde z_{\bt}^k,\hat z)
  &=
  \Delta \hat f_{\bt}^k(\tilde z_{\bt}^k,\hat z)
  +
  \left[
    p_k^{k,\bt}(\tilde z_{\bt}^k)-p_k^{k,\bt}(\hat z)
  \right]
  -
  \Delta \hat p_k^{k,\bt}(\tilde z_{\bt}^k,\hat z)
  \\
  &=
  \Delta \hat f_{\bt}^k(\tilde z_{\bt}^k,\hat z)
  +
  \frac{\eta_{\bt}^k}{2}
  \left(
    \|\tilde z_{\bt}^k-z_{\bt}^k\|_{\bP}^2
    -
    \|\hat z-z_{\bt}^k\|_{\bP}^2
  \right)
  \\
  &\quad
  +
  \left\langle
    \tilde\nabla \hat p_k^{k,\bt}(z_{\bt}^k)
    +
    Q_k(z_{\bt}^k),
    \tilde z_{\bt}^k-\hat z
  \right\rangle
  -
  \Delta \hat p_k^{k,\bt}(\tilde z_{\bt}^k,\hat z)
  \\
  &\geq
  \Delta \hat f_{\bt}^k(\tilde z_{\bt}^k,\hat z)
  +
  \frac{\eta_{\bt}^k-L_k^{k,\bt}}{2}
  \|\tilde z_{\bt}^k-z_{\bt}^k\|_{\bP}^2
  \\
  &\quad
  -
  \frac{\eta_{\bt}^k}{2}
  \|\hat z-z_{\bt}^k\|_{\bP}^2
  +
  \left\langle
    Q_k(z_{\bt}^k),
    \tilde z_{\bt}^k-\hat z
  \right\rangle
  -
  \delta_k^{k,\bt}.
\end{align*}
Define
\[
  \Delta_{Q_k}^{k,\bt}
  :=
  Q_k(z_{\bt}^k)-Q_k(\tilde z_{\bt}^k).
\]
By monotonicity of \(Q_k\),
\begin{align*}
  \left\langle
    Q_k(z_{\bt}^k),
    \tilde z_{\bt}^k-\hat z
  \right\rangle
  &=
  \left\langle
    \Delta_{Q_k}^{k,\bt}
    +
    Q_k(\tilde z_{\bt}^k),
    \tilde z_{\bt}^k-\hat z
  \right\rangle \\
  &\geq
  \left\langle
    \Delta_{Q_k}^{k,\bt}
    +
    Q_k(\hat z),
    \tilde z_{\bt}^k-\hat z
  \right\rangle .
\end{align*}
Moreover, since
\[
  g_{\hat z}^{k-1}(z)
  =
  g_{\hat z}^{k}(z)
  +
  \langle Q_k(\hat z), z\rangle,
\]
we have
\[
  \Delta g_{\hat z}^{k-1}(\tilde z_{\bt}^k,\hat z)
  =
  \Delta g_{\hat z}^{k}(\tilde z_{\bt}^k,\hat z)
  +
  \left\langle
    Q_k(\hat z),
    \tilde z_{\bt}^k-\hat z
  \right\rangle .
\]

Therefore, using \(h_{\bt,\hat z}^k=f_{\bt}^k+g_{\hat z}^k\) and the previous
lower bound on \(\Delta f_{\bt}^k(\tilde z_{\bt}^k,\hat z)\), we obtain
\begin{align*}
  \Delta h_{\bt,\hat z}^k(\tilde z_{\bt}^k,\hat z)
  &\geq
  \Delta\hat f_{\bt}^k(\tilde z_{\bt}^k,\hat z)
  +
  \Delta g_{\hat z}^{k-1}(\tilde z_{\bt}^k,\hat z) \\
  &\quad
  +
  \frac{\eta_{\bt}^k-L_k^{k,\bt}}{2}
  \|\tilde z_{\bt}^k-z_{\bt}^k\|_{\bP}^2
  -
  \frac{\eta_{\bt}^k}{2}
  \|\hat z-z_{\bt}^k\|_{\bP}^2 \\
  &\quad
  +
  \left\langle
    \Delta_{Q_k}^{k,\bt},
    \tilde z_{\bt}^k-\hat z
  \right\rangle
  -
  \delta_k^{k,\bt}.
\end{align*}
Let
\[
  \hat\bt=(t_1,\dots,t_{k-1},t_k+1).
\]
From line~\ref{line:update_z} of Algorithm~\ref{alg:sliding_recursive},
we have
\[
  z_{\hat\bt}^k
  =
  \argmin_{z\in\mathcal Z}
  \Bigl\{
    \left\langle
      Q_k(\tilde z_\bt^k)-Q_k(z_\bt^k),z
    \right\rangle
    +
    \frac{\eta_\bt^k}{2}
    \|z-\tilde z_\bt^k\|_{\bP}^2
  \Bigr\}.
\]
Since
\[
  \Delta_{Q_k}^{k,\bt}
  :=
  Q_k(z_\bt^k)-Q_k(\tilde z_\bt^k),
\]
the optimality of \(z_{\hat\bt}^k\) gives, for every \(y\in\mathcal Z\),
\begin{align*}
  \left\langle
    -\Delta_{Q_k}^{k,\bt},
    z_{\hat\bt}^k-y
  \right\rangle
  &\leq
  \frac{\eta_\bt^k}{2}
  \Bigl(
    \|\tilde z_\bt^k-y\|_{\bP}^2
    -
    \|z_{\hat\bt}^k-y\|_{\bP}^2
    -
    \|z_{\hat\bt}^k-\tilde z_\bt^k\|_{\bP}^2
  \Bigr).
\end{align*}
Equivalently,
\begin{align*}
  \left\langle
    \Delta_{Q_k}^{k,\bt},
    z_{\hat\bt}^k-y
  \right\rangle
  &\geq
  \frac{\eta_\bt^k}{2}
  \Bigl(
    \|z_{\hat\bt}^k-y\|_{\bP}^2
    +
    \|z_{\hat\bt}^k-\tilde z_\bt^k\|_{\bP}^2
    -
    \|\tilde z_\bt^k-y\|_{\bP}^2
  \Bigr).
\end{align*}
Taking \(y=\hat z\), we have
\begin{align*}
  \left\langle
    \Delta_{Q_k}^{k,\bt},
    \tilde z_\bt^k-\hat z
  \right\rangle
  &=
  \left\langle
    \Delta_{Q_k}^{k,\bt},
    z_{\hat\bt}^k-\hat z
  \right\rangle
  +
  \left\langle
    \Delta_{Q_k}^{k,\bt},
    \tilde z_\bt^k-z_{\hat\bt}^k
  \right\rangle                                    \\
  &\geq
  \frac{\eta_\bt^k}{2}
  \left(
    \|z_{\hat\bt}^k-\hat z\|_{\bP}^2
    -
    \|\tilde z_\bt^k-\hat z\|_{\bP}^2
  \right)\\
  &\quad+
  \frac{\eta_\bt^k}{2}
  \|z_{\hat\bt}^k-\tilde z_\bt^k\|_{\bP}^2
  +
  \left\langle
    \Delta_{Q_k}^{k,\bt},
    \tilde z_\bt^k-z_{\hat\bt}^k
  \right\rangle .
\end{align*}
By Young's inequality,
\begin{align*}
  \left\langle
    \Delta_{Q_k}^{k,\bt},
    \tilde z_\bt^k-z_{\hat\bt}^k
  \right\rangle
  &\geq
  -
  \frac{1}{2\eta_\bt^k}
  \|\Delta_{Q_k}^{k,\bt}\|_{\bP^{-1}}^2
  -
  \frac{\eta_\bt^k}{2}
  \|\tilde z_\bt^k-z_{\hat\bt}^k\|_{\bP}^2.
\end{align*}
Therefore,
\begin{align*}
  \left\langle
    \Delta_{Q_k}^{k,\bt},
    \tilde z_\bt^k-\hat z
  \right\rangle
  &\geq
  \frac{\eta_\bt^k}{2}
  \left(
    \|z_{\hat\bt}^k-\hat z\|_{\bP}^2
    -
    \|\tilde z_\bt^k-\hat z\|_{\bP}^2
  \right)
  -
  \frac{1}{2\eta_\bt^k}
  \|\Delta_{Q_k}^{k,\bt}\|_{\bP^{-1}}^2                         \\
  &\geq
  \frac{\eta_\bt^k}{2}
  \left(
    \|z_{\hat\bt}^k-\hat z\|_{\bP}^2
    -
    \|\tilde z_\bt^k-\hat z\|_{\bP}^2
  \right)
  -
  \frac{(M_k^{k,\bt})^2}{2\eta_\bt^k}
  \|\tilde z_\bt^k-z_\bt^k\|_{\bP}^2 .
\end{align*}
where the last inequality follows from Assumption~\ref{ass:Monotone_and_Lipschitz_Operators_VI} and the definition
\[
  M_k^{k,\bt}
  =
  M_k\prod_{\ell=1}^k
  \frac{\alpha_{t_\ell}}{\alpha_{T_\ell-1}}.
\]
Indeed, since \(0\leq t_\ell\leq T_\ell-1\) and \(\{\alpha_t\}\) is
positive and nonincreasing, \(M_k^{k,\bt}\geq M_k\), and hence
\[
  \|\Delta_{Q_k}^{k,\bt}\|_{\bP^{-1}}
  \leq
  M_k\|\tilde z_\bt^k-z_\bt^k\|_{\bP}
  \leq
  M_k^{k,\bt}\|\tilde z_\bt^k-z_\bt^k\|_{\bP}.
\]
  Therefore,
\begin{align}
  \Delta h_{\bt,\hat z}^k(\tilde z_\bt^k,\hat z)
  &\geq
  \Delta \hat f_\bt^k(\tilde z_\bt^k,\hat z)
  +
  \Delta g_{\hat z}^{k-1}(\tilde z_\bt^k,\hat z) \notag\\
  &\quad
  +
  \frac{1}{2}
  \left(
    \eta_\bt^k
    -
    L_k^{k,\bt}
    -
    \frac{(M_k^{k,\bt})^2}{\eta_\bt^k}
  \right)
  \|\tilde z_\bt^k-z_\bt^k\|_{\bP}^2 \notag\\
  &\quad
  +
  \frac{\eta_\bt^k}{2}
  \left(
    \|z_{\hat\bt}^k-\hat z\|_{\bP}^2
    -
    \|\tilde z_\bt^k-\hat z\|_{\bP}^2
    -
    \|z_\bt^k-\hat z\|_{\bP}^2
  \right)
  -
  \delta_k^{k,\bt},
  \label{eq:delta_h_lower_true}
\end{align}
Since \(\eta_\bt^k=L_k^{k,\bt}+M_k^{k,\bt}\), we have
\[
  \eta_\bt^k
  -
  L_k^{k,\bt}
  -
  \frac{(M_k^{k,\bt})^2}{\eta_\bt^k}
  =
  M_k^{k,\bt}
  -
  \frac{(M_k^{k,\bt})^2}{L_k^{k,\bt}+M_k^{k,\bt}}
  \geq 0.
\]
Dropping the corresponding nonnegative term in
\eqref{eq:delta_h_lower_true}, we obtain
\begin{align}
  \Delta h_{\bt,\hat z}^k(\tilde z_\bt^k,\hat z)
  &\geq
  \Delta \hat f_\bt^k(\tilde z_\bt^k,\hat z)
  +
  \Delta g_{\hat z}^{k-1}(\tilde z_\bt^k,\hat z) \notag\\
  &\quad
  +
  \frac{\eta_\bt^k}{2}
  \left(
    \|z_{\hat\bt}^k-\hat z\|_{\bP}^2
    -
    \|\tilde z_\bt^k-\hat z\|_{\bP}^2
    -
    \|z_\bt^k-\hat z\|_{\bP}^2
  \right)
  -
  \delta_k^{k,\bt}.
  \label{eq:delta_h_lower_simplified}
\end{align}
  Using~\eqref{eq:delta_h_lower_simplified} and
\(H_\Sigma^k=H_\Sigma^{k-1}+\eta_\bt^k\), we obtain
\begin{align}
  \Phi_{\bt,\hat z}^k(\tilde z_\bt^k,\hat z)
  &=
  \Delta h_{\bt,\hat z}^k(\tilde z_\bt^k,\hat z)
  +
  \frac{H_\Sigma^k}{2}
  \|\tilde z_\bt^k-\hat z\|_{\bP}^2
  \notag\\
  &\geq
  \Delta \hat f_\bt^k(\tilde z_\bt^k,\hat z)
  +
  \Delta g_{\hat z}^{k-1}(\tilde z_\bt^k,\hat z)
  +
  \frac{H_\Sigma^{k-1}}{2}
  \|\tilde z_\bt^k-\hat z\|_{\bP}^2
  \notag\\
  &\quad
  +
  \frac{\eta_\bt^k}{2}
  \left(
    \|z_{\hat\bt}^k-\hat z\|_{\bP}^2
    -
    \|z_\bt^k-\hat z\|_{\bP}^2
  \right)
  -
  \delta_k^{k,\bt}.
  \label{eq:phi_lower_level_k}
\end{align}
Since
\[
  \Phi_{\bt,\hat z}^k(\tilde z_\bt^k,\hat z)
  \leq
  S_\bt^k+R_\bt^k,
\]
it follows from~\eqref{eq:phi_lower_level_k} that
\begin{align}
  S_\bt^k+R_\bt^k
  &\geq
  \Delta \hat f_\bt^k(\tilde z_\bt^k,\hat z)
  +
  \Delta g_{\hat z}^{k-1}(\tilde z_\bt^k,\hat z)
  +
  \frac{H_\Sigma^{k-1}}{2}
  \|\tilde z_\bt^k-\hat z\|_{\bP}^2
  \notag\\
  &\quad
  +
  \frac{\eta_\bt^k}{2}
  \left(
    \|z_{\hat\bt}^k-\hat z\|_{\bP}^2
    -
    \|z_\bt^k-\hat z\|_{\bP}^2
  \right)
  -
  \delta_k^{k,\bt}.
  \label{eq:induction_consequence_level_k}
\end{align}

Let $\alpha=\alpha_{t_k}$. By line~\ref{line:define_hat_p} of
Algorithm~\ref{alg:sliding_recursive}, we have
\begin{align*}
  \Delta \hat f_\bt^k(\tilde z_\bt^k,\hat z)
  &=
  \sum_{i=1}^n
  \left(
    \hat p_i^{k,\bt}(\tilde z_\bt^k)
    -
    \hat p_i^{k,\bt}(\hat z)
  \right) \\
  &=
  \sum_{i=1}^{k-1}
  \left(
    p_i^{k-1,\bt'}(\tilde z_\bt^k)
    -
    p_i^{k-1,\bt'}(\hat z)
  \right) \\
  &\quad
  +
  \frac{1}{\alpha}
  \sum_{i=k}^{n}
    p_i^{k-1,\bt'}
    \bigl(\alpha\tilde z_\bt^k+(1-\alpha)\bar z_\bt^k\bigr)
  \\
  &\quad
  -
  \frac{1}{\alpha}
  \sum_{i=k}^{n}
    p_i^{k-1,\bt'}
    \bigl(\alpha\hat z+(1-\alpha)\bar z_\bt^k\bigr)
  .
\end{align*}
By line~\ref{line:update_bar_z} of
Algorithm~\ref{alg:sliding_recursive},
\[
  \bar z_{\hat\bt}^k
  =
  \alpha\tilde z_\bt^k+(1-\alpha)\bar z_\bt^k.
\]
Therefore,
\begin{align*}
  \Delta \hat f_\bt^k(\tilde z_\bt^k,\hat z)
  &=
  \sum_{i=1}^{k-1}
  \left(
    p_i^{k-1,\bt'}(\tilde z_\bt^k)
    -
    p_i^{k-1,\bt'}(\hat z)
  \right) \\
  &\quad
  +
  \frac{1}{\alpha}
  \sum_{i=k}^{n}
    p_i^{k-1,\bt'}(\bar z_{\hat\bt}^k)
  \\
  &\quad
  -
  \frac{1}{\alpha}
  \sum_{i=k}^{n}
    p_i^{k-1,\bt'}
    \bigl(\alpha\hat z+(1-\alpha)\bar z_\bt^k\bigr)
  .
\end{align*}
By convexity of \(p_i^{k-1,\bt'}\), for \(i\geq k\),
\[
  p_i^{k-1,\bt'}
  \bigl(\alpha\hat z+(1-\alpha)\bar z_\bt^k\bigr)
  \leq
  \alpha p_i^{k-1,\bt'}(\hat z)
  +(1-\alpha)p_i^{k-1,\bt'}(\bar z_\bt^k),
\]
and, for \(i\leq k-1\),
\[
  p_i^{k-1,\bt'}(\tilde z_\bt^k)
  \geq
  \frac{1}{\alpha}p_i^{k-1,\bt'}(\bar z_{\hat\bt}^k)
  -
  \frac{1-\alpha}{\alpha}p_i^{k-1,\bt'}(\bar z_\bt^k).
\]
Consequently,
\begin{align}
  \Delta \hat f_\bt^k(\tilde z_\bt^k,\hat z)
  &\geq
  \frac{1}{\alpha}
  \sum_{i=1}^{n}
  p_i^{k-1,\bt'}(\bar z_{\hat\bt}^k)
  -
  \frac{1-\alpha}{\alpha}
  \sum_{i=1}^{n}
  p_i^{k-1,\bt'}(\bar z_\bt^k)
  \notag\\
  &\quad-
  \sum_{i=1}^{n}
  p_i^{k-1,\bt'}(\hat z) \notag\\
  &=
  \frac{1}{\alpha}
  \Delta f_{\bt'}^{k-1}(\bar z_{\hat\bt}^k,\hat z)
  -
  \frac{1-\alpha}{\alpha}
  \Delta f_{\bt'}^{k-1}(\bar z_\bt^k,\hat z).
  \label{eq:delta_hat_f_lower}
\end{align}


  Moreover, by line~\ref{line:update_bar_z} of
Algorithm~\ref{alg:sliding_recursive}, with
\(\alpha=\alpha_{t_k}\), we have
\[
  \bar z_{\hat\bt}^k
  =
  \alpha \tilde z_\bt^k
  +
  (1-\alpha)\bar z_\bt^k.
\]
Equivalently,
\[
  \tilde z_\bt^k-\hat z
  =
  \frac{1}{\alpha}
  \bigl(\bar z_{\hat\bt}^k-\hat z\bigr)
  -
  \frac{1-\alpha}{\alpha}
  \bigl(\bar z_\bt^k-\hat z\bigr).
\]
Therefore,
\begin{align*}
  \Delta g_{\hat z}^{k-1}(\tilde z_\bt^k,\hat z)
  &=
  \sum_{i=k}^n
  \left\langle
    Q_i(\hat z),\tilde z_\bt^k-\hat z
  \right\rangle \\
  &=
  \frac{1}{\alpha}
  \sum_{i=k}^n
  \left\langle
    Q_i(\hat z),\bar z_{\hat\bt}^k-\hat z
  \right\rangle
  -
  \frac{1-\alpha}{\alpha}
  \sum_{i=k}^n
  \left\langle
    Q_i(\hat z),\bar z_\bt^k-\hat z
  \right\rangle \\
  &=
  \frac{1}{\alpha}
  \Delta g_{\hat z}^{k-1}(\bar z_{\hat\bt}^k,\hat z)
  -
  \frac{1-\alpha}{\alpha}
  \Delta g_{\hat z}^{k-1}(\bar z_\bt^k,\hat z).
\end{align*}
By convexity of the squared \(\bP\)-norm,
\[
  \|\bar z_{\hat\bt}^k-\hat z\|_{\bP}^2
  \leq
  \alpha\|\tilde z_\bt^k-\hat z\|_{\bP}^2
  +
  (1-\alpha)\|\bar z_\bt^k-\hat z\|_{\bP}^2.
\]
Rearranging gives
\[
  \frac{H_\Sigma^{k-1}}{2}
  \|\tilde z_\bt^k-\hat z\|_{\bP}^2
  \geq
  \frac{H_\Sigma^{k-1}}{2\alpha}
  \|\bar z_{\hat\bt}^k-\hat z\|_{\bP}^2
  -
  \frac{(1-\alpha)H_\Sigma^{k-1}}{2\alpha}
  \|\bar z_\bt^k-\hat z\|_{\bP}^2.
\]
Consequently,
\begin{align*}
  &\Delta g_{\hat z}^{k-1}(\tilde z_\bt^k,\hat z)
  +
  \frac{H_\Sigma^{k-1}}{2}
  \|\tilde z_\bt^k-\hat z\|_{\bP}^2 \\
  &\quad\geq
  \frac{1}{\alpha}
  \left[
    \Delta g_{\hat z}^{k-1}(\bar z_{\hat\bt}^k,\hat z)
    +
    \frac{H_\Sigma^{k-1}}{2}
    \|\bar z_{\hat\bt}^k-\hat z\|_{\bP}^2
  \right] \\
  &\qquad
  -
  \frac{1-\alpha}{\alpha}
  \left[
    \Delta g_{\hat z}^{k-1}(\bar z_\bt^k,\hat z)
    +
    \frac{H_\Sigma^{k-1}}{2}
    \|\bar z_\bt^k-\hat z\|_{\bP}^2
  \right].
\end{align*}
It follows from~\eqref{eq:induction_consequence_level_k},
\eqref{eq:delta_hat_f_lower}, and the preceding bound on
\(\Delta g_{\hat z}^{k-1}\) that
\begin{align}
  S_\bt^k+R_\bt^k
  &\geq
  \frac{1}{\alpha_{t_k}}
  \Phi_{\bt',\hat z}^{k-1}(\bar z_{\hat\bt}^k,\hat z)
  -
  \frac{1-\alpha_{t_k}}{\alpha_{t_k}}
  \Phi_{\bt',\hat z}^{k-1}(\bar z_{\bt}^k,\hat z) \notag\\
  &\quad
  +
  \frac{\eta_\bt^k}{2}
  \left(
    \|z_{\hat\bt}^k-\hat z\|_{\bP}^2
    -
    \|z_{\bt}^k-\hat z\|_{\bP}^2
  \right)
  -
  \delta_k^{k,\bt}.
  \label{eq:pre_telescoping_level_k}
\end{align}
Dividing both sides by \(\alpha_{t_k}\), we obtain
\begin{align}
  \frac{S_\bt^k+R_\bt^k}{\alpha_{t_k}}
  &\geq
  \frac{1}{\alpha_{t_k}^2}
  \Phi_{\bt',\hat z}^{k-1}(\bar z_{\hat\bt}^k,\hat z)
  -
  \frac{1-\alpha_{t_k}}{\alpha_{t_k}^2}
  \Phi_{\bt',\hat z}^{k-1}(\bar z_{\bt}^k,\hat z) \notag\\
  &\quad
  +
  \frac{\eta_\bt^k}{2\alpha_{t_k}}
  \left(
    \|z_{\hat\bt}^k-\hat z\|_{\bP}^2
    -
    \|z_{\bt}^k-\hat z\|_{\bP}^2
  \right)
  -
  \frac{\delta_k^{k,\bt}}{\alpha_{t_k}}.
  \label{eq:pre_telescoping_level_k_divided}
\end{align}
Using
\[
  \frac{1-\alpha_{t_k}}{\alpha_{t_k}^2}
  =
  \frac{1}{\alpha_{t_k-1}^2},
  \qquad t_k\geq 1,
\]
with the convention \(1/\alpha_{-1}^2:=0\), and using
\(\delta_k^{k,\bt}=\delta_k^{k-1,\bt'}/\alpha_{t_k}\), we get
\begin{align}
  \frac{S_\bt^k+R_\bt^k}{\alpha_{t_k}}
  &\geq
  \frac{1}{\alpha_{t_k}^2}
  \Phi_{\bt',\hat z}^{k-1}(\bar z_{\hat\bt}^k,\hat z)
  -
  \frac{1}{\alpha_{t_k-1}^2}
  \Phi_{\bt',\hat z}^{k-1}(\bar z_{\bt}^k,\hat z) \notag\\
  &\quad
  +
  \frac{\eta_\bt^k}{2\alpha_{t_k}}
  \left(
    \|z_{\hat\bt}^k-\hat z\|_{\bP}^2
    -
    \|z_{\bt}^k-\hat z\|_{\bP}^2
  \right)
  -
  \frac{\delta_k^{k-1,\bt'}}{\alpha_{t_k}^2}.
  \label{eq:telescoping_step_level_k}
\end{align}
  Summing~\eqref{eq:telescoping_step_level_k} over
\(t_k=0,\dots,T_k-1\), we obtain
\begin{align*}
  \sum_{t_k=0}^{T_k-1}
  \frac{S_\bt^k+R_\bt^k}{\alpha_{t_k}}
  &\geq
  \frac{1}{\alpha_{T_k-1}^2}
  \Phi_{\bt',\hat z}^{k-1}
  \bigl(\bar z_{\bt',T_k}^k,\hat z\bigr) \\
  &\quad
  +
  \sum_{t_k=0}^{T_k-1}
  \frac{\eta_\bt^k}{2\alpha_{t_k}}
  \left(
    \|z_{\hat\bt}^k-\hat z\|_{\bP}^2
    -
    \|z_\bt^k-\hat z\|_{\bP}^2
  \right) \\
  &\quad
  -
  \delta_k^{k-1,\bt'}
  \sum_{t_k=0}^{T_k-1}
  \frac{1}{\alpha_{t_k}^2}.
\end{align*}
Moreover, by the definitions of \(L_k^{k,\bt}\), \(M_k^{k,\bt}\), and
\(\eta_\bt^k\),
\[
  \frac{\eta_\bt^k}{\alpha_{t_k}}
  =
  L_k^{k-1,\bt'}
  +
  \frac{M_k^{k-1,\bt'}}{\alpha_{T_k-1}},
\]
which is independent of \(t_k\). For brevity, set
\[
  \begin{aligned}
  D_k^{\bt'}
  &:=
  L_k^{k-1,\bt'}
  +\frac{M_k^{k-1,\bt'}}{\alpha_{T_k-1}},\\
  C_k^{\bt'}
  &:=
  \alpha_{T_k-1}^2D_k^{\bt'}\\
  &=
  L_k^{k-1,\bt'}\alpha_{T_k-1}^2
  +M_k^{k-1,\bt'}\alpha_{T_k-1}.
  \end{aligned}
\]
Hence the second sum telescopes:
\begin{align*}
  \sum_{t_k=0}^{T_k-1}
  \frac{S_\bt^k+R_\bt^k}{\alpha_{t_k}}
  &\geq
  \frac{1}{\alpha_{T_k-1}^2}
  \Phi_{\bt',\hat z}^{k-1}
  \bigl(\bar z_{\bt',T_k}^k,\hat z\bigr) \\
  &\quad
  +
  \frac{D_k^{\bt'}}{2}
  \left(
    \|z_{\bt',T_k}^k-\hat z\|_{\bP}^2
    -
    \|z_{\bt',0}^k-\hat z\|_{\bP}^2
  \right) \\
  &\quad
  -
  \delta_k^{k-1,\bt'}
  \sum_{t_k=0}^{T_k-1}
  \frac{1}{\alpha_{t_k}^2}.
\end{align*}
After multiplying both sides by \(\alpha_{T_k-1}^2\), we obtain
\begin{align}
  \sum_{t_k=0}^{T_k-1}
  \frac{\alpha_{T_k-1}^2}{\alpha_{t_k}}
  \left(S_\bt^k+R_\bt^k\right)
  &\geq
  \Phi_{\bt',\hat z}^{k-1}
  \bigl(\bar z_{\bt',T_k}^k,\hat z\bigr) \notag\\
  &\qquad
  +
  \frac{C_k^{\bt'}}{2}
  \left(
    \|z_{\bt',T_k}^k-\hat z\|_{\bP}^2
    -
    \|z_{\bt',0}^k-\hat z\|_{\bP}^2
  \right) \notag\\
  &\qquad
  -
  \delta_k^{k-1,\bt'}
  \sum_{t_k=0}^{T_k-1}
  \frac{\alpha_{T_k-1}^2}{\alpha_{t_k}^2}.
  \label{eq:after_multiply_alpha_T}
\end{align}
By lines~\ref{line:recursive_call} and~\ref{line:recursive_return} of
Algorithm~\ref{alg:sliding_recursive}, we have
\[
  \tilde z_{\bt'}^{k-1}
  =
  \bar z_{\bt',T_k}^k.
\]
Therefore, \eqref{eq:after_multiply_alpha_T} becomes
\begin{align}
  \sum_{t_k=0}^{T_k-1}
  \frac{\alpha_{T_k-1}^2}{\alpha_{t_k}}
  \left(S_\bt^k+R_\bt^k\right)
  &\geq
  \Phi_{\bt',\hat z}^{k-1}
  \bigl(\tilde z_{\bt'}^{k-1},\hat z\bigr) \notag\\
  &\qquad
  +
  \frac{C_k^{\bt'}}{2}
  \left(
    \|z_{\bt',T_k}^k-\hat z\|_{\bP}^2
    -
    \|z_{\bt',0}^k-\hat z\|_{\bP}^2
  \right) \notag\\
  &\qquad
  -
  \delta_k^{k-1,\bt'}
  \sum_{t_k=0}^{T_k-1}
  \frac{\alpha_{T_k-1}^2}{\alpha_{t_k}^2}.
  \label{eq:after_multiply_alpha_T_tilde}
\end{align}
Rearranging~\eqref{eq:after_multiply_alpha_T_tilde}, we get
\begin{align}
  \Phi_{\bt',\hat z}^{k-1}
  \bigl(\tilde z_{\bt'}^{k-1},\hat z\bigr)
  &\leq
  \sum_{t_k=0}^{T_k-1}
  \frac{\alpha_{T_k-1}^2}{\alpha_{t_k}}
  \left(S_\bt^k+R_\bt^k\right) \notag\\
  &\quad
  +
  \frac{C_k^{\bt'}}{2}
  \left(
    \|z_{\bt',0}^k-\hat z\|_{\bP}^2
    -
    \|z_{\bt',T_k}^k-\hat z\|_{\bP}^2
  \right) \notag\\
  &\quad
  +
  \delta_k^{k-1,\bt'}
  \sum_{t_k=0}^{T_k-1}
  \frac{\alpha_{T_k-1}^2}{\alpha_{t_k}^2}.
  \label{eq:rearranged_induction_step}
\end{align}

By the definitions of \(L_i^{k,\bt}\), \(M_i^{k,\bt}\), and \(S_\bt^k\),
and by the warm-start relation in line~\ref{line:warm_start} of
Algorithm~\ref{alg:sliding_recursive}, the terms involving squared
distances telescope. Hence
\begin{align}
  \sum_{t_k=0}^{T_k-1}
  \frac{\alpha_{T_k-1}^2}{\alpha_{t_k}}S_\bt^k
  +
  \frac{C_k^{\bt'}}{2}
  \left(
    \|z_{\bt',0}^k-\hat z\|_{\bP}^2
    -
    \|z_{\bt',T_k}^k-\hat z\|_{\bP}^2
  \right)
  &=
  S_{\bt'}^{k-1}.
  \label{eq:S_telescoping_level_k}
\end{align}

We have
\[
  \frac{1}{\alpha_t^2}
  -
  \frac{1}{\alpha_{t-1}^2}
  =
  \frac{1}{\alpha_t},
  \qquad t\geq 1.
\]
Therefore,
\[
  \sum_{t=0}^{T-1}\frac{1}{\alpha_t}
  =
  \frac{1}{\alpha_{T-1}^2}.
\]
Since \(\{\alpha_t\}\) is nonincreasing, for \(0\leq t\leq T-1\),
\[
  \frac{\alpha_{T-1}^2}{\alpha_t^2}
  \leq
  \frac{\alpha_{T-1}}{\alpha_t}.
\]
Hence
\[
  \sum_{t=0}^{T-1}
  \frac{\alpha_{T-1}^2}{\alpha_t^2}
  \leq
  \alpha_{T-1}
  \sum_{t=0}^{T-1}\frac{1}{\alpha_t}
  =
  \frac{1}{\alpha_{T-1}}.
\]
Applying this with \(T=T_k\), and using
\[
  \delta_i^{k,\bt}
  =
  \frac{\delta_i^{k-1,\bt'}}{\alpha_{t_k}},
  \qquad
  \bt=(\bt',t_k),
\]
we obtain
\begin{align}
  &\sum_{t_k=0}^{T_k-1}
  \frac{\alpha_{T_k-1}^2}{\alpha_{t_k}}R_\bt^k
  +
  \delta_k^{k-1,\bt'}
  \sum_{t_k=0}^{T_k-1}
  \frac{\alpha_{T_k-1}^2}{\alpha_{t_k}^2}
  \notag\\
  &\quad =
  \left(
    \sum_{t_k=0}^{T_k-1}
    \frac{\alpha_{T_k-1}^2}{\alpha_{t_k}^2}
  \right)
  \left[
    \delta_k^{k-1,\bt'}
    +
    \sum_{i=k+1}^n
    \frac{
      \delta_i^{k-1,\bt'}
    }{
      \prod_{j=k+1}^i \alpha_{T_j-1}
    }
  \right]
  \notag\\
  &\quad \leq
  \frac{\delta_k^{k-1,\bt'}}{\alpha_{T_k-1}}
  +
  \sum_{i=k+1}^n
  \frac{
    \delta_i^{k-1,\bt'}
  }{
    \prod_{j=k}^i \alpha_{T_j-1}
  }
  =
  R_{\bt'}^{k-1}.
  \label{eq:R_bound_level_k}
\end{align}
Combining~\eqref{eq:rearranged_induction_step},
\eqref{eq:S_telescoping_level_k}, and~\eqref{eq:R_bound_level_k}, we get
\[
  \Phi_{\bt',\hat z}^{k-1}
  \bigl(\tilde z_{\bt'}^{k-1},\hat z\bigr)
  \leq
  S_{\bt'}^{k-1}+R_{\bt'}^{k-1}.
\]
Since the induction step is valid for every \(1\leq k\leq n\), the claim
holds for all \(0\leq k\leq n\).

\end{proof}

\begin{proof}[Proof of Theorem~\ref{thm:VI_sliding_inexact}]
Fix an arbitrary \(z\in\mathcal Z\) and set \(\hat z=z\). We use the
empty multi-index convention and write
\[
  \begin{aligned}
  f_\varnothing^0(u)&=p(u),\\
  g_{\hat z}^0(u)&=\langle Q(\hat z),u\rangle,\\
  h_{\varnothing,\hat z}^0(u)
  &=p(u)+\langle Q(\hat z),u\rangle .
  \end{aligned}
\]
Since \(H_\Sigma^0=0\), we have
\[
  \Phi_{\varnothing,\hat z}^0(u,\hat z)
  =
  \Delta h_{\varnothing,\hat z}^0(u,\hat z)
  =
  p(u)-p(\hat z)+\langle Q(\hat z),u-\hat z\rangle .
\]
Let \(z_{\mathrm{out}}=\tilde z_\varnothing^0\) denote the output of
Algorithm~\ref{alg:sliding_recursive}. Applying
Lemma~\ref{lem:recursive_gap_bound} with \(k=0\) and \(\hat z=z\), we get
\[
  \mathcal G(z)
  :=
  p(z_{\mathrm{out}})-p(z)
  +
  \langle Q(z),z_{\mathrm{out}}-z\rangle .
\]
Thus
\[
  \mathcal G(z)
  =
  \Phi_{\varnothing,z}^0(z_{\mathrm{out}},z)
  \leq
  S_\varnothing^0+R_\varnothing^0 .
\]

We first bound \(R_\varnothing^0\). By the definition of \(R_\bt^k\),
\[
  R_\varnothing^0
  =
  \sum_{i=1}^n
  \frac{\delta_i}{\prod_{j=1}^i\alpha_{T_j-1}} .
\]
By Lemma~\ref{lem:alpha_properties},
\[
  \alpha_{T_j-1}\geq \frac{1}{T_j},
  \qquad j=1,\dots,n.
\]
Hence
\[
  R_\varnothing^0
  \leq
  \sum_{i=1}^n
  \delta_i
  \prod_{j=1}^i T_j .
\]

Next, we bound \(S_\varnothing^0\). By the definition of \(S_\bt^k\), the
initialization \(z^i_{\mathbf 0_i}=z_{\mathrm{in}}\), and the nonpositivity
of the terminal squared-distance terms, we have
\[
  S_\varnothing^0
  \leq
  \sum_{i=1}^n
  \frac{
    L_i\prod_{j=1}^i\alpha_{T_j-1}^2
    +
    M_i\prod_{j=1}^i\alpha_{T_j-1}
  }{2}
  \|z_{\mathrm{in}}-z\|_{\bP}^2 .
\]
Using Lemma~\ref{lem:alpha_properties} again,
\[
  \alpha_{T_j-1}\leq \frac{2}{T_j},
  \qquad j=1,\dots,n,
\]
and therefore
\[
  S_\varnothing^0
  \leq
  \sum_{i=1}^n
  \left(
    \frac{4^iL_i}{\prod_{j=1}^i T_j^2}
    +
    \frac{2^iM_i}{\prod_{j=1}^i T_j}
  \right)
  \frac{1}{2}
  \|z_{\mathrm{in}}-z\|_{\bP}^2 .
\]

Combining the bounds on \(S_\varnothing^0\) and \(R_\varnothing^0\), we
obtain
\[
  \mathcal G(z)
  \leq
  \sum_{i=1}^n
  \left[
    \left(
      \frac{2^{2i-1}L_i}{\prod_{j=1}^iT_j^2}
      +
      \frac{2^{i-1}M_i}{\prod_{j=1}^iT_j}
    \right)
    \|z_{\mathrm{in}}-z\|_{\bP}^2
    +
    \delta_i\prod_{j=1}^iT_j
  \right].
\]
Since \(z\in\mathcal Z\) was arbitrary, the proof is complete.
\end{proof}
\section{Proof of Theorem~\ref{thm:VI_sliding_Holder}}\label{app:proof_VI_sliding_Holder}

\begin{proof}

\noindent\textbf{Step 1: From the H\"older condition to an inexact oracle.}

\begin{lemma}[H\"older-to-inexact oracle]\label{lem:Holder_to_inexact}
  Let \(\cU \subseteq \R^{d_z}\) be a convex set and let \(h \colon \cU \to \R\) be a convex function which satisfies the \((\nu, H)\)-H\"older condition on \(\cU\) with respect to \(\|\cdot\|_{\bP}\) (Assumption~\ref{ass:Holder_continuous_VI}), with \(0 \leq \nu \leq 1\) and \(H \geq 0\). Then for every \(\delta > 0\), the pair
  \[
    \tilde{h}(v) = h(v), \qquad \tilde{\nabla}h(v) = h'(v) \in \partial h(v)
  \]
  defines a first-order \((\delta, L(\delta))\)-oracle for \(h\) on \(\cU\) with respect to \(\|\cdot\|_{\bP}\), where
  \begin{equation}\label{eq:DGN_L_of_delta}
    L(\delta) = \Bigl[\frac{1-\nu}{2(1+\nu)} \cdot \frac{1}{\delta}\Bigr]^{\frac{1-\nu}{1+\nu}} H^{\frac{2}{1+\nu}}.
  \end{equation}
  In the limit \(\nu = 1\), the formula reduces to \(L(\delta) = H\) for every \(\delta \geq 0\) (a \((0, H)\)-oracle, i.e., exact gradient with \(H\)-Lipschitz smoothness in \(\|\cdot\|_{\bP}\)).
\end{lemma}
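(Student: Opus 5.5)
The plan is the standard Devolder–Glineur–Nesterov argument, carried out in the weighted norm \(\|\cdot\|_{\bP}\). The lower bound in Definition~\ref{def:inexact_oracle} is immediate. With \(\tilde h(v)=h(v)\) and \(\tilde\nabla h(v)=h'(v)\), the quantity \(h(u)-h(v)-\langle h'(v),u-v\rangle\) is nonnegative by the subgradient inequality~\eqref{eq:def_subgradient}. Only the upper bound requires work.

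\textbf{Step 1: a H\"older upper bound.} For \(u,v\in\cU\), set \(u_\tau=v+\tau(u-v)\in\cU\), which lies in \(\cU\) by convexity. The function \(\tau\mapsto h(u_\tau)\) is convex and finite on \([0,1]\), hence absolutely continuous. Its derivative exists almost everywhere and equals \(\langle h'(u_\tau),u-v\rangle\) for any choice of subgradient (this holds on the restriction to the segment). Therefore
\[
h(u)-h(v)-\langle h'(v),u-v\rangle=\int_0^1\langle h'(u_\tau)-h'(v),u-v\rangle\,d\tau .
\]
By Cauchy--Schwarz in the dual pair \(\|\cdot\|_{\bP^{-1}}\), \(\|\cdot\|_{\bP}\) and by~\eqref{eq:def_Holder_condition}, the integrand is at most \(H\tau^{\nu}\|u-v\|_{\bP}^{1+\nu}\). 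Integrating gives
\[
h(u)-h(v)-\langle h'(v),u-v\rangle\le \frac{H}{1+\nu}\,r^{1+\nu},\qquad r=\|u-v\|_{\bP}.
\]
This is the only place the Euclidean argument must be adapted. The adaptation is harmless because \(\langle a,b\rangle\le\|a\|_{\bP^{-1}}\|b\|_{\bP}\).

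\textbf{Step 2: Young's inequality.} For \(\nu<1\), the remaining task is to show \(\frac{H}{1+\nu}r^{1+\nu}\le \frac{L}{2}r^2+\delta\) for all \(r\ge0\), with \(L=L(\delta)\) from~\eqref{eq:DGN_L_of_delta}. I would apply the weighted AM--GM inequality \(ab\le \frac{a^p}{p}+\frac{b^q}{q}\) with \(p=\frac{2}{1+\nu}\) and \(q=\frac{2}{1-\nu}\). Write \(\frac{H}{1+\nu}r^{1+\nu}=(c\,r^{1+\nu})\cdot\frac{H}{(1+\nu)c}\) and choose \(c\) so that \(\frac{(cr^{1+\nu})^p}{p}=\frac L2 r^2\). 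The second term then equals a constant, which is exactly \(\delta\) when \(L\) is as in~\eqref{eq:DGN_L_of_delta}.

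Alternatively, one can maximize \(\phi(r)=\frac{H}{1+\nu}r^{1+\nu}-\frac L2 r^2\) directly. The maximizer is \(r_*=(H/L)^{1/(1-\nu)}\), giving \(\max\phi=\frac{1-\nu}{2(1+\nu)}H^{2/(1-\nu)}L^{-(1+\nu)/(1-\nu)}\). Setting this equal to \(\delta\) and solving for \(L\) yields precisely \(L=[\frac{1-\nu}{2(1+\nu)\delta}]^{\frac{1-\nu}{1+\nu}}H^{\frac{2}{1+\nu}}\). This exponent check is the main point to verify carefully. It is a routine but error-prone computation, and I would do it via the maximization route because that makes the constant \(\frac{1-\nu}{2(1+\nu)}\) transparent.

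\textbf{Step 3: the case \(\nu=1\).} Step 1 directly gives \(\frac H2 r^2\), so \((0,H)\) works and any \(\delta\ge0\) is allowed. The \(H=0\) edge case is trivial, since the bound is then \(0\).
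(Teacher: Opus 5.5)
Your proposal is correct and follows essentially the same route as the paper's proof. Both integrate along the segment and apply Cauchy--Schwarz in the $\|\cdot\|_{\bP^{-1}}$--$\|\cdot\|_{\bP}$ pairing to get $\frac{H}{1+\nu}\|u-v\|_{\bP}^{1+\nu}$, then maximize $\frac{H}{1+\nu}r^{1+\nu}-\frac{L}{2}r^2$ at $r_\star=(H/L)^{1/(1-\nu)}$ and solve for $L$; the Young's-inequality variant you mention gives the same constant.
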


\begin{proof}
Fix \(u,v\in\cU\), \(h'(u)\in\partial h(u)\), and
\(h'(v)\in\partial h(v)\). The lower bound of
Definition~\ref{def:inexact_oracle} is the standard convexity inequality
\begin{equation}\label{eq:holder_lower_bound}
  h(u)-h(v)-\langle h'(v),u-v\rangle
  \geq0.
\end{equation}
We turn to the upper bound. Define \(\varphi\colon[0,1]\to\R\) by
\[
  \varphi(s)=h\bigl(v+s(u-v)\bigr).
\]
Since \(\cU\) is convex and \(h\) is convex, \(\varphi\) is finite, convex,
and absolutely continuous on \([0,1]\), with (sub)derivative
\begin{equation}\label{eq:holder_varphi}
  \varphi'(s)
  =
  \left\langle h'\bigl(v+s(u-v)\bigr),u-v\right\rangle
\end{equation}
for almost every \(s\in(0,1)\). Hence
\begin{align}
  &h(u)-h(v)-\langle h'(v),u-v\rangle\notag\\
  &\quad=
  \int_0^1
  \left\langle
    h'\bigl(v+s(u-v)\bigr)-h'(v),u-v
  \right\rangle ds .
  \label{eq:holder_integral}
\end{align}
By Cauchy--Schwarz with respect to the dual pairing
\(\|\cdot\|_{\bP},\|\cdot\|_{\bP^{-1}}\) and the H\"older hypothesis applied
to \((v+s(u-v),v)\in\cU\times\cU\),
\begin{align}
  &\left\langle
    h'\bigl(v+s(u-v)\bigr)-h'(v),u-v
  \right\rangle\notag\\
  &\quad\leq
  \left\|h'\bigl(v+s(u-v)\bigr)-h'(v)\right\|_{\bP^{-1}}
  \|u-v\|_{\bP}\notag\\
  &\quad\leq
  H\|s(u-v)\|_{\bP}^{\nu}\|u-v\|_{\bP}\notag\\
  &\quad=
  Hs^\nu\|u-v\|_{\bP}^{1+\nu}.
  \label{eq:holder_cs_holder}
\end{align}
Integrating \(s\in[0,1]\) gives
\begin{align}
  &h(u)-h(v)-\langle h'(v),u-v\rangle\notag\\
  &\quad\leq
  \frac{H}{1+\nu}\|u-v\|_{\bP}^{1+\nu}.
  \label{eq:holder_integrated}
\end{align}
It remains to show that
\begin{equation}\label{eq:holder_young_target}
  \frac{H}{1+\nu}r^{1+\nu}
  \leq
  \frac{L(\delta)}{2}r^2+\delta
\end{equation}
for all \(r=\|u-v\|_{\bP}\geq0\).
If \(\nu=1\), then
\[
  \frac{H}{1+\nu}r^{1+\nu}
  =
  \frac{H}{2}r^2,
\]
so the desired bound holds with \(L=H\) and \(\delta=0\).

Now let \(0\le\nu<1\). If \(H=0\), the claim is immediate with
\(L(\delta)=0\). Assume \(H>0\). For \(L>0\), consider
\[
  F(r)
  =
  \frac{H}{1+\nu}r^{1+\nu}
  -
  \frac{L}{2}r^2,
  \qquad r\ge0.
\]
The maximum of \(F\) is attained at
\[
  r_\star=\left(\frac{H}{L}\right)^{1/(1-\nu)},
\]
and direct computation gives
\[
  \sup_{r\ge0}F(r)
  =
  \frac{1-\nu}{2(1+\nu)}
  H^{2/(1-\nu)}
  L^{-(1+\nu)/(1-\nu)}.
\]
Therefore
\[
  \frac{H}{1+\nu}r^{1+\nu}
  \le
  \frac{L}{2}r^2+\delta
  \quad\forall r\ge0
\]
whenever
\[
  \frac{1-\nu}{2(1+\nu)}
  H^{2/(1-\nu)}
  L^{-(1+\nu)/(1-\nu)}
  \le
  \delta.
\]
Solving for \(L\) gives
\[
  L
  \ge
  \left[
    \frac{1-\nu}{2(1+\nu)}\cdot\frac{1}{\delta}
  \right]^{\frac{1-\nu}{1+\nu}}
  H^{\frac{2}{1+\nu}}.
\]
Choosing equality gives \(L=L(\delta)\). Combining with
\eqref{eq:holder_integrated},
\begin{align}
  0
  &\leq
  h(u)-h(v)-\langle h'(v),u-v\rangle\notag\\
  &\leq
  \frac{L(\delta)}{2}\|u-v\|_{\bP}^2+\delta .
  \label{eq:holder_oracle_bound}
\end{align}
This is the \((\delta,L(\delta))\)-oracle bound of
Definition~\ref{def:inexact_oracle}.
\end{proof}

\medskip
\noindent\textbf{Step 2: Reduction to Theorem~\ref{thm:VI_sliding_inexact}.}
Temporarily fix \(\delta_1,\dots,\delta_n>0\) and run the Sliding
Algorithm with \(L_k(\delta_k)\) in place of \(L_k(\delta_k^\circ)\) in
the step sizes~\eqref{eq:Holder_eta}. Lemma~\ref{lem:Holder_to_inexact}
shows that the exact value and (sub)gradient of each \(p_i\) form a
\((\delta_i,L_i(\delta_i))\)-oracle. Hence
Theorem~\ref{thm:VI_sliding_inexact} applies. In Step~4 we set
\(\delta_i=\delta_i^\circ\), at which point these step sizes coincide
with~\eqref{eq:Holder_eta}. For brevity, set
\[
  \mathcal G(z)
  :=
  p(z_{\mathrm{out}})-p(z)
  +\langle Q(z),z_{\mathrm{out}}-z\rangle .
\]
Also write
\[
  D_z:=\|z_{\mathrm{in}}-z\|_{\bP},
  \qquad
  N_i:=\prod_{j=1}^iT_j.
\]
Therefore, for every \(z\in\cZ\),
\begin{align}
  \mathcal G(z)
  &\leq
  \sum_{i=1}^n
  \left[
    \left(
      \frac{2^{2i-1}L_i(\delta_i)}{N_i^2}
      +
      \frac{2^{i-1}M_i}{N_i}
    \right)D_z^2
    +
    \delta_iN_i
  \right].
  \label{eq:Holder_pre_optimize}
\end{align}

\medskip
\noindent\textbf{Step 3: Choice of \(\delta_i\).}
Let
\[
  a_i:=\frac{1-\nu_i}{1+\nu_i}.
\]
For \(0\leq\nu_i<1\), write
\[
  L_i(\delta)=K_i\delta^{-a_i},
  \qquad
  K_i
  =
  \left[
    \frac{1-\nu_i}{2(1+\nu_i)}
  \right]^{\frac{1-\nu_i}{1+\nu_i}}
  H_i^{\frac{2}{1+\nu_i}}.
\]
The \(\delta_i\)-dependent contribution in
\eqref{eq:Holder_pre_optimize}, with
\(D=\|z_{\mathrm{in}}-z\|_{\bP}\), is
\begin{equation}\label{eq:Holder_gi}
  g_i(\delta;D)
  =
  \frac{4^iL_i(\delta)}{2}\frac{D^2}{N_i^2}
  +
  \delta N_i
  =
  \frac{2^{2i-1}K_iD^2}{N_i^2}\delta^{-a_i}
  +
  N_i\delta .
\end{equation}
If \(H_iD>0\), then \(g_i(\cdot;D)\) is strictly convex on
\((0,\infty)\), and its first-order condition is
\begin{equation}\label{eq:Holder_foc}
  -a_i
  \frac{2^{2i-1}K_iD^2}{N_i^2}
  \delta^{-a_i-1}
  +
  N_i
  =
  0.
\end{equation}
Equivalently,
\[
  -\frac{1-\nu_i}{1+\nu_i}
  \frac{2^{2i-1}K_iD^2}{N_i^2}
  \delta^{-2/(1+\nu_i)}
  +
  N_i
  =
  0.
\]
Thus the unique minimizer is
\begin{equation}\label{eq:Holder_delta_optimal}
  \delta_i^\star(D)
  =
  \left[
    \frac{(1-\nu_i)\,2^{2i-1}K_iD^2}
    {(1+\nu_i)N_i^3}
  \right]^{(1+\nu_i)/2}.
\end{equation}
If \(H_i=0\) or \(D=0\), the corresponding function contribution is trivial
and the same expression may be interpreted by continuity.
At the minimum, the two summands satisfy
\begin{equation}\label{eq:Holder_ratio}
  \frac{4^iL_i(\delta_i^\star)}{N_i^2}\frac{D^2}{2}
  =
  \frac{1+\nu_i}{1-\nu_i}
  \delta_i^\star N_i,
\end{equation}
and therefore
\begin{equation}\label{eq:Holder_optimal_value}
  g_i(\delta_i^\star(D);D)
  =
  \frac{2}{1-\nu_i}N_i\delta_i^\star(D).
\end{equation}
Using
\begin{equation}\label{eq:Holder_Ki_identity}
  K_i^{(1+\nu_i)/2}
  =
  \left[
    \frac{1-\nu_i}{2(1+\nu_i)}
  \right]^{(1-\nu_i)/2}
  H_i,
\end{equation}
we obtain
\begin{equation}\label{eq:Holder_per_component}
  g_i(\delta_i^\star(D);D)
  =
  \frac{
    2^{i(1+\nu_i)}H_iD^{1+\nu_i}
  }{
    1+\nu_i
  }
  \frac{1}{N_i^{(1+3\nu_i)/2}}.
\end{equation}

\medskip
\medskip
\noindent\textbf{Step 4: Choosing \(\delta_i\) from
\(\Omega_{z_{\mathrm{in}}}\) and taking the supremum over \(z\).}

The minimizer \(\delta_i^\star(D)\) in~\eqref{eq:Holder_delta_optimal}
depends on
\[
  D=\|z_{\mathrm{in}}-z\|_{\bP},
\]
and hence generally depends on the comparator \(z\). Since the algorithm
must use a single value of \(\delta_i\) throughout the run, we choose it
using the worst-case bound
\[
  D^2
  \leq
  \Omega_{z_{\mathrm{in}}}
  :=
  \sup_{z\in\mathcal Z}
  \|z_{\mathrm{in}}-z\|_{\bP}^2.
\]
Thus, for \(0\leq\nu_i<1\), we set
\[
  \delta_i^\circ
  =
  \delta_i^\star\bigl(\sqrt{\Omega_{z_{\mathrm{in}}}}\bigr)
  =
  \left[
    \frac{(1-\nu_i)\,2^{2i-1}K_i\,\Omega_{z_{\mathrm{in}}}}
    {(1+\nu_i)N_i^3}
  \right]^{(1+\nu_i)/2}.
\]
If \(H_i=0\), then \(K_i=0\), and we interpret this choice as
\(\delta_i^\circ=0\) and \(L_i(\delta_i^\circ)=0\). For \(\nu_i=1\), we set
\[
  \delta_i^\circ=0,
  \qquad
  L_i(\delta_i^\circ)=H_i.
\]

Substituting \(\delta_i=\delta_i^\circ\) into
\eqref{eq:Holder_pre_optimize}, we obtain, for every \(z\in\mathcal Z\),
\[
  \mathcal G(z)
  \leq
  \sum_{i=1}^n
  \left[
    \left(
      \frac{2^{2i-1}L_i(\delta_i^\circ)}{N_i^2}
      +
      \frac{2^{i-1}M_i}{N_i}
    \right)D_z^2
    +
    \delta_i^\circ N_i
  \right].
\]
Since
\[
  \|z_{\mathrm{in}}-z\|_{\bP}^2
  \leq
  \Omega_{z_{\mathrm{in}}},
  \qquad \forall z\in\mathcal Z,
\]
we get
\[
  \sup_{z\in\mathcal Z}\mathcal G(z)
  \leq
  \sum_{i=1}^n
  \left[
    \left(
      \frac{2^{2i-1}L_i(\delta_i^\circ)}{N_i^2}
      +
      \frac{2^{i-1}M_i}{N_i}
    \right)
    \Omega_{z_{\mathrm{in}}}
    +
    \delta_i^\circ N_i
  \right].
\]

For \(0\leq\nu_i<1\), the choice of \(\delta_i^\circ\) gives
\[
  \frac{4^iL_i(\delta_i^\circ)}{N_i^2}
  \frac{\Omega_{z_{\mathrm{in}}}}{2}
  +
  \delta_i^\circ N_i
  =
  \frac{
    2^{i(1+\nu_i)}H_i
    \Omega_{z_{\mathrm{in}}}^{(1+\nu_i)/2}
  }{
    (1+\nu_i)
    N_i^{(1+3\nu_i)/2}
  }.
\]
For \(\nu_i=1\), the same expression is obtained by taking
\(\delta_i^\circ=0\) and \(L_i(\delta_i^\circ)=H_i\), since
\[
  \frac{4^iH_i}{N_i^2}
  \frac{\Omega_{z_{\mathrm{in}}}}{2}
  =
  \frac{2^{2i-1}H_i\Omega_{z_{\mathrm{in}}}}{N_i^2}
  =
  \frac{
    2^{i(1+\nu_i)}H_i
    \Omega_{z_{\mathrm{in}}}^{(1+\nu_i)/2}
  }{
    (1+\nu_i)
    N_i^{(1+3\nu_i)/2}
  }
  \bigg|_{\nu_i=1}.
\]
Therefore,
\[
  \sup_{z\in\mathcal Z}\mathcal G(z)
  \leq
  \sum_{i=1}^n
  \left[
    \frac{
      2^{i(1+\nu_i)}H_i
      \Omega_{z_{\mathrm{in}}}^{(1+\nu_i)/2}
    }{
      (1+\nu_i)N_i^{(1+3\nu_i)/2}
    }
    +
    \frac{2^{i-1}M_i\Omega_{z_{\mathrm{in}}}}{N_i}
  \right].
\]
This proves~\eqref{eq:Holder_gap_bound}.
\end{proof}

\section{Proof of Corollary~\ref{cor:VI_sliding_Holder_complexity}} \label{app:proof_VI_Holder_complexity}
\begin{proof}

Set \(\Omega:=\Omega_{z_{\mathrm{in}}}\). Since the finite-sum
representation
\[
  p=\sum_{i=1}^n p_i,
  \qquad
  Q=\sum_{i=1}^n Q_i
\]
is unchanged by a simultaneous relabeling of the pairs \((p_i,Q_i)\), we
order the components so that
\[
  R_1\leq R_2\leq\cdots\leq R_n .
\]

We first compare \(R_i\) with the explicit lower bounds needed in
Theorem~\ref{thm:VI_sliding_Holder}. Define
\[
  \widetilde R_{i,H}
  :=
  \left(
    \frac{
      2^{i(1+\nu_i)+1}H_i\Omega^{(1+\nu_i)/2}
    }{
      (1+\nu_i)\varepsilon
    }
  \right)^{\frac{2}{1+3\nu_i}},
\]
\[
  \widetilde R_{i,M}
  :=
  \frac{2^iM_i\Omega}{\varepsilon}.
\]
Then set
\[
  \widetilde R_i
  :=
  \max\left\{
    \widetilde R_{i,H},
    \widetilde R_{i,M},
    1
  \right\}.
\]
We claim that there exists a constant \(\kappa_n>0\), depending only on
\(n\), such that
\[
  \widetilde R_i\leq \kappa_n R_i,
  \qquad i=1,\dots,n.
\]
Indeed, since \(i\leq n\), \(0\leq\nu_i\leq1\), and \(1+\nu_i\geq1\),
\[
  \left(
    \frac{2^{i(1+\nu_i)+1}}{1+\nu_i}
  \right)^{\frac{2}{1+3\nu_i}}
  \leq
  2^{2n+2}.
\]
Also \(2^i\leq 2^n\leq 2^{2n+2}\). Hence we may take
\[
  \kappa_n:=2^{2n+2},
\]
and obtain \(\widetilde R_i\leq\kappa_n R_i\) for all \(i\).

Next, we construct positive integers \(T_1,\dots,T_n\). Let \(N_0:=1\), and
define recursively
\[
  T_i
  :=
  \max\left\{
    1,
    \left\lceil \frac{\kappa_n R_i}{N_{i-1}}\right\rceil
  \right\}.
\]
Set
\[
  N_i:=N_{i-1}T_i,
  \qquad i=1,\dots,n.
\]
Then each \(T_i\) is a positive integer. We claim that
\[
  \kappa_n R_i\leq N_i\leq 2\kappa_n R_i,
  \qquad i=1,\dots,n.
\]
The lower bound follows directly from the definition of \(T_i\). For the
upper bound, use induction. For \(i=1\),
\[
  N_1
  =
  \left\lceil \kappa_n R_1\right\rceil
  \leq
  \kappa_n R_1+1
  \leq
  2\kappa_n R_1,
\]
because \(R_1\geq1\). Suppose \(N_{i-1}\leq2\kappa_n R_{i-1}\). If
\(\kappa_n R_i\leq N_{i-1}\), then \(T_i=1\), and therefore
\[
  N_i=N_{i-1}\leq2\kappa_n R_{i-1}\leq2\kappa_n R_i.
\]
If \(\kappa_n R_i>N_{i-1}\), then
\[
  T_i
  =
  \left\lceil \frac{\kappa_n R_i}{N_{i-1}}\right\rceil
  \leq
  \frac{\kappa_n R_i}{N_{i-1}}+1,
\]
and hence
\[
  N_i
  =
  N_{i-1}T_i
  \leq
  \kappa_n R_i+N_{i-1}
  <
  2\kappa_n R_i.
\]
Thus \(N_i\leq2\kappa_n R_i\) for all \(i\).

Since
\[
  N_i\geq \kappa_n R_i\geq \widetilde R_i,
\]
the choice of \(N_i\) satisfies the explicit lower bounds
\[
  \begin{aligned}
  N_i
  \geq
  \left(
    \frac{
      2^{i(1+\nu_i)+1}H_i\Omega^{(1+\nu_i)/2}
    }{
      (1+\nu_i)\varepsilon
    }
  \right)^{\frac{2}{1+3\nu_i}},\\
  N_i
  \geq
  \frac{2^iM_i\Omega}{\varepsilon}.
  \end{aligned}
\]
Therefore, by Theorem~\ref{thm:VI_sliding_Holder},
\[
  \sup_{z\in\cZ}
  \left\{
    p(z_{\mathrm{out}})-p(z)
    +
    \langle Q(z),z_{\mathrm{out}}-z\rangle
  \right\}
  \leq
  \sum_{i=1}^n(A_i+B_i),
\]
where
\[
  A_i
  :=
  \frac{
    2^{i(1+\nu_i)}H_i\Omega^{(1+\nu_i)/2}
  }{
    (1+\nu_i)N_i^{(1+3\nu_i)/2}
  },
  \qquad
  B_i
  :=
  \frac{2^{i-1}M_i\Omega}{N_i}.
\]
The first lower bound on \(N_i\) gives \(A_i\leq\varepsilon/2\), and the
second gives \(B_i\leq\varepsilon/2\). Hence
\[
  A_i+B_i\leq\varepsilon,
  \qquad i=1,\dots,n.
\]
Summing over \(i\) yields
\[
  \sum_{i=1}^n(A_i+B_i)\leq n\varepsilon.
\]
Thus
\[
  \sup_{z\in\cZ}
  \left\{
    p(z_{\mathrm{out}})-p(z)
    +
    \langle Q(z),z_{\mathrm{out}}-z\rangle
  \right\}
  \leq
  n\varepsilon .
\]

Finally, since \(N_i\leq 2\kappa_n R_i\), the required number of evaluations
of \(p_i'\) and \(Q_i\) is at most \(C_nR_i\), with \(C_n=2^{2n+4}\). This
proves the corollary.

\end{proof}

\section{Subspace Structure under Assumption~\ref{ass:properties_of_B}}
\label{app:subspace}

Throughout this section we write
\[
  V_x:=\range(\bB^\top)=(\ker\bB)^\perp,
  \qquad
  V_y:=\range(\bB)=(\ker\bB^\top)^\perp .
\]
For the zero matrix, Assumption~\ref{ass:properties_of_B} is understood with
the convention \(\lminp(\mathbf 0)=0\).
Under \(\mathsf R_x\), Assumption~\ref{ass:properties_of_B} uses the
Rayleigh-quotient bound \(\lminp(\bB^\top\bB)\) on \(V_x\). Otherwise it uses
the ambient-space bound \(\lmin(\bB^\top\bB)\). The lemmas below establish
the required subspace invariance. The \(y\)-block is symmetric.

\begin{lemma}[Structure of the solution set]
\label{lem:solution_set_structure}
Suppose \(\mathsf R_x\) holds and \(\cX+\ker\bB=\cX\). Then:
\begin{enumerate}
  \item \(f(x+u)=f(x)\) and \(\partial f(x+u)=\partial f(x)\) for every
    \(x\in\cX\) and \(u\in\ker\bB\).
  \item if \(\mu_x>0\), then \(\ker\bB=\{0\}\).
  \item \(F(x+u,y)=F(x,y)\) for every \((x,y)\in\cZ\) and \(u\in\ker\bB\).
    Consequently \(\cS+(\ker\bB\times\{0\})=\cS\).
\end{enumerate}
The symmetric statements hold for \(g\), \(\cY\) and \(\ker\bB^\top\) under
\(\mathsf R_y\). If \(\cS\neq\varnothing\), it contains a point \(z^*\) such
that
\[
  \xin-x^*\in V_x.
\]
If, in addition, \(\mathsf R_y\) and
\(\cY+\ker\bB^\top=\cY\) hold, \(z^*\) may be chosen so that
\[
  \zin-z^*\in V_x\times V_y.
\]
\end{lemma}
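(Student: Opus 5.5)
We prove the three items in order and then use them to construct the anchored saddle point.

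\textbf{Item 1.} Fix \(x\in\cX\) and \(u\in\ker\bB\). By \(\cX+\ker\bB=\cX\), the whole line \(x+s u\), \(s\in\R\), lies in \(\cX\), and \(f\) is finite and convex on it. For any \(\xi\in\partial f(x)\), the range condition \(\mathsf R_x\) gives \(\xi\in\range(\bB^\top)\perp\ker\bB\). The subgradient inequality at \(x\) then yields
\[
  f(x+su)\geq f(x)+s\langle\xi,u\rangle=f(x)
  \qquad\forall s\in\R .
\]
Apply the same argument at the base point \(x+su\) with direction \(-su\) (a subgradient exists there, since \(f\) is finite on \(\R^{d_x}\)). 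This gives \(f(x)\geq f(x+su)\), so \(f\) is constant along \(\ker\bB\) on \(\cX\). To obtain the ambient invariance \(\partial f(x+u)=\partial f(x)\), we need \(f(w+u)=f(w)\) for all \(w\in\R^{d_x}\), not only for \(w\in\cX\). The same argument works at every \(w\) where \(\mathsf R_x\) applies. If \(\mathsf R_x\) is only assumed on \(\cX\), we instead argue with directional derivatives: for \(\xi\in\partial f(x)\) and any \(v\),
\[
  f(x+u+v)-f(x+u)\geq\langle\xi,v\rangle
\]
should follow from \(f(x+u+v)\geq f(x+v)+\langle\xi',u\rangle\), where \(\xi'\in\partial f(x+v)\). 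This step is the main obstacle, because \(\xi'\) need not lie in the range when \(x+v\notin\cX\). The fallback is to read \(\partial f\) as the subdifferential of \(f\) restricted along \(\cX\)-directions, or to note that the later proofs only use the \(\cX\)-restricted identity. We will state that explicitly.

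\textbf{Item 2.} Suppose \(0\neq u\in\ker\bB\). By item 1, \(f(x+su)=f(x)\) for all \(s\in\R\). Strong convexity with \(\mu_x>0\) would instead force \(f(x+su)\) to grow like \(\tfrac{\mu_x}{2}s^2\|u\|^2\). This contradiction gives \(\ker\bB=\{0\}\).

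\textbf{Item 3.} We have \(\langle y,\bB(x+u)\rangle=\langle y,\bB x\rangle\), and together with item 1 this gives \(F(x+u,y)=F(x,y)\). Moreover, \(x+u\in\cX\). Hence the saddle inequalities defining \(\cS\) transfer between \((x,y)\) and \((x+u,y)\), which proves \(\cS+(\ker\bB\times\{0\})=\cS\).

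\textbf{Anchored solution.} Take any \((x^*,y^*)\in\cS\). Write \(\xin-x^*=a+u\) with \(a\in V_x\) and \(u\in\ker\bB\), using the orthogonal decomposition \(\R^{d_x}=V_x\oplus\ker\bB\). Replace \(x^*\) by \(x^*+u\). By item 3 this point is still in \(\cS\), and now \(\xin-(x^*+u)=a\in V_x\). Under \(\mathsf R_y\) we apply the symmetric shift in \(y\) by an element of \(\ker\bB^\top\). The two shifts commute because \(F\) is invariant under each separately, so both anchoring conditions hold at once and \(\zin-z^*\in V_x\times V_y\).
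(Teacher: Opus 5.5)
Your proof that $f(x+u)=f(x)$, your items~2 and~3, and your anchoring step coincide with the paper's proof; the paper's representative is exactly $x^*:=\hat x+\Pi_{\ker\bB}(\xin-\hat x)$. The genuine gap is the second claim of item~1, $\partial f(x+u)=\partial f(x)$. You leave it unproved and propose either reinterpreting $\partial f$ as a subdifferential relative to $\cX$ or not proving the ambient statement. Neither option is available. The paper explicitly takes $\partial f$ to be the ambient subdifferential. It also uses the ambient identity later, to reuse $f'(x^*)\in\partial f(x^*+u)$ when showing $\D_f(x,x^*+u)=\D_f(x,x^*)$.

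The obstacle you describe does not exist: the lemma never needs $f(w+u)=f(w)$ at points $w\notin\cX$. Take $x\in\cX$, $u\in\ker\bB$ and $q\in\partial f(x)$. Three facts are already available. The subgradient inequality for $q$ holds for every $w\in\R^{d_x}$. $\mathsf R_x$ at $x$ gives $\langle q,u\rangle=0$. You have shown $f(x+u)=f(x)$. Together they give
\[
  f(w)\geq f(x)+\langle q,w-x\rangle
  =f(x+u)+\langle q,w-(x+u)\rangle
  \qquad\forall w\in\R^{d_x},
\]
so $q\in\partial f(x+u)$. Applying the same argument at $x+u\in\cX$ with $-u$ gives the reverse inclusion. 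In your directional formulation, pivot at $x$ itself rather than at $x+v$: $f(x+u+v)\geq f(x)+\langle\xi,u+v\rangle=f(x+u)+\langle\xi,v\rangle$. This never requires a subgradient at the possibly infeasible point $x+v$. The ambient invariance you were after is in fact also true, because a finite convex function that is constant on one full line in direction $u$ is constant along $u$ everywhere. The lemma does not need it.
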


\begin{proof}
\noindent\emph{Part (1): invariance of \(f\) and \(\partial f\).}
Fix \(x\in\cX\) and \(u\in\ker\bB\). The cylinder condition gives
\(x+u\in\cX\), while \(\mathsf R_x\) gives
\[
  f'(x),f'(x+u)\in V_x=(\ker\bB)^\perp.
\]
Applying convexity in both directions,
\[
  f(x+u)
  \geq f(x)+\langle f'(x),u\rangle
  =f(x),
\]
and
\[
  f(x)
  \geq f(x+u)+\langle f'(x+u),-u\rangle
  =f(x+u).
\]
Hence \(f(x+u)=f(x)\).

Now take \(q\in\partial f(x)\). Since \(q\in V_x\), for every \(w\in\R^{d_x}\),
\[
  f(w)
  \geq
  f(x)+\langle q,w-x\rangle
  =
  f(x+u)+\langle q,w-(x+u)\rangle.
\]
Thus \(q\in\partial f(x+u)\), proving
\(\partial f(x)\subseteq\partial f(x+u)\). Replacing \(u\) by \(-u\) gives
the reverse inclusion.

\smallskip
\noindent\emph{Part (2): strong convexity eliminates kernel directions.}
Fix \(x\in\cX\) and \(u\in\ker\bB\). By part~(1) and \(\mathsf R_x\),
\(f(x+u)=f(x)\) and \(\langle f'(x),u\rangle=0\). Strong convexity therefore
gives
\[
  0
  =
  f(x+u)-f(x)
  \geq
  \frac{\mu_x}{2}\|u\|^2.
\]
If \(\mu_x>0\), then \(u=0\). Hence \(\ker\bB=\{0\}\).

\smallskip
\noindent\emph{Part (3): invariance of \(F\) and \(\cS\).}
For \(u\in\ker\bB\), part~(1) gives
\[
  F(x+u,y)
  =
  f(x)+\langle y,\bB x+\bB u\rangle-g(y)
  =
  F(x,y).
\]
The cylinder condition preserves feasibility, so translating the
\(x\)-component of any saddle point by \(u\) yields another saddle point.
This proves
\[
  \cS+(\ker\bB\times\{0\})=\cS.
\]
Interchanging \(x,f,\bB\) with \(y,g,\bB^\top\) proves the symmetric
statements.

\smallskip
\noindent\emph{Choice of representative.}
Pick \(\hat z=(\hat x,\hat y)\in\cS\) and define
\[
  x^*
  :=
  \hat x+\Pi_{\ker\bB}(\xin-\hat x),
  \qquad
  y^*:=\hat y.
\]
Part~(3) shows that \(z^*:=(x^*,y^*)\in\cS\), and
\[
  \xin-x^*
  =
  \Pi_{V_x}(\xin-\hat x)
  \in V_x.
\]
If the symmetric hypotheses also hold, replace \(y^*\) by
\[
  y^*
  :=
  \hat y+\Pi_{\ker\bB^\top}(\yin-\hat y).
\]
The symmetric version of part~(3) preserves \(z^*\in\cS\), and
\[
  \yin-y^*
  =
  \Pi_{V_y}(\yin-\hat y)
  \in V_y.
\]
\end{proof}

\paragraph{Consequences for the Lyapunov function.}
Under the \(x\)-block hypotheses, kernel translations generate the solution
orbit
\[
  \mathcal K_x(z^*)
  :=
  \{(x^*+u,y^*):u\in\ker\bB\}
  \subseteq\cS.
\]
Using the same subgradient
\(f'(x^*)\in\partial f(x^*+u)\), part~(1) of
Lemma~\ref{lem:solution_set_structure} gives
\[
  \D_f(x,x^*+u)
  =
  f(x)-f(x^*)-\langle f'(x^*),x-x^*\rangle+\langle f'(x^*),u\rangle
  =
  \D_f(x,x^*).
\]
Moreover, if \(x-x^*\in V_x\), orthogonality gives
\[
  \delta_x\|x-x^*\|^2
  =
  \inf_{u\in\ker\bB}\delta_x\|x-(x^*+u)\|^2.
\]
The symmetric statements hold for the \(y\)-block. In particular, when both
sets of hypotheses hold and
\[
  \mathcal K(z^*)
  :=
  z^*+(\ker\bB\times\ker\bB^\top)\subseteq\cS,
\]
then, for \(z-z^*\in V_x\times V_y\),
\[
  \|z-z^*\|_{\bP}^2
  =
  \operatorname{dist}_{\bP}(z,\mathcal K(z^*))^2.
\]
Since \(\mathcal K(z^*)\subseteq\cS\), this quantity upper-bounds
\(\operatorname{dist}_{\bP}(z,\cS)^2\). The restart analysis uses the fixed
aligned representative \(z^*\).

\begin{lemma}[Subspace invariance]\label{lem:subspace_invariance}
Suppose \(\mathsf R_x\) holds and \(\cX+\ker\bB=\cX\). Apply
Algorithm~\ref{alg:sliding_recursive} to the
decomposition~\eqref{eq:SPP_decomposition}--\eqref{eq:SPP_operator} with
\(\bP=\diag(\delta_x\bI_{d_x},\delta_y\bI_{d_y})\) and input
\(\zin=(\xin,\yin)\). Then every point \(z=(x,y)\) generated by the algorithm
satisfies \(x-\xin\in V_x\). The symmetric statement holds for \(y\) under
\(\mathsf R_y\) and \(\cY+\ker\bB^\top=\cY\).
\end{lemma}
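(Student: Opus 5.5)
We argue by induction over the order in which Algorithm~\ref{alg:sliding_recursive} generates points, with the invariant that every point created so far has $x$-block in the affine subspace $\xin+V_x$. All initial points $z^k_{\mathbf 0_k}=\zin$ satisfy it trivially. Every later point is one of three kinds: an averaging point $\bar z$ (lines~\ref{line:update_bar_z} and~\ref{line:init_bar_z}), a warm-start copy (line~\ref{line:warm_start}), or the output of a proximal step (line~\ref{line:base_case_return} or line~\ref{line:update_z}). Affine combinations preserve membership in an affine subspace, and copies obviously do, so averaging and warm starts are immediate. The whole difficulty is in the proximal steps.

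By Appendix~\ref{app:algo_implementability}, each proximal step has the form $\Pi_{\cZ}^{\bP}\bigl(\bar a-\theta^{-1}\bP^{-1}c\bigr)$. Here $\bar a$ is a convex combination of previously generated points, and $c$ is a sum of linearized gradients and operator values. First, $\bar a$ has $x$-block in $\xin+V_x$ by the induction hypothesis. Second, I would check that the $x$-block of $c$ lies in $V_x$. The contributions to it are:
\begin{itemize}
\item subgradients $f'(\cdot)$ evaluated at points in $\cX$, which lie in $V_x$ by $\mathsf R_x$;
\item the term $\beta_x\bB^\top(\bB x-g'(\yin))$, which lies in $\range(\bB^\top)=V_x$;
\item the $Q_3$ term $\bB^\top y$ and the differences $\bB^\top(\tilde y-y)$ from line~\ref{line:update_z}, both also in $V_x$.
\end{itemize}
The $y$-block of $p_3$ contributes only to the $y$-coordinates. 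Since $\bP$ is block diagonal with the scalar $\delta_x$ on the $x$-block, the unprojected point $v$ has $x$-block in $\xin+V_x$.

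The remaining step, which I expect to be the main obstacle, is to show that projection onto $\cZ=\cX\times\cY$ preserves this. The projection separates into blocks, so it suffices to show: if $\cX+\ker\bB=\cX$ and $v_x\in\xin+V_x$, then $\Pi_{\cX}(v_x)\in\xin+V_x$. I would let $p=\Pi_{\cX}(v_x)$ and write $p=p_V+p_K$ with $p_V\in\xin+V_x$ and $p_K\in\ker\bB$. The cylinder property gives $p-p_K\in\cX$. Orthogonality of $V_x$ and $\ker\bB$ gives $\|v_x-(p-p_K)\|^2=\|v_x-p\|^2-\|p_K\|^2$. Uniqueness of the projection then forces $p_K=0$.

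Finally, I would make one technical point explicit. The subgradients are evaluated at averaged points $\alpha z+(1-\alpha)\bar z$ (line~\ref{line:define_hat_p}), and these lie in $\cX$ by convexity. Hence $\mathsf R_x$, which is stated for every point of $\cX$, applies to them. The $y$-statement follows by the symmetric argument with $\bB^\top$, $g$, $\mathsf R_y$, and $\cY+\ker\bB^\top=\cY$. One small check: the $y$-block of $Q_3$ is $-\bB x$, and $\bB x\in\range(\bB)=V_y$, so it lies in $V_y$ as required.
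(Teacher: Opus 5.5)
Your proof is correct and follows the paper's argument. The shared steps are: induction over generated points; invariance of \(\xin+V_x\) under averaging and warm starts; the check that the \(x\)-block of \(c\) in the proximal form \(\Pi_{\cZ}^{\bP}(\bar a-\theta^{-1}\bP^{-1}c)\) lies in \(V_x\); and blockwise projection. The only difference is cosmetic: for the projection step you use Pythagoras plus uniqueness of the projection, while the paper splits \(\cX=(\cX\cap V_x)\oplus\ker\bB\) and projects componentwise, and both establish the same fact.
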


\begin{proof}
Set
\[
  K_x:=\ker\bB,
  \qquad
  \cX_V:=\cX\cap V_x.
\]
The cylinder condition implies
\[
  \cX=\cX_V\oplus K_x.
\]
Indeed, if \(x\in\cX\), then
\(\Pi_{V_x}x=x-\Pi_{K_x}x\in\cX_V\). Conversely,
\(\cX_V+K_x\subseteq\cX\).

For any \(q=q_V+q_K\in V_x\oplus K_x\), projection onto this direct sum
separates across the two subspaces:
\[
  \Pi_{\cX}(q)
  =
  \Pi_{\cX_V}(q_V)+q_K,
  \qquad\text{hence}\qquad
  \Pi_{\cX}(q)-q\in V_x.
\]
Moreover, because \(\cZ=\cX\times\cY\) and
\(\bP=\diag(\delta_x\bI,\delta_y\bI)\), the \(\bP\)-projection is blockwise:
\[
  \Pi_{\cZ}^{\bP}(q_x,q_y)
  =
  \bigl(\Pi_{\cX}(q_x),\Pi_{\cY}(q_y)\bigr).
\]
Thus, if \(q_x\in\xin+V_x\), the projected \(x\)-block remains in
\(\xin+V_x\).

By Appendix~\ref{app:algo_implementability}, every minimization in
Algorithm~\ref{alg:sliding_recursive} has the form
\[
  \Pi^{\bP}_{\cZ}
  \left(
    \frac{\sum_m\theta_ma_m}{\sum_m\theta_m}
    -\frac{1}{\sum_m\theta_m}\bP^{-1}c
  \right),
\]
where \(c\) is a sum of terms \(\tilde\nabla\hat p_i(\cdot)\) and
\(Q_i(\cdot)\). Their \(x\)-blocks satisfy
\[
  \begin{aligned}
    p_1 &: \quad f'(\cdot)\in V_x
      &&\text{by \(\mathsf R_x\)},\\
    p_2 &: \quad 0,\\
    p_3 &: \quad
      \beta_x\bB^\top\bigl(\bB\,\cdot-g'(\yin)\bigr)\in V_x,\\
    Q_3 &: \quad \bB^\top y\in V_x,
      \qquad Q_1=Q_2=0.
  \end{aligned}
\]
Lemma~\ref{lem:Holder_to_inexact} supplies exact subgradients. The affine
rescaling used to construct \(\hat p_i\) changes only their evaluation points
and scalar factors, so the same inclusions hold for
\(\tilde\nabla\hat p_i\). Therefore the \(x\)-block of \(c\) lies in \(V_x\).
Since the \(x\)-block of \(\bP^{-1}\) is
\(\delta_x^{-1}\bI\), the \(x\)-block of \(\bP^{-1}c\) also lies in \(V_x\).

To complete the argument, let
\[
  \cA_x
  :=
  \{(x,y):x\in\xin+V_x\}.
\]
The initial point belongs to \(\cA_x\). This affine set is preserved by
convex combinations, subtraction of
\(\bP^{-1}c/\sum_m\theta_m\), and the final projection. Induction over the
algorithm therefore gives \(x-\xin\in V_x\) for every generated point.

The \(y\)-block statement follows by the same argument with
\((\cY,V_y,\ker\bB^\top)\) in place of \((\cX,V_x,\ker\bB)\).
\end{proof}

\begin{remark}[Fixed representative across restarts]\label{rem:restart_slice}
Choose \(z^*\) as in Section~\ref{sec:bilinear_SPP}, aligned with \(z^0\) in
each block whose range condition is invoked.
Lemma~\ref{lem:subspace_invariance} keeps \(x^s\in x^0+V_x\) when
\(\mathsf R_x\) is invoked and \(y^s\in y^0+V_y\) when \(\mathsf R_y\) is
invoked. Hence every restart remains aligned with the same \(z^*\), and the
contraction inequalities telescope with a fixed \(\Psi\).
\end{remark}

\begin{corollary}\label{cor:mu_xy_valid}
Let \(z^*\in\cS\) be chosen as in Section~\ref{sec:bilinear_SPP}: require
\(\xin-x^*\in V_x\) when \(\mathsf R_x\) is invoked and
\(\yin-y^*\in V_y\) when \(\mathsf R_y\) is invoked. Then the output \(\zout\) of
Algorithm~\ref{alg:sliding_recursive} satisfies
\[
  \|\bB(\xout-x^*)\|^2\geq\mu_{xy}^2\|\xout-x^*\|^2,
  \quad
  \|\bB^\top(\yout-y^*)\|^2\geq\mu_{yx}^2\|\yout-y^*\|^2 .
\]
\end{corollary}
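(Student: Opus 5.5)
We will prove the two inequalities blockwise and argue only the \(x\)-block; the \(y\)-block follows from the same argument with \((\bB^\top,\mathsf R_y,V_y,\ker\bB^\top,\mu_{yx})\) in place of \((\bB,\mathsf R_x,V_x,\ker\bB,\mu_{xy})\). There are two cases, according to whether the range condition \(\mathsf R_x\) is invoked in Assumption~\ref{ass:properties_of_B}.

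\emph{Case 1: \(\mathsf R_x\) is not invoked.} Then Assumption~\ref{ass:properties_of_B} gives \(\mu_{xy}^2\le\lmin(\bB^\top\bB)\). For every \(w\in\R^{d_x}\), the Rayleigh quotient bound gives
\[
  \|\bB w\|^2=\langle\bB^\top\bB w,w\rangle\ge\lmin(\bB^\top\bB)\|w\|^2\ge\mu_{xy}^2\|w\|^2 .
\]
Taking \(w=\xout-x^*\) gives the claim. No information about the iterates is needed here.

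\emph{Case 2: \(\mathsf R_x\) is invoked.} Then the cylinder condition \(\cX+\ker\bB=\cX\) holds, and only \(\mu_{xy}^2\le\lminp(\bB^\top\bB)\) is available. So we must show \(w:=\xout-x^*\in V_x=\range(\bB^\top)=(\ker\bB)^\perp\). We decompose
\[
  w=(\xout-\xin)+(\xin-x^*).
\]
\begin{itemize}
  \item The first term lies in \(V_x\) by Lemma~\ref{lem:subspace_invariance}, since \(\zout\) is a point generated by Algorithm~\ref{alg:sliding_recursive}. It is the returned average \(\bar z^{1}_{T_1}\), a convex combination of generated points, and the affine set \(\cA_x\) is closed under convex combinations.
  \item The second term lies in \(V_x\) by the choice of \(z^*\) in Lemma~\ref{lem:solution_set_structure}.
\end{itemize}
Since \(V_x\) is a subspace, \(w\in V_x\).

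It remains to bound \(\|\bB w\|^2\) on \(V_x\). Diagonalize the symmetric PSD matrix \(\bB^\top\bB\). Its kernel is \(\ker\bB\), so \(V_x\) is spanned by the eigenvectors with positive eigenvalues. Hence for \(w\in V_x\),
\[
  \langle\bB^\top\bB w,w\rangle\ge\lminp(\bB^\top\bB)\|w\|^2\ge\mu_{xy}^2\|w\|^2 .
\]
If \(\bB=0\), the convention \(\lminp(\mathbf 0)=0\) forces \(\mu_{xy}=0\), and the claim is trivial.

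The only delicate step is checking that \(\zout\) is covered by Lemma~\ref{lem:subspace_invariance}, including the case where the representative \(z^*\) is aligned with \(\zin\) rather than with an earlier point. This holds because the lemma is stated for all generated points and its invariant set is affine, hence closed under the averaging steps. Across restarts, Remark~\ref{rem:restart_slice} guarantees that the alignment with \(z^*\) persists.
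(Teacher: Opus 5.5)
Your proposal is correct and matches the paper's proof. If \(\mathsf R_x\) is not invoked, the \(\lmin(\bB^\top\bB)\) Rayleigh bound holds for every vector. Under \(\mathsf R_x\), Lemma~\ref{lem:subspace_invariance} and the alignment \(\xin-x^*\in V_x\) place \(\xout-x^*\) in \(V_x\), where the \(\lminp(\bB^\top\bB)\) bound applies, and the \(y\)-block is symmetric. Your splitting \(\xout-x^*=(\xout-\xin)+(\xin-x^*)\), and your remark that the returned average is itself a generated point, only spell out what the paper states in one sentence.
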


\begin{proof}
If \(\mathsf R_x\) does not hold, Assumption~\ref{ass:properties_of_B} gives
\(\mu_{xy}^2\leq\lmin(\bB^\top\bB)\) and the first inequality holds for every
vector. Under \(\mathsf R_x\),
Lemma~\ref{lem:subspace_invariance} and the choice of \(z^*\) give
\(\xout-x^*\in V_x\), where the Rayleigh bound
\(\lminp(\bB^\top\bB)\geq\mu_{xy}^2\) applies. The second inequality is
symmetric.
\end{proof}

\begin{remark}[Normal conditions at \(z^*\)]\label{rem:exact_optimality}
The conditions \(\zeta=0\) when \(\beta_x>0\) and \(\xi=0\) when
\(\beta_y>0\) yield, respectively,
\[
  g'(y^*)=\bB x^*,
  \qquad
  f'(x^*)=-\bB^\top y^*.
\]
Interior and unconstrained blocks have zero normal vectors. When
\(\beta_x=\beta_y=0\), equivalently \(\nu_x<1\) and \(\nu_y<1\), these
conditions impose no restriction.
\end{remark}

\section{Proof of Theorem~\ref{thm:bilinear_SPP_Holder}}\label{app:proof_bilinear_SPP_Holder}
\begin{proof}

  Let $z = (x,y) \in \mathcal{Z} = \mathcal{X} \times \mathcal{Y}$ and $z^\prime = (x^\prime,y^\prime) \in \mathcal{Z} = \mathcal{X} \times \mathcal{Y}$.
  For brevity, set
  \[
    d_x:=x-x',
    \qquad
    d_y:=y-y',
  \]
  and define
  \[
    \mathcal E
    :=
    p(z_{\mathrm{out}})-p(z^*)
    +
    \langle Q(z^*),z_{\mathrm{out}}-z^*\rangle .
  \]
  At the smooth endpoints, we choose
  \[
    \delta_f=0\quad\text{if }\nu_x=1,
    \qquad
    \delta_g=0\quad\text{if }\nu_y=1.
  \]
  The tolerances for components with exponent below one are balanced later.

  For \(p_1(z)=f(x)\), Lemma~\ref{lem:Holder_to_inexact} gives
\[
  p_1(z)-\tilde p_1(z')
  -\langle \tilde\nabla p_1(z'),z-z'\rangle
  \leq
  \frac{L_x}{2}\|x-x'\|^2+\delta_f.
\]
Since
\[
  \|z-z'\|_{\bP}^2
  =
  \delta_x\|x-x'\|^2+\delta_y\|y-y'\|^2,
\]
we get
  \begin{align*}
  &p_1(z)-\tilde p_1(z')
  -\langle \tilde\nabla p_1(z'),z-z'\rangle\\
  &\quad\leq
  \frac{L_x}{2\delta_x}\|z-z'\|_{\bP}^2+\delta_f\\
  &\quad=
  \frac{\kappa_x}{2}\|z-z'\|_{\bP}^2+\delta_f.
  \end{align*}
Similarly, \(p_2\) admits a \((\delta_g,\kappa_y)\)-oracle with respect to
\(\|\cdot\|_{\bP}\). Hence we may take $L_1=\kappa_x, L_2=\kappa_y$.

For \(p_3\), we have
  \[
    \nabla p_3(z)-\nabla p_3(z')
  =
  \begin{pmatrix}
    \beta_x\bB^\top\bB(x-x')\\
    \beta_y\bB\bB^\top(y-y')
  \end{pmatrix}.
  \]
  Define
  \[
    \gamma_B
    :=
    \max\{\beta_x\delta_y,\beta_y\delta_x\}.
  \]
  Therefore
  \[
  \|\nabla p_3(z)-\nabla p_3(z')\|_{\bP^{-1}}
  \leq
  \kappa_{xy}\gamma_B\|z-z'\|_{\bP}.
  \]

By the assumptions of the theorem,
\[
  \beta_x\delta_y \leq \frac14,
  \qquad
  \beta_y\delta_x \leq \frac14.
\]
Hence
\[
  \|\nabla p_3(z)-\nabla p_3(z')\|_{\bP^{-1}}
  \leq
  \frac{\kappa_{xy}}{4}\|z-z'\|_{\bP}
  \leq
  \kappa_{xy}\|z-z'\|_{\bP}.
\]
Therefore \(p_3\) is \(\kappa_{xy}\)-smooth with respect to
\(\|\cdot\|_{\bP}\), and we take \(L_3=\kappa_{xy}\).

Since
\[
  Q_3(z)-Q_3(z')
  =
  \begin{pmatrix}
    \bB^\top d_y\\
    -\bB d_x
  \end{pmatrix},
\]
we can upper-bound \(\|Q_3(z)-Q_3(z')\|_{\bP^{-1}}\) as follows:
\begin{align*}
  \|Q_3(z)-Q_3(z')\|_{\bP^{-1}}^2
  &=
    \frac{1}{\delta_x}\|\bB^\top d_y\|^2
    +
    \frac{1}{\delta_y}\|\bB d_x\|^2\\
  &\leq
  \kappa_{xy}
  \left(
    \delta_x\|d_x\|^2+\delta_y\|d_y\|^2
  \right)\\
  &=
  \kappa_{xy}\|z-z'\|_{\bP}^2.
\end{align*}

Hence we can choose $M_3 = \sqrt{\kappa_{xy}}$.

We have
\begin{align*}
  \mathcal E
  &=
  f(\xout)-f(x^*)
  +
  \langle \bB^\top y^*,\xout-x^*\rangle\\
  &\quad
  +
  g(\yout)-g(y^*)
  -
  \langle \bB x^*,\yout-y^*\rangle\\
  &\quad+
  \frac{\beta_x}{2}
  \left(
    \|\bB\xout-g'(\yin)\|^2
    -
    \|\bB x^*-g'(\yin)\|^2
  \right)\\
  &\quad+
  \frac{\beta_y}{2}
  \left(
    \|\bB^\top\yout+f'(\xin)\|^2
    -
    \|\bB^\top y^*+f'(\xin)\|^2
  \right).
\end{align*}

By the optimality conditions~\eqref{eq:opt_cond},
\[
  f'(x^*)+\bB^\top y^*=-\xi,
  \qquad
  g'(y^*)-\bB x^*=-\zeta,
\]
with \(\xi\in N_{\cX}(x^*)\) and \(\zeta\in N_{\cY}(y^*)\). Since
\(\xout\in\cX\) and \(\yout\in\cY\), the definition of the normal cone gives
\(\langle\xi,\xout-x^*\rangle\leq0\) and
\(\langle\zeta,\yout-y^*\rangle\leq0\). Hence
\begin{align*}
  f(\xout)-f(x^*)
  +\langle\bB^\top y^*,\xout-x^*\rangle
  &=\D_f(\xout,x^*)\\
  &\quad-\langle\xi,\xout-x^*\rangle\\
  &\geq\D_f(\xout,x^*),\\
  g(\yout)-g(y^*)
  -\langle\bB x^*,\yout-y^*\rangle
  &=\D_g(\yout,y^*)\\
  &\quad-\langle\zeta,\yout-y^*\rangle\\
  &\geq\D_g(\yout,y^*).
\end{align*}
For brevity, write
\[
  \begin{aligned}
  D_{f,o}&:=\D_f(\xout,x^*),
  &D_{g,o}&:=\D_g(\yout,y^*),\\
  D_{f,i}&:=\D_f(\xin,x^*),
  &D_{g,i}&:=\D_g(\yin,y^*).
  \end{aligned}
\]

For the quadratic terms, we use the elementary inequality
\[
  \frac12\|a+b\|^2-\frac12\|b\|^2
  \geq
  \frac14\|a\|^2-\|b\|^2.
\]
Multiplying this inequality by the nonnegative coefficients
\(\beta_x\) and \(\beta_y\), and applying it with \(a=\bB(\xout-x^*)\),
\(b=\bB x^*-g'(\yin)\), and then with
\(a=\bB^\top(\yout-y^*)\),
\(b=\bB^\top y^*+f'(\xin)\), gives
\begin{align*}
  \mathcal E
  &\geq
  D_{f,o}+D_{g,o}\\
  &\quad+
  \frac{\beta_x}{4}\|\bB(\xout-x^*)\|^2
  \\
  &\quad+
  \frac{\beta_y}{4}\|\bB^\top(\yout-y^*)\|^2                                      \\
  &\quad
  -
  \beta_x\|\bB x^*-g'(\yin)\|^2
  -
  \beta_y\|\bB^\top y^*+f'(\xin)\|^2 .
\end{align*}

For active coefficients, the normal conditions in
Section~\ref{sec:bilinear_SPP} and~\eqref{eq:opt_cond} give
\[
  \bB x^*=g'(y^*)
  \ \ \text{in the }\beta_x\text{-term},
  \quad
  \bB^\top y^*=-f'(x^*)
  \ \ \text{in the }\beta_y\text{-term}
\]
(see Remark~\ref{rem:exact_optimality}). We obtain
\begin{align*}
  \mathcal E
  &\geq
  D_{f,o}+D_{g,o}\\
  &\quad+
  \frac{\beta_x}{4}\|\bB(\xout-x^*)\|^2
  \\
  &\quad+
  \frac{\beta_y}{4}\|\bB^\top(\yout-y^*)\|^2                                      \\
  &\quad
  -
  \beta_x\|g'(\yin)-g'(y^*)\|^2
  -
  \beta_y\|f'(\xin)-f'(x^*)\|^2 .
\end{align*}

If \(\beta_x>0\), then
\(\nu_y=1\), for which we chose \(\delta_g=0\), and the ambient self-bounding
inequality~\eqref{eq:ambient_self_bounding} gives
\[
  \|g'(\yin)-g'(y^*)\|^2
  \leq
  2L_y\D_g(\yin,y^*) .
\]
Similarly, if \(\beta_y>0\), then \(\nu_x=1\), for which we chose
\(\delta_f=0\), and
\eqref{eq:ambient_self_bounding} gives
\[
  \|f'(\xin)-f'(x^*)\|^2
  \leq
  2L_x\D_f(\xin,x^*) .
\]

Therefore, using the definition of \(\beta_x\) and \(\beta_y\) in
\eqref{eq:SPP_beta_xy_holder},
\[
  2\beta_x L_y \leq \frac12,
  \qquad
  2\beta_y L_x \leq \frac12.
\]
Hence
\[
  \beta_x\|g'(\yin)-g'(y^*)\|^2
  +\beta_y\|f'(\xin)-f'(x^*)\|^2
  \leq \frac12(D_{g,i}+D_{f,i}).
\]

Substituting this into the previous lower bound gives
\begin{align*}
  \mathcal E
  &\geq
  D_{f,o}+D_{g,o}\\
  &\quad+
  \frac{\beta_x}{4}\|\bB(\xout-x^*)\|^2
  \\
  &\quad+
  \frac{\beta_y}{4}\|\bB^\top(\yout-y^*)\|^2                                      \\
  &\quad
  -
  \frac12(D_{f,i}+D_{g,i}).
\end{align*}

Using Corollary~\ref{cor:mu_xy_valid}, which supplies
\[
  \begin{aligned}
  \|\bB(\xout-x^*)\|^2
  &\geq
  \mu_{xy}^2\|\xout-x^*\|^2,\\
  \|\bB^\top(\yout-y^*)\|^2
  &\geq
  \mu_{yx}^2\|\yout-y^*\|^2,
  \end{aligned}
\]
and strong convexity,
\[
  \begin{aligned}
  \D_f(\xout,x^*)&\geq \frac{\mu_x}{2}\|\xout-x^*\|^2,\\
  \D_g(\yout,y^*)&\geq \frac{\mu_y}{2}\|\yout-y^*\|^2,
  \end{aligned}
\]
we obtain
\[
  \begin{aligned}
  c_x
  &:=
  \frac{\mu_x}{8}
  +
  \frac{\beta_x\mu_{xy}^2}{4},\\
  c_y
  &:=
  \frac{\mu_y}{8}
  +
  \frac{\beta_y\mu_{yx}^2}{4}.
  \end{aligned}
\]
Then
\[
  \mathcal E
  \geq \frac34(D_{f,o}+D_{g,o})-\frac12(D_{f,i}+D_{g,i})
  +c_x\|\xout-x^*\|^2+c_y\|\yout-y^*\|^2 .
\]

By the definitions of the effective strong convexity parameters,
equivalently,
\[
  \begin{aligned}
  \delta_x&=\mu_x+4\beta_x\mu_{xy}^2,\\
  \delta_y&=\mu_y+4\beta_y\mu_{yx}^2,
  \end{aligned}
\]
we have
\[
  c_x\geq\frac{\delta_x}{16},
  \qquad
  c_y\geq\frac{\delta_y}{16}.
\]

Therefore,
\[
  \mathcal E
  \geq \frac34(D_{f,o}+D_{g,o})-\frac12(D_{f,i}+D_{g,i})
  +\frac{1}{16}\|z_{\mathrm{out}}-z^*\|_{\bP}^2 .
\]

Let
\[
  R_{\mathrm{in}}^2
  :=
  \|z_{\mathrm{in}}-z^*\|_{\bP}^2,
  \qquad
  A_i:=\prod_{j=1}^iT_j,
\]
so that \(A_i\) is the number of evaluations of the component placed at
level \(i\), and set
\[
  \bar N_f:=A_{\pi^{-1}(1)},
  \qquad
  \bar N_g:=A_{\pi^{-1}(2)},
  \qquad
  \bar N_B:=A_{\pi^{-1}(3)}.
\]
Thus \(\bar N_f,\bar N_g,\bar N_B\) are the component-oracle call budgets,
irrespective of the levels to which \(\pi\) assigns the components. Applying
Theorem~\ref{thm:VI_sliding_inexact} with comparator \(z=z^*\), we obtain
\[
  \Theta
  :=\frac{\kappa_x}{\bar N_f^2}+\frac{\kappa_y}{\bar N_g^2}
  +\frac{\kappa_{xy}}{\bar N_B^2}
  +\frac{\sqrt{\kappa_{xy}}}{\bar N_B}.
\]
\begin{align}
  \mathcal E
  &\leq
  C_{\mathrm{VI}}
  \Theta R_{\mathrm{in}}^2
  +
  \bar N_f\delta_f
  +
  \bar N_g\delta_g ,
  \label{eq:SPP_gap_upper_compact_constant}
\end{align}
where \(C_{\mathrm{VI}}>0\) absorbs the level-dependent factors
\(2^{2i-1}\) and \(2^{i-1}\) from
Theorem~\ref{thm:VI_sliding_inexact}. These factors are at most \(2^5\) for
\(n=3\), uniformly over \(\pi\), so \(\Theta\) depends only on the component
budgets \(\bar N_f,\bar N_g,\bar N_B\).

Combining the upper bound~\eqref{eq:SPP_gap_upper_compact_constant} with the
lower bound obtained above, the oracle-error terms are
\[
  \bar N_f\delta_f+\bar N_g\delta_g .
\]

We now choose \(\bar N_f,\bar N_g,\bar N_B\) and \(\delta_f,\delta_g\) so that, for a
numerical constant \(\rho>0\),
\begin{equation*}
  C_{\mathrm{VI}}
  \Theta
  \leq
  \rho
\end{equation*}
and
\begin{equation*}
  \bar N_f\delta_f
  \leq
  \rho \Omega_{\mathrm{in}},
  \qquad
  \bar N_g\delta_g
  \leq
  \rho \Omega_{\mathrm{in}}.
\end{equation*}
Since \(R_{\mathrm{in}}^2\leq \Psi(\zin)\leq \Omega_{\mathrm{in}}\), we get
\begin{align*}
  \mathcal E
  &\leq
  \rho R_{\mathrm{in}}^2
  +
  2\rho\Omega_{\mathrm{in}} .
\end{align*}

Hence, combining this upper bound with the lower bound obtained above,
\begin{align*}
  &\frac{1}{16}\|\zout-z^*\|_{\bP}^2
  +\frac34\bigl(\D_f(\xout,x^*)+\D_g(\yout,y^*)\bigr)\\
  &\leq \rho R_{\mathrm{in}}^2+2\rho\Omega_{\mathrm{in}}
  +\frac12\bigl(\D_f(\xin,x^*)+\D_g(\yin,y^*)\bigr).
\end{align*}

Multiplying both sides by \(16\) gives
\begin{align*}
  &\|\zout-z^*\|_{\bP}^2
  +12\bigl(\D_f(\xout,x^*)+\D_g(\yout,y^*)\bigr)\\
  &\leq 16\rho R_{\mathrm{in}}^2+32\rho\Omega_{\mathrm{in}}
  +8\bigl(\D_f(\xin,x^*)+\D_g(\yin,y^*)\bigr).
\end{align*}

Since
\[
  \Psi(\zin)
  =
  R_{\mathrm{in}}^2
  +
  12\D_f(\xin,x^*)
  +
  12\D_g(\yin,y^*)
  \leq
  \Omega_{\mathrm{in}},
\]
and with \(16\rho\leq 2/3\), we have
\begin{align*}
  16\rho R_{\mathrm{in}}^2
  +
  8(D_{f,i}+D_{g,i})
  &\leq
  \frac23
  \left[
    R_{\mathrm{in}}^2+12(D_{f,i}+D_{g,i})
  \right]\\
  &=
  \frac23\Psi(\zin)
  \leq
  \frac23\Omega_{\mathrm{in}}.
\end{align*}
Therefore,
\[
  \Psi(\zout)
  \leq
  \left(
    \frac23+32\rho
  \right)
  \Omega_{\mathrm{in}}.
\]
Choosing
\[
  \rho=\frac{1}{384},
\]
we obtain
\[
  \frac23+32\rho
  =
  \frac23+\frac1{12}
  =
  \frac34.
\]
Consequently,
\[
  \Psi(\zout)
  \leq
  \frac34\Omega_{\mathrm{in}}.
\]

It remains to verify the claimed evaluation bounds. We use the same balancing
argument as in the proof of Theorem~\ref{thm:VI_sliding_Holder}.

We first consider the \(x\)-component. Suppose \(0\leq\nu_x<1\), and define
\[
  a_x:=\frac{1-\nu_x}{1+\nu_x},
  \qquad
  L_x(\delta_f)=K_x\delta_f^{-a_x},
\]
where
\[
  K_x
  =
  \left[
    \frac{1-\nu_x}{2(1+\nu_x)}
  \right]^{\frac{1-\nu_x}{1+\nu_x}}
  H_x^{\frac{2}{1+\nu_x}}.
\]

The two conditions that need to be satisfied are
\[
  C_{\mathrm{VI}}\frac{\kappa_x}{\bar N_f^2}
  =
  C_{\mathrm{VI}}\frac{L_x(\delta_f)}{\delta_x\bar N_f^2}
  \leq
  \frac{\rho}{4},
  \qquad
  \bar N_f\delta_f
  \leq
  \rho\Omega_{\mathrm{in}}.
\]

Choose
\[
  \delta_f := \frac{\rho\Omega_{\mathrm{in}}}{\bar N_f}.
\]
Then the oracle-error condition
\(\bar N_f\delta_f\leq\rho\Omega_{\mathrm{in}}\)
holds with equality. Moreover,
\[
  L_x(\delta_f)
  =
  K_x
  \left(
    \frac{\rho\Omega_{\mathrm{in}}}{\bar N_f}
  \right)^{-a_x}
  =
  K_x\rho^{-a_x}\Omega_{\mathrm{in}}^{-a_x}\bar N_f^{a_x}.
\]
Therefore, the first condition becomes
\[
  C_{\mathrm{VI}}
  \frac{
    K_x\rho^{-a_x}\Omega_{\mathrm{in}}^{-a_x}\bar N_f^{a_x}
  }{
    \delta_x\bar N_f^2
  }
  \leq
  \frac{\rho}{4}.
\]
Equivalently,
\[
  \bar N_f^{2-a_x}
  \geq
  \frac{4C_{\mathrm{VI}}}{\rho^{1+a_x}}
  \frac{K_x}{\delta_x}
  \Omega_{\mathrm{in}}^{-a_x}.
\]
Thus it is enough to choose
\[
  \bar N_f
  \geq
  \left(
    \frac{4 C_{\mathrm{VI}}}{\rho^{1+a_x}}
    \frac{K_x}{\delta_x}
    \Omega_{\mathrm{in}}^{-a_x}
  \right)^{\frac{1}{2-a_x}}.
\]
Since
\[
  2-a_x
  =
  2-\frac{1-\nu_x}{1+\nu_x}
  =
  \frac{1+3\nu_x}{1+\nu_x},
\]
we obtain
\[
  \bar N_f
  =
  \mathcal O\left(
    K_x^{\frac{1+\nu_x}{1+3\nu_x}}
    \delta_x^{-\frac{1+\nu_x}{1+3\nu_x}}
    \Omega_{\mathrm{in}}^{\frac{\nu_x-1}{1+3\nu_x}}
  \right).
\]

Using the definition of \(K_x\), this becomes
\[
  \bar N_f
  =
  \mathcal O\left(
    H_x^{\frac{2}{1+3\nu_x}}
    \delta_x^{-\frac{1+\nu_x}{1+3\nu_x}}
    \Omega_{\mathrm{in}}^{\frac{\nu_x-1}{1+3\nu_x}}
  \right).
\]
Equivalently, with
\[
  \tilde H_x:=H_x\delta_x^{-(1+\nu_x)/2},
\]
we have
\[
  \bar N_f
  =
  \mathcal O\left(
    \tilde H_x^{\frac{2}{1+3\nu_x}}
    \Omega_{\mathrm{in}}^{\frac{\nu_x-1}{1+3\nu_x}}
  \right).
\]

When \(\nu_x=1\), we take \(\delta_f=0\) and \(L_x=H_x\). Then the only
condition involving \(\bar N_f\) is
\[
  C_{\mathrm{VI}}\frac{\kappa_x}{\bar N_f^2}
  =
  C_{\mathrm{VI}}\frac{L_x}{\delta_x\bar N_f^2}
  \leq
  \frac{\rho}{4},
\]
which is satisfied by
\[
  \bar N_f
  =
  \mathcal O\left(\sqrt{\frac{L_x}{\delta_x}}\right)
  =
  \mathcal O(\sqrt{\kappa_x}).
\]

The same two cases, with
\(
  C_{\mathrm{VI}}\kappa_y/\bar N_g^2\leq\rho/4
\),
give
\[
  \bar N_g
  =
  \mathcal O\left(
    \tilde H_y^{\frac{2}{1+3\nu_y}}
    \Omega_{\mathrm{in}}^{\frac{\nu_y-1}{1+3\nu_y}}
  \right),
  \qquad
  \tilde H_y:=H_y\delta_y^{-(1+\nu_y)/2}.
\]

Finally, the \(B\)-dependent terms require
\[
  C_{\mathrm{VI}}\frac{\kappa_{xy}}{\bar N_B^2}\leq\frac{\rho}{4},
  \qquad
  C_{\mathrm{VI}}\frac{\sqrt{\kappa_{xy}}}{\bar N_B}
  \leq\frac{\rho}{4}.
\]
These are satisfied by
\[
  \bar N_B
  =
  \mathcal O\left(\sqrt{\kappa_{xy}}\right).
\]

The preceding estimates hold once component \(m\) receives \(cR_m\)
evaluations for some constant \(c\geq1\). Let \(\pi\) order the budgets as
\(
  R_{\pi(1)}\leq R_{\pi(2)}\leq R_{\pi(3)}
\),
matching the nondecreasing cumulative counts
\(A_i:=\prod_{j\leq i}T_j\). Set \(A_0:=1\) and
\[
  T_i
  :=
  \max\left\{
    1,
    \left\lceil \frac{c\,R_{\pi(i)}}{A_{i-1}}\right\rceil
  \right\},
  \qquad
  A_i:=A_{i-1}T_i .
\]
The induction in Appendix~\ref{app:proof_VI_Holder_complexity} gives
\[
  c\,R_{\pi(i)}\leq A_i\leq 2c\,R_{\pi(i)},
  \qquad i=1,2,3 ,
\]
because every \(R_m\geq1\). Hence
\[
  \bar N_f=\mathcal O(R_1),
  \qquad
  \bar N_g=\mathcal O(R_2),
  \qquad
  \bar N_B=\mathcal O(R_3).
\]
Accounting for one anchor-gradient call of each type in \(p_3\) and a
constant number of \(\bB,\bB^\top\) products per \(p_3,Q_3\) call, the
evaluation counts satisfy, for an absolute constant \(C_B\),
\[
  N_f\leq\bar N_f+1,
  \qquad
  N_g\leq\bar N_g+1,
  \qquad
  N_B\leq C_B(\bar N_B+1).
\]
Since \(R_1,R_2,R_3\geq1\), the claimed bounds for \(N_f,N_g,N_B\) follow.
  \end{proof}

\section{Proof of Corollary~\ref{cor:bilinear_SPP_restart}}
\label{app:proof_bilinear_SPP_restart}

\begin{proof}
By Remark~\ref{rem:restart_slice}, every restart uses the function \(\Psi\)
from Section~\ref{sec:bilinear_SPP} and the same representative \(z^*\), so
the one-step inequalities share a common reference point.

The case \(S=0\) is immediate. Assume \(S\geq1\).

Let \(z^s\) be the point at the beginning of restart \(s\), and write
\[
  \Psi_s:=\Psi(z^s),
  \qquad
  \Psi_0:=\Psi(z^0),
  \qquad
  \Omega_s:=\left(\frac34\right)^s\Psi_0 .
\]
We prove by induction that
\[
  \Psi_s\leq \Omega_s,
  \qquad s=0,1,\dots,S.
\]
The claim is true for \(s=0\). If \(\Psi_s\leq\Omega_s\), then
Theorem~\ref{thm:bilinear_SPP_Holder}, applied with
\(\Omega_{\mathrm{in}}=\Omega_s\), gives
\[
  \Psi_{s+1}
  \leq
  \frac34\Omega_s
  =
  \Omega_{s+1}.
\]
Therefore,
\[
  \Psi_S
  \leq
  \Omega_S
  =
  \left(\frac34\right)^S\Psi_0
  \leq
  \varepsilon
\]
by the definition of \(S\).

We now bound the total number of evaluations. For fixed problem parameters
and every \(\alpha>0\),
\[
  1+\log(\Psi_0/\varepsilon)
  =
  \mathcal O(\varepsilon^{-\alpha})
  \qquad
  \text{as }\varepsilon\downarrow0.
\]
First consider the \(f\)-component. If \(0\leq\nu_x<1\), define
\[
  \alpha_x:=\frac{1-\nu_x}{1+3\nu_x}>0.
\]
At restart \(s\), Theorem~\ref{thm:bilinear_SPP_Holder} gives
\[
  N_{f,s}
  =
  \mathcal O\left(
    \max\left\{
      \tilde H_x^{\frac{2}{1+3\nu_x}}
      \Omega_s^{-\alpha_x},1
    \right\}
  \right).
\]
Hence
\begin{align*}
  N_f^{\mathrm{tot}}
  &=
  \sum_{s=0}^{S-1}N_{f,s}                                                   \\
  &=
  \mathcal O\left(
    \tilde H_x^{\frac{2}{1+3\nu_x}}
    \sum_{s=0}^{S-1}\Omega_s^{-\alpha_x}
    +S
  \right)                                                                  \\
  &=
  \mathcal O\left(
    \tilde H_x^{\frac{2}{1+3\nu_x}}
    \Psi_0^{-\alpha_x}
    \sum_{s=0}^{S-1}
    \left(\frac34\right)^{-\alpha_x s}
    +S
  \right).
\end{align*}

By the minimality of \(S\),
\[
  \varepsilon<\Omega_{S-1}\leq\frac43\varepsilon.
\]
The geometric sum is therefore bounded by
\[
  \sum_{s=0}^{S-1}\Omega_s^{-\alpha_x}
  \leq
  \Omega_{S-1}^{-\alpha_x}
  \sum_{j=0}^{\infty}\left(\frac34\right)^{\alpha_xj}
  =
  \mathcal O\left(\varepsilon^{-\alpha_x}\right).
\]
Thus
\[
  N_f^{\mathrm{tot}}
  =
  \mathcal O\left(
    \tilde H_x^{\frac{2}{1+3\nu_x}}
    \varepsilon^{-\alpha_x}
    +S
  \right)
  .
\]
Because \(\alpha_x>0\), this becomes
\[
  N_f^{\mathrm{tot}}
  =
  \mathcal O\left(
    \tilde H_x^{\frac{2}{1+3\nu_x}}
    \varepsilon^{\frac{\nu_x-1}{1+3\nu_x}}
  \right).
\]

If \(\nu_x=1\), then each restart requires
\[
  N_{f,s}
  =
  \mathcal O(\sqrt{\kappa_x}),
\]
and hence
\[
  N_f^{\mathrm{tot}}
  =
  \sum_{s=0}^{S-1}N_{f,s}
  =
  \mathcal O\left(
    \sqrt{\kappa_x}
    \left(1+\log\frac{\Psi_0}{\varepsilon}\right)
  \right).
\]

The proof for the \(g\)-component is identical. If
\(0\leq\nu_y<1\), with
\[
  \alpha_y:=\frac{1-\nu_y}{1+3\nu_y},
\]
we obtain
\[
  N_g^{\mathrm{tot}}
  =
  \mathcal O\left(
    \tilde H_y^{\frac{2}{1+3\nu_y}}
    \varepsilon^{-\alpha_y}
    +S
  \right).
\]
Because \(\alpha_y>0\), this becomes
\[
  N_g^{\mathrm{tot}}
  =
  \mathcal O\left(
    \tilde H_y^{\frac{2}{1+3\nu_y}}
    \varepsilon^{\frac{\nu_y-1}{1+3\nu_y}}
  \right).
\]
If \(\nu_y=1\), then
\[
  N_g^{\mathrm{tot}}
  =
  \mathcal O\left(
    \sqrt{\kappa_y}
    \left(1+\log\frac{\Psi_0}{\varepsilon}\right)
  \right).
\]

Finally, at each restart,
\[
  N_{B,s}
  =
  \mathcal O\left(\max\{\sqrt{\kappa_{xy}},1\}\right).
\]
Therefore
\[
  N_B^{\mathrm{tot}}
  =
  \sum_{s=0}^{S-1}N_{B,s}
  =
  \mathcal O\left(
    \max\{\sqrt{\kappa_{xy}},1\}
    \left(1+\log\frac{\Psi_0}{\varepsilon}\right)
  \right).
\]
This proves the corollary.
\end{proof}

\section{The Degenerate and Mixed Cases} \label{app:mixed_cases}

The degenerate regime \(\delta_x=\delta_y=0\) of
Section~\ref{sec:bilinear_SPP} yields the complexities collected in
Table~\ref{tab:bilinear_SPP_degenerate_complexity}.

\begin{table*}[h]
  \centering
  \begin{tabular}{ll}
    \toprule
    Oracle
    & Complexity \\
    \midrule
    Evaluations of \(f'\)
    &
    \(\displaystyle
      \mathcal O\!\left(
        \left(
          \frac{H_x\Omega^{(1+\nu_x)/2}}{\varepsilon}
        \right)^{\frac{2}{1+3\nu_x}}
      \right)
    \)
    \\[2ex]

    Evaluations of \(g'\)
    &
    \(\displaystyle
      \mathcal O\!\left(
        \left(
          \frac{H_y\Omega^{(1+\nu_y)/2}}{\varepsilon}
        \right)^{\frac{2}{1+3\nu_y}}
      \right)
    \)
    \\[2ex]

    Products with \(\bB,\bB^\top\)
    &
    \(\displaystyle
      \mathcal O\!\left(
        \frac{L_{xy}\Omega}{\varepsilon}
      \right)
    \)
    \\
    \bottomrule
  \end{tabular}
  \caption{Evaluation complexity for the bilinear saddle-point problem
  in the degenerate case \(\delta_x=\delta_y=0\). Here
  \(\Omega := \sup_{z\in\cZ}\|z^0-z\|^2\), and the bounds hold for
  \(0\leq\nu_x,\nu_y\leq1\).}
  \label{tab:bilinear_SPP_degenerate_complexity}
\end{table*}

\paragraph{The mixed cases
\(\delta_x>0,\delta_y=0\) and
\(\delta_x=0,\delta_y>0\).}

In the mixed regimes, exactly one effective strong-convexity parameter is
positive. We regularize the zero-curvature block and rebuild the active part
of \(p_3\) with the regularized constant and anchor.

By symmetry, it suffices to consider
\[
  \delta_x>0,
  \qquad
  \delta_y=0.
\]
Because
\(\delta_y=\mu_y+4\beta_y\mu_{yx}^2=0\), nonnegativity implies
\[
  \mu_y=0,
  \qquad
  \beta_y\mu_{yx}^2=0.
\]
Assume that \(\cY\) is bounded, fix \(y^0\in\R^{d_y}\), and define
\[
  \Omega_y
  :=
  \sup_{y\in\cY}\|y-y^0\|^2
  <\infty,
  \qquad
  D_{\cY}
  :=
  \sup_{u,v\in\cY}\|u-v\|
  \leq2\sqrt{\Omega_y}.
\]
If \(\Omega_y=0\), then \(\cY=\{y^0\}\) and the \(y\)-block can be
eliminated. Hence assume \(\Omega_y>0\).
For a target accuracy \(\varepsilon>0\), set
\[
  \lambda_y
  :=
  \frac{\varepsilon}{\Omega_y},
\]
and replace \(g\) by
\[
  g_{\lambda}(y)
  :=
  g(y)
  +
  \frac{\lambda_y}{2}\|y-y^0\|^2.
\]

For \(u,v\in\cY\),
\[
  \lambda_y\|u-v\|
  \leq
  \lambda_yD_{\cY}^{1-\nu_y}\|u-v\|^{\nu_y}.
\]
Consequently, \(g_\lambda\) is \(\lambda_y\)-strongly convex and satisfies
the \((\nu_y,H_{y,\lambda})\)-H\"older condition with
\begin{equation}\label{eq:mixed_H_y_lambda}
  H_{y,\lambda}
  :=
  H_y+\lambda_yD_{\cY}^{1-\nu_y}.
\end{equation}
For \(\nu_y=1\), the function \(g_\lambda\) is smooth on the ambient space
with constant \(L_{y,\lambda}=L_y+\lambda_y\). In particular,
\eqref{eq:ambient_self_bounding} holds for \(g_\lambda\) with this constant.
The coefficient of the \(x\)-part of the coupling regularizer must therefore
be recomputed:
\begin{equation}\label{eq:mixed_beta_lambda}
  \beta_x^\lambda
  :=
  \begin{cases}
    \dfrac{1}{4(L_y+\lambda_y)},&\nu_y=1,\\[1ex]
    0,&0\leq\nu_y<1,
  \end{cases}
  \qquad
  \beta_y^\lambda:=\beta_y.
\end{equation}
The effective strong convexity parameters of the regularized problem are
\begin{equation}\label{eq:mixed_delta_lambda}
  \begin{aligned}
  \delta_x^\lambda
  &:=
  \mu_x+4\beta_x^\lambda\mu_{xy}^2,\\
  \delta_y^\lambda
  &:=
  \mu_y+\lambda_y+4\beta_y^\lambda\mu_{yx}^2
  =
  \lambda_y.
  \end{aligned}
\end{equation}
The last equality uses \(\delta_y=0\). Thus
\(\delta_x^\lambda=\delta_x\) when \(\nu_y<1\), but generally not when
\(\nu_y=1\).

At every restart with input
\(z_{\mathrm{in}}=(x_{\mathrm{in}},y_{\mathrm{in}})\), use
\[
  g_\lambda'(y_{\mathrm{in}})
  =
  g'(y_{\mathrm{in}})
  +
  \lambda_y(y_{\mathrm{in}}-y^0)
\]
and replace~\eqref{eq:SPP_decomposition} by
\begin{equation}\label{eq:mixed_p3_lambda}
  \begin{aligned}
  p_{3,\lambda}(x,y)
  &:=
  \frac{\beta_x^\lambda}{2}
  \|\bB x-g_\lambda'(y_{\mathrm{in}})\|^2\\
  &\quad+
  \frac{\beta_y^\lambda}{2}
  \|\bB^\top y+f'(x_{\mathrm{in}})\|^2.
  \end{aligned}
\end{equation}
The other components are \(p_1=f\), \(p_2=g_\lambda\), and the same
bilinear operator \(Q_3\) as in~\eqref{eq:SPP_operator}. The norm and
condition parameters used by Theorem~\ref{thm:bilinear_SPP_Holder}, which
requires positive effective strong convexity parameters, are
\[
  \bP_\lambda
  =
  \diag(\delta_x^\lambda\bI_{d_x},
        \delta_y^\lambda\bI_{d_y}),
\]
\[
  \kappa_x^\lambda
  =
  \frac{L_x}{\delta_x^\lambda},
  \qquad
  \kappa_{xy}^\lambda
  =
  \frac{L_{xy}^2}{\delta_x^\lambda\lambda_y}.
\]

Assume that the regularized problem has a saddle point
\(z_\lambda^*=(x_\lambda^*,y_\lambda^*)\), and choose subgradients and
normal vectors satisfying
\[
  f'(x_\lambda^*)+\bB^\top y_\lambda^*+\xi_\lambda=0,
  \qquad
  g_\lambda'(y_\lambda^*)-\bB x_\lambda^*+\zeta_\lambda=0.
\]
Applying Theorem~\ref{thm:bilinear_SPP_Holder} requires
\begin{equation}\label{eq:mixed_regularized_compatibility}
  \begin{gathered}
  \zeta_\lambda=0
  \quad\text{if }\beta_x^\lambda>0,
  \qquad
  \xi_\lambda=0
  \quad\text{if }\beta_y^\lambda>0,\\
  \beta_x^\lambda\delta_y^\lambda\leq\frac14,
  \qquad
  \beta_y^\lambda\delta_x^\lambda\leq\frac14.
  \end{gathered}
\end{equation}
The first inequality follows from
\(\delta_y^\lambda=\lambda_y\) and
\(\beta_x^\lambda\lambda_y\leq1/4\). The second is automatic when either
component is nonsmooth. In the fully smooth case it is retained as a
compatibility assumption. We set \(\mu_{yx}=0\) on the regularized
\(y\)-block.

Let \(\Psi_\lambda\) be the Lyapunov function~\eqref{eq:Psi_s} formed with
\(\bP_\lambda\), \(g_\lambda\), and \(z_\lambda^*\).
Apply Corollary~\ref{cor:bilinear_SPP_restart} to
\((f,g_\lambda,p_{3,\lambda},Q_3)\), with terminal target
\(\varepsilon/2\). Its output \(z_\lambda^S\) satisfies
\[
  \Psi_\lambda(z_\lambda^S)\leq\frac{\varepsilon}{2}.
\]
Define
\[
  \tilde H_x^\lambda
  :=
  H_x(\delta_x^\lambda)^{-(1+\nu_x)/2}.
\]
The direct bound for the \(f\)-oracle is
\[
  N_f
  =
  \begin{cases}
  \displaystyle
  \mathcal O\!\left(
    (\tilde H_x^\lambda)^{\frac{2}{1+3\nu_x}}
    \varepsilon^{-\frac{1-\nu_x}{1+3\nu_x}}
  \right),
  & 0\leq\nu_x<1,\\[2ex]
  \displaystyle
  \widetilde{\mathcal O}\!\left(
    \sqrt{\kappa_x^\lambda}
  \right),
  & \nu_x=1,
  \end{cases}
  .
\]
For the regularized \(g\)-oracle and the bilinear operator, the direct bounds
are
\begin{equation}\label{eq:mixed_Ng_corrected}
  N_g
  =
  \begin{cases}
  \displaystyle
  \mathcal O\!\left(
    \left(
      \frac{
        H_{y,\lambda}\Omega_y^{(1+\nu_y)/2}
      }{\varepsilon}
    \right)^{\frac{2}{1+3\nu_y}}
  \right),
  &0\leq\nu_y<1,\\[2.2ex]
  \displaystyle
  \widetilde{\mathcal O}\!\left(
    \sqrt{1+\frac{L_y\Omega_y}{\varepsilon}}
  \right),
  &\nu_y=1,
  \end{cases}
\end{equation}
and
\begin{equation}\label{eq:mixed_NB_corrected}
  N_B
  =
  \widetilde{\mathcal O}\!\left(
    L_{xy}
    \sqrt{
      \frac{\Omega_y}{\delta_x^\lambda\varepsilon}
    }
  \right).
\end{equation}

For any saddle function \(H\) on \(\cX\times\cY\), define
\[
  \operatorname{Gap}_H(x,y)
  :=
  \max_{v\in\cY}H(x,v)-\min_{u\in\cX}H(u,y).
\]
Write \(\operatorname{Gap}:=\operatorname{Gap}_F\) and
\(\operatorname{Gap}_{\lambda_y}:=\operatorname{Gap}_{F_{\lambda_y}}\), where
\[
  F_{\lambda_y}(x,y)
  :=
  F(x,y)-\frac{\lambda_y}{2}\|y-y^0\|^2.
\]
The exact
regularized saddle has
\(\operatorname{Gap}_{\lambda_y}(z_\lambda^*)=0\) and satisfies
\[
  \operatorname{Gap}(z_\lambda^*)
  \leq
  \operatorname{Gap}_{\lambda_y}(z_\lambda^*)
  +
  \frac{\lambda_y}{2}\Omega_y
  =
  \frac{\varepsilon}{2}.
\]
Thus \(z_\lambda^*\) is an \(\varepsilon/2\)-weak-gap solution of the
original problem. It remains to transfer the Lyapunov guarantee for
\(z_\lambda^S\) to a weak-gap bound for the original
problem~\eqref{prob:bilinear_SPP_main}.

Recall
\[
  F_{\lambda_y}(x,y)
  =F(x,y)-\frac{\lambda_y}{2}\|y-y^0\|^2
  =f(x)+\langle y,\bB x\rangle-g_\lambda(y),
\]
and set
\[
  \varphi_\lambda(x):=\max_{v\in\cY}F_{\lambda_y}(x,v),
  \qquad
  \psi_\lambda(y):=\min_{u\in\cX}F_{\lambda_y}(u,y),
\]
so that \(\operatorname{Gap}_{\lambda_y}(z)=\varphi_\lambda(x)-\psi_\lambda(y)\)
and \(\varphi_\lambda(x_\lambda^*)=\psi_\lambda(y_\lambda^*)
=F_{\lambda_y}(z_\lambda^*)\).

Define
\begin{equation}\label{eq:mixed_conjugates}
  h^*(w):=\max_{v\in\cY}\bigl\{\langle w,v\rangle-g_\lambda(v)\bigr\},
  \qquad
  q^*(w'):=\max_{u\in\cX}\bigl\{\langle w',u\rangle-f(u)\bigr\},
\end{equation}
so that \(\varphi_\lambda(x)=f(x)+h^*(\bB x)\) and
\(\psi_\lambda(y)=-q^*(-\bB^\top y)-g_\lambda(y)\). The first maximum is
finite because \(\cY\) is bounded, the second by
Assumption~\ref{ass:mixed_gap_transfer}(ii), which is in force from here on.
We write
\[
  \D_{h^*}(w,w')
  :=h^*(w)-h^*(w')-\langle\nabla h^*(w'),w-w'\rangle,
\]
and, for the fixed choice \(x_\lambda^*\in\partial q^*(-\bB^\top y_\lambda^*)\)
supplied by Lemma~\ref{lem:mixed_conjugate_smoothness},
\[
  \D_{q^*}(w,-\bB^\top y_\lambda^*)
  :=q^*(w)-q^*(-\bB^\top y_\lambda^*)
   -\langle x_\lambda^*,w+\bB^\top y_\lambda^*\rangle .
\]

\begin{assumption}\label{ass:mixed_gap_transfer}
  In addition to~\eqref{eq:mixed_regularized_compatibility}:
  \begin{enumerate}
    \item[(i)] \(\xi_\lambda=0\) and \(\zeta_\lambda=0\).
    \item[(ii)] either \(\mu_x>0\), or \(\cX\) is bounded, in which case we set
      \(D_{\cX}:=\sup_{u,u'\in\cX}\|u-u'\|<\infty\).
  \end{enumerate}
\end{assumption}

\begin{lemma}\label{lem:mixed_conjugate_smoothness}
  Let Assumption~\ref{ass:mixed_gap_transfer}(ii) hold, so that \(q^*\) is
  finite. The function \(h^*\) is differentiable
  on \(\R^{d_y}\), with
  \(\nabla h^*(w)=\argmax_{v\in\cY}\{\langle w,v\rangle-g_\lambda(v)\}\) and
  \(1/\lambda_y\)-Lipschitz gradient. In particular
  \begin{equation}\label{eq:mixed_Dh_bound}
    \D_{h^*}(w,w')\leq\frac{\|w-w'\|^2}{2\lambda_y},
    \qquad
    \nabla h^*(\bB x_\lambda^*)=y_\lambda^* .
  \end{equation}
  Moreover \(x_\lambda^*\in\partial q^*(-\bB^\top y_\lambda^*)\), and for every
  \(y\in\cY\),
  \begin{equation}\label{eq:mixed_Dq_variational}
    \D_{q^*}\bigl(-\bB^\top y,-\bB^\top y_\lambda^*\bigr)
    \leq
    \max_{u\in\cX}
    \Bigl\{
      \bigl\langle-\bB^\top(y-y_\lambda^*),\,u-x_\lambda^*\bigr\rangle
      -\D_f(u,x_\lambda^*)
    \Bigr\}.
  \end{equation}
\end{lemma}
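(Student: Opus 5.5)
We will prove the three claims of Lemma~\ref{lem:mixed_conjugate_smoothness} in the order stated, using only standard conjugate-duality facts together with the optimality conditions with \(\xi_\lambda=\zeta_\lambda=0\) from Assumption~\ref{ass:mixed_gap_transfer}(i).

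\textbf{Smoothness of \(h^*\).} The function \(g_\lambda\) is \(\lambda_y\)-strongly convex on \(\cY\), and \(\cY\) is closed, convex and bounded. Hence for every \(w\) the problem \(\max_{v\in\cY}\{\langle w,v\rangle-g_\lambda(v)\}\) has a unique maximizer \(v(w)\). To show that \(v(\cdot)\) is \(1/\lambda_y\)-Lipschitz, we write the variational inequalities at \(v(w)\) and at \(v(w')\), add them, and use strong monotonicity of \(\partial g_\lambda\) (with the normal cone of \(\cY\) absorbed, since that cone is monotone). This gives
\[
  \lambda_y\|v(w)-v(w')\|^2\le\langle w-w',v(w)-v(w')\rangle .
\]
Danskin's theorem then gives differentiability with \(\nabla h^*=v\). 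The bound \(\D_{h^*}(w,w')\le\|w-w'\|^2/(2\lambda_y)\) is the usual descent lemma, obtained by integrating along the segment from \(w'\) to \(w\).

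\textbf{Identification \(\nabla h^*(\bB x_\lambda^*)=y_\lambda^*\).} By the optimality condition with \(\zeta_\lambda=0\), we have \(\bB x_\lambda^*\in\partial g_\lambda(y_\lambda^*)\). This is exactly the first-order sufficient condition for \(y_\lambda^*\) to maximize \(\langle\bB x_\lambda^*,v\rangle-g_\lambda(v)\) over \(\cY\). Uniqueness of the maximizer yields the claim.

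\textbf{The \(q^*\)-part.} Assumption~\ref{ass:mixed_gap_transfer}(ii) makes \(q^*\) finite: either \(f\) is strongly convex, or \(\cX\) is bounded and \(f\) is continuous. With \(\xi_\lambda=0\), the optimality condition reads \(-\bB^\top y_\lambda^*=f'(x_\lambda^*)\). Consequently \(x_\lambda^*\) maximizes \(\langle -\bB^\top y_\lambda^*,u\rangle-f(u)\) over \(\cX\), and
\[
  q^*(-\bB^\top y_\lambda^*)=\langle-\bB^\top y_\lambda^*,x_\lambda^*\rangle-f(x_\lambda^*).
\]
Since \(q^*\) is a supremum of affine functions of \(w\), any maximizer is a subgradient, so \(x_\lambda^*\in\partial q^*(-\bB^\top y_\lambda^*)\).

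For \eqref{eq:mixed_Dq_variational}, we set \(w=-\bB^\top y\) and \(w^*=-\bB^\top y_\lambda^*\), and expand
\[
  q^*(w)=\max_u\{\langle w,u\rangle-f(u)\}.
\]
Subtracting \(q^*(w^*)\) and \(\langle x_\lambda^*,w-w^*\rangle\), the expression inside the max becomes
\[
  \langle w,u\rangle-f(u)-\langle w^*,x_\lambda^*\rangle+f(x_\lambda^*)-\langle x_\lambda^*,w-w^*\rangle .
\]
We then regroup it as \(\langle w-w^*,u-x_\lambda^*\rangle-\bigl[f(u)-f(x_\lambda^*)-\langle w^*,u-x_\lambda^*\rangle\bigr]\). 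Because \(w^*=f'(x_\lambda^*)\), the bracket is exactly \(\D_f(u,x_\lambda^*)\). This gives \eqref{eq:mixed_Dq_variational}, in fact with equality.

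\textbf{Main obstacle.} The only delicate point is the consistent choice of subgradient. \(\D_f\) must be built from the specific \(f'(x_\lambda^*)\) that equals \(-\bB^\top y_\lambda^*\). That subgradient is taken in the ambient sense, as fixed in the optimality conditions. The vanishing normal vector \(\xi_\lambda=0\) is precisely what makes this identification valid at boundary points of \(\cX\).
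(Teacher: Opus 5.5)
Your route is the paper's route: conjugacy of the restricted strongly convex \(g_\lambda\), identification of the maximizers from optimality, and rewriting \(\D_{q^*}\) as a maximum over \(u\in\cX\). There is, however, a hypothesis gap. You invoke \(\xi_\lambda=\zeta_\lambda=0\) from Assumption~\ref{ass:mixed_gap_transfer}(i), but the lemma assumes only part (ii). This matters because the lemma is also used in Lemma~\ref{lem:mixed_gap_identity}, again under (ii) alone, where the normal vectors need not vanish. That is why the identity there keeps the terms \(-\langle\xi_\lambda,x-x_\lambda^*\rangle\) and \(-\langle\zeta_\lambda,y-y_\lambda^*\rangle\). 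Likewise, the compatibility conditions~\eqref{eq:mixed_regularized_compatibility} force \(\xi_\lambda=0\) or \(\zeta_\lambda=0\) only when the corresponding \(\beta\) is positive. With \(w^*=-\bB^\top y_\lambda^*\) and \(\xi_\lambda\neq0\), your step ``the bracket is exactly \(\D_f(u,x_\lambda^*)\)'' fails. The subgradient fixed in \(\D_f\) is \(f'(x_\lambda^*)=w^*-\xi_\lambda\), not \(w^*\).

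The repair is short, and it is what the paper does. For the two identifications, use the saddle-point property of \(z_\lambda^*\) directly. Equivalently, use the inclusions \(\bB x_\lambda^*\in\partial g_\lambda(y_\lambda^*)+N_{\cY}(y_\lambda^*)\) and \(w^*\in\partial f(x_\lambda^*)+N_{\cX}(x_\lambda^*)\). These still suffice for \(y_\lambda^*\) and \(x_\lambda^*\) to be maximizers over \(\cY\) and \(\cX\), respectively. The latter is no longer a maximizer over all of \(\R^{d_x}\), but \(q^*\) only maximizes over \(\cX\). For~\eqref{eq:mixed_Dq_variational}, your own regrouping gives the bracket \(f(u)-f(x_\lambda^*)-\langle w^*,u-x_\lambda^*\rangle=\D_f(u,x_\lambda^*)-\langle\xi_\lambda,u-x_\lambda^*\rangle\geq\D_f(u,x_\lambda^*)\). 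The inequality holds because \(u\in\cX\) and \(\xi_\lambda\in N_{\cX}(x_\lambda^*)\). This is why the lemma states an inequality; equality holds only when \(\xi_\lambda=0\). So your ``main obstacle'' remark is misplaced. What the bound needs at boundary points is the sign of the normal-cone term, which the restriction \(u\in\cX\) in the maximum makes available, not its vanishing. The smoothness part of your argument, via strong monotonicity plus Danskin, is fine; it is a more hands-on version of the paper's appeal to the conjugate of a closed strongly convex function.
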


\begin{proof}
  The function equal to \(g_\lambda\) on \(\cY\) and to \(+\infty\) elsewhere
  is proper, closed and \(\lambda_y\)-strongly convex, and \(h^*\) is its
  Fenchel conjugate. Hence \(h^*\) is differentiable with
  \(1/\lambda_y\)-Lipschitz gradient given by the unique maximizer, and the
  first part of~\eqref{eq:mixed_Dh_bound} is the descent lemma for \(h^*\).
  Since \(z_\lambda^*\) is a saddle point of \(F_{\lambda_y}\), the point
  \(y_\lambda^*\) maximizes \(v\mapsto\langle\bB x_\lambda^*,v\rangle
  -g_\lambda(v)\) over \(\cY\), which is the second part. Likewise
  \(x_\lambda^*\) minimizes \(u\mapsto f(u)+\langle\bB^\top y_\lambda^*,u
  \rangle\) over \(\cX\), i.e.\ \(x_\lambda^*\in\partial
  q^*(-\bB^\top y_\lambda^*)\).

  For~\eqref{eq:mixed_Dq_variational}, write \(w:=-\bB^\top y\) and
  \(w':=-\bB^\top y_\lambda^*\). Using
  \(q^*(w')=\langle w',x_\lambda^*\rangle-f(x_\lambda^*)\),
  \[
    \D_{q^*}(w,w')
    =\max_{u\in\cX}
      \bigl\{\langle w,u-x_\lambda^*\rangle
        -\bigl(f(u)-f(x_\lambda^*)\bigr)\bigr\}.
  \]
  By the regularized optimality conditions,
  \(f'(x_\lambda^*)=w'-\xi_\lambda\) with
  \(\xi_\lambda\in N_{\cX}(x_\lambda^*)\), so for every \(u\in\cX\),
  \[
    f(u)-f(x_\lambda^*)
    =\D_f(u,x_\lambda^*)+\langle w',u-x_\lambda^*\rangle
     -\langle\xi_\lambda,u-x_\lambda^*\rangle
    \geq\D_f(u,x_\lambda^*)+\langle w',u-x_\lambda^*\rangle,
  \]
  because \(\langle\xi_\lambda,u-x_\lambda^*\rangle\leq0\). Substituting gives
  the claim.
\end{proof}

\begin{lemma}\label{lem:mixed_gap_identity}
  Let Assumption~\ref{ass:mixed_gap_transfer}(ii) hold. For every
  \(z=(x,y)\in\cZ\),
  \begin{equation}\label{eq:mixed_gap_identity}
    \begin{aligned}
    \operatorname{Gap}_{\lambda_y}(z)
    &=\D_f(x,x_\lambda^*)
      +\D_{h^*}(\bB x,\bB x_\lambda^*)
      -\langle\xi_\lambda,x-x_\lambda^*\rangle\\
    &\quad+\D_{g_\lambda}(y,y_\lambda^*)
      +\D_{q^*}(-\bB^\top y,-\bB^\top y_\lambda^*)
      -\langle\zeta_\lambda,y-y_\lambda^*\rangle .
    \end{aligned}
  \end{equation}
\end{lemma}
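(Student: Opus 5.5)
The plan is a direct computation: split $\operatorname{Gap}_{\lambda_y}(z)=\varphi_\lambda(x)-\psi_\lambda(y)$ into an $x$-part and a $y$-part, each measured against the common value $F_{\lambda_y}(z_\lambda^*)=\varphi_\lambda(x_\lambda^*)=\psi_\lambda(y_\lambda^*)$. That is, we write
\[
  \operatorname{Gap}_{\lambda_y}(z)
  =\bigl[\varphi_\lambda(x)-\varphi_\lambda(x_\lambda^*)\bigr]
  +\bigl[\psi_\lambda(y_\lambda^*)-\psi_\lambda(y)\bigr].
\]
We then expand each bracket through the conjugate representations $\varphi_\lambda=f+h^*\circ\bB$ and $\psi_\lambda=-q^*(-\bB^\top\cdot)-g_\lambda$ from~\eqref{eq:mixed_conjugates}. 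Finiteness of $h^*$ and $q^*$ comes from the boundedness of $\cY$ and from Assumption~\ref{ass:mixed_gap_transfer}(ii).

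\emph{The $x$-part.} We have
\[
  \varphi_\lambda(x)-\varphi_\lambda(x_\lambda^*)
  =f(x)-f(x_\lambda^*)+h^*(\bB x)-h^*(\bB x_\lambda^*).
\]
Adding and subtracting the linear terms gives
\begin{align*}
  h^*(\bB x)-h^*(\bB x_\lambda^*)
  &=\D_{h^*}(\bB x,\bB x_\lambda^*)
   +\langle\nabla h^*(\bB x_\lambda^*),\bB(x-x_\lambda^*)\rangle\\
  &=\D_{h^*}(\bB x,\bB x_\lambda^*)
   +\langle\bB^\top y_\lambda^*,x-x_\lambda^*\rangle,
\end{align*}
where the second equality uses $\nabla h^*(\bB x_\lambda^*)=y_\lambda^*$ from Lemma~\ref{lem:mixed_conjugate_smoothness}. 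For the $f$-difference, the optimality condition gives $f'(x_\lambda^*)=-\bB^\top y_\lambda^*-\xi_\lambda$, hence
\[
  f(x)-f(x_\lambda^*)
  =\D_f(x,x_\lambda^*)-\langle\bB^\top y_\lambda^*+\xi_\lambda,x-x_\lambda^*\rangle .
\]
The two $\bB^\top y_\lambda^*$ inner products cancel, and what remains is $\D_f+\D_{h^*}-\langle\xi_\lambda,x-x_\lambda^*\rangle$.

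\emph{The $y$-part.} We have
\[
  \psi_\lambda(y_\lambda^*)-\psi_\lambda(y)
  =q^*(-\bB^\top y)-q^*(-\bB^\top y_\lambda^*)+g_\lambda(y)-g_\lambda(y_\lambda^*).
\]
By the definition of $\D_{q^*}$ with the fixed subgradient $x_\lambda^*\in\partial q^*(-\bB^\top y_\lambda^*)$ (Lemma~\ref{lem:mixed_conjugate_smoothness}),
\[
  q^*(-\bB^\top y)-q^*(-\bB^\top y_\lambda^*)
  =\D_{q^*}(-\bB^\top y,-\bB^\top y_\lambda^*)-\langle\bB x_\lambda^*,y-y_\lambda^*\rangle.
\]
For the $g_\lambda$-difference, the regularized optimality condition gives $g_\lambda'(y_\lambda^*)=\bB x_\lambda^*-\zeta_\lambda$, so
\[
  g_\lambda(y)-g_\lambda(y_\lambda^*)
  =\D_{g_\lambda}(y,y_\lambda^*)+\langle\bB x_\lambda^*-\zeta_\lambda,y-y_\lambda^*\rangle .
\]
The $\bB x_\lambda^*$ terms cancel, leaving $\D_{g_\lambda}+\D_{q^*}-\langle\zeta_\lambda,y-y_\lambda^*\rangle$. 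Summing the two parts yields~\eqref{eq:mixed_gap_identity}.

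\emph{Main obstacle.} The only delicate point is consistency of the subgradient choices. The Bregman terms $\D_f$ and $\D_{g_\lambda}$ must be formed with exactly the subgradients $f'(x_\lambda^*)$ and $g_\lambda'(y_\lambda^*)$ fixed in the regularized optimality conditions. Likewise, $\D_{q^*}$ must use the particular element $x_\lambda^*$ of $\partial q^*$. If either choice differs, the linear terms do not cancel. The identity $\varphi_\lambda(x_\lambda^*)=\psi_\lambda(y_\lambda^*)$ follows from the saddle property established in Lemma~\ref{lem:mixed_conjugate_smoothness}, so no minimax theorem beyond the assumed existence of $z_\lambda^*$ is needed.
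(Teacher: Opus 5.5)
Your proposal is correct and follows essentially the same route as the paper. Both split $\operatorname{Gap}_{\lambda_y}(z)$ at the common saddle value $\varphi_\lambda(x_\lambda^*)=\psi_\lambda(y_\lambda^*)$ and expand each half through $\varphi_\lambda=f+h^*\circ\bB$ and $\psi_\lambda=-q^*(-\bB^\top\cdot)-g_\lambda$. Both then cancel the linear terms using the regularized optimality conditions together with $\nabla h^*(\bB x_\lambda^*)=y_\lambda^*$ and $x_\lambda^*\in\partial q^*(-\bB^\top y_\lambda^*)$ from Lemma~\ref{lem:mixed_conjugate_smoothness}, and you are right that the Bregman terms must use exactly the subgradients fixed in those conditions.
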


\begin{proof}
  Since \(\varphi_\lambda=f+h^*\circ\bB\),
  Lemma~\ref{lem:mixed_conjugate_smoothness} gives
  \begin{align*}
    \varphi_\lambda(x)-\varphi_\lambda(x_\lambda^*)
    &=\D_f(x,x_\lambda^*)+\langle f'(x_\lambda^*),x-x_\lambda^*\rangle\\
    &\quad+\D_{h^*}(\bB x,\bB x_\lambda^*)
      +\langle y_\lambda^*,\bB(x-x_\lambda^*)\rangle,
  \end{align*}
  and the two linear terms combine into
  \(\langle f'(x_\lambda^*)+\bB^\top y_\lambda^*,x-x_\lambda^*\rangle
  =-\langle\xi_\lambda,x-x_\lambda^*\rangle\). Similarly,
  \begin{align*}
    \psi_\lambda(y_\lambda^*)-\psi_\lambda(y)
    &=q^*(-\bB^\top y)-q^*(-\bB^\top y_\lambda^*)
      +g_\lambda(y)-g_\lambda(y_\lambda^*)\\
    &=\D_{q^*}(-\bB^\top y,-\bB^\top y_\lambda^*)
      -\langle\bB x_\lambda^*,y-y_\lambda^*\rangle\\
    &\quad+\D_{g_\lambda}(y,y_\lambda^*)
      +\langle g_\lambda'(y_\lambda^*),y-y_\lambda^*\rangle,
  \end{align*}
  whose linear terms combine into
  \(\langle g_\lambda'(y_\lambda^*)-\bB x_\lambda^*,y-y_\lambda^*\rangle
  =-\langle\zeta_\lambda,y-y_\lambda^*\rangle\). Adding the two displays and
  using \(\varphi_\lambda(x_\lambda^*)=\psi_\lambda(y_\lambda^*)\) yields
  \eqref{eq:mixed_gap_identity}. The Bregman terms are nonnegative by
  convexity, and the two remaining terms are nonnegative because
  \(\xi_\lambda\in N_{\cX}(x_\lambda^*)\) with \(x\in\cX\), and
  \(\zeta_\lambda\in N_{\cY}(y_\lambda^*)\) with \(y\in\cY\).
\end{proof}

\begin{theorem}\label{thm:mixed_gap_transfer}
  Let Assumption~\ref{ass:mixed_gap_transfer} hold and set
  \begin{equation}\label{eq:mixed_kappa_hat}
    \hat\kappa:=\frac{L_{xy}^2}{\delta_x^\lambda\lambda_y}
      =\frac{L_{xy}^2\Omega_y}{\delta_x^\lambda\varepsilon},
    \qquad
    \hat\kappa_\mu:=\frac{L_{xy}^2}{\mu_x\lambda_y}
      =\frac{L_{xy}^2\Omega_y}{\mu_x\varepsilon}
    \ \ (\mu_x>0).
  \end{equation}
  If \(z\in\cZ\) satisfies \(\Psi_\lambda(z)\leq\varepsilon'\), then
  \begin{equation}\label{eq:mixed_gap_transfer}
    \operatorname{Gap}(z)
    \leq
    \frac{\varepsilon}{2}
    +\frac{\varepsilon'}{6}
    +\frac{\hat\kappa}{2}\varepsilon'
    +
    \begin{cases}
      \dfrac{\hat\kappa_\mu}{2}\,\varepsilon', & \mu_x>0,\\[1.8ex]
      L_{xy}D_{\cX}\sqrt{\dfrac{\varepsilon'}{\lambda_y}}, & \mu_x=0 .
    \end{cases}
  \end{equation}
  In particular \(\operatorname{Gap}(z)\leq\varepsilon\) whenever
  \begin{equation}\label{eq:mixed_target}
    \varepsilon'\leq\varepsilon'_\star:=
    \begin{cases}
      \dfrac{\varepsilon}
        {2\left(\frac16+\frac12\hat\kappa+\frac12\hat\kappa_\mu\right)},
        &\mu_x>0,\\[2.6ex]
      \min\left\{
        \dfrac{\varepsilon}{4\left(\frac16+\frac12\hat\kappa\right)},\
        \dfrac{\lambda_y\varepsilon^2}{16L_{xy}^2D_{\cX}^2}
      \right\},&\mu_x=0 ,
    \end{cases}
  \end{equation}
  where the second entry of the minimum is read as \(+\infty\) when
  \(D_{\cX}=0\).
\end{theorem}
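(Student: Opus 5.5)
The plan is to split the original gap into the regularized gap plus the regularization error, and then bound the regularized gap through the Bregman identity of Lemma~\ref{lem:mixed_gap_identity}, controlling each term by \(\Psi_\lambda(z)\leq\varepsilon'\). For the first step, for any \(z\in\cZ\), \(F\) and \(F_{\lambda_y}\) differ only by \(-\frac{\lambda_y}{2}\|y-y^0\|^2\), which lies in \([-\frac{\lambda_y}{2}\Omega_y,0]\) on \(\cY\). Hence \(\max_vF(x,v)\leq\max_vF_{\lambda_y}(x,v)+\frac{\lambda_y}{2}\Omega_y\) and \(\min_uF(u,y)\geq\min_uF_{\lambda_y}(u,y)\). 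This gives \(\operatorname{Gap}(z)\leq\operatorname{Gap}_{\lambda_y}(z)+\frac{\varepsilon}{2}\) by the choice \(\lambda_y=\varepsilon/\Omega_y\).

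Next, under Assumption~\ref{ass:mixed_gap_transfer}(i) the normal-vector terms in \eqref{eq:mixed_gap_identity} vanish. The regularized gap is then \(\D_f(x,x_\lambda^*)+\D_{g_\lambda}(y,y_\lambda^*)+\D_{h^*}(\bB x,\bB x_\lambda^*)+\D_{q^*}(-\bB^\top y,-\bB^\top y_\lambda^*)\), and I bound it term by term.
\begin{enumerate}
  \item The two direct Bregman terms are at most \(\frac{1}{12}\Psi_\lambda(z)\) each in sum, so they contribute at most \(\varepsilon'/12\leq\varepsilon'/6\).
  \item By \eqref{eq:mixed_Dh_bound}, \(\D_{h^*}\leq\|\bB(x-x_\lambda^*)\|^2/(2\lambda_y)\leq L_{xy}^2\|x-x_\lambda^*\|^2/(2\lambda_y)\). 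Using \(\delta_x^\lambda\|x-x_\lambda^*\|^2\leq\|z-z_\lambda^*\|_{\bP_\lambda}^2\leq\varepsilon'\), this is at most \(\frac{\hat\kappa}{2}\varepsilon'\).
\end{enumerate}

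The main obstacle is the \(q^*\) term, since \(f\) need not be smooth, so \(q^*\) need not be strongly convex. I would use the variational bound \eqref{eq:mixed_Dq_variational}, writing \(a:=-\bB^\top(y-y_\lambda^*)\) with \(\|a\|\leq L_{xy}\|y-y_\lambda^*\|\), and use \(\lambda_y\|y-y_\lambda^*\|^2\leq\varepsilon'\), since \(\delta_y^\lambda=\lambda_y\).
\begin{enumerate}
  \item If \(\mu_x>0\), strong convexity gives \(\D_f(u,x_\lambda^*)\geq\frac{\mu_x}{2}\|u-x_\lambda^*\|^2\). Maximizing the concave quadratic \(\langle a,d\rangle-\frac{\mu_x}{2}\|d\|^2\) over \(d\) gives \(\|a\|^2/(2\mu_x)\leq L_{xy}^2\varepsilon'/(2\mu_x\lambda_y)=\frac{\hat\kappa_\mu}{2}\varepsilon'\).
  \item If \(\mu_x=0\), I drop \(\D_f\geq0\) and use boundedness of \(\cX\). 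This gives \(\langle a,u-x_\lambda^*\rangle\leq\|a\|D_{\cX}\leq L_{xy}D_{\cX}\sqrt{\varepsilon'/\lambda_y}\).
\end{enumerate}
Summing all contributions yields \eqref{eq:mixed_gap_transfer}.

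Finally, the target condition \eqref{eq:mixed_target} is arithmetic.
\begin{enumerate}
  \item If \(\mu_x>0\), the extra terms are \((\frac16+\frac12\hat\kappa+\frac12\hat\kappa_\mu)\varepsilon'\leq\varepsilon/2\).
  \item If \(\mu_x=0\), the linear terms are at most \(\varepsilon/4\). The square-root term is at most \(L_{xy}D_{\cX}\cdot\varepsilon/(4L_{xy}D_{\cX})=\varepsilon/4\) by the second entry of the minimum.
\end{enumerate}
In both cases the total is at most \(\varepsilon/2+\varepsilon/2=\varepsilon\). The delicate points to double-check are the constant in the Bregman part of \(\Psi_\lambda\), which must be referenced to \(z_\lambda^*\), and the fact that \(\|\bB\|\leq L_{xy}\) holds in operator norm, which follows from \(L_{xy}^2\geq\lmax(\bB^\top\bB)\).
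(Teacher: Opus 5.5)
Your proposal is correct and follows the paper's proof step for step. The steps match: the regularization split $\operatorname{Gap}\leq\operatorname{Gap}_{\lambda_y}+\varepsilon/2$, the identity of Lemma~\ref{lem:mixed_gap_identity} with vanishing normal terms, the bounds on $\D_{h^*}$ via \eqref{eq:mixed_Dh_bound} and on $\D_{q^*}$ via \eqref{eq:mixed_Dq_variational} in the cases $\mu_x>0$ and $\mu_x=0$, and the arithmetic for \eqref{eq:mixed_target}. The only difference is cosmetic: you bound $\D_f+\D_{g_\lambda}$ jointly by $\varepsilon'/12$, whereas the paper bounds each term separately by $\varepsilon'/12$ and records $\varepsilon'/6$.
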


\begin{proof}
  For \(v\in\cY\) we have \(\|v-y^0\|^2\leq\Omega_y\), hence
  \(F(x,v)\leq F_{\lambda_y}(x,v)+\tfrac{\lambda_y}{2}\Omega_y\), while
  \(F(u,y)\geq F_{\lambda_y}(u,y)\). Taking the maximum over \(v\in\cY\) and
  the minimum over \(u\in\cX\),
  \[
    \operatorname{Gap}(z)
    \leq\operatorname{Gap}_{\lambda_y}(z)+\frac{\lambda_y}{2}\Omega_y
    =\operatorname{Gap}_{\lambda_y}(z)+\frac{\varepsilon}{2}.
  \]
  We bound \(\operatorname{Gap}_{\lambda_y}(z)\)
  by~\eqref{eq:mixed_gap_identity}, in which the two normal-cone terms vanish
  by Assumption~\ref{ass:mixed_gap_transfer}(i). Since
  \(\Psi_\lambda(z)\leq\varepsilon'\) and every summand of \(\Psi_\lambda\) is
  nonnegative,
  \begin{equation}\label{eq:mixed_from_Psi}
    \D_f(x,x_\lambda^*)\leq\frac{\varepsilon'}{12},
    \quad
    \D_{g_\lambda}(y,y_\lambda^*)\leq\frac{\varepsilon'}{12},
    \quad
    \|x-x_\lambda^*\|^2\leq\frac{\varepsilon'}{\delta_x^\lambda},
    \quad
    \|y-y_\lambda^*\|^2\leq\frac{\varepsilon'}{\lambda_y}.
  \end{equation}
  By~\eqref{eq:mixed_Dh_bound} and
  \(\|\bB(x-x_\lambda^*)\|\leq L_{xy}\|x-x_\lambda^*\|\),
  \[
    \D_{h^*}(\bB x,\bB x_\lambda^*)
    \leq\frac{L_{xy}^2\|x-x_\lambda^*\|^2}{2\lambda_y}
    \leq\frac{\hat\kappa}{2}\varepsilon'.
  \]
  For the dual Bregman term we use~\eqref{eq:mixed_Dq_variational}. If
  \(\mu_x>0\), then \(\D_f(u,x_\lambda^*)\geq
  \tfrac{\mu_x}{2}\|u-x_\lambda^*\|^2\), and maximizing the resulting concave
  quadratic in \(u-x_\lambda^*\) over \(\R^{d_x}\supseteq\cX-x_\lambda^*\)
  gives
  \[
    \D_{q^*}(-\bB^\top y,-\bB^\top y_\lambda^*)
    \leq\frac{\|\bB^\top(y-y_\lambda^*)\|^2}{2\mu_x}
    \leq\frac{\hat\kappa_\mu}{2}\varepsilon'.
  \]
  If \(\mu_x=0\), we discard \(\D_f\geq0\) and use \(u,x_\lambda^*\in\cX\):
  \[
    \D_{q^*}(-\bB^\top y,-\bB^\top y_\lambda^*)
    \leq L_{xy}\|y-y_\lambda^*\|D_{\cX}
    \leq L_{xy}D_{\cX}\sqrt{\varepsilon'/\lambda_y}.
  \]
  Adding the four contributions gives~\eqref{eq:mixed_gap_transfer}, and
  \eqref{eq:mixed_target} makes everything after \(\varepsilon/2\) at most
  \(\varepsilon/2\).
\end{proof}

\begin{corollary}
\label{cor:mixed_gap_complexity}
  Let Assumption~\ref{ass:mixed_gap_transfer} hold and set
  \[
    \Psi_0:=\Psi_\lambda(z^0),
    \qquad
    \Gamma:=\frac{\varepsilon}{\varepsilon'_\star},
    \qquad
    \alpha_x:=\frac{1-\nu_x}{1+3\nu_x},
    \qquad
    \alpha_y:=\frac{1-\nu_y}{1+3\nu_y}.
  \]
  Let \(z_\lambda^S:=z^0\) if \(\Psi_0\leq\varepsilon'_\star\), and otherwise
  let \(z_\lambda^S\) be the output of
  Corollary~\ref{cor:bilinear_SPP_restart} applied to
  \((f,g_\lambda,p_{3,\lambda},Q_3)\) with terminal target
  \(\varepsilon'_\star\) of~\eqref{eq:mixed_target}. Then
  \[
    \operatorname{Gap}(z_\lambda^S)\leq\varepsilon
    \quad\text{for~\eqref{prob:bilinear_SPP_main}},
  \]
  and, with \(N_f\), \(N_g\), \(N_B\) as
  in~\eqref{eq:mixed_Ng_corrected}--\eqref{eq:mixed_NB_corrected} and
  \(S=\mathcal O\bigl(1+\log_+(\Psi_0\Gamma/\varepsilon)\bigr)\),
  \begin{equation}\label{eq:mixed_gap_counts}
    N_f^{\mathrm{tot}}=\mathcal O(\Gamma^{\alpha_x}N_f),
    \qquad
    N_g^{\mathrm{tot}}=\mathcal O(\Gamma^{\alpha_y}N_g),
    \qquad
    N_B^{\mathrm{tot}}=\mathcal O(N_B).
  \end{equation}
  Moreover
  \[
    \nu_x=\nu_y=1
    \ \Longrightarrow\
    \alpha_x=\alpha_y=0,
    \qquad
    \nu_y<1
    \ \Longrightarrow\
    \hat\kappa=\hat\kappa_\mu=\kappa_{xy}^\lambda,
    \ \ \Gamma=\Theta(1+\kappa_{xy}^\lambda).
  \]
\end{corollary}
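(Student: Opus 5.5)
We combine Theorem~\ref{thm:mixed_gap_transfer} with Corollary~\ref{cor:bilinear_SPP_restart}, applied to the regularized instance \((f,g_\lambda,p_{3,\lambda},Q_3)\).

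\textbf{Correctness.} If \(\Psi_0\leq\varepsilon'_\star\), the output is \(z^0\) and \(\Psi_\lambda(z^0)\leq\varepsilon'_\star\). Otherwise, Corollary~\ref{cor:bilinear_SPP_restart}, run with terminal target \(\varepsilon'_\star\in(0,\Psi_0]\), yields \(\Psi_\lambda(z_\lambda^S)\leq\varepsilon'_\star\). This requires that the hypotheses of Theorem~\ref{thm:bilinear_SPP_Holder} hold for the regularized problem. These are exactly \eqref{eq:mixed_regularized_compatibility} together with the facts already derived: \(g_\lambda\) is \(\lambda_y\)-strongly convex with Hölder constant \(H_{y,\lambda}\); it is ambient smooth with constant \(L_y+\lambda_y\) when \(\nu_y=1\); and \(\delta_x^\lambda,\delta_y^\lambda>0\). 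In both cases, Theorem~\ref{thm:mixed_gap_transfer} with \(\varepsilon'=\varepsilon'_\star\) gives \(\operatorname{Gap}(z_\lambda^S)\leq\varepsilon\). The number of restarts is \(S=\lceil\log(\Psi_0/\varepsilon'_\star)/\log(4/3)\rceil\). Since \(\varepsilon'_\star=\varepsilon/\Gamma\), this is \(\mathcal O(1+\log_+(\Psi_0\Gamma/\varepsilon))\).

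\textbf{Counts.} The per-restart budgets from Theorem~\ref{thm:bilinear_SPP_Holder} are summed exactly as in the proof of Corollary~\ref{cor:bilinear_SPP_restart}, now with final accuracy \(\varepsilon/\Gamma\) in place of \(\varepsilon\). For \(\nu_x<1\), the geometric sum gives
\[
\mathcal O\bigl((\tilde H_x^\lambda)^{2/(1+3\nu_x)}(\varepsilon/\Gamma)^{-\alpha_x}\bigr)=\mathcal O(\Gamma^{\alpha_x}N_f).
\]
For \(\nu_x=1\), we have \(\alpha_x=0\), and the count is \(\sqrt{\kappa_x^\lambda}\,S\), which is \(\widetilde{\mathcal O}(N_f)\) because logarithms are hidden. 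The \(g\)-count is handled identically. With \(\tilde H_y^\lambda=H_{y,\lambda}\lambda_y^{-(1+\nu_y)/2}\) and \(\lambda_y=\varepsilon/\Omega_y\), one checks
\[
(\tilde H_y^\lambda)^{2/(1+3\nu_y)}\varepsilon^{-\alpha_y}=\bigl(H_{y,\lambda}\Omega_y^{(1+\nu_y)/2}/\varepsilon\bigr)^{2/(1+3\nu_y)},
\]
which reproduces \eqref{eq:mixed_Ng_corrected}. For \(\nu_y=1\), one has \(\sqrt{\kappa_y^\lambda}=\sqrt{(L_y+\lambda_y)\Omega_y/\varepsilon}\). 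The coupling count is \(\sqrt{\kappa_{xy}^\lambda}\) per restart times \(S\), which is \(\widetilde{\mathcal O}(N_B)\) by \eqref{eq:mixed_NB_corrected}.

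\textbf{Remaining claims.} The statement \(\alpha_x=\alpha_y=0\) at \(\nu=1\) follows from the definition. If \(\nu_y<1\), then \(\beta_x^\lambda=0\), so \(\delta_x^\lambda=\mu_x\). Hence \(\hat\kappa=\hat\kappa_\mu=L_{xy}^2/(\mu_x\lambda_y)=\kappa_{xy}^\lambda\). This needs \(\mu_x=\delta_x>0\), which also places us in the \(\mu_x>0\) branch of \eqref{eq:mixed_target}. That branch gives \(\Gamma=2(\tfrac16+\kappa_{xy}^\lambda)=\Theta(1+\kappa_{xy}^\lambda)\).

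\textbf{Main obstacle.} The delicate step is the bookkeeping of \(\Gamma\) in the count bounds. For \(\mu_x=0\), the target \(\varepsilon'_\star\) may scale like \(\varepsilon^2\). This inflates the Hölder counts by \(\Gamma^{\alpha}\) and the logarithm in \(S\). We must then argue that the coupling count stays \(\widetilde{\mathcal O}(N_B)\), with only logarithmic dependence on \(\Gamma\). This is where the \(\widetilde{\mathcal O}\) convention does the work. A second point is to confirm that the regularized \(\Psi_\lambda\), with its fixed aligned representative \(z_\lambda^*\), is legitimately used across restarts. This holds via Remark~\ref{rem:restart_slice}, with \(\mu_{yx}=0\) on the \(y\)-block so that no range condition is invoked there.
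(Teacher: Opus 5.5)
Your proposal is correct and follows essentially the same route as the paper. Correctness comes from either the trivial branch or Corollary~\ref{cor:bilinear_SPP_restart}, followed by Theorem~\ref{thm:mixed_gap_transfer} at \(\varepsilon'=\varepsilon'_\star\); the counts come from substituting \(\varepsilon'_\star=\varepsilon/\Gamma\) into the restart sums while \(\lambda_y=\varepsilon/\Omega_y\), \(\kappa_x^\lambda\) and \(\kappa_{xy}^\lambda\) stay fixed; and the last claims follow from \(\beta_x^\lambda=0\) when \(\nu_y<1\), with your explicit checks of \((\tilde H_y^\lambda)^{2/(1+3\nu_y)}\varepsilon^{-\alpha_y}\) against \eqref{eq:mixed_Ng_corrected} and of \(\sqrt{\kappa_y^\lambda}=\sqrt{1+L_y\Omega_y/\varepsilon}\) usefully spelling out what the paper leaves implicit.
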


\begin{proof}
  If \(\Psi_0\leq\varepsilon'_\star\), then
  \(\Psi_\lambda(z_\lambda^S)=\Psi_0\leq\varepsilon'_\star\) at zero oracle
  cost. Otherwise \(\varepsilon'_\star\in(0,\Psi_0]\) and
  Corollary~\ref{cor:bilinear_SPP_restart} returns \(z_\lambda^S\) with
  \(\Psi_\lambda(z_\lambda^S)\leq\varepsilon'_\star\). In both cases
  \eqref{eq:mixed_target} gives
  \(\operatorname{Gap}(z_\lambda^S)\leq\varepsilon\).

  By Appendix~\ref{app:proof_bilinear_SPP_restart}, the terminal target
  \(\varepsilon'\) enters only through
  \[
    N_f^{\mathrm{tot}}
    =\mathcal O\bigl((\varepsilon')^{-\alpha_x}\bigr)\ (\nu_x<1),
    \qquad
    N_g^{\mathrm{tot}}
    =\mathcal O\bigl((\varepsilon')^{-\alpha_y}\bigr)\ (\nu_y<1),
  \]
  and, in the remaining cases, through
  \(S=\mathcal O(1+\log_+(\Psi_0/\varepsilon'))\), with
  \(N_B^{\mathrm{tot}}=\mathcal O(\max\{\sqrt{\kappa_{xy}^\lambda},1\}\,S)\).
  Setting \(\varepsilon'=\varepsilon'_\star=\varepsilon/\Gamma\),
  \[
    (\varepsilon'_\star)^{-\alpha}=\Gamma^{\alpha}\varepsilon^{-\alpha},
    \qquad
    S=\mathcal O\bigl(1+\log_+(\Psi_0\Gamma/\varepsilon)\bigr),
  \]
  while \(\lambda_y=\varepsilon/\Omega_y\), \(\kappa_x^\lambda\) and
  \(\kappa_{xy}^\lambda\) are unchanged, which
  gives~\eqref{eq:mixed_gap_counts}.

  Finally, \(\nu_y<1\) forces \(\beta_x^\lambda=0\)
  in~\eqref{eq:mixed_beta_lambda}, hence
  \(\delta_x^\lambda=\mu_x=\delta_x>0\) and
  \[
    \hat\kappa=\hat\kappa_\mu
    =\frac{L_{xy}^2\Omega_y}{\delta_x\varepsilon}
    =\kappa_{xy}^\lambda,
    \qquad
    \Gamma=2\Bigl(\tfrac16+\hat\kappa\Bigr)
    =\Theta(1+\kappa_{xy}^\lambda)
  \]
  by~\eqref{eq:mixed_kappa_hat} and~\eqref{eq:mixed_target}.
\end{proof}

\section{Stochastic Settings} \label{app:stoch_cases}

Throughout this section, \(f\) and \(g\) have Lipschitz-continuous gradients.
The degenerate result applies on bounded \(\cZ\) with
\(\delta_x=\delta_y=0\) and uses a uniform expected gap. The
positive-curvature result assumes full-space domains,
\(\mu_x,\mu_y>0\), and \(\mu_{xy}=\mu_{yx}=0\), so
\(\delta_x=\mu_x\) and \(\delta_y=\mu_y\).

\subsection{Stochastic Oracle Extension}\label{sec:VI_stochastic}

The function components \(p_i\) are accessed through stochastic first-order
oracles, while the operators \(Q_i\) are evaluated exactly. The resulting rate
contains no deterministic \(\delta_i\)-term.

Assumption~\ref{ass:stochastic_oracle_VI} states the oracle model, and
Lemmas~\ref{lem:stoch_three_point}--\ref{lem:stoch_recursive_gap_bound}
adapt the oracle step and recursion to track the martingale terms.

Let \(\|\cdot\|_{\bP,*}\) denote the dual norm of \(\|\cdot\|_{\bP}\), i.e.,
\[
  \|s\|_{\bP,*}
  :=
  \sqrt{\langle s,\bP^{-1}s\rangle}.
\]

\paragraph{The stochastic algorithm and its filtration.}
Replace each component-gradient call in
Algorithm~\ref{alg:sliding_recursive} by its stochastic counterpart using a
fresh sample, leaving all other steps unchanged. The oracle-call nodes are
\[
  \mathcal T
  :=
  \bigl\{
    (k,\bt):1\leq k\leq n,\;
    \bt=(t_1,\dots,t_k),\;
    0\leq t_\ell<T_\ell,\ 1\leq\ell\leq k
  \bigr\}
\]
enumerated as \(\nu_1,\dots,\nu_N\) in the algorithm's depth-first execution
order, where \(N=\sum_{k=1}^n\prod_{j=1}^kT_j\). If \(\xi_{\nu_m}\) is the
sample drawn at \(\nu_m\), set
\[
  \cF_0:=\sigma(z_{\mathrm{in}}),
  \qquad
  \cF_m:=\cF_0\vee\sigma(\xi_{\nu_1},\dots,\xi_{\nu_m}),
  \qquad
  \cF_{\nu_m}:=\cF_{m-1},
  \quad 1\leq m\leq N.
\]
Thus \(\cF_\nu\) is the history before the call at \(\nu\). For
\(\nu=(k,\bt)\), we use repeatedly that (P1) \(z_{\bt}^k\),
\(\bar z_{\bt}^k\), and \(\hat p_k^{k,\bt}\) are
\(\cF_\nu\)-measurable, whereas (P2) \(\tilde z_{\bt}^k\) generally is not,
since it depends on \(\xi_\nu\) and later samples in the same subtree.

\begin{assumption}[Stochastic first-order oracle]
\label{ass:stochastic_oracle_VI}
For every \(1\leq i\leq n\), the function \(p_i\) is convex and differentiable
on \(\mathcal Z\) and has \(L_i\)-Lipschitz gradient with respect to
\(\|\cdot\|_{\bP}\). Hence, for all \(u,v\in\mathcal Z\),
\begin{equation}\label{eq:stoch_exact_two_sided}
  0
  \leq
  p_i(u)-p_i(v)-\langle \nabla p_i(v),u-v\rangle
  \leq
  \frac{L_i}{2}\|u-v\|_{\bP}^2 .
\end{equation}
Moreover, there is a measurable map \(G_i\) such that, writing
\[
  \varepsilon_i(v,\xi):=G_i(v,\xi)-\nabla p_i(v),
\]
for every node \(\nu\in\mathcal T\) and every \(\cF_\nu\)-measurable
\(v\in\mathcal Z\),
\[
  \mathbb E\bigl[\varepsilon_i(v,\xi_\nu)\mid\cF_\nu\bigr]=0,
  \qquad
  \mathbb E\bigl[\|\varepsilon_i(v,\xi_\nu)\|_{\bP,*}^2\mid\cF_\nu\bigr]
  \leq \sigma_i^2
  \qquad\text{a.s.}
\]
\end{assumption}

The two-sided bound~\eqref{eq:stoch_exact_two_sided} concerns the true
gradient. A sampled gradient need not satisfy its lower inequality pathwise,
and the prox output depends on the same sample. Accordingly,
Lemma~\ref{lem:stoch_three_point} uses the smooth upper bound at the prox output
and the convex lower bound at an \(\cF_\nu\)-measurable comparator, leaving a
martingale-difference noise term.

\begin{lemma}[Stochastic three-point inequality]
\label{lem:stoch_three_point}
Let \(h\colon\mathcal Z\to\R\) be convex with \(L\)-Lipschitz gradient with
respect to \(\|\cdot\|_{\bP}\), and let \(G(v,\xi)=\nabla h(v)+\varepsilon(v,\xi)\).
Fix a node \(\nu\in\mathcal T\) and an \(\cF_\nu\)-measurable
\(v\in\mathcal Z\), and abbreviate \(G:=G(v,\xi_\nu)\),
\(\varepsilon:=\varepsilon(v,\xi_\nu)\). Then for every \(\tau>0\), every
\(\cF_\nu\)-measurable \(u\in\mathcal Z\), and every random
\(y\in\mathcal Z\) --- with arbitrary dependence on \(\xi_\nu\) --- we have,
almost surely,
\begin{equation}\label{eq:stoch_three_point}
  h(u)-h(y)
  \;\geq\;
  \langle G,u-y\rangle
  -\frac{L+\tau}{2}\|y-v\|_{\bP}^2
  -\frac{1}{2\tau}\|\varepsilon\|_{\bP,*}^2
  -\langle\varepsilon,u-v\rangle .
\end{equation}
If \(\varepsilon=0\) almost surely, \eqref{eq:stoch_three_point} also holds
with \(\tau=0\) and the last two terms omitted. If, in addition,
Assumption~\ref{ass:stochastic_oracle_VI} holds for \(h\) with variance
\(\sigma^2\), then
\begin{equation}\label{eq:stoch_three_point_moments}
  \mathbb E\bigl[\langle\varepsilon,u-v\rangle\mid\cF_\nu\bigr]=0,
  \qquad
  \mathbb E\bigl[\|\varepsilon\|_{\bP,*}^2\mid\cF_\nu\bigr]\leq\sigma^2 .
\end{equation}
\end{lemma}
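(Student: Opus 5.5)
The plan is to prove \eqref{eq:stoch_three_point} pathwise by a three-term decomposition of \(h(u)-h(y)\) around the query point \(v\). We then obtain \eqref{eq:stoch_three_point_moments} from measurability alone. First we write
\[
  h(u)-h(y)=\bigl[h(u)-h(v)\bigr]-\bigl[h(y)-h(v)\bigr].
\]
We use the two sides of \eqref{eq:stoch_exact_two_sided} separately. The lower (convexity) bound at the comparator \(u\) gives \(h(u)-h(v)\geq\langle\nabla h(v),u-v\rangle\). The upper (smoothness) bound at the random point \(y\) gives \(h(y)-h(v)\leq\langle\nabla h(v),y-v\rangle+\frac L2\|y-v\|_{\bP}^2\). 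Both are deterministic inequalities valid for every pair of points in \(\mathcal Z\). They therefore hold almost surely, whatever the dependence of \(y\) on \(\xi_\nu\). Subtracting, we get
\[
  h(u)-h(y)\geq\langle\nabla h(v),u-y\rangle-\frac L2\|y-v\|_{\bP}^2 .
\]

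Next we replace \(\nabla h(v)\) by \(G-\varepsilon\) and split \(u-y=(u-v)-(y-v)\):
\[
  \langle\nabla h(v),u-y\rangle
  =\langle G,u-y\rangle-\langle\varepsilon,u-v\rangle+\langle\varepsilon,y-v\rangle .
\]
The term \(\langle\varepsilon,y-v\rangle\) is the dangerous one, because \(y\) may depend on \(\xi_\nu\). We bound it from below by the dual-norm Cauchy--Schwarz inequality and Young's inequality with parameter \(\tau\):
\[
  \langle\varepsilon,y-v\rangle\geq-\frac1{2\tau}\|\varepsilon\|_{\bP,*}^2-\frac\tau2\|y-v\|_{\bP}^2 .
\]
Collecting the terms gives \eqref{eq:stoch_three_point}. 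If \(\varepsilon=0\), both \(\varepsilon\)-terms vanish identically. In that case no Young step is needed, so \(\tau=0\) is allowed.

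For the moment statements, we note that \(u\) and \(v\) are \(\cF_\nu\)-measurable. Hence \(u-v\) can be pulled out of the conditional expectation, and \(\mathbb E[\langle\varepsilon,u-v\rangle\mid\cF_\nu]=\langle\mathbb E[\varepsilon\mid\cF_\nu],u-v\rangle=0\) by Assumption~\ref{ass:stochastic_oracle_VI}. The variance bound is exactly the second condition of that assumption, applied with the \(\cF_\nu\)-measurable query point \(v\).

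The only subtle point, and the place I expect care to be needed, is the integrability needed to pull \(u-v\) out of the conditional expectation. We have \(\mathbb E\|\varepsilon\|_{\bP,*}^2<\infty\), but \(u-v\) need not be bounded. I would handle this with the generalized conditional expectation, which works for nonnegative or conditionally integrable products. The key fact is that \(\mathbb E[|\langle\varepsilon,u-v\rangle|\mid\cF_\nu]\leq\|u-v\|_{\bP}\,\sigma<\infty\) almost surely, by conditional Cauchy--Schwarz. This suffices to apply the "taking out what is known" rule conditionally. Alternatively, if \(\mathcal Z\) is bounded, the difficulty does not arise at all.
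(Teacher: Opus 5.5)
Your proposal is correct and takes essentially the same route as the paper. Both arguments apply convexity at \(v\) evaluated at the \(\cF_\nu\)-measurable comparator \(u\) and smoothness at \(v\) evaluated at the sample-dependent \(y\), substitute \(\nabla h(v)=G-\varepsilon\), split the noise term along the query point, apply Young's inequality to the \(y\)-dependent piece, and read the moment bounds off from measurability and Assumption~\ref{ass:stochastic_oracle_VI}. Your remark on the conditional integrability of \(\langle\varepsilon,u-v\rangle\) is a correct refinement that the paper leaves implicit.
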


\begin{proof}
By convexity of \(h\) at \(v\), evaluated at \(u\),
\[
  h(u)\geq h(v)+\langle\nabla h(v),u-v\rangle,
\]
and by \(L\)-smoothness of \(h\) at \(v\), evaluated at \(y\),
\[
  -h(y)\geq -h(v)-\langle\nabla h(v),y-v\rangle-\frac{L}{2}\|y-v\|_{\bP}^2 .
\]
Adding the two inequalities and substituting
\(\nabla h(v)=G-\varepsilon\) gives
\[
  h(u)-h(y)
  \geq
  \langle\nabla h(v),u-y\rangle-\frac{L}{2}\|y-v\|_{\bP}^2
  =
  \langle G,u-y\rangle
  -\langle\varepsilon,u-y\rangle
  -\frac{L}{2}\|y-v\|_{\bP}^2 .
\]
Split the noise term along the query point,
\[
  -\langle\varepsilon,u-y\rangle
  =
  -\langle\varepsilon,u-v\rangle
  -\langle\varepsilon,v-y\rangle,
\]
and bound the second summand by Young's inequality,
\[
  -\langle\varepsilon,v-y\rangle
  \geq
  -\frac{\tau}{2}\|y-v\|_{\bP}^2
  -\frac{1}{2\tau}\|\varepsilon\|_{\bP,*}^2 .
\]
Combining the last three displays gives~\eqref{eq:stoch_three_point}. The case
\(\varepsilon=0\) is immediate from the second display above. Finally,
\eqref{eq:stoch_three_point_moments} follows from
Assumption~\ref{ass:stochastic_oracle_VI}, since \(u\) and \(v\) are
\(\cF_\nu\)-measurable and hence
\(\mathbb E[\langle\varepsilon,u-v\rangle\mid\cF_\nu]
=\langle\mathbb E[\varepsilon\mid\cF_\nu],u-v\rangle=0\).
\end{proof}

Setting \(\varepsilon=0\) and \(\tau=0\) in~\eqref{eq:stoch_three_point}
recovers exactly the deterministic three-point
inequality~\eqref{eq:oracle_lower_three_point} with \(\delta=0\). The
asymmetry between \(u\) and \(y\) is the substance of the lemma: the dependence
of the right-hand side on \(y\) is confined to \(\|y-v\|_{\bP}^2\), which the
prox term of the algorithm absorbs, so \(y\) may be an arbitrary function of
the current sample.

\begin{lemma}[Rescaled stochastic three-point inequality]
\label{lem:stoch_affine_rescaled}
Let Assumption~\ref{ass:stochastic_oracle_VI} hold, suppose
\(0<\alpha_t\leq1\) for all \(t\), and suppose all averaging points generated
by Algorithm~\ref{alg:sliding_recursive} belong to \(\mathcal Z\). Fix
\(1\leq k\leq n\) and a node \(\nu=(k,\bt)\in\mathcal T\) with
\(\bt=(t_1,\dots,t_k)\), and set \(\Gamma_{\bt}:=\prod_{\ell=1}^k\alpha_{t_\ell}\).
Let
\[
  \varepsilon^{k,\bt}
  :=
  G_k^{k,\bt}(z_{\bt}^k,\xi_\nu)-\nabla\hat p_k^{k,\bt}(z_{\bt}^k)
\]
be the oracle error incurred at \(\nu\). Then for every \(\tau_k>0\), every
\(\cF_\nu\)-measurable \(u\in\mathcal Z\) and every random
\(y\in\mathcal Z\), almost surely,
\begin{equation}\label{eq:stoch_three_point_rescaled}
  \hat p_k^{k,\bt}(u)-\hat p_k^{k,\bt}(y)
  \geq
  \bigl\langle G_k^{k,\bt}(z_{\bt}^k,\xi_\nu),u-y\bigr\rangle
  -\frac{L_k^{k,\bt}+\tau_k^{k,\bt}}{2}\|y-z_{\bt}^k\|_{\bP}^2
  -\delta_k^{k,\bt}
  -\bigl\langle\varepsilon^{k,\bt},u-z_{\bt}^k\bigr\rangle,
\end{equation}
where
\[
  L_k^{k,\bt}=L_k\Gamma_{\bt},
  \qquad
  \tau_k^{k,\bt}=\tau_k\Gamma_{\bt},
  \qquad
  \delta_k^{k,\bt}
  =
  \frac{\|\varepsilon^{k,\bt}\|_{\bP,*}^2}{2\tau_k\Gamma_{\bt}} .
\]
Moreover,
\begin{equation}\label{eq:stoch_rescaled_moments}
  \mathbb E\bigl[
    \langle\varepsilon^{k,\bt},u-z_{\bt}^k\rangle
    \mid\cF_\nu
  \bigr]=0,
  \qquad
  \mathbb E\bigl[\delta_k^{k,\bt}\mid\cF_\nu\bigr]
  \leq
  \bar\delta_k^{k,\bt}
  :=
  \frac{\sigma_k^2}{2\tau_k\Gamma_{\bt}} .
\end{equation}
If \(\sigma_k=0\), then \(\varepsilon^{k,\bt}=0\) almost surely, and the same
conclusions hold with
\(\tau_k=\tau_k^{k,\bt}=\delta_k^{k,\bt}=\bar\delta_k^{k,\bt}=0\).
\end{lemma}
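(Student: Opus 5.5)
The plan is to mirror the proof of Lemma~\ref{lem:affine_rescaled_oracle}. The three-point inequality of Lemma~\ref{lem:stoch_three_point} is applied once, to the original component \(p_k\), and then transported through the affine rescalings. First we unfold the recursive definition in line~\ref{line:define_hat_p} and use \(p_k^{\ell,\cdot}=\hat p_k^{\ell,\cdot}\) for \(\ell<k\) (line~\ref{line:define_p_next}). Composing the \(k\) affine maps \(z\mapsto\alpha_{t_\ell}z+(1-\alpha_{t_\ell})\bar z^{\ell}\) gives an explicit representation
\[
  \hat p_k^{k,\bt}(z)=\Gamma_{\bt}^{-1}\,p_k\bigl(\Gamma_{\bt}z+c_{\bt}\bigr),
\]
where the offset \(c_{\bt}\) is built from the averaging points \(\bar z^{\ell}\), \(\ell\leq k\). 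Those points are \(\cF_\nu\)-measurable by (P1), and \(\Gamma_{\bt}z+c_{\bt}\in\mathcal Z\) by convexity, exactly as in that lemma. The stochastic oracle for \(\hat p_k^{k,\bt}\) is the induced one, \(G_k^{k,\bt}(z,\xi)=G_k(\Gamma_{\bt}z+c_{\bt},\xi)\). Its error \(\varepsilon^{k,\bt}\) therefore equals \(\varepsilon_k(v',\xi_\nu)\) with \(v':=\Gamma_{\bt}z_{\bt}^k+c_{\bt}\), since the chain rule gives \(\nabla\hat p_k^{k,\bt}(z)=\nabla p_k(\Gamma_{\bt}z+c_{\bt})\).

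Next we map the points \(u'=\Gamma_{\bt}u+c_{\bt}\), \(y'=\Gamma_{\bt}y+c_{\bt}\), \(v'\) as above. By (P1), \(u'\) and \(v'\) are \(\cF_\nu\)-measurable, while \(y'\) may depend on \(\xi_\nu\). We apply Lemma~\ref{lem:stoch_three_point} to \(h=p_k\) at these points, with parameter \(\tau:=\tau_k\) and the original constant \(L_k\). This gives \(p_k(u')-p_k(y')\ge\langle G,u'-y'\rangle-\tfrac{L_k+\tau_k}{2}\|y'-v'\|_{\bP}^2-\tfrac1{2\tau_k}\|\varepsilon\|_{\bP,*}^2-\langle\varepsilon,u'-v'\rangle\). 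We then divide by \(\Gamma_{\bt}\) and substitute \(u'-y'=\Gamma_{\bt}(u-y)\), \(y'-v'=\Gamma_{\bt}(y-z_{\bt}^k)\), \(u'-v'=\Gamma_{\bt}(u-z_{\bt}^k)\). The linear terms keep coefficient one, the quadratic term acquires the factor \(\Gamma_{\bt}\), giving \(L_k^{k,\bt}+\tau_k^{k,\bt}\), and the variance term acquires \(1/\Gamma_{\bt}\). This is exactly \eqref{eq:stoch_three_point_rescaled} with the stated \(\delta_k^{k,\bt}\).

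For the moments \eqref{eq:stoch_rescaled_moments}, we use that \(v'\) is \(\cF_\nu\)-measurable. Assumption~\ref{ass:stochastic_oracle_VI} then gives \(\mathbb E[\varepsilon^{k,\bt}\mid\cF_\nu]=0\) and \(\mathbb E[\|\varepsilon^{k,\bt}\|_{\bP,*}^2\mid\cF_\nu]\le\sigma_k^2\). Because \(u-z_{\bt}^k\) and \(\Gamma_{\bt}\) are \(\cF_\nu\)-measurable, the zero-mean claim and the bound \(\bar\delta_k^{k,\bt}\) follow. The case \(\sigma_k=0\) uses the \(\varepsilon=0\), \(\tau=0\) clause of Lemma~\ref{lem:stoch_three_point}.

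I expect the main obstacle to be the measurability bookkeeping rather than the algebra. One must check carefully that every averaging point entering \(c_{\bt}\), and the query point \(z_{\bt}^k\), is determined before the sample \(\xi_\nu\) is drawn in the depth-first order. This holds because \(\bar z^{\ell}_{\cdot}\) for \(\ell\le k\) are fixed at the start of the corresponding loop iterations, which precede node \(\nu\). By contrast, \(y\), for instance \(\tilde z_{\bt}^k\), is allowed arbitrary dependence on \(\xi_\nu\), and the asymmetric form of Lemma~\ref{lem:stoch_three_point} is designed for exactly that case.
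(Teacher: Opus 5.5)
Your proposal is correct and follows the paper's proof essentially step for step. The paper likewise writes \(\hat p_k^{k,\bt}(z)=\Gamma_{\bt}^{-1}p_k\bigl(A_{\bt}^k(z)\bigr)\) with an \(\cF_\nu\)-measurable affine map of linear part \(\Gamma_{\bt}\,\mathrm{Id}\), identifies the induced oracle and its error through the chain rule, applies Lemma~\ref{lem:stoch_three_point} to \(p_k\) at the mapped points \(u',y',v'\) with parameter \(\tau_k\), divides by \(\Gamma_{\bt}\), and reads off the moment bounds and the \(\sigma_k=0\) case from the measurability of \(v'\) and \(u-z_{\bt}^k\), exactly as you describe.
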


\begin{proof}
Exactly as in the proof of Lemma~\ref{lem:affine_rescaled_oracle} --- the
affine transformation in line~\ref{line:define_hat_p} is unaffected by the
choice of oracle --- induction on \(k\) gives
\[
  \hat p_k^{k,\bt}(z)
  =
  \Gamma_{\bt}^{-1}\,
  p_k\bigl(A_{\bt}^k(z)\bigr),
\]
where \(A_{\bt}^k\colon\mathcal Z\to\mathcal Z\) is affine with linear part
\(\Gamma_{\bt}\,\mathrm{Id}\) and is built from the averaging points
\(\bar z_{\bt}^k\). By (P1) it is \(\cF_\nu\)-measurable. Differentiating,
\[
  \nabla\hat p_k^{k,\bt}(z)=\nabla p_k\bigl(A_{\bt}^k(z)\bigr),
\]
so \(\hat p_k^{k,\bt}\) is convex with \(L_k\Gamma_{\bt}\)-Lipschitz gradient,
the induced oracle is
\(G_k^{k,\bt}(z,\xi)=G_k\bigl(A_{\bt}^k(z),\xi\bigr)\), and
\(\varepsilon^{k,\bt}=\varepsilon_k\bigl(A_{\bt}^k(z_{\bt}^k),\xi_\nu\bigr)\).

Put \(v':=A_{\bt}^k(z_{\bt}^k)\), \(u':=A_{\bt}^k(u)\),
\(y':=A_{\bt}^k(y)\). All three lie in \(\mathcal Z\) because
\(\bar z_{\bt}^k\in\mathcal Z\) and \(\mathcal Z\) is convex, and \(v'\), \(u'\)
are \(\cF_\nu\)-measurable. Apply Lemma~\ref{lem:stoch_three_point} to
\(h=p_k\) at the points \(u',y',v'\) with parameter \(\tau_k\), and divide the
resulting inequality by \(\Gamma_{\bt}>0\). Using
\[
  u'-y'=\Gamma_{\bt}(u-y),
  \qquad
  u'-v'=\Gamma_{\bt}(u-z_{\bt}^k),
  \qquad
  \|y'-v'\|_{\bP}^2=\Gamma_{\bt}^2\|y-z_{\bt}^k\|_{\bP}^2,
\]
the four terms on the right-hand side become, respectively,
\(\langle G_k^{k,\bt}(z_{\bt}^k,\xi_\nu),u-y\rangle\),
\(-\tfrac{(L_k+\tau_k)\Gamma_{\bt}}{2}\|y-z_{\bt}^k\|_{\bP}^2\),
\(-\tfrac{1}{2\tau_k\Gamma_{\bt}}\|\varepsilon^{k,\bt}\|_{\bP,*}^2\) and
\(-\langle\varepsilon^{k,\bt},u-z_{\bt}^k\rangle\), which
is~\eqref{eq:stoch_three_point_rescaled}. The
moments~\eqref{eq:stoch_rescaled_moments} follow
from~\eqref{eq:stoch_three_point_moments}. If \(\sigma_k=0\), the conditional
second-moment bound gives \(\varepsilon^{k,\bt}=0\) almost surely, and the
\(\tau=0\) clause of Lemma~\ref{lem:stoch_three_point} applies.
\end{proof}

For \(\tau_k>0\), the scaling in
Lemma~\ref{lem:stoch_affine_rescaled} matches the deterministic one:
\(L_k^{k,\bt}=L_k\Gamma_{\bt}\) and
\(\delta_k^{k,\bt}=\delta_k/\Gamma_{\bt}\) with
\(\delta_k=\|\varepsilon^{k,\bt}\|_{\bP,*}^2/(2\tau_k)\), exactly as in
Lemma~\ref{lem:affine_rescaled_oracle}. The martingale term carries no
\(\Gamma_{\bt}\) factor in the rescaled coordinates.

\begin{lemma}[Stochastic recursive gap bound]
\label{lem:stoch_recursive_gap_bound}
Under Assumptions~\ref{ass:Monotone_and_Lipschitz_Operators_VI}
and~\ref{ass:stochastic_oracle_VI}, let \(T_i\in\mathbb N\),
let \(\{\alpha_t\}\) satisfy~\eqref{eq:alpha_sequence}, and choose
\(\tau_i\geq0\), with \(\tau_i>0\) if \(\sigma_i>0\) and
\(L_i+\tau_i+M_i>0\). Run stochastic
Algorithm~\ref{alg:sliding_recursive} with
\begin{equation}\label{eq:eta_stochastic}
  \eta_{\bt}^{k}
  =
  (L_k+\tau_k)\prod_{\ell=1}^k \alpha_{t_\ell}
  +
  M_k\prod_{\ell=1}^k
  \frac{\alpha_{t_\ell}}{\alpha_{T_\ell-1}},
  \qquad
  \bt=(t_1,\dots,t_k).
\end{equation}
For every deterministic \(\hat z\in\mathcal Z\) and every admissible
\((k,\bt)\),
\[
  \mathbb E\bigl[\Phi_{\bt,\hat z}^k(\tilde z_{\bt}^k,\hat z)\bigr]
  \leq
  \mathbb E\bigl[S_{\bt}^k\bigr]+R_{\bt}^k,
\]
where \(\Phi_{\bt,\hat z}^k,S_{\bt}^k\) are as in
Lemma~\ref{lem:recursive_gap_bound}, after replacing
\(L_i^{k,\bt}\) by \((L_i+\tau_i)\Gamma_{\bt}\), and
\[
  R_{\bt}^k
  :=
  \sum_{i=k+1}^n
  \frac{\bar\delta_i^{k,\bt}}{\prod_{j=k+1}^i \alpha_{T_j-1}},
  \qquad
  \bar\delta_i^{k,\bt}
  :=
  \frac{\sigma_i^2}{2\tau_i\Gamma_{\bt}}
\]
is deterministic. Set \(\bar\delta_i^{k,\bt}=0\) when
\(\sigma_i=\tau_i=0\).
\end{lemma}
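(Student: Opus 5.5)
I will prove Lemma~\ref{lem:stoch_recursive_gap_bound} by rerunning the backward induction of Lemma~\ref{lem:recursive_gap_bound} pathwise and taking expectations only at the end of each level. The aim is to keep every inequality almost sure, carrying two extra random terms per oracle node $\nu=(k,\bt)$: the variance term $\delta_k^{k,\bt}=\|\varepsilon^{k,\bt}\|_{\bP,*}^2/(2\tau_k\Gamma_{\bt})$ and the martingale term $\langle\varepsilon^{k,\bt},\hat z-z_{\bt}^k\rangle$. The base case $k=n$ is unchanged, since the innermost step is an exact $\argmin$ of a strongly convex function. This gives $\Phi^n_{\bt,\hat z}(\tilde z^n_{\bt},\hat z)\le0$ almost surely.

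For the inductive step I will replace the deterministic three-point inequality~\eqref{eq:oracle_lower_three_point} by Lemma~\ref{lem:stoch_affine_rescaled}. I apply it with $u=\hat z$, which is deterministic and hence $\cF_\nu$-measurable, and $y=\tilde z_{\bt}^k$, which may depend on $\xi_\nu$ and later samples. This yields the lower bound on $\Delta f^k_{\bt}(\tilde z^k_{\bt},\hat z)$ exactly as before, with three changes. The constant $L_k^{k,\bt}$ becomes $(L_k+\tau_k)\Gamma_{\bt}$. The error $\delta_k^{k,\bt}$ is now random. Finally, there is an additional $-\langle\varepsilon^{k,\bt},\hat z-z^k_{\bt}\rangle$. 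The choice~\eqref{eq:eta_stochastic} gives $\eta^k_{\bt}=(L_k+\tau_k)\Gamma_{\bt}+M_k^{k,\bt}$, so the nonnegativity argument that drops the $\|\tilde z-z\|^2$ term goes through verbatim. The convexity and averaging steps, the extragradient step for $Q_k$, and the telescoping in $t_k$ are all pathwise algebra, so they transfer unchanged. The result is the analogue of~\eqref{eq:rearranged_induction_step}, with $S^k_{\bt}$, $R^k_{\bt}$, $\delta_k^{k-1,\bt'}$ replaced by the random quantities, plus $\sum_{t_k}\tfrac{\alpha^2_{T_k-1}}{\alpha^2_{t_k}}\,\Gamma_{\bt}\langle\varepsilon^{k,\bt},\hat z-z^k_{\bt}\rangle$ with weights fixed by the rescaling.

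The key step, and the main obstacle, is the order of expectation. The induction hypothesis must be in expectation, yet the pathwise recursion mixes random variance terms from deeper levels. To handle this I will strengthen the statement carried through the induction to a pathwise inequality
\[
  \Phi^k_{\bt,\hat z}(\tilde z^k_{\bt},\hat z)\le S^k_{\bt}+\widetilde R^k_{\bt}+\mathcal M^k_{\bt}.
\]
Here $\widetilde R^k_{\bt}$ is the same weighted sum as $R^k_{\bt}$, but with the realized $\delta_i$'s of all nodes in the subtree. The term $\mathcal M^k_{\bt}$ is the corresponding weighted sum of martingale terms. Every weight is deterministic, since it is a product of $\alpha$'s depending only on the indices. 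Pathwise, the $R$-bound~\eqref{eq:R_bound_level_k} then becomes a statement about deterministic weights applied to random nonnegative terms.

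At the end I take total expectations. By the tower property and~\eqref{eq:stoch_rescaled_moments}, each martingale term has zero expectation, because $z^k_{\bt}$ is $\cF_\nu$-measurable by (P1) and $\hat z$ is deterministic. Each $\mathbb E[\delta_i^{k,\bt}]\le\bar\delta_i^{k,\bt}$, where $\Gamma_{\bt}$ is deterministic. Linearity with deterministic weights gives $\mathbb E[\widetilde R^k_{\bt}]\le R^k_{\bt}$, which proves the claim. I need to be careful that no random stepsize appears, which holds because $\eta$ is deterministic. I also need integrability, which follows from the variance bounds and boundedness of the relevant distances; otherwise I will add this as a standing assumption. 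The degenerate case $\sigma_i=\tau_i=0$ uses the $\tau=0$ clause of Lemma~\ref{lem:stoch_affine_rescaled}.
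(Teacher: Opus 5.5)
Your proposal is correct and is essentially the paper's proof. It runs the same backward induction with Lemma~\ref{lem:stoch_affine_rescaled} applied at $u=\hat z$, $y=\tilde z_{\bt}^k$, uses the same nonnegativity check for the step size~\eqref{eq:eta_stochastic}, and removes the residuals by conditioning on $\cF_\nu$ with deterministic weights. One harmless slip: the martingale weight at level $k$ is $\alpha_{T_k-1}^2/\alpha_{t_k}$, not $\alpha_{T_k-1}^2\Gamma_{\bt}/\alpha_{t_k}^2$; this does not matter, since only determinism of the weights is used.

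The only real difference is where expectations are taken. The paper takes them level by level, which is legitimate because $\Phi^k_{\bt,\hat z}$ enters the level-$k$ recursion linearly with deterministic positive weights, so the ordering obstacle you flag does not actually arise. Your pathwise accumulation to the root is exactly the form~\eqref{eq:pathwise_uniform} that the paper later needs for Theorem~\ref{thm:VI_sliding_stochastic_uniform}.
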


\begin{proof}
Repeat the downward induction of Lemma~\ref{lem:recursive_gap_bound}. At
\(k=n\), the minimizing property of \(\tilde z_{\bt}^n\) gives
\(\Phi_{\bt,\hat z}^n(\tilde z_{\bt}^n,\hat z)\leq0\) pathwise, while
\(S_{\bt}^n=R_{\bt}^n=0\).

At a node \(\nu=(k,\bt)\), replace the deterministic three-point inequality
used in~\eqref{eq:delta_h_lower_true} by
Lemma~\ref{lem:stoch_affine_rescaled} with
\(u=\hat z\) and \(y=\tilde z_{\bt}^k\). This is valid because \(\hat z\) is
\(\cF_\nu\)-measurable and \(z_{\bt}^k\) is measurable by (P1), while the
lemma permits \(y\) to depend on the current sample. The deterministic
derivation then holds pathwise after replacing
\(L_k^{k,\bt}\) by \(L_k^{k,\bt}+\tau_k^{k,\bt}\) and adding the residual
\(-\delta_k^{k,\bt}-\mathcal E_{\bt}^k\), where
\[
  \mathcal E_{\bt}^k
  :=
  \bigl\langle\varepsilon^{k,\bt},\hat z-z_{\bt}^k\bigr\rangle .
\]
Indeed, writing
\(\eta_{\bt}^k=L_k^{k,\bt}+\tau_k^{k,\bt}+M_k^{k,\bt}\), the coefficient
dropped in the deterministic proof remains nonnegative:
\[
  \eta_{\bt}^k-L_k^{k,\bt}-\tau_k^{k,\bt}-\frac{(M_k^{k,\bt})^2}{\eta_{\bt}^k}
  =
  \frac{M_k^{k,\bt}(L_k^{k,\bt}+\tau_k^{k,\bt})}{\eta_{\bt}^k}
  \geq0,
\]
so~\eqref{eq:phi_lower_level_k} follows with the stated residual.

Since \(\hat z\) and \(z_{\bt}^k\) are \(\cF_\nu\)-measurable,
\eqref{eq:stoch_rescaled_moments} gives
\[
  \mathbb E[\mathcal E_{\bt}^k\mid\cF_\nu]=0,
  \qquad
  \mathbb E[\delta_k^{k,\bt}\mid\cF_\nu]
  \leq\bar\delta_k^{k,\bt}.
\]
Thus the level-\(k\) induction step holds in expectation with
\(\delta_k^{k,\bt}\) replaced by \(\bar\delta_k^{k,\bt}\). The remaining
pathwise inequalities, telescoping identities, and deterministic nonnegative
weights in the proof of Lemma~\ref{lem:recursive_gap_bound} commute with
expectation. Finally,
\(\bar\delta_i^{k,\bt}=\bar\delta_i^{k-1,\bt'}/\alpha_{t_k}\) as the
deterministic oracle errors, so~\eqref{eq:R_bound_level_k} yields
\(R_{\bt'}^{k-1}\). This completes the induction.
\end{proof}

Sample-independent step sizes give the martingale terms deterministic weights,
so their weighted sum has zero mean under the conditional moment assumption.

\begin{theorem}[Stochastic sliding]\label{thm:VI_sliding_stochastic}
Let Assumptions~\ref{ass:Monotone_and_Lipschitz_Operators_VI} and
\ref{ass:stochastic_oracle_VI} hold, where \(M_i,L_i\geq0\). Let
\(T_1,\dots,T_n\) be positive integers and define
\[
  A_i:=\prod_{j=1}^i T_j .
\]
Let \(\tau_1,\dots,\tau_n\geq0\) with \(\tau_i>0\) whenever \(\sigma_i>0\),
and assume \(L_i+\tau_i+M_i>0\) for all \(i\). Suppose that the stochastic
version of Algorithm~\ref{alg:sliding_recursive} uses the sequence
\[
  \alpha_0=1,
  \qquad
  \alpha_{t+1}
  =
  \frac{2}{1+\sqrt{1+4/\alpha_t^2}},
  \qquad t\geq0,
\]
and the step sizes~\eqref{eq:eta_stochastic}, and that the input
\(z_{\mathrm{in}}\) is \(\cF_0\)-measurable. Then, for every
\emph{deterministic} comparison point \(z\in\mathcal Z\), the output
\(z_{\mathrm{out}}\) satisfies
\[
  \begin{aligned}
    \mathcal G_{\mathrm{st}}(z)
    &:=
    \mathbb E\bigl[
      p(z_{\mathrm{out}})-p(z)
      +\langle Q(z),z_{\mathrm{out}}-z\rangle
    \bigr]
    \\
    &\leq
    \sum_{i=1}^n
    \frac{2^{2i-1}(L_i+\tau_i)}{A_i^2}
    \,\mathbb E\|z_{\mathrm{in}}-z\|_{\bP}^2
    \\
    &\quad+
    \sum_{i=1}^n
    \frac{2^{i-1}M_i}{A_i}
    \,\mathbb E\|z_{\mathrm{in}}-z\|_{\bP}^2
    +
    \sum_{i=1}^n
    \frac{\sigma_i^2A_i}{2\tau_i},
  \end{aligned}
\]
with the convention \(\sigma_i^2/(2\tau_i):=0\) when \(\sigma_i=\tau_i=0\).
\end{theorem}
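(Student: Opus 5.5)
The proof will follow the proof of Theorem~\ref{thm:VI_sliding_inexact} almost verbatim, with Lemma~\ref{lem:stoch_recursive_gap_bound} replacing Lemma~\ref{lem:recursive_gap_bound}. First I fix a deterministic comparator \(z\in\mathcal Z\) and set \(\hat z=z\). At \(k=0\), \(\bt=\varnothing\), we have \(H_\Sigma^0=0\), so \(\Phi^0_{\varnothing,z}(z_{\mathrm{out}},z)=p(z_{\mathrm{out}})-p(z)+\langle Q(z),z_{\mathrm{out}}-z\rangle\) pathwise. Taking expectations and applying Lemma~\ref{lem:stoch_recursive_gap_bound} with \(k=0\) then gives
\[
  \mathcal G_{\mathrm{st}}(z)\leq \mathbb E[S_\varnothing^0]+R_\varnothing^0 .
\]
Deterministic \(z\) matters here: the lemma only yields zero-mean martingale terms \(\langle\varepsilon^{k,\bt},\hat z-z_{\bt}^k\rangle\) when \(\hat z\) is \(\cF_\nu\)-measurable for every node.

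Next I bound \(\mathbb E[S_\varnothing^0]\). By the definition of \(S^0_\varnothing\), every initial point satisfies \(z^i_{\mathbf 0_i}=z_{\mathrm{in}}\), and the terminal squared distances enter with a minus sign, so they can be dropped pathwise. With \(L_i\) replaced by \(L_i+\tau_i\) this gives
\[
  S_\varnothing^0\leq\sum_i\tfrac12\Bigl((L_i+\tau_i)\prod_{j\le i}\alpha_{T_j-1}^2+M_i\prod_{j\le i}\alpha_{T_j-1}\Bigr)\|z_{\mathrm{in}}-z\|_{\bP}^2 .
\]
The coefficients are deterministic, since the step sizes do not depend on the samples. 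Applying \(\alpha_{T_j-1}\le 2/T_j\) from Lemma~\ref{lem:alpha_properties} then produces the factors \(2^{2i-1}/A_i^2\) and \(2^{i-1}/A_i\), and only \(\mathbb E\|z_{\mathrm{in}}-z\|_{\bP}^2\) remains. This is where \(\cF_0\)-measurability of \(z_{\mathrm{in}}\) is used: it allows a random input while keeping \(z_{\mathrm{in}}\) measurable at every node.

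For the noise term, \(R_\varnothing^0=\sum_i \bar\delta_i^{0,\varnothing}/\prod_{j\le i}\alpha_{T_j-1}\) with \(\bar\delta_i^{0,\varnothing}=\sigma_i^2/(2\tau_i)\), because \(\Gamma_\varnothing=1\). Using \(\alpha_{T_j-1}\ge 1/T_j\) bounds this by \(\sum_i\sigma_i^2A_i/(2\tau_i)\), with the stated convention whenever \(\sigma_i=\tau_i=0\). Adding the three pieces gives the claim.

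The main obstacle is justifying the expectation step inside Lemma~\ref{lem:stoch_recursive_gap_bound}, which I take as given. What remains is to confirm that the telescoping weights \(\alpha_{T_k-1}^2/\alpha_{t_k}\) are deterministic, so they commute with expectation, and that the warm-start telescoping identity~\eqref{eq:S_telescoping_level_k} holds pathwise. Both follow because the schedule \(\eta^k_{\bt}\) in~\eqref{eq:eta_stochastic} is sample-independent.
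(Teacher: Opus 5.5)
Your proposal is correct and matches the paper's proof essentially line by line. You apply Lemma~\ref{lem:stoch_recursive_gap_bound} at $k=0$, drop the terminal distances in $S_\varnothing^0$ pathwise, use $\alpha_{T_j-1}\le 2/T_j$ with deterministic coefficients, and bound $R_\varnothing^0$ via $\alpha_{T_j-1}\ge 1/T_j$. One small quibble: the $\mathcal F_0$-measurability of $z_{\mathrm{in}}$ is used inside the lemma, where it makes the iterates $z^k_{\bt}$ $\mathcal F_\nu$-measurable so the martingale terms have zero mean; it is not needed for bounding $\mathbb E[S_\varnothing^0]$, which is just the expectation of a pathwise inequality.
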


\begin{proof}
Fix a deterministic \(z\in\mathcal Z\) and set \(\hat z=z\). As in the proof of
Theorem~\ref{thm:VI_sliding_inexact}, the empty multi-index conventions give
\(H_\Sigma^0=0\) and
\[
  \Phi_{\varnothing,\hat z}^0(u,\hat z)
  =
  p(u)-p(\hat z)+\langle Q(\hat z),u-\hat z\rangle,
\]
and \(z_{\mathrm{out}}=\tilde z_\varnothing^0\). Applying
Lemma~\ref{lem:stoch_recursive_gap_bound} with \(k=0\) and \(\bt=\varnothing\),
\[
  \mathcal G_{\mathrm{st}}(z)
  =
  \mathbb E\bigl[\Phi_{\varnothing,z}^0(z_{\mathrm{out}},z)\bigr]
  \leq
  \mathbb E\bigl[S_\varnothing^0\bigr]+R_\varnothing^0 .
\]

By Lemma~\ref{lem:alpha_properties}, \(\alpha_{T_j-1}\geq1/T_j\), so
\[
  R_\varnothing^0
  =
  \sum_{i=1}^n
  \frac{\sigma_i^2/(2\tau_i)}{\prod_{j=1}^i\alpha_{T_j-1}}
  \leq
  \sum_{i=1}^n
  \frac{\sigma_i^2}{2\tau_i}\prod_{j=1}^iT_j
  =
  \sum_{i=1}^n\frac{\sigma_i^2A_i}{2\tau_i}.
\]
By the definition of \(S_{\bt}^k\), the initialization
\(z^i_{\mathbf 0_i}=z_{\mathrm{in}}\) in line~\ref{line:init_z}, and the
nonpositivity of the terminal squared-distance terms,
\[
  S_\varnothing^0
  \leq
  \sum_{i=1}^n
  \frac{
    (L_i+\tau_i)\prod_{j=1}^i\alpha_{T_j-1}^2
    +
    M_i\prod_{j=1}^i\alpha_{T_j-1}
  }{2}
  \,\|z_{\mathrm{in}}-z\|_{\bP}^2
\]
pathwise. Taking expectations and using \(\alpha_{T_j-1}\leq2/T_j\) from
Lemma~\ref{lem:alpha_properties},
\[
  \mathbb E\bigl[S_\varnothing^0\bigr]
  \leq
  \sum_{i=1}^n
  \frac{2^{2i-1}(L_i+\tau_i)}{A_i^2}
  \,\mathbb E\|z_{\mathrm{in}}-z\|_{\bP}^2
  +
  \sum_{i=1}^n
  \frac{2^{i-1}M_i}{A_i}
  \,\mathbb E\|z_{\mathrm{in}}-z\|_{\bP}^2 .
\]
Combining the two bounds completes the proof.
\end{proof}

\begin{corollary}[Optimized stochastic rate]
\label{cor:VI_sliding_stochastic_optimized}
Let the assumptions of Theorem~\ref{thm:VI_sliding_stochastic} hold and let
\(z\in\mathcal Z\) be deterministic. Assume that there exists \(R>0\) with
\begin{equation}\label{eq:stoch_radius_in_expectation}
  \mathbb E\|z_{\mathrm{in}}-z\|_{\bP}^2\leq R^2 ,
\end{equation}
and choose
\[
  \tau_i
  =
  \frac{\sigma_i A_i^{3/2}}{2^i R},
  \qquad
  A_i=\prod_{j=1}^i T_j .
\]
Then
\[
  \mathcal G_{\mathrm{st}}(z)
  \leq
  \sum_{i=1}^n
  \frac{2^{2i-1}L_iR^2}{A_i^2}
  +
  \sum_{i=1}^n
  \frac{2^{i-1}M_iR^2}{A_i}
  +
  \sum_{i=1}^n
  \frac{2^i\sigma_iR}{\sqrt{A_i}} .
\]
\end{corollary}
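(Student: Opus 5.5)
This corollary follows by substituting the prescribed \(\tau_i\) into the bound of Theorem~\ref{thm:VI_sliding_stochastic} and bounding each component separately. Three terms in that bound involve \(\tau_i\):
\[
  \frac{2^{2i-1}\tau_i}{A_i^2}\,\mathbb E\|z_{\mathrm{in}}-z\|_{\bP}^2
  \quad\text{and}\quad
  \frac{\sigma_i^2A_i}{2\tau_i},
\]
together with the \(L_i\)- and \(M_i\)-terms. Using assumption~\eqref{eq:stoch_radius_in_expectation}, we replace \(\mathbb E\|z_{\mathrm{in}}-z\|_{\bP}^2\) by \(R^2\). This is legitimate because every coefficient multiplying it is nonnegative. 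The \(L_i\)- and \(M_i\)-terms then already have the claimed form \(2^{2i-1}L_iR^2/A_i^2\) and \(2^{i-1}M_iR^2/A_i\).

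It remains to bound the \(\tau_i\)-dependent part \(2^{2i-1}\tau_iR^2/A_i^2+\sigma_i^2A_i/(2\tau_i)\). Minimizing \(a\tau+b/\tau\) with \(a=2^{2i-1}R^2/A_i^2\) and \(b=\sigma_i^2A_i/2\) gives the optimum \(\tau=\sqrt{b/a}=\sigma_iA_i^{3/2}/(2^iR)\). This is exactly the prescribed choice. At the optimum the value is \(2\sqrt{ab}=2\cdot 2^{i-1}\sigma_iR/\sqrt{A_i}=2^i\sigma_iR/\sqrt{A_i}\). As a direct check, the first term equals \(2^{2i-1}\sigma_iR/(2^iA_i^{1/2})=2^{i-1}\sigma_iR/\sqrt{A_i}\), and the second equals \(\sigma_i^2A_i2^iR/(2\sigma_iA_i^{3/2})=2^{i-1}\sigma_iR/\sqrt{A_i}\). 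Their sum is \(2^i\sigma_iR/\sqrt{A_i}\), as claimed.

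The only point needing care is the degenerate case \(\sigma_i=0\). There the prescribed \(\tau_i\) equals \(0\), which is allowed by Theorem~\ref{thm:VI_sliding_stochastic} provided \(L_i+M_i>0\). By the stated convention the variance term vanishes, and so does the \(\tau_i\)-term. The corresponding noise contribution \(2^i\sigma_iR/\sqrt{A_i}\) is then zero, which is consistent with the bound. We also note that \(\tau_i\) is deterministic, since \(R\), \(\sigma_i\), and \(A_i\) are deterministic. This keeps the step sizes sample-independent, as required by Lemma~\ref{lem:stoch_recursive_gap_bound}. Summing the per-component bounds over \(i\) yields the stated inequality.
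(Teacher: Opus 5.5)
Your proposal is correct and follows the paper's proof essentially verbatim: substitute the prescribed \(\tau_i\) into Theorem~\ref{thm:VI_sliding_stochastic}, bound \(\mathbb E\|z_{\mathrm{in}}-z\|_{\bP}^2\leq R^2\), and check that each of the two noise terms equals \(2^{i-1}\sigma_iR/\sqrt{A_i}\), with the case \(\sigma_i=0\) handled by the zero-noise convention. Your extra remarks are harmless additions that the paper leaves implicit: that \(\tau_i\) is the minimizer of \(a\tau+b/\tau\), and that \(\tau_i=0\) when \(\sigma_i=0\) still needs \(L_i+M_i>0\).
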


\begin{proof}
Substituting the stated \(\tau_i\) into Theorem~\ref{thm:VI_sliding_stochastic}
and using~\eqref{eq:stoch_radius_in_expectation}, both noise terms vanish by
convention when \(\sigma_i=0\). For \(\sigma_i>0\),
\[
  \frac{2^{2i-1}\tau_iR^2}{A_i^2}
  =
  \frac{2^{2i-1}R^2}{A_i^2}\cdot\frac{\sigma_iA_i^{3/2}}{2^iR}
  =
  \frac{2^{i-1}\sigma_iR}{\sqrt{A_i}},
  \qquad
  \frac{\sigma_i^2A_i}{2\tau_i}
  =
  \frac{\sigma_i^2A_i}{2}\cdot\frac{2^iR}{\sigma_iA_i^{3/2}}
  =
  \frac{2^{i-1}\sigma_iR}{\sqrt{A_i}} .
\]
Adding the two contributions gives \(2^i\sigma_iR/\sqrt{A_i}\).
\end{proof}

The expectation bound~\eqref{eq:stoch_radius_in_expectation} permits random
restart inputs in
Section~\ref{sec:bilinear_SPP_stochastic_strongly_convex}, while the step sizes
remain deterministic as required by
Lemma~\ref{lem:stoch_recursive_gap_bound}.

\begin{corollary}[Stochastic oracle complexity]
\label{cor:VI_sliding_stochastic_complexity}
Let the assumptions of Corollary~\ref{cor:VI_sliding_stochastic_optimized}
hold, and let \(z^*\in\mathcal Z\) be a solution of
\eqref{prob:general_VI}. Assume that there exists \(R>0\) such that
\[
  \mathbb E\|z_{\mathrm{in}}-z^*\|_{\bP}^2\leq R^2 .
\]
Fix \(\varepsilon>0\). For each \(i=1,\dots,n\), define
\(A_i:=\prod_{j=1}^iT_j\). If the cumulative number of stochastic oracle calls
to the \(i\)-th component satisfies, for a sufficiently large constant
\(c_n>0\) depending only on \(n\),
\[
  A_i
  \geq
  c_n\left(
    1+
     \sqrt{\frac{L_i R^2}{\varepsilon}}
    +
    \frac{M_iR^2}{\varepsilon}
    +
    \frac{\sigma_i^2R^2}{\varepsilon^2}
  \right),
\]
then the stochastic sliding method returns \(z_{\mathrm{out}}\) satisfying
\[
  \mathbb E\left[
  p(z_{\mathrm{out}})-p(z^*)
  +
  \langle Q(z^*),z_{\mathrm{out}}-z^*\rangle
  \right]
  \leq \varepsilon .
\]
After relabeling the component pairs so these target budgets are
nondecreasing, they are realized with
\[
  A_i
  =
  \mathcal O_n\left(
    1+
    \sqrt{\frac{L_i R^2}{\varepsilon}}
    +
    \frac{M_iR^2}{\varepsilon}
    +
    \frac{\sigma_i^2R^2}{\varepsilon^2}
  \right).
\]
\end{corollary}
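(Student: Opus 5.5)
The plan is to apply Corollary~\ref{cor:VI_sliding_stochastic_optimized} with the deterministic comparator \(z=z^*\) and then choose the budgets \(A_i\) so that each of the three sums is at most a constant fraction of \(\varepsilon\). The hypothesis \(\mathbb E\|z_{\mathrm{in}}-z^*\|_{\bP}^2\leq R^2\) is exactly~\eqref{eq:stoch_radius_in_expectation} with \(z=z^*\). With \(\tau_i=\sigma_iA_i^{3/2}/(2^iR)\) we then have
\[
  \mathbb E\bigl[p(z_{\mathrm{out}})-p(z^*)+\langle Q(z^*),z_{\mathrm{out}}-z^*\rangle\bigr]
  \leq
  \sum_{i=1}^n\Bigl[\frac{2^{2i-1}L_iR^2}{A_i^2}+\frac{2^{i-1}M_iR^2}{A_i}+\frac{2^i\sigma_iR}{\sqrt{A_i}}\Bigr].
\]
The step sizes \eqref{eq:eta_stochastic} depend only on the deterministic \(A_i,R,\sigma_i\), so the measurability requirement of Lemma~\ref{lem:stoch_recursive_gap_bound} is respected.

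Next I will make each of the \(3n\) terms at most \(\varepsilon/(3n)\). The condition \(2^{2i-1}L_iR^2/A_i^2\leq\varepsilon/(3n)\) holds once \(A_i\geq\sqrt{3n\,2^{2i-1}}\sqrt{L_iR^2/\varepsilon}\). The condition \(2^{i-1}M_iR^2/A_i\leq\varepsilon/(3n)\) holds once \(A_i\geq 3n2^{i-1}M_iR^2/\varepsilon\). The condition \(2^i\sigma_iR/\sqrt{A_i}\leq\varepsilon/(3n)\) holds once \(A_i\geq 9n^2 4^i\sigma_i^2R^2/\varepsilon^2\). Since \(i\leq n\), one constant such as \(c_n:=9n^24^n\) dominates all three prefactors. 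Hence the stated lower bound on \(A_i\) implies each term is below \(\varepsilon/(3n)\), and the sum is at most \(\varepsilon\).

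For the realization claim, I relabel the pairs \((p_i,Q_i)\); the sums \(p,Q\) are permutation invariant and the guarantee is componentwise. The targets \(\widehat R_i:=c_n(1+\sqrt{L_iR^2/\varepsilon}+M_iR^2/\varepsilon+\sigma_i^2R^2/\varepsilon^2)\) are then nondecreasing, and each is at least \(1\). I will reuse the construction from the proof of Corollary~\ref{cor:VI_sliding_Holder_complexity}: set \(A_0=1\) and \(T_i=\max\{1,\lceil\widehat R_i/A_{i-1}\rceil\}\). The induction there gives \(\widehat R_i\leq A_i\leq2\widehat R_i\), which yields \(A_i=\mathcal O_n(\cdot)\).

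The only delicate point is the \(\sigma_i=0\) case, where \(\tau_i=0\) is permitted only if \(L_i+M_i>0\). If some component has \(L_i=M_i=\sigma_i=0\), it contributes nothing, but the step size would vanish. I would handle this by taking an arbitrarily small \(\tau_i>0\) there: its noise term is zero, and its \(\tau_i\)-term can be made negligible. Apart from this, the argument is direct bookkeeping, and I do not expect a genuine obstacle.
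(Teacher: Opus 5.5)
Your proposal is correct and follows the paper's proof: you apply Corollary~\ref{cor:VI_sliding_stochastic_optimized} at the deterministic comparator \(z^*\), force each of the \(3n\) terms below \(\varepsilon/(3n)\) by absorbing the level factors and \(3n\) into \(c_n\), and realize the nested budgets with the integer construction of Appendix~\ref{app:proof_VI_Holder_complexity}; your explicit \(c_n=9n^2 4^n\) makes the paper's ``up to absolute constants'' precise. Your worry about a component with \(L_i=M_i=\sigma_i=0\) is moot, because the inherited hypothesis \(L_i+\tau_i+M_i>0\) of Theorem~\ref{thm:VI_sliding_stochastic} (with \(\tau_i=0\) whenever \(\sigma_i=0\)) already rules that case out, although your small-\(\tau_i\) patch would also work.
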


\begin{proof}
Each of the three families of terms in
Corollary~\ref{cor:VI_sliding_stochastic_optimized} is at most
\(\varepsilon/(3n)\) up to absolute constants under the stated budgets, since
\(L_iR^2/A_i^2\leq\varepsilon\), \(M_iR^2/A_i\leq\varepsilon\) and
\(\sigma_iR/\sqrt{A_i}\leq\varepsilon\) are equivalent to
\(A_i\gtrsim\sqrt{L_iR^2/\varepsilon}\),
\(A_i\gtrsim M_iR^2/\varepsilon\) and
\(A_i\gtrsim\sigma_i^2R^2/\varepsilon^2\), respectively. The level-dependent
factors \(2^{2i-1}\), \(2^{i-1}\), \(2^i\), as well as the factor \(3n\),
are absorbed into \(c_n\). The nested budgets are then realized by the
integer construction of Appendix~\ref{app:proof_VI_Holder_complexity}.
\end{proof}

\begin{remark}[Uniform and fixed-comparator residuals]
\label{rem:stoch_fixed_comparator}
For the pure bilinear problem \(p\equiv0\), \(Q(z)=\bS z\) with
\(\bS^\top=-\bS\), and solution \(z^*=0\), the fixed-comparator residual
equals zero for every \(z_{\mathrm{out}}\). The uniform gap
\[
  \sup_{z\in\cZ}\langle Q(z),z_{\mathrm{out}}-z\rangle
  =\sup_{z\in\cZ}\langle z,-\bS z_{\mathrm{out}}\rangle,
\]
controls \(\bS z_{\mathrm{out}}\), so the degenerate analysis uses
Theorem~\ref{thm:VI_sliding_stochastic_uniform}. In the strongly convex
analysis, the fixed residual controls the Lyapunov decrease.
\end{remark}

For the uniform gap, the comparator is noise-dependent.
Lemma~\ref{lem:stoch_recursive_gap_bound} therefore provides a pathwise bound
for all comparators, permitting supremization on a bounded domain before
expectation.

\begin{theorem}[Uniform stochastic gap]
\label{thm:VI_sliding_stochastic_uniform}
Let the assumptions of Theorem~\ref{thm:VI_sliding_stochastic} hold and let
\(\cZ\) be bounded. Assume that a deterministic constant
\(0<\Omega<\infty\) satisfies
\[
  \sup_{z\in\cZ}\|z_{\mathrm{in}}-z\|_{\bP}^2
  \leq \Omega
  \qquad\text{almost surely}.
\]
Then
\[
  \mathbb E\Bigl[
    \sup_{z\in\cZ}
    \bigl\{p(z_{\mathrm{out}})-p(z)+\langle Q(z),z_{\mathrm{out}}-z\rangle\bigr\}
  \Bigr]
  \leq
  \sum_{i=1}^n\frac{2^{2i-1}(L_i+\tau_i)\Omega}{A_i^2}
  +\sum_{i=1}^n\frac{2^{i-1}M_i\Omega}{A_i}
  +\sum_{i=1}^n\frac{\sigma_i^2A_i}{2\tau_i}
  +\sqrt{\Omega}\sum_{i=1}^n\frac{2^{i/2}\sigma_i}{\sqrt{A_i}} .
\]
The zero-noise convention of Theorem~\ref{thm:VI_sliding_stochastic} applies
when \(\sigma_i=\tau_i=0\).
\end{theorem}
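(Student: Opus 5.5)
The plan is to rerun the backward induction of Lemma~\ref{lem:stoch_recursive_gap_bound} \emph{pathwise}, for an arbitrary (possibly random) comparator \(\hat z\in\cZ\). Only after the supremum over \(\hat z\) is taken do we pass to expectation. In the pathwise version, every deterministic step (telescoping, convexity of the averaging, the Mirror-Prox correction step, and the warm-start identities) holds verbatim. The three-point inequality at node \(\nu=(k,\bt)\) comes from Lemma~\ref{lem:stoch_affine_rescaled}, but we cannot apply it with \(u=\hat z\), because \(\hat z\) need not be \(\cF_\nu\)-measurable.

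The inequality \eqref{eq:stoch_three_point_rescaled} was derived by adding a convexity bound at \(u\) to a smoothness bound at \(y\); that derivation is pathwise and uses measurability only to kill the noise term in expectation. So for any \(\hat z\) we obtain, almost surely, the deterministic recursion with residual \(-\delta_k^{k,\bt}-\langle\varepsilon^{k,\bt},\hat z-z_{\bt}^k\rangle\). Propagating through the induction with the same deterministic weights as in \eqref{eq:R_bound_level_k}, we get, for every \(z\in\cZ\) simultaneously,
\[
  \mathcal G(z)\leq S_\varnothing^0(z)+\sum_{\nu}w_\nu\delta_\nu+\sum_{\nu}w_\nu\langle\varepsilon_\nu,z_\nu-z\rangle ,
\]
where \(w_\nu\geq0\) are deterministic weights. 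We then split \(\langle\varepsilon_\nu,z_\nu-z\rangle=\langle\varepsilon_\nu,z_\nu-z_{\mathrm{in}}\rangle+\langle\varepsilon_\nu,z_{\mathrm{in}}-z\rangle\). The first part is a martingale difference, since \(z_\nu\) and \(z_{\mathrm{in}}\) are \(\cF_\nu\)-measurable, so it has zero mean. The second part equals \(\langle\sum_\nu w_\nu\varepsilon_\nu,z_{\mathrm{in}}-z\rangle\).

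Taking the supremum over \(z\), bounding \(S_\varnothing^0\leq\) the \(\Omega\)-terms, and bounding the second part by \(\sqrt{\Omega}\,\|\sum_\nu w_\nu\varepsilon_\nu\|_{\bP,*}\), we take expectations. The \(\delta\)-terms give \(\sum_i\sigma_i^2A_i/(2\tau_i)\), exactly as in Theorem~\ref{thm:VI_sliding_stochastic}. For the last term, Jensen and orthogonality of martingale increments give
\[
  \mathbb E\Bigl\|\sum_\nu w_\nu\varepsilon_\nu\Bigr\|_{\bP,*}\leq\Bigl(\sum_\nu w_\nu^2\sigma_{i(\nu)}^2\Bigr)^{1/2}.
\]
Cross terms vanish because \(\varepsilon_\nu\) has zero conditional mean given \(\cF_\nu\), and earlier errors are \(\cF_\nu\)-measurable.

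The main obstacle is computing the weights \(w_\nu\) and showing \(\sum_{\nu\text{ at level }i}w_\nu^2\leq 2^i/A_i\) up to the stated constant, which is what yields \(\sqrt{\Omega}\,2^{i/2}\sigma_i/\sqrt{A_i}\). Tracking the induction, the weight multiplying a level-\(i\) error at node \(\bt\) is \(\prod_{\ell\leq i}\alpha_{T_\ell-1}^2/\alpha_{t_\ell}\). This works because the martingale term carries no \(\Gamma_{\bt}\) factor in rescaled coordinates, so only the telescoping multipliers \(\alpha_{T_\ell-1}^2/\alpha_{t_\ell}\) accumulate. Hence
\[
  \sum w_\nu^2=\prod_{\ell\leq i}\Bigl(\alpha_{T_\ell-1}^4\sum_{t}\alpha_t^{-2}\Bigr).
\]
Using \(\sum_{t<T}\alpha_t^{-2}\leq T\alpha_{T-1}^{-2}\) (monotonicity) and \(\alpha_{T-1}\leq 2/T\), each factor is at most \(T_\ell\alpha_{T_\ell-1}^2\leq 4/T_\ell\). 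That gives \(4^i/A_i\), whose square root is \(2^i/\sqrt{A_i}\). Reaching the sharper \(2^{i/2}\) requires the finer identity \(\sum_t\alpha_t^{-1}=\alpha_{T-1}^{-2}\) together with \(\alpha_t^{-2}\leq\alpha_{T-1}^{-1}\alpha_t^{-1}\). This gives \(\alpha_{T-1}^4\sum_t\alpha_t^{-2}\leq\alpha_{T-1}\leq 2/T\), so each factor is at most \(2/T_\ell\), which yields \(2^i/A_i\) and hence the stated constant. Summing over \(i\) completes the bound.
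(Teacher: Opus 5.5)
Your proposal is correct and follows the paper's proof essentially step for step: the pathwise induction that holds simultaneously for every comparator, the deterministic weights $\prod_{\ell\le k}\alpha_{T_\ell-1}^2/\alpha_{t_\ell}$, the split of the noise term around $z_{\mathrm{in}}$ into a zero-mean part plus $\langle V,\hat z-z_{\mathrm{in}}\rangle\le\sqrt{\Omega}\,\|V\|_{\bP,*}$, martingale orthogonality to bound $\mathbb E\|V\|_{\bP,*}$, and the refined estimate $\alpha_{T-1}^4\sum_{t<T}\alpha_t^{-2}\le\alpha_{T-1}\le 2/T$. The only slip is a sign: the residual propagated to the root is $\langle\varepsilon_\nu,z-z_\nu\rangle$, not $\langle\varepsilon_\nu,z_\nu-z\rangle$, and this is harmless because neither the zero-mean argument nor the Cauchy--Schwarz bound depends on it.
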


\begin{proof}
The pathwise induction underlying
Lemma~\ref{lem:stoch_recursive_gap_bound} holds simultaneously for all
\(\hat z\in\cZ\), since the iterates do not depend on \(\hat z\). A residual
at node \((k,\bt)\) reaches the root with deterministic weight
\[
  w_{\bt}^k:=\prod_{\ell=1}^k\frac{\alpha_{T_\ell-1}^2}{\alpha_{t_\ell}} .
\]
Thus, almost surely and for all \(\hat z\in\cZ\),
\begin{equation}\label{eq:pathwise_uniform}
  p(z_{\mathrm{out}})-p(\hat z)+\langle Q(\hat z),z_{\mathrm{out}}-\hat z\rangle
  \leq
  S_\varnothing^0(\hat z)
  +\sum_{(k,\bt)\in\mathcal T}w_{\bt}^k\delta_k^{k,\bt}
  +\sum_{(k,\bt)\in\mathcal T}w_{\bt}^k
   \langle\varepsilon^{k,\bt},\hat z-z_{\bt}^k\rangle .
\end{equation}
For \(\sigma_k>0\), the conditional variance bound and
Lemma~\ref{lem:alpha_properties} give
\[
  \begin{aligned}
  \mathbb E\sum_{\bt}w_{\bt}^k\delta_k^{k,\bt}
  &\leq
  \frac{\sigma_k^2}{2\tau_k}
  \prod_{\ell=1}^k
  \left(
    \alpha_{T_\ell-1}^2
    \sum_{t=0}^{T_\ell-1}\alpha_t^{-2}
  \right)\\
  &\leq
  \frac{\sigma_k^2}
  {2\tau_k\prod_{\ell=1}^k\alpha_{T_\ell-1}}
  \leq\frac{\sigma_k^2A_k}{2\tau_k}.
  \end{aligned}
\]
When \(\sigma_k=0\), this contribution is zero by the convention above.

Let \(V:=\sum_{(k,\bt)}w_{\bt}^k\varepsilon^{k,\bt}\). The last sum
in~\eqref{eq:pathwise_uniform} equals
\[
  \langle V,\hat z-z_{\mathrm{in}}\rangle
  -\sum_{(k,\bt)}w_{\bt}^k
   \langle\varepsilon^{k,\bt},z_{\bt}^k-z_{\mathrm{in}}\rangle .
\]
The second term has zero expectation by (P1) and conditional centering, while
\[
  \sup_{\hat z\in\cZ}\langle V,\hat z-z_{\mathrm{in}}\rangle
  \leq\sqrt{\Omega}\|V\|_{\bP,*}.
\]
Conditional centering also eliminates the cross terms in
\(\mathbb E\|V\|_{\bP,*}^2\). Hence
\[
  \mathbb E\|V\|_{\bP,*}
  \leq
  \left(
    \sum_{k=1}^n\sigma_k^2\sum_{\bt}(w_{\bt}^k)^2
  \right)^{1/2}.
\]
Lemma~\ref{lem:alpha_properties} also yields
\[
  \sum_{\bt}(w_{\bt}^k)^2
  =\prod_{\ell=1}^k
   \Bigl(\alpha_{T_\ell-1}^4\sum_{t=0}^{T_\ell-1}\alpha_t^{-2}\Bigr)
  \leq\frac{2^k}{A_k},
\]
so
\[
  \mathbb E\|V\|_{\bP,*}
  \leq
  \sum_{k=1}^n\frac{2^{k/2}\sigma_k}{\sqrt{A_k}}.
\]
Finally, take the supremum in~\eqref{eq:pathwise_uniform} and then
expectations. Dropping the terminal distances in \(S_\varnothing^0\) and
using the radius bound gives the first two sums in the theorem. The preceding
bounds give the remaining two.
\end{proof}

\begin{corollary}[Uniform stochastic complexity]
\label{cor:VI_sliding_stochastic_uniform_complexity}
Under the assumptions of Theorem~\ref{thm:VI_sliding_stochastic_uniform}, choose
\(\tau_i=\sigma_iA_i^{3/2}2^{-i}\Omega^{-1/2}\). Then the right-hand side is at
most
\[
  \sum_{i=1}^n\frac{2^{2i-1}L_i\Omega}{A_i^2}
  +\sum_{i=1}^n\frac{2^{i-1}M_i\Omega}{A_i}
  +\sum_{i=1}^n\frac{(2^i+2^{i/2})\sigma_i\sqrt{\Omega}}{\sqrt{A_i}},
\]
so the budgets of Corollary~\ref{cor:VI_sliding_stochastic_complexity}, with
\(R^2=\Omega\), also suffice to make the \emph{uniform} expected gap at most
\(\varepsilon\). In particular the degenerate block of
Table~\ref{tab:bilinear_SPP_stochastic_complexity} is unchanged.
\end{corollary}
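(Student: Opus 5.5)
The plan is to substitute the stated choice of \(\tau_i\) into the right-hand side of Theorem~\ref{thm:VI_sliding_stochastic_uniform} and simplify term by term, mirroring the proof of Corollary~\ref{cor:VI_sliding_stochastic_optimized} with \(R^2\) replaced by \(\Omega\). The first two sums of the theorem, \(\sum_i 2^{2i-1}L_i\Omega/A_i^2\) and \(\sum_i 2^{i-1}M_i\Omega/A_i\), carry over unchanged. Only the \(\tau_i\)-dependent pieces need attention: the \(\tau_i\)-part of the first sum and the variance sum \(\sum_i\sigma_i^2A_i/(2\tau_i)\).

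\textbf{Components with \(\sigma_i>0\).} With \(\tau_i=\sigma_iA_i^{3/2}2^{-i}\Omega^{-1/2}\) the two pieces become
\[
  \frac{2^{2i-1}\tau_i\Omega}{A_i^2}=\frac{2^{i-1}\sigma_i\sqrt\Omega}{\sqrt{A_i}},
  \qquad
  \frac{\sigma_i^2A_i}{2\tau_i}=\frac{2^{i-1}\sigma_i\sqrt\Omega}{\sqrt{A_i}} .
\]
Their sum is \(2^i\sigma_i\sqrt\Omega/\sqrt{A_i}\). The martingale term of Theorem~\ref{thm:VI_sliding_stochastic_uniform}, \(\sqrt\Omega\,2^{i/2}\sigma_i/\sqrt{A_i}\), does not involve \(\tau_i\). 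Adding it gives the coefficient \((2^i+2^{i/2})\). For \(\sigma_i=0\) the formula gives \(\tau_i=0\), and every noise contribution vanishes by the zero-noise convention.

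\textbf{Admissibility of the zero-noise case.} Theorem~\ref{thm:VI_sliding_stochastic_uniform} requires \(L_i+\tau_i+M_i>0\), so for \(\sigma_i=0\) we need \(L_i+M_i>0\). This is the only point needing care. I would note that it is inherited from the standing assumption of the deterministic results. If \(L_i=M_i=0\), the component contributes nothing to the bound and can be dropped, or given any tiny \(\tau_i\) at arbitrarily small cost. No step here is a real obstacle; the result is pure algebra.

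\textbf{Budget sufficiency.} Each of the three families of terms must be at most \(\varepsilon/(3n)\) under the budgets of Corollary~\ref{cor:VI_sliding_stochastic_complexity} with \(R^2=\Omega\). This follows exactly as in that proof: \(A_i\gtrsim\sqrt{L_i\Omega/\varepsilon}\), \(A_i\gtrsim M_i\Omega/\varepsilon\), and \(A_i\gtrsim\sigma_i^2\Omega/\varepsilon^2\) control the three sums, and the constants \(2^{2i-1}\), \(2^{i-1}\), \(2^i+2^{i/2}\le 2^{i+1}\) are absorbed into \(c_n\).

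\textbf{Degenerate block of the table.} Since \(p_3\equiv0\) there, with \(Q_3\) being \(L_{xy}\)-Lipschitz, the per-component budgets are exactly those listed in the degenerate column of Table~\ref{tab:bilinear_SPP_stochastic_complexity}. Hence the table entry is unchanged.
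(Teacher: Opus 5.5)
Your proposal is correct and matches the paper's argument. Substituting \(\tau_i=\sigma_iA_i^{3/2}2^{-i}\Omega^{-1/2}\) turns both \(\tau_i\)-dependent pieces into \(2^{i-1}\sigma_i\sqrt{\Omega}/\sqrt{A_i}\), exactly as in Corollary~\ref{cor:VI_sliding_stochastic_optimized} with \(R^2=\Omega\); adding the \(\tau_i\)-free martingale term gives the coefficient \(2^i+2^{i/2}\), and the budget and table claims then follow by absorbing constants into \(c_n\). Your check that the \(\sigma_i=0\) case needs \(L_i+M_i>0\) is a useful detail that the paper leaves implicit.
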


\subsection{Stochastic Bilinear Saddle-Point Problem}
\label{sec:bilinear_SPP_stochastic_degenerate}

Let \(\delta_x=\delta_y=0\), let \(\cZ=\cX\times\cY\) be bounded, and, for a
deterministic initial point \(z^0\), define
\[
  \Omega:=\sup_{z\in\cZ}\|z^0-z\|^2<\infty .
\]
Assume that \(f\) and \(g\) are convex and \(L_x\)- and \(L_y\)-smooth and
that their stochastic gradients satisfy
Assumption~\ref{ass:stochastic_oracle_VI} with Euclidean variances
\(\sigma_x^2\) and \(\sigma_y^2\). Use
\[
  p_1(x,y)=f(x),\quad p_2(x,y)=g(y),\quad p_3\equiv0,
  \qquad Q_1=Q_2=0,
\]
\[
  Q_3(x,y)=
  \begin{pmatrix}
    \bO_{d_x} & \bB^\top\\
    -\bB & \bO_{d_y}
  \end{pmatrix}
  \begin{pmatrix}
    x\\y
  \end{pmatrix},
\]
which is evaluated exactly and is \(L_{xy}\)-Lipschitz in the Euclidean norm.
Let \(N_f,N_g,N_B\) denote the respective oracle counts and set
\[
  \mathcal R(z)
  :=
  p(z_{\mathrm{out}})-p(z)
  +
  \langle Q(z),z_{\mathrm{out}}-z\rangle .
\]
Relabel the three component triples
\[
  (L,M,\sigma)
  =
  (L_x,0,\sigma_x),\quad
  (L_y,0,\sigma_y),\quad
  (0,L_{xy},0)
\]
so that their required budgets are nondecreasing. Applying
Corollary~\ref{cor:VI_sliding_stochastic_uniform_complexity} then gives, for
an absolute constant \(C\),
\[
  \mathbb E\!\left[\sup_{z\in\cZ}\mathcal R(z)\right]
  \leq C\left[
    \frac{L_x\Omega}{N_f^2}+\frac{L_y\Omega}{N_g^2}
    +\frac{L_{xy}\Omega}{N_B}
    +\sigma_x\sqrt{\frac{\Omega}{N_f}}
    +\sigma_y\sqrt{\frac{\Omega}{N_g}}
  \right],
\]
where \(C\) absorbs the level-dependent factors for \(n=3\). Hence the
uniform expected gap is at most \(\varepsilon\) with
\[
  \begin{aligned}
  N_f
  &=\mathcal O\left(
    1+
    \sqrt{\frac{L_x\Omega}{\varepsilon}}
    +
    \frac{\sigma_x^2\Omega}{\varepsilon^2}
  \right),\\
  N_g
  &=\mathcal O\left(
    1+
    \sqrt{\frac{L_y\Omega}{\varepsilon}}
    +
    \frac{\sigma_y^2\Omega}{\varepsilon^2}
  \right),\\
  N_B
  &=\mathcal O\left(
    1+
    \frac{L_{xy}\Omega}{\varepsilon}
  \right).
  \end{aligned}
\]
For \(\sigma_x=\sigma_y=0\), these reduce to the deterministic smooth
degenerate rates.

\subsection{Stochastic strongly convex--strongly concave regime}
\label{sec:bilinear_SPP_stochastic_strongly_convex}

Assume \(\cX=\mathbb R^{d_x}\), \(\cY=\mathbb R^{d_y}\), and that \(f,g\)
are respectively \(\mu_x\)- and \(\mu_y\)-strongly convex, with
\(\mu_x,\mu_y>0\), and have ambient \(L_x\)- and \(L_y\)-Lipschitz gradients.
Assume a saddle point exists (hence is unique), and take
\[
  \mu_{xy}=\mu_{yx}=0,
  \qquad
  \delta_x=\mu_x,
  \qquad
  \delta_y=\mu_y.
\]
With curvature supplied by \(f,g\) on full-space domains, the range
compatibility and normal-cone conditions hold automatically. Moreover,
\(\beta_x\delta_y\leq1/4\) and \(\beta_y\delta_x\leq1/4\) follow from
\(\mu_y\leq L_y\) and \(\mu_x\leq L_x\).

All internal and anchor samples are fresh. Their errors are conditionally
unbiased and have Euclidean conditional second moments bounded by
\(\sigma_x^2,\sigma_y^2\). Evaluate \(Q_3\) exactly and set
\[
  \bP=\diag(\delta_x\bI_{d_x},\delta_y\bI_{d_y}),
  \qquad
  \bar\sigma_x^2=\frac{\sigma_x^2}{\delta_x},
  \qquad
  \bar\sigma_y^2=\frac{\sigma_y^2}{\delta_y}.
\]
Thus \(\bar\sigma_x^2,\bar\sigma_y^2\) are the variance parameters of the two
function components in Assumption~\ref{ass:stochastic_oracle_VI}.

\paragraph{Filtration of the restart scheme.}
Let \(\cH_s\) be the history before restart \(s\), so \(z^s\) is
\(\cH_s\)-measurable. Draw fresh anchors
\[
  \widehat f_s:=G_f(x^s,\xi_f^s),
  \qquad
  \widehat g_s:=G_g(y^s,\xi_g^s),
  \qquad
  \cG_s:=\cH_s\vee\sigma(\xi_f^s,\xi_g^s),
\]
and keep them fixed during the restart. Then
\[
  p_{3,s}(x,y)
  =
  \frac{\beta_x}{2}\|\bB x-\widehat g_s\|^2
  +
  \frac{\beta_y}{2}\|\bB^\top y+\widehat f_s\|^2 .
\]
Conditional on \(\cG_s\), this is a fixed exact quadratic, so
Theorem~\ref{thm:VI_sliding_stochastic} applies with \(\cF_0=\cG_s\).
Its constants are
\[
  (L_1,L_2,L_3)=(\kappa_x,\kappa_y,\kappa_{xy}),
  \qquad
  (M_1,M_2,M_3)=(0,0,\sqrt{\kappa_{xy}}).
\]

Define
\[
  \Psi(z):=\|z-z^*\|_{\bP}^2+12\D_f(x,x^*)+12\D_g(y,y^*),
  \qquad
  \Delta_{\mathrm{anc}}:=\beta_y\sigma_x^2+\beta_x\sigma_y^2.
\]
If \(\mathbb E[\Psi(z^s)]\leq\Gamma_s\), choose, up to sufficiently large
universal factors,
\[
  \begin{aligned}
    N_{f,s}&\asymp\max\{\sqrt{\kappa_x},1\}
      +\frac{\bar\sigma_x^2}{\Gamma_s},\\
    N_{g,s}&\asymp\max\{\sqrt{\kappa_y},1\}
      +\frac{\bar\sigma_y^2}{\Gamma_s},\\
    N_{B,s}&\asymp\max\{\sqrt{\kappa_{xy}},1\}.
  \end{aligned}
\]
Sort them along the recursive levels. Here the labels denote oracle families,
and \(N_{f,s},N_{g,s}\) include their respective anchors. The output satisfies
\[
  \mathbb E[\Psi(z^{s+1})]
  \leq\frac34\Gamma_s+C_{\mathrm{anc}}\Delta_{\mathrm{anc}},
\]
for a universal \(C_{\mathrm{anc}}>0\). Indeed, conditional unbiasedness and
\eqref{eq:ambient_self_bounding} give
\[
  \begin{aligned}
  \beta_x\mathbb E[
    \|\widehat g_s-\nabla g(y^*)\|^2\mid\cH_s]
  &\leq\tfrac12\D_g(y^s,y^*)+\beta_x\sigma_y^2,\\
  \beta_y\mathbb E[
    \|\widehat f_s-\nabla f(x^*)\|^2\mid\cH_s]
  &\leq\tfrac12\D_f(x^s,x^*)+\beta_y\sigma_x^2.
  \end{aligned}
\]
Thus the deterministic lower bound of
Theorem~\ref{thm:bilinear_SPP_Holder} gains the error
\(\Delta_{\mathrm{anc}}\).

For \(p_s=p_1+p_2+p_{3,s}\), set
\[
  \mathcal R_s:=p_s(z^{s+1})-p_s(z^*)
  +\langle Q(z^*),z^{s+1}-z^*\rangle
\]
and
\[
  \Theta_s:=\frac{\kappa_x}{N_{f,s}^2}
  +\frac{\kappa_y}{N_{g,s}^2}
  +\frac{\kappa_{xy}}{N_{B,s}^2}
  +\frac{\sqrt{\kappa_{xy}}}{N_{B,s}}.
\]
Since
\(\mathbb E\|z^s-z^*\|_{\bP}^2\leq\mathbb E[\Psi(z^s)]\leq\Gamma_s\),
Theorem~\ref{thm:VI_sliding_stochastic}, conditioned on \(\cG_s\) and then
averaged, gives the optimized bound of
Corollary~\ref{cor:VI_sliding_stochastic_optimized} with deterministic
\(R^2=\Gamma_s\):
\[
  \mathbb E[\mathcal R_s]
  \leq C\left[
    \Theta_s\Gamma_s
    +\bar\sigma_x\sqrt{\frac{\Gamma_s}{N_{f,s}}}
    +\bar\sigma_y\sqrt{\frac{\Gamma_s}{N_{g,s}}}
  \right].
\]
The chosen budgets make this a sufficiently small multiple of \(\Gamma_s\).
Combining it with the lower bound proves the one-stage recursion.

\paragraph{Restart and total complexity.}
Let \(\Psi_0:=\Psi(z^0)\),
\(\Gamma_0:=\Psi_0\), and
\(\Gamma_{s+1}:=\tfrac34\Gamma_s+C_{\mathrm{anc}}\Delta_{\mathrm{anc}}\).
Then
\[
  \mathbb E[\Psi(z^s)]
  \leq\Gamma_s
  \leq\left(\frac34\right)^s\Psi_0
  +4C_{\mathrm{anc}}\Delta_{\mathrm{anc}}.
\]
If \(\Psi_0=0\), stop. Otherwise, for \(0<\varepsilon\leq\Psi_0\), set
\[
  S:=\left\lceil
    \frac{\log(\Psi_0/\varepsilon)}{\log(4/3)}
  \right\rceil,
  \qquad
  \ell_\varepsilon:=1+\log\frac{\Psi_0}{\varepsilon}.
\]
Then
\[
  \mathbb E[\Psi(z^S)]
  \leq\varepsilon+4C_{\mathrm{anc}}\Delta_{\mathrm{anc}}.
\]
Thus a single anchor per restart gives an
\(\mathcal O(\Delta_{\mathrm{anc}})\) error floor. The bound gives
\(\varepsilon\)-accuracy when
\(\Delta_{\mathrm{anc}}=\mathcal O(\varepsilon)\).

Since \(\Gamma_s\geq(3/4)^s\Psi_0\),
\(\sum_{s<S}\Gamma_s^{-1}=\mathcal O(\varepsilon^{-1})\). Summing the
stage budgets gives
\[
  \begin{aligned}
  N_f^{\mathrm{tot}}
  &=\mathcal O\left(
    \max\{\sqrt{\kappa_x},1\}\ell_\varepsilon
    +\frac{\sigma_x^2}{\delta_x\varepsilon}\right),\\
  N_g^{\mathrm{tot}}
  &=\mathcal O\left(
    \max\{\sqrt{\kappa_y},1\}\ell_\varepsilon
    +\frac{\sigma_y^2}{\delta_y\varepsilon}\right),\\
  N_B^{\mathrm{tot}}
  &=\mathcal O\left(
    \max\{\sqrt{\kappa_{xy}},1\}\ell_\varepsilon\right).
  \end{aligned}
\]
The variance terms are log-free because their stage budgets grow
geometrically.

\paragraph{Mini-batched anchors.}
Averaging \(b\) independent samples per anchor replaces
\(\Delta_{\mathrm{anc}}\) by \(\Delta_{\mathrm{anc}}/b\). Choosing
\[
  b
  :=
  \max\left\{
    1,
    \left\lceil
      \frac{4C_{\mathrm{anc}}\Delta_{\mathrm{anc}}}{\varepsilon}
    \right\rceil
  \right\}
\]
gives \(\mathbb E[\Psi(z^S)]\leq2\varepsilon\). Per function block, the total
and additional anchor costs are, respectively,
\[
  Sb=\mathcal O\left(
    \left(1+\frac{\Delta_{\mathrm{anc}}}{\varepsilon}\right)
    \ell_\varepsilon\right),
  \qquad
  S(b-1)=\mathcal O\left(
    \frac{\Delta_{\mathrm{anc}}}{\varepsilon}\ell_\varepsilon\right).
\]
Thus mini-batching adds
\(\mathcal O((\Delta_{\mathrm{anc}}/\varepsilon)\ell_\varepsilon)\) anchor
calls and yields the same \(\sigma^2/\varepsilon\) order as the variance terms,
up to a logarithm.

\section{Experimental Details}\label{app:experiments}

All experiments are CPU-only and single-threaded, run on an Intel Core
i5-8250U under x86-64 Linux 6.8 with Python 3.12, NumPy 2.5, SciPy 1.18, and
Matplotlib 3.11.

\subsection{Synthetic experiments}\label{app:exp_synthetic}

\paragraph{Instances.}\label{app:exp_instances}

We use separable primal and dual functions with a linear tilt,
\[
  \begin{aligned}
    f(x)&=\tfrac12\textstyle\sum_i a_ix_i^2
      +\tfrac{\alpha_x}{1+\nu_x}\sum_i|x_i|^{1+\nu_x}-\langle c_x,x\rangle,\\
    g(y)&=\tfrac12\textstyle\sum_j b_jy_j^2
      +\tfrac{\alpha_y}{1+\nu_y}\sum_j|y_j|^{1+\nu_y}-\langle c_y,y\rangle,
  \end{aligned}
\]
on boxes \(\cX=[-R_x,R_x]^{d_x}\), \(\cY=[-R_y,R_y]^{d_y}\), with
\(d_x=d_y=60\) and \(R_x=R_y=1\), so \(\Omega=R_x^2d_x+R_y^2d_y=120\).
Separability is what makes the exact primal--dual gap computable, and the tilts
\(c_x,c_y\sim\cN(0,I)\) move the saddle point away from the origin --- without
them \(z^0=0\) is already optimal and every method looks identical. Tilts change
no constant, since \(H\), \(\mu\) and \(L\) depend only on the even part.

The coupling is \(\bB=\bU\mathbf{\Sigma}\bV^\top\) with \(\bU,\bV\) Haar-distributed
(from the \(QR\) factorization of Gaussians) and singular values log-spaced in
\([\sigma_{\min},\sigma_{\max}]\). Hence
\(\lambda_{\max}(\bB^\top\bB)=\sigma_{\max}^2\),
\(\lminp(\bB^\top\bB)=\sigma_{\min}^2\), and \(\bB\) has full column rank, so
the compatibility conditions in Assumption~\ref{ass:properties_of_B} hold
automatically.

\paragraph{Constants used in the runs.}
Because Assumptions~\ref{ass:properties_of_f}--\ref{ass:properties_of_B}
allow any valid strong convexity lower bounds, we set
\[
  \mu_x=\mu_y=0,
  \qquad
  \mu_{xy}=\mu_{yx}=0,
\]
so \(\delta_x=\delta_y=0\) and
Table~\ref{tab:bilinear_SPP_degenerate_complexity} applies. We use
\(p_3\equiv0\), \(\bP=\bI\), and no restarts, and set the available curvature
lower bounds to zero to isolate the H\"older exponents.

The H\"older constant of \(\nabla f\) on \(\cX\) is
\[
  H_x
  =
  2^{1-\nu_x}\alpha_x d_x^{(1-\nu_x)/2}
  +
  \big(\max_i a_i\big)\,\mathrm{diam}(\cX)^{1-\nu_x} .
\]
The first term follows from the scalar constant \(2^{1-\nu_x}\) and
coordinate aggregation, with equality for equally spread increments. The
second converts the quadratic part's Lipschitz constant into a H\"older bound
on the bounded set.

\paragraph{Metric and oracle accounting.}

We report
\[
  \operatorname{Gap}(x,y)
  :=\max_{y'\in\cY}F(x,y')-\min_{x'\in\cX}F(x',y).
\]
Separability and box constraints reduce both extrema to scalar problems, solved
by vectorized bisection to machine precision. For \(p=f+g\) and the skew
coupling \(Q\), this is precisely the residual in
Theorem~\ref{thm:VI_sliding_Holder}:
\[
  \operatorname{Gap}(\xout,\yout)
  =\sup_z\{p(\zout)-p(z)+\langle Q(z),\zout-z\rangle\}.
\]

Instrumented wrappers count each \(f'\), \(g'\), \(\bB v\), and
\(\bB^\top u\) evaluation. Gap computations are excluded. With the coupling
innermost, the three-level scheme uses
\(N_f=T_1\), \(N_g=T_1T_2\), and \(N_B=4T_1T_2T_3\).

\paragraph{Baselines and tuning.}\label{app:exp_baselines}

Figure~\ref{fig:e2_separation} compares Generalized Universal
Mirror-Prox~\citep{stonyakin2022generalized} with lockstep and two-level
ablations. Generalized Universal Mirror-Prox covers H\"older-continuous
operators. Lockstep assigns every component the largest required budget, while
two-level groups \(g'\) with the coupling. Both schedules follow
Corollary~\ref{cor:VI_sliding_Holder_complexity}. The universal and lockstep
methods query \(f'\) and \(g'\) equally often, so \(N_g/N_f=1\).

The smooth cost study uses Mirror-Prox~\citep{nemirovski2004prox} under the
same oracle budget. The application also includes Accelerated
Mirror-Prox~\citep{chen2017accelerated} and two-level Mirror-Prox
Sliding~\citep{lan2021mirrorprox}, with \(f\) outside and \(g+\bD\) inside.
Each of the four methods selects among twelve configurations on one validation
instance per regime, then freezes them for five test instances. The sliding
method uses the constants supplied by the synthetic generator, whereas the
universal baseline estimates its parameters adaptively.

\begin{figure*}[!tp]
  \centering
  \includegraphics[width=\textwidth]{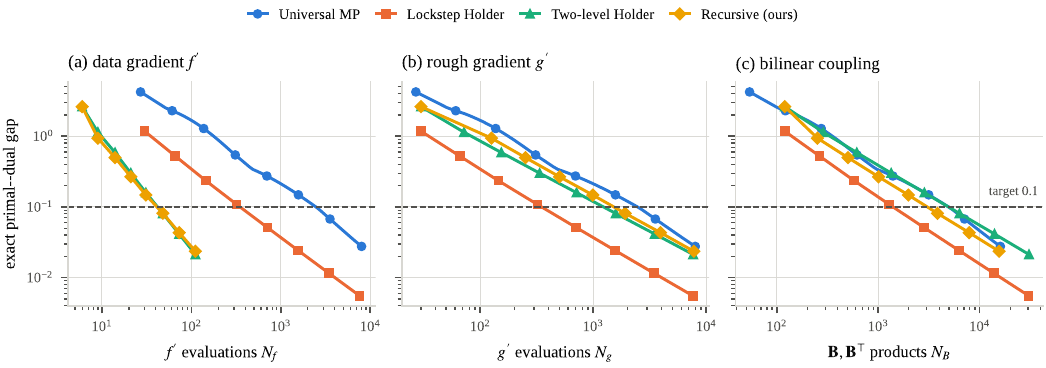}
  \caption{Oracle-wise convergence for
  \((\nu_x,\nu_y)=(0.75,0.25)\): exact gap versus \(f'\), \(g'\), and
  \(\bB,\bB^\top\) evaluations (median, seeds \(0,1,2\)). Universal
  Mirror-Prox is the external baseline. Lockstep and two-level H\"older are
  no- and partial-separation ablations. Settings are frozen after tuning on
  seed \(1729\).}
  \label{fig:e2_separation}
\end{figure*}

\begin{table}[!htb]
  \centering
  \footnotesize
  \setlength{\tabcolsep}{3pt}
  \begin{tabular}{cccccc}
  \toprule
  $\nu_x$ & predicted & measured & per-seed range & decades & gap/bound \\
  \midrule
    $0.00$ & $2.000$ & $1.922$ & $[1.897, 1.931]$ & $1.8$ & $[0.172,0.270]$ \\
    $0.25$ & $1.143$ & $1.103$ & $[1.099, 1.111]$ & $2.7$ & $[0.157,0.196]$ \\
    $0.50$ & $0.800$ & $0.819$ & $[0.817, 0.819]$ & $3.6$ & $[0.135,0.210]$ \\
    $0.75$ & $0.615$ & $0.642$ & $[0.642, 0.645]$ & $3.7$ & $[0.119,0.210]$ \\
    $1.00$ & $0.500$ & $0.509$ & $[0.506, 0.510]$ & $2.9$ & $[0.080,0.112]$ \\
  \bottomrule
\end{tabular}

  \caption{Measured and predicted exponent of \(N_f\) in \(1/\varepsilon\)
  (median and per-seed slope range over three seeds) and the measured
  gap-to-bound range over all budgets. ``Decades'' is the fitted gap range.
  Using the unrounded fits, the largest relative deviation from the predicted
  exponent is \(4.28\%\), attained at \(\nu_x=0.75\).}
  \label{tab:e1_slopes}
\end{table}

\begin{figure*}[!tp]
  \centering
  \includegraphics[width=\textwidth]{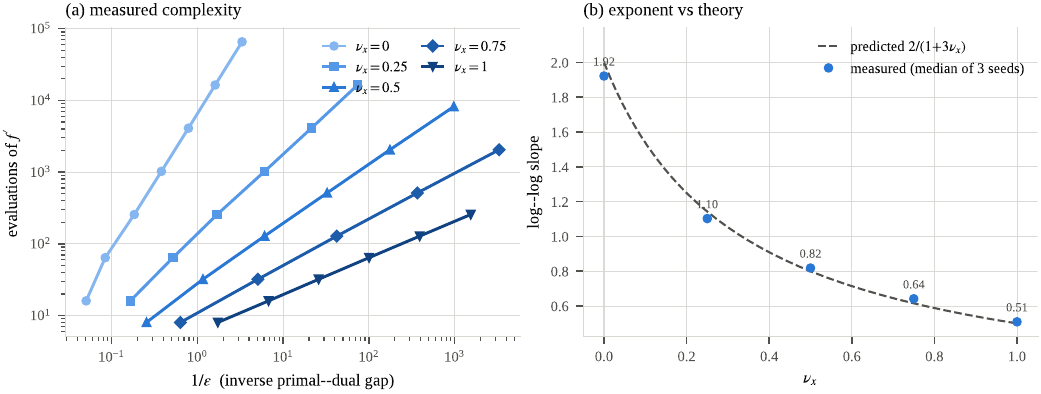}
  \caption{\(f'\)-complexity across the H\"older range in the degenerate
  regime: \(N_f\) versus inverse exact gap (left) and fitted versus predicted
  slope \(2/(1+3\nu_x)\) (right), medians over three seeds.}
  \label{fig:e1}
\end{figure*}

Sorting the recursive levels is essential when \(R_c\ll R_g<R_f\). As
\(\varepsilon\) decreases from \(8\) to \(1\), the sorted order uses
\(4\)--\(40\) coupling products, compared with
\(28{,}804\)--\(1{,}843{,}204\) for the fixed order, while both attain the
requested targets.

\begin{table}[!htb]
  \centering
  \scriptsize
  \setlength{\tabcolsep}{2pt}
  \begin{tabular}{cccrrrcc}
  \toprule
  $(\nu_x,\nu_y)$ & levels & $N_f$ & $N_g$ & $N_g/N_f$ & $N_B$ & gap & seed range \\
  \midrule
    $(1,0.5)$ & \texttt{fgB} & $16$ & $96$ & $6$ & $1920$ & $0.152$ & $[0.108,0.196]$ \\
    $(1,0)$ & \texttt{fBg} & $16$ & $115680$ & $7230$ & $1920$ & $0.417$ & $[0.332,0.419]$ \\
    $(0.75,0.25)$ & \texttt{fBg} & $31$ & $992$ & $32$ & $1984$ & $0.147$ & $[0.094,0.185]$ \\
    $(0.5,0.5)$ & \texttt{fgB} & $93$ & $93$ & $1$ & $2232$ & $0.127$ & $[0.073,0.159]$ \\
    $(0,0)$ & \texttt{Bfg} & $115680$ & $115680$ & $1$ & $1920$ & $0.532$ & $[0.456,0.572]$ \\
  \bottomrule
\end{tabular}

  \caption{Component budgets at \(\varepsilon=0.5\) (median, three seeds).
  Levels are sorted outermost first. The last columns give the attained gap and
  range. Reversing unequal exponent pairs exchanges the roles of \(f\) and
  \(g\), so mirrored rows are omitted. All displayed rows meet the target
  except the fully nonsmooth case, which is \(6\%\) above it with
  \(\kappa_n=1\).}
  \label{tab:e2_mixed}
\end{table}

\paragraph{The cost trade-off.}\label{app:exp_cost}

Table~\ref{tab:e4_cost} reports, at matched accuracy on a smooth instance, the
gradient and matrix-vector counts of the sliding method and of Mirror-Prox, the
resulting ratios, and the crossover
\(\rho^\ast=\mathrm{cost}(f')/\mathrm{cost}(\bB v)\) above which sliding attains
the lower total cost \(\rho(N_f+N_g)+N_B\). The saving in gradient calls grows
as \(\varepsilon\) decreases (from \(4.7\times\) to \(8.5\times\)) and so does
the extra spend on products (from \(18\times\) to \(40\times\)), leaving
\(\rho^\ast\) in the range \(22\)--\(44\). Sliding is therefore the cheaper
method exactly when \(\rho>\rho^\ast\), that is, when one
\((\text{sub})\)gradient evaluation costs more than a few dozen matrix-vector
products. Below that threshold Mirror-Prox is cheaper.

\begin{table}[!htb]
  \centering
  \small
  \begin{tabular}{crrcrrcccc}
  \toprule
  & \multicolumn{3}{c}{sliding} & \multicolumn{3}{c}{Mirror-Prox}
  & \multicolumn{2}{c}{ratio} & \\
  \cmidrule(lr){2-4}\cmidrule(lr){5-7}\cmidrule(lr){8-9}
  $\varepsilon$ & grad & $\bB$ & gap & grad & $\bB$ & gap & grad saved & $\bB$ extra & $\rho^\ast$ \\
  \midrule
    $1e-01$ & $48$ & $4096$ & $5.3e-02$ & $224$ & $224$ & $9.5e-02$ & $4.7\times$ & $18.3\times$ & $22$ \\
    $3e-02$ & $96$ & $16384$ & $1.4e-02$ & $592$ & $592$ & $2.9e-02$ & $6.2\times$ & $27.7\times$ & $32$ \\
    $1e-02$ & $192$ & $65536$ & $3.7e-03$ & $1632$ & $1632$ & $1.0e-02$ & $8.5\times$ & $40.2\times$ & $44$ \\
  \bottomrule
\end{tabular}

  \caption{Oracle counts at matched accuracy and the cost crossover
  \(\rho^\ast\). The entries are medians over three seeds. ``grad'' is
  \(N_f+N_g\). For each
  method the reported counts are those at which its gap first crosses
  \(\varepsilon\), so the accuracy actually attained is at most \(\varepsilon\)
  by construction.}
  \label{tab:e4_cost}
\end{table}

\paragraph{Parameters and seeds.}\label{app:exp_params}

The swept values and selection criteria are:

\begin{itemize}\itemsep2pt
  \item \textbf{Loop counts.} \(T_1\) is swept geometrically, with \(T_2=2\) and
    \(T_3=\lceil 4T_1^{(3\nu_x-1)/2}\rceil\), the rule that keeps the \(g\)- and
    coupling terms of~\eqref{eq:Holder_gap_bound} below the \(f\)-term so that
    the measured exponent is the one belonging to \(f\). In the ordering
    experiment the \(T_i\) are instead those the theory prescribes, from the
    budgets \(R_i\) of Corollary~\ref{cor:VI_sliding_Holder_complexity}.
  \item \textbf{Oracle tolerances.} \(\delta_i\) and the induced \(L_i\) are
    computed from~\eqref{eq:Holder_delta_star}. No parameter is tuned.
  \item \textbf{Baseline parameters.} For the smooth cost experiment, the
    Mirror-Prox step-size grid is \(\{2^k\}_{k=-6}^{6}\) times
    \(1/(L_{xy}+H_x+H_y)\), and the run reaching the smallest gap within the
    shared oracle budget is reported. For Figure~\ref{fig:e2_separation},
    Universal Mirror-Prox uses a target of \(0.0625\) in its backtracking test and
    selects \(L_0\in\{0.1,1,10\}\) on validation seed \(1729\). The selected
    value \(L_0=1\) is then frozen.
  \item \textbf{Instance constants.} \(a_i\equiv0\) and \(b_j\equiv0\) (so the
    quadratic part is absent and the power term alone shapes \(f,g\)),
    \(\alpha_x=\alpha_y=1\), \(\sigma_{\min}=0.5\), and
    \(\sigma_{\max}=2\). Two experiments deviate: the exponent sweep of
    Table~\ref{tab:e1_slopes} fixes \(\nu_y=1\) and takes
    \(\alpha_y=0.05\), \(b_j\in[0,0.05]\) so that the \(g\)-term never binds
    while \(\nu_x\) is swept over \(\{0,\tfrac14,\tfrac12,\tfrac34,1\}\). The
    convergence comparison of Figure~\ref{fig:e2_separation} uses the genuinely
    H\"older pair \((\nu_x,\nu_y)=(\tfrac34,\tfrac14)\). Reversed exponent
    pairs were also run and exchange the \(f\)- and \(g\)-budgets as expected.
    The ordering experiment uses \(\nu_x=0\),
    \(\nu_y=1\), \(\alpha_x=4\), \(\alpha_y=0.1\),
    \(\sigma\in[0.02,0.05]\) to place \(R_c\ll R_g<R_f\).
  \item \textbf{Loop-count constant.} The construction of
    Corollary~\ref{cor:VI_sliding_Holder_complexity} is applied with
    \(\kappa_n=1\), using the raw budgets \(R_i\). The theorem's sufficient
    value is \(2^{2n+2}=256\). Four of five displayed rows in
    Table~\ref{tab:e2_mixed} attain the target. The fully nonsmooth row is
    \(6\%\) above it.
  \item \textbf{Synthetic seeds.} Results for seeds \(0,1,2\) are reported as
    medians, with the per-seed range added in the exponent table. Runs are
    deterministic given the seed.
\end{itemize}

\paragraph{Validity of the bound.}\label{app:exp_bound}

The last column of Table~\ref{tab:e1_slopes} reports the measured gap as a
fraction of the right-hand side of~\eqref{eq:Holder_gap_bound} over the whole
grid of Section~\ref{sec:experiments}. There are no violations in \(90\) runs.
The ratio drifts upward slowly with the budget, which is why a log--log slope
fitted over too narrow a range of accuracies overestimates the exponent. The
fits therefore use the largest four budgets, over which the gap spans
\(1.8\)--\(3.7\) decades.


\subsection{Tomographic reconstruction}\label{app:exp_ct}

\paragraph{Objective, domains, and scaling.}
For \(n=64\), let \(d=n^2\), \(m=180\cdot91=16{,}380\), and consider
\begin{equation}\label{eq:ct_saddle}
 \min_{x\in\cX}\max_{y\in\cY}F_{\tau,c}(x,y)
 =
 \tfrac12\|\bA x-b\|^2+\tfrac{\mu}{2}\|x\|^2
 +\langle y,\bD x\rangle
 -\tfrac{\beta}{2}\|y\|^2-\tfrac{\tau}{2}\langle y,\bL_{\rm nl}y\rangle
 -\tfrac{c}{1+\nu_y}\sum_{i=1}^{2d}|y_i|^{1+\nu_y},
\end{equation}
that is, in the notation of~\eqref{prob:bilinear_SPP_main},
\[
 f(x)=\tfrac12\|\bA x-b\|^2+\tfrac\mu2\|x\|^2,
 \qquad
 g_{\tau,c}(y)=\tfrac\beta2\|y\|^2+
 \tfrac\tau2\langle y,\bL_{\rm nl}y\rangle+
 \tfrac{c}{1+\nu_y}\sum_{i=1}^{2d}|y_i|^{1+\nu_y},
 \qquad \bB=\bD,
\]
\[
 \cX=\{x:\|x\|\leq R\},
 \quad
 \cY=[-\lambda,\lambda]^{2d},
 \quad
 R=1.05\bigl(\|\hat x(0)\|+\|\bD\|\lambda\sqrt{2d}/\mu\bigr),
 \quad
 \hat x(0)=(\bA^\top\bA+\mu\bI)^{-1}\bA^\top b.
\]
The definition of \(R\) bounds
\(\hat x(y)=\hat x(0)-(\bA^\top\bA+\mu\bI)^{-1}\bD^\top y\) uniformly over
\(\cY\). Thus the ball leaves the primal minimizers unchanged and gives the
finite domain constant \(\Omega=R^2+2d\lambda^2\). In the reported runs,
\(R\approx29.0\), \(\|x^\ast\|\approx14.9\), and the projection is inactive.

The projector \(\bA\in\R^{m\times d}\) is sparse pixel-driven parallel-beam,
splatting each pixel, of width \(2/(n-1)\), onto its two neighboring detector
bins. Its stored transpose is the exact adjoint, and \(\bA\) is divided by its
spectral norm. The target intensity lies in \([0,1]\), and
\[
 b=\bA x^\dagger+\xi,\qquad
 \xi_j\stackrel{\rm iid}{\sim}\cN(0,\sigma^2),\qquad
 \sigma=0.01\|\bA x^\dagger\|/\sqrt m.
\]
The map \(\bD\) concatenates the unscaled forward differences
\((D_hx)_{i,j}=x_{i,j+1}-x_{i,j}\) and
\((D_vx)_{i,j}=x_{i+1,j}-x_{i,j}\), with zero in the last column or row and
the corresponding exact transpose implemented.

\paragraph{The three regimes.}
All share
\[
 \mu=10^{-3},\qquad \beta=0.05,\qquad \lambda=5\cdot10^{-5},
\]
and differ only in the dual parameters \((\tau,c,\nu_y)\).

\emph{Standard} (\(\tau=c=0\)). Eliminating \(y\) in~\eqref{eq:ct_saddle} gives
the primal problem
\begin{equation}\label{eq:ct_huber_primal}
 \min_{x\in\R^d}
 \tfrac12\|\bA x-b\|^2+\tfrac\mu2\|x\|^2
 +\lambda\sum_{i=1}^{2d}h_{\beta\lambda}((\bD x)_i),
 \qquad
 h_\delta(t)=
 \begin{cases}
 t^2/(2\delta),&|t|\leq\delta,\\
 |t|-\delta/2,&|t|>\delta,
 \end{cases}
\end{equation}
so the Huber threshold is \(\beta\lambda\), not \(\beta\).

\emph{Nonlocal} (\(\tau=0.05\), \(c=0\)), which tests the third level. Here
\(\bL_{\rm nl}=\operatorname{diag}(L,L)\), where
\(L\) is the graph Laplacian connecting all pixels at offsets
\(|\Delta i|,|\Delta j|\leq5\) with weights
\(\exp(-(\Delta i^2+\Delta j^2)/(2\cdot2.5^2))\), divided by the maximum
weighted degree. This normalization gives \(\|\bL_{\rm nl}\|_2\leq2\), with
\(908{,}552\) stored entries at \(64^2\). Here \(g'\) applies the sparse graph
while \(\bD\) remains a local stencil, and eliminating \(y\) gives the
spatially correlated penalty \(g_{\tau,0}^*(\bD x)\).

\emph{H\"older} (\(\tau=0.05\), \(\nu_y=\tfrac12\),
\(c=7.955\cdot10^{-3}\)) adds a power term whose gradient
\(c\operatorname{sign}(y)|y|^{\nu_y}\) is \((\nu_y,H_y)\)-H\"older but not
Lipschitz. We set the coefficient using the dimensionless boundary-gradient
ratio \(r_c=22.5\):
\[
 c=\operatorname{round}_{10^{-6}}
 \bigl(r_c\beta\lambda^{1-\nu_y}\bigr)=7.955\cdot10^{-3},
\]
which makes the power-gradient magnitude \(22.5\) times the diagonal
quadratic gradient at \(|y_i|=\lambda\). The H\"older constant is the
two-part bound of
Appendix~\ref{app:exp_instances},
\(
  H_y=c\,2^{1-\nu_y}(2d)^{(1-\nu_y)/2}
      +(\beta+2\tau)\operatorname{diam}(\cY)^{1-\nu_y},
\)
and the sliding curvature is \(L_2(\delta_2^\circ)\)
of~\eqref{eq:Holder_delta_star} rather than a Lipschitz constant.

\paragraph{H\"older diagnostics.}
At a reference saddle point, \(3.10\%\) of coordinates satisfy
\(|y_i|\leq0.01\lambda\), and the \(99\)th percentile of the coordinatewise
power-gradient Lipschitz quotient is \(85.8\), compared with the quadratic
constant \(0.15\). The observed global H\"older quotient is
\(0.0616<H_y=0.1213\). Varying the coefficient over \(c/2,c,2c\) leaves the
weighted-cost ordering unchanged and all median gaps below
\(3.1\cdot10^{-4}\).

The values \((\mu,\beta,\lambda)\) were selected before method tuning using a
validation-only pilot over eight candidate pairs. Each received the same
250-step L-BFGS screen, and the selected pair maximized validation PSNR subject
to \(\mu\geq10^{-3}\).

\paragraph{Metric and timing.}
Let \(\operatorname{Gap}_{\tau,c}\) be the gap of
Appendix~\ref{app:exp_synthetic} formed with \(F_{\tau,c}\) on
\(\cX\times\cY\). We report
\[
 \operatorname{RelGap}_{\tau,c}(x,y)
 =\frac{\operatorname{Gap}_{\tau,c}(x,y)}
 {\operatorname{Gap}_{\tau,c}(0,0)},
\]
with the denominator recomputed on every instance. The primal infimum is
computed by diagonally preconditioned CG to relative tolerance
\(2\cdot10^{-11}\), and the dual maximum by proximal gradient with
coordinatewise bisection in the proximal map. Strong-convexity residual bounds
certify both solves. On the recorded H\"older run, the resulting gap-value
error bound is \(9.73\cdot10^{-18}\).

The weights \(c_f,c_g,c_B\) are median isolated latencies of the actual
\(f'\), \(g'\), and one \(\bD\) or \(\bD^\top\) kernel, and
\(C_{\rm weighted}=c_fN_f+c_gN_g+c_BN_B\). Median seedwise ratios are
\((c_f/c_B,c_g/c_B)=(157,0.118)\) in the diagonal regime and
\((160,49.4)\) in the nonlocal regime. End-to-end wall time includes all
algorithm and projection overhead but excludes construction, validation, cost
calibration, and gap evaluation.

\paragraph{Tuning protocol.}
The configurations of Appendix~\ref{app:exp_baselines}, run on validation
phantom/noise pair \((0,0)\), are as follows. Mirror-Prox uses
\[
 \eta/\eta_0\in\{0.75,1,1.25\},\quad
 N\in\{1536,1792,2048,2304\},\quad
 \eta_0=(H_x+H_y+L_{xy})^{-1},
\]
Accelerated MP uses the same iteration set with curvature scales
\(\{0.75,1,1.25\}\). Each sliding grid contains twelve configurations.
The MPS grid consists of \((T_1,T_2)=(96,32),(112,32)\) and every pair in
\(\{128,144,160,176,192\}\times\{24,32\}\). The recursive grid consists of
\((T_1,T_2,T_3)=(96,4,8),(112,4,8)\) and, for each
\(T_1\in\{128,144,160,176,192\}\), both
\((T_2,T_3)=(3,10)\) and \((4,8)\).
Selection minimizes \(C_{\rm weighted}\) subject to validation
\(\operatorname{RelGap}\leq3.75\cdot10^{-4}\), \(75\%\) of the test target,
with no restarts and no test-time stopping. For \(\tau=0\), the selected
fixed configurations are Mirror-Prox
\((\eta,N)=(0.321384,1792)\), Accelerated MP
\((s,N)=(0.75,1536)\), MPS \(T=(128,32)\), and recursive
\(T=(160,3,10)\). For \(\tau=0.05\), they are respectively
\((0.313328,1792)\), \((0.75,1536)\), \((160,24)\), and \((160,3,10)\).
In the H\"older regime, Generalized Universal
Mirror-Prox~\citep{stonyakin2022generalized} is the external baseline. The
fixed-step methods are reported as ablations. Universal Mirror-Prox sweeps the
controlled inexactness
\(\varepsilon\in\{2\cdot10^{-3},5\cdot10^{-4},10^{-4}\}\) of its backtracking
test against \(N\in\{384,448,512,576\}\) at \(L_0=1\). Validation selects
\((\varepsilon,N)=(2\cdot10^{-3},576)\), whose test relative gaps,
\(2.32\cdot10^{-4}\) to \(3.21\cdot10^{-4}\), are comparable to those of the
other methods.

\paragraph{Test evaluation.}
The frozen counts are evaluated on five unseen pairs
\((\text{seed},\text{phantom})=(1,0),(2,1),(3,2),(4,0),(5,1)\), so variability
includes three object geometries as well as independent measurement noise.
Each method is timed three times per pair. The reported wall time is the median
of the five per-pair medians, with their full range. All sixty frozen
method/regime runs attain the common
\(\operatorname{RelGap}\leq5\cdot10^{-4}\) target.

\paragraph{Scope.}
At the reported accuracies, the component budgets are
\(R_f\approx1.7\cdot10^{3}\), \(R_g\approx6.9\cdot10^{3}\), and
\(R_c\approx7.9\cdot10^{6}\), so coupling calls determine the rate. Tomography
therefore demonstrates oracle separation with an expensive, non-Lipschitz
\(g'\), but not the \(\nu_y\)-dependent exponent. The synthetic experiments
test that exponent.

\clearpage
\begin{table*}[t]
  \centering
  \scriptsize
  \setlength{\tabcolsep}{3pt}
  \resizebox{\textwidth}{!}{\begin{tabular}{lrrrrrrr}
  \toprule
  method & $N_f$ & $N_g$ & $N_B$ & weighted ms & wall ms [range] & rel. gap & PSNR [range] \\
  \midrule
    \multicolumn{8}{l}{\emph{Huber--TV}} \\
    Mirror-Prox & $3584$ & $3584$ & $7168$ & $16236$ & $17540$ $[17484,17660]$ & $2.55\times10^{-4}$ & $25.67$ $[23.84,25.82]$ \\
    Accelerated MP & $1536$ & $1536$ & $6144$ & $7052$ & $7899$ $[7882,7937]$ & $3.14\times10^{-4}$ & $25.75$ $[23.99,26.05]$ \\
    MP Sliding & $128$ & $4096$ & $16384$ & $1086$ & $2433$ $[2420,2461]$ & $2.80\times10^{-4}$ & $25.77$ $[24.00,26.08]$ \\
    Recursive (ours) & $160$ & $480$ & $19200$ & $1302$ & $3177$ $[3120,3194]$ & $2.98\times10^{-4}$ & $25.68$ $[23.91,25.98]$ \\
    \multicolumn{8}{l}{\emph{Nonlocal Huber--TV}} \\
    Mirror-Prox & $3584$ & $3584$ & $7168$ & $21145$ & $23070$ $[22996,23365]$ & $2.66\times10^{-4}$ & $25.59$ $[23.77,25.74]$ \\
    Accelerated MP & $1536$ & $1536$ & $6144$ & $9157$ & $10348$ $[10195,10506]$ & $3.14\times10^{-4}$ & $25.75$ $[23.99,26.05]$ \\
    MP Sliding & $160$ & $3840$ & $15360$ & $6498$ & $8387$ $[8227,8445]$ & $2.94\times10^{-4}$ & $25.71$ $[23.94,26.01]$ \\
    Recursive (ours) & $160$ & $480$ & $19200$ & $1968$ & $3957$ $[3935,4000]$ & $2.98\times10^{-4}$ & $25.67$ $[23.91,25.98]$ \\
    \multicolumn{8}{l}{\emph{Nonlocal H\"older--TV}} \\
    Universal MP & $1728$ & $1728$ & $3456$ & $10508$ & $10780$ $[10698,11331]$ & $2.59\times10^{-4}$ & $25.64$ $[23.82,25.80]$ \\
    Accelerated MP & $1536$ & $1536$ & $6144$ & $9426$ & $10429$ $[9824,10458]$ & $3.15\times10^{-4}$ & $25.74$ $[23.98,26.05]$ \\
    MP Sliding & $160$ & $3840$ & $15360$ & $6889$ & $8562$ $[8322,8618]$ & $2.94\times10^{-4}$ & $25.70$ $[23.94,26.01]$ \\
    Recursive (ours) & $160$ & $480$ & $19200$ & $2029$ & $3981$ $[3922,4022]$ & $3.03\times10^{-4}$ & $25.63$ $[23.87,25.94]$ \\
  \bottomrule
\end{tabular}
}
  \caption{Application comparison at the common relative primal--dual-gap
  target \(5\cdot10^{-4}\). Weighted cost uses measured per-component latency.
  Wall time is end-to-end optimization time. Entries are medians over five
  unseen phantom/noise instances. Brackets give the observed instance range
  for wall time and PSNR.}
  \label{tab:e5_ct}
\end{table*}

\begin{figure*}[t]
  \centering
  \includegraphics[width=\textwidth]{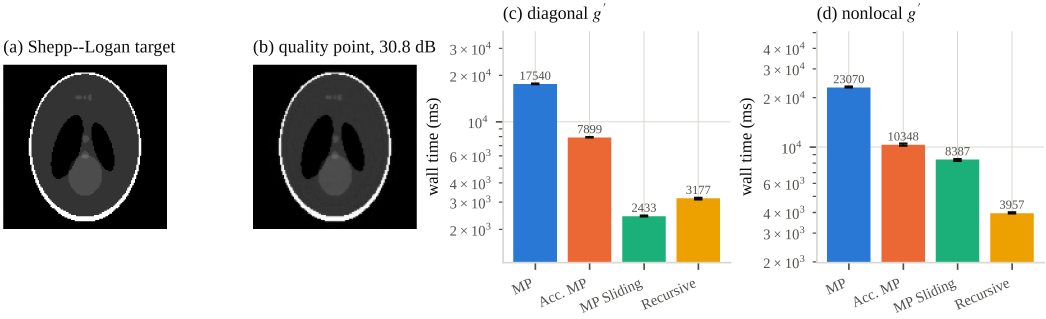}
  \caption{Tomographic reconstruction and timing. (a) \(128^2\) modified
  Shepp--Logan target. (b) Recursive result at \(T=(768,4,8)\), reaching
  \(30.81\) dB at relative gap
  \(3.26\cdot10^{-5}\). (c--d) Median end-to-end time at the common
  \(5\cdot10^{-4}\) target, with bars spanning the five test-instance medians.
  The standard diagonal and nonlocal regimes use the counts and times in
  Table~\ref{tab:e5_ct}.}
  \label{fig:e5_ct}
\end{figure*}

\FloatBarrier

\begin{figure}[H]
  \centering
  \includegraphics[width=\textwidth]{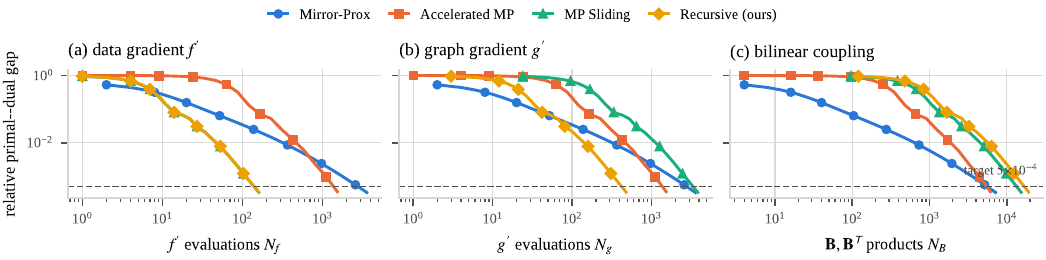}
  \caption{Oracle-wise convergence on the first unseen nonlocal tomography
  instance. The three panels plot the relative primal--dual gap against,
  respectively, evaluations of the data gradient \(f'\), evaluations of the
  graph gradient \(g'\), and products with \(\bB,\bB^\top\). The dashed line
  is the common target \(5\cdot10^{-4}\).}
  \label{fig:e5_oracle_convergence}
\end{figure}

\section{Broader Complexity-Separation Results}\label{app:separation}

This appendix expands the positioning summarized in
Section~\ref{sec:related_work} by comparing the oracle models and assumptions
of the closest complexity-separation results.

\paragraph{Joint versus separated oracle counts.}
\citet{kovalev2022accelerated} solve the smooth bilinear
problem~\eqref{prob:bilinear_SPP_main} with the Accelerated Primal-Dual
Gradient method, which is linearly convergent and optimal in the
strongly-convex--strongly-concave regime. Its oracle model allows
\(\mathcal O(1)\) evaluations of \(\nabla f\), of \(\nabla g\), and of products
with \(\bB,\bB^\top\) at every iteration, and its complexity is a single count
\(\mathcal O(\min\{T_a,T_b,T_c,T_d\}\log(1/\varepsilon))\) whose leading term is
\[
  T_a
  =
  \max\Bigl\{
    \sqrt{\kappa_x},\;
    \sqrt{\kappa_y},\;
    \sqrt{\kappa_{xy}}
  \Bigr\},
  \qquad
  \sqrt{\kappa_{xy}}=\frac{L_{xy}}{\sqrt{\delta_x\delta_y}},
\]
charged simultaneously to all three oracles. Separation replaces this maximum by
the three terms individually, one per oracle.
\citet{borodich2025linear} obtain such separate counts with optimal dependence
on \(\sqrt{\kappa_x}\), \(\sqrt{\kappa_y}\), and
\(\sqrt{\kappa_{xy}}\) for the smooth bilinear problem.

\paragraph{Separation for general coupling.}
\citet{tominin2021accelerated} consider
\(\min_x\max_y\{f(x)+G(x,y)-h(y)\}\) with \(h\) of finite-sum form,
strongly convex in \(x\) and strongly concave in \(y\), and all components
smooth. They estimate separately the numbers of evaluations of \(\nabla f\),
\(\nabla_xG\), \(\nabla_yG\) and \(\nabla h_i\), obtaining budgets such as
\(\widetilde{\mathcal O}(\sqrt{\kappa_x^{(f)}\kappa_y^{(G)}})\) for \(\nabla f\)
and \(\widetilde{\mathcal O}(\sqrt{\kappa_x^{(G)}\kappa_y^{(G)}})\) for the
coupling. Their coupling budgets depend on products of primal and dual
condition numbers of \(G\), whereas the bilinear specialization considered
here uses the spectral quantity \(\sqrt{\kappa_{xy}}\). Their separation is
obtained through a two-level accelerated construction for \(\min_xF(x)\), with
\(F(x)=f(x)+\max_y\{G(x,y)-h(y)\}\), driven by inexact proximal steps and
probabilistic inexact oracles. The resulting bounds carry polylogarithmic
factors in \(\varepsilon^{-1}\) and the confidence level, and assume the
strongly-convex--strongly-concave regime.

\paragraph{Universal H\"older primal--dual methods.}
Affine-constrained models have bilinear Lagrangians.
\citet{yurtsever2015universal} adapt to unknown H\"older regularity in the dual
formulation, while \citet{luo2024universal} handle Lipschitz and H\"older
gradients universally and prove an optimal mixed-type rate. These methods use
joint iteration or line-search counts, while the present analysis tracks
function-gradient and \(A,A^\top\) evaluations separately.

\paragraph{Parameter-free composite separation.}
For composite minimization, the universal, parameter-free PFUGS method
of~\citet{wu2026parameterfree} separately counts a H\"older \(f'\)-oracle and a
smooth \(g'\)-oracle. Its model has two function-oracle families and no
coupling oracle. The model studied here tracks \(f'\), \(g'\), and
\(\bB,\bB^\top\), with separate H\"older exponents for the two function
components.

\end{document}